\title{
Stability and instability in scalar balance laws:\\
fronts and periodic waves}
\author{V. Duch\^{e}ne\thanks{Univ Rennes, CNRS, IRMAR - UMR 6625, F-35000 Rennes, France. \url{vincent.duchene@univ-rennes1.fr}} \and L.M. Rodrigues\thanks{Univ Rennes \& IUF, CNRS, IRMAR - UMR 6625, F-35000 Rennes, France. \url{luis-miguel.rodrigues@univ-rennes1.fr}.
}}
\date{\today}
\numberwithin{equation}{section}
\let\Title\@title
\let\Author\@author
\newtheorem{Theorem}{Theorem}[section]
\newtheorem{Definition}[Theorem]{Definition}
\newtheorem{Proposition}[Theorem]{Proposition}
\newtheorem{Corollary}[Theorem]{Corollary}
\newtheorem{Lemma}[Theorem]{Lemma}
\newtheorem{Assumption}[Theorem]{Assumption}
\newtheorem{Remark}[Theorem]{Remark}
\newcommand{\eps}{\varepsilon}
\newcommand\uU{{\underline U}}
\newcommand\uu{{\underline u}}
\newcommand\cS{{\mathcal S}}
\newcommand\cL{{\mathcal L}}
\newcommand\cC{{\mathcal C}}
\newcommand\tu{{\widetilde u}}
\newcommand\tw{{\widetilde w}}
\newcommand\tchi{{\widetilde \chi}}
\newcommand\tA{\widetilde{A}}
\newcommand\tB{\widetilde{B}}
\newcommand{\RR}{\mathbb{R}}
\newcommand{\CC}{\mathbb{C}}
\newcommand{\NN}{\mathbb{N}}
\newcommand{\ZZ}{\mathbb{Z}}
\newcommand\cD{{\mathcal D}}
\newcommand\cN{{\mathcal N}}
\newcommand\cX{{\mathcal X}}
\newcommand\cY{{\mathcal Y}}
\newcommand\uI{{\underline I}}
\newcommand{\upsi}{x}
\DeclareMathOperator\dd{d}
\renewcommand\d{\partial}
\newcommand{\Id}{{\rm Id}}
\DeclareMathOperator\sign{sgn}
\DeclareMathOperator\supp{supp}
\DeclareMathOperator\Span{span}
\DeclareMathOperator{\Div}{div}
\newcommand{\fpar}{f_{\shortparallel}}
\newcommand{\fperp}{f_{\perp}}
\newcommand{\Fperp}{F_{\perp}}
\newcommand{\Phiperp}{\Phi_{\perp}}
\newcommand{\sigpar}{\sigma_{\shortparallel}}
\newcommand{\id}[1]{\left\vert_{_{#1}}\right.}
\newcommand{\eqdef}{\stackrel{\rm def}{=}}
\DeclarePairedDelimiter\abs{\lvert}{\rvert}
\DeclarePairedDelimiter\Norm{\big\lVert}{\big\rVert}
\newcommand{\dsp}{\displaystyle}
\begin{document}
\maketitle

\begin{abstract}
We complete a full classification of non-degenerate traveling waves of scalar balance laws from the point of view of spectral and nonlinear stability/instability under (piecewise) smooth perturbations. A striking feature of our analysis is the elucidation of the prominent role of characteristic points in the determination of both the spectra of the linearized operators and the phase dynamics involved in the nonlinear large-time evolution. For a generic class of equations an upshot of our analysis is a dramatic reduction from a tremendously wide variety of entropic traveling waves to a relatively small range of \emph{stable} entropic traveling waves.

\vspace{1em}
\noindent{\it Keywords}: discontinuous traveling waves; characteristic points; asymptotic stability; scalar balance laws.

\vspace{1em}
\noindent{\it 2010 MSC}: 35B35, 35L02, 35L67, 35B40, 35L03, 35P15, 37L15.
\end{abstract}

\tableofcontents

\section*{Introduction}

In the present contribution, we continue our study, initiated in~\cite{DR1}, of the large-time asymptotic behavior of solutions to first order scalar hyperbolic balance laws, that is, of the form
\begin{equation}\label{eq-u}
\d_t u+\d_x \big(f(u)\big)=g(u), \qquad u:\RR^+\times\RR\to\RR\,,
\end{equation}
in neighborhoods of traveling waves.

Let us first recall, that, prior to~\cite{DR1}, under rather natural assumptions on $f$ and $g$ --- including the strict convexity of $f$ and the strict dissipativity at infinity of $g$ ---, it was already known that starting from an $L^\infty$ initial datum that is either spatially periodic or is constant near $-\infty$ and near $+\infty$, the large-time dynamics is well captured in $L^\infty$ topology by piecing together traveling waves (constants, fronts or periodic waves). Indeed on one hand it is proved in~\cite{FanHale93,Lyberopoulos94,Sinestrari95,Sinestrari97} that in a spatially periodic setting, every solution approaches asymptotically either a periodic (necessarily discontinuous) traveling wave, or a constant equilibrium. Moreover, periodic traveling waves are actually unstable and the rate of convergence is exponential in the latter case whereas for the untypical solutions that do converge to a periodic wave convergence rates may be arbitrarily slow. On the other hand it is proved in ~\cite{Sinestrari96,MasciaSinestrari97} that, starting from data with essentially compact support on the whole line, the large-time asymptotics may a priori involve several blocks of different kinds (constants, fronts or periodics). Yet again the scenario generating periodic blocks is also non generic and unstable. Note that at the level of regularity considered there the strict convexity assumption on $f$ plays a key role as it impacts the structure of possible discontinuities. The few contributions relaxing the convexity assumption add severe restrictions on $g$ or on the initial data, for instance linearity of $g$ in~\cite{Lyberopoulos92}, Riemann initial data in~\cite{Sinestrari97a,Mascia00} and monotonicity of the initial data in~\cite{Mascia98}. At a technical level, one key ingredient in the proofs of the aforementioned series of investigations are generalized characteristics of Dafermos~\cite{Dafermos77}. They provide a formulation of the equation that is well suited to comparison principles thus to asymptotics in $L^\infty$ topology. 

Our goal was to complete the foregoing studies by providing stability/instability results in strong topologies measuring the size of piecewise smooth functions, but assuming no localization on perturbations and relaxing also convexity assumptions. In the companion paper~\cite{DR1} we have already identified what are spectral stability conditions for traveling waves that are either constant states or Riemann shocks, and proved a (dynamical) nonlinear stability result for (non-degenerate) spectrally stable ones. Here we complete our program by providing 
\begin{itemize}
\item a complete classification of non-degenerate traveling waves according to their spectral stability;
\item proofs that for those waves spectral instability (resp. stability) yields dynamical nonlinear instability (resp. asymptotic stability).
\end{itemize}
The notions of non-degeneracy, stability and instability we use here are precisely introduced in Section~\ref{S.structure}. Yet we would like to emphasize already at this stage that the nonlinear instabilities we prove are dramatically strong, they hold even if one is allowed to fully re-synchronize before comparing shapes of solutions and to lose arbitrarily much on Sobolev scales between topologies used to measure initial data and resulting solutions. 

The upshot of our classification is that though Equation~\eqref{eq-u} may possess a tremendously huge number of (non-degenerate) traveling-wave solutions\footnote{In the present introduction, \emph{solution} always means entropy-admissible solution.} only very few of them are stable. To illustrate how strong the reduction is, let us momentarily focus on the case where $f''$ does not vanish and zeros of $g$ are simple, and consider a continuous stable front, or in other words, a stable traveling wave solution with continuous profile $\uU$ connecting two distinct endstates. Then by piecing together parts of the profile $\uU$ according to the Rankine-Hugoniot condition one may obtain non-degenerate piecewise smooth traveling waves with the same speed as the original front, realizing as a sequence of discontinuity amplitudes any prescribed sequence\footnote{Given as an element of $(0,A)^I$, where $I$ is either $[\![ 0,m]\!]$ for some $m\in\NN$, $\NN$, $-\NN$ or $\ZZ$, and $A$ is the maximal jump amplitude
\[
A\eqdef\sup_{\substack{(u,v)\in(\min(\{\uu_{-\infty},\uu_{+\infty}\}),\max(\{\uu_{-\infty},\uu_{+\infty}\}))^2\\f(u)-\sigma\,u=f(v)-\sigma\,v}}|u-v|
\]
with $\sigma$ the speed of the front and $\uu_{-\infty}$, $\uu_{+\infty}$ its endstates.}. Yet, though they are built out of pieces of a stable profile none of those discontinuous waves is actually stable.

\medskip

As stressed by the foregoing paragraph the spatial structure of traveling-wave profiles we consider is extremely diverse. Yet an outcome of our analysis is that stability of non-degenerate traveling waves of~\eqref{eq-u} is decided by conditions that are essentially local and involve only three kinds of points: infinities, points of discontinuity, and characteristic points. More explicitly, a non-degenerate entropy-admissible piecewise regular traveling wave of profile $\uU$ and speed $\sigma$ (as in the forthcoming Definition~\ref{A.generic-traveling-wave}) is spectrally unstable if and only if it exhibits at least one of the following features:
\begin{itemize}
\item an endstate $\uu_\infty$ --- that is, a limit of $\uU$ at $+\infty$ or $-\infty$ --- such that $g'(\uu_\infty)>0$;
\item a discontinuity point $d_0$ at which $\frac{[\,g(\uU)\,]_{d_0}}{[\,\uU\,]_{d_0}}>0$ (with $[\cdot]_{d_0}$ denoting jump at $d_0$);
\item a characteristic value $\uu_\star$ --- that is, a value $\uu_\star$ of $\uU$ with $f'(\uu_\star)=\sigma$ --- such that $g'(\uu_\star)<0$. 
\end{itemize}
The reader well-trained in stability of discontinuous solutions of hyperbolic systems may expect that from the first or the second conditions stems spectral instability. The impact of characteristic points seems however to be fully clarified here for the first time, and, to our knowledge, otherwise is only (briefly) mentioned in~\cite{JNRYZ}. It is significantly more striking that any of these conditions is also sufficient to bring nonlinear instability and even more that the absence of all these (again under non-degeneracy assumptions) yield nonlinear asymptotic stability. 

Before examining the consequences of the latter instability criteria let us pause to describe roughly the nature of each instability mechanism. Concerning instabilities at infinity, a key simple observation is that if $\uU$ is continuous near $+\infty$ (resp. $-\infty$) with limit $\uu_\infty$ such that $g'(\uu_\infty)>0$ then\footnote{See Proposition~\ref{P.structure}  and specifically~\eqref{signs-infinity} in its proof.} $f'(\uu_\infty)<\sigma$ (resp. $f'(\uu_\infty)>\sigma$) so that a perturbation starting sufficiently far near such infinity will move outward the infinity at hand and keep growing exponentially as long as it has not reached some macroscopic threshold (reversing the direction of propagation, canceling pointwise linear growth rate or reaching a discontinuity). 

Instabilities created by bad jump signs are really driven by an instability of shock positions; in such case an infinitesimally small kick in position will be exponentially enhanced (in one direction or the other depending on the initial sign of the kick) up to some macroscopic threshold. Note that in the latter case the instability manifests itself not so much in that the position of the discontinuity moves (since our notion of stability allows for resynchronization of positions) but by the fact that as the shock location moves on one side it erases a macroscopic part of the original profile and on the other side it unravels some macroscopic shape not originally present. 

At last, instabilities at bad characteristic point are of wave-breaking type. Near a point $x_\star$ such that $f'(\uU(x_\star))=\sigma$ and $g'(\uU(x_\star))<0$, $f'(\uU(\cdot))-\sigma$ is positive on the left and negative on the right\footnote{See Proposition~\ref{P.structure}  and specifically~\eqref{signs-characteristic} in its proof.}  so that a perturbation localized near $x_\star$ will concentrate at $x_\star$ and cause a finite-time blow-up of derivatives. We stress that, though we prove that the latter scenario takes place, this is not completely obvious from a purely formal point of view since one may expect that the sign condition $g'(\uU(x_\star))<0$ will also bring some damping in values near $x_\star$. Thus one needs to prove that the concentration phenomenon overtakes any possible damping.
\medskip

In the foregoing discussion, we have repeatedly used sign information on $f'(\uU(\cdot))-\sigma$ deduced from signs of $g'(\uU(\cdot))$ and the profile equation. Under mild genericity assumptions (see Assumption~\ref{A.generic} below), similar considerations show that, as opposed to the otherwise quite wild possibilities, profiles of  \emph{stable} non-degenerate piecewise regular traveling wave enter in a relatively small number of classes that we list now and, except for constant states, represent in Figure~\ref{F.classes}: 

\begin{figure}[!phtb]
\begin{subfigure}{.5\textwidth}
\includegraphics[width=\textwidth]{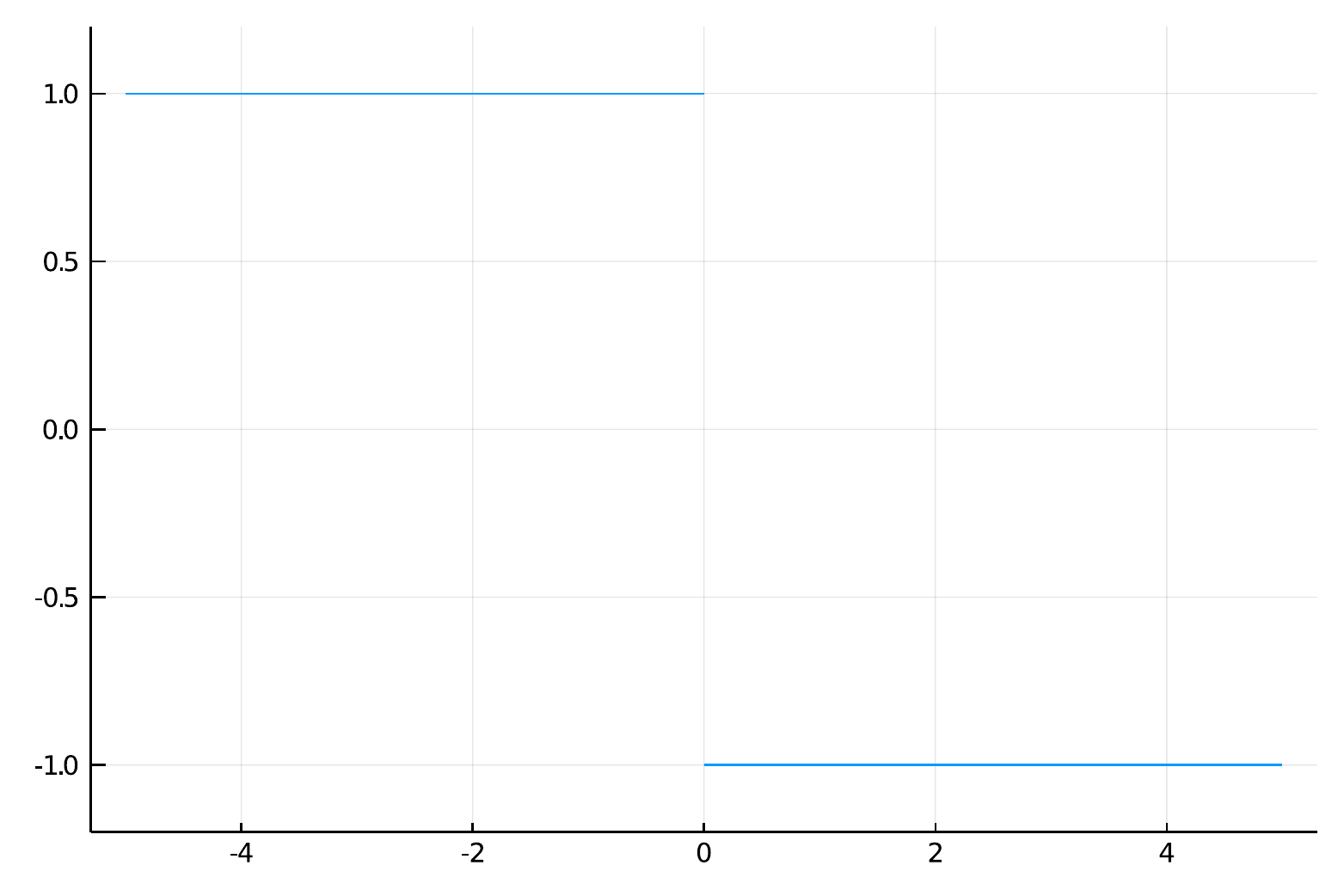}
\caption{A traveling wave profile of class \ref{class2}.}
\label{F.class2}
\end{subfigure}%
\begin{subfigure}{.5\textwidth}
\includegraphics[width=\textwidth]{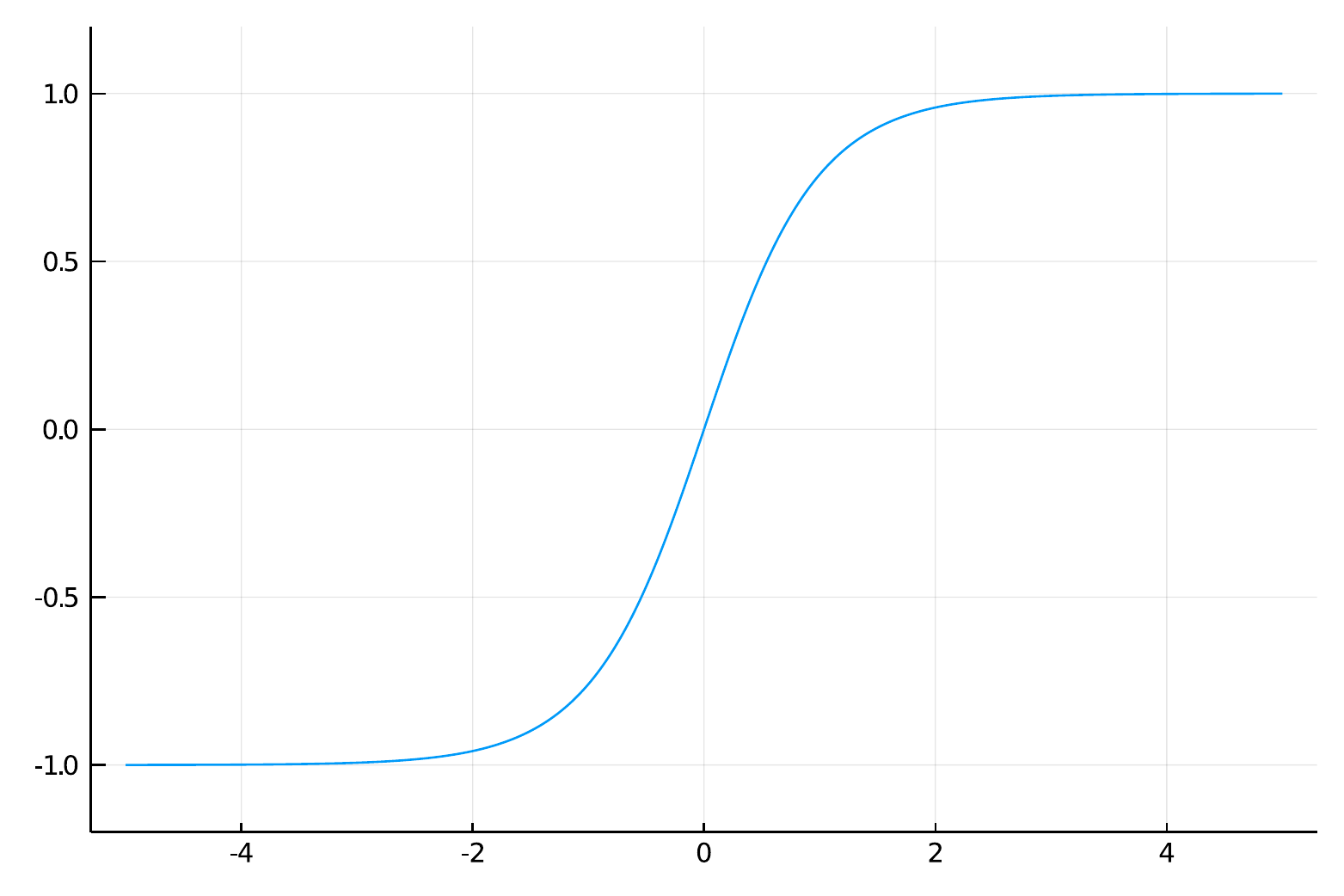}
\caption{A traveling wave profile of class \ref{class3}.}
\label{F.class3}
\end{subfigure}
\begin{subfigure}{.5\textwidth}
\includegraphics[width=\textwidth]{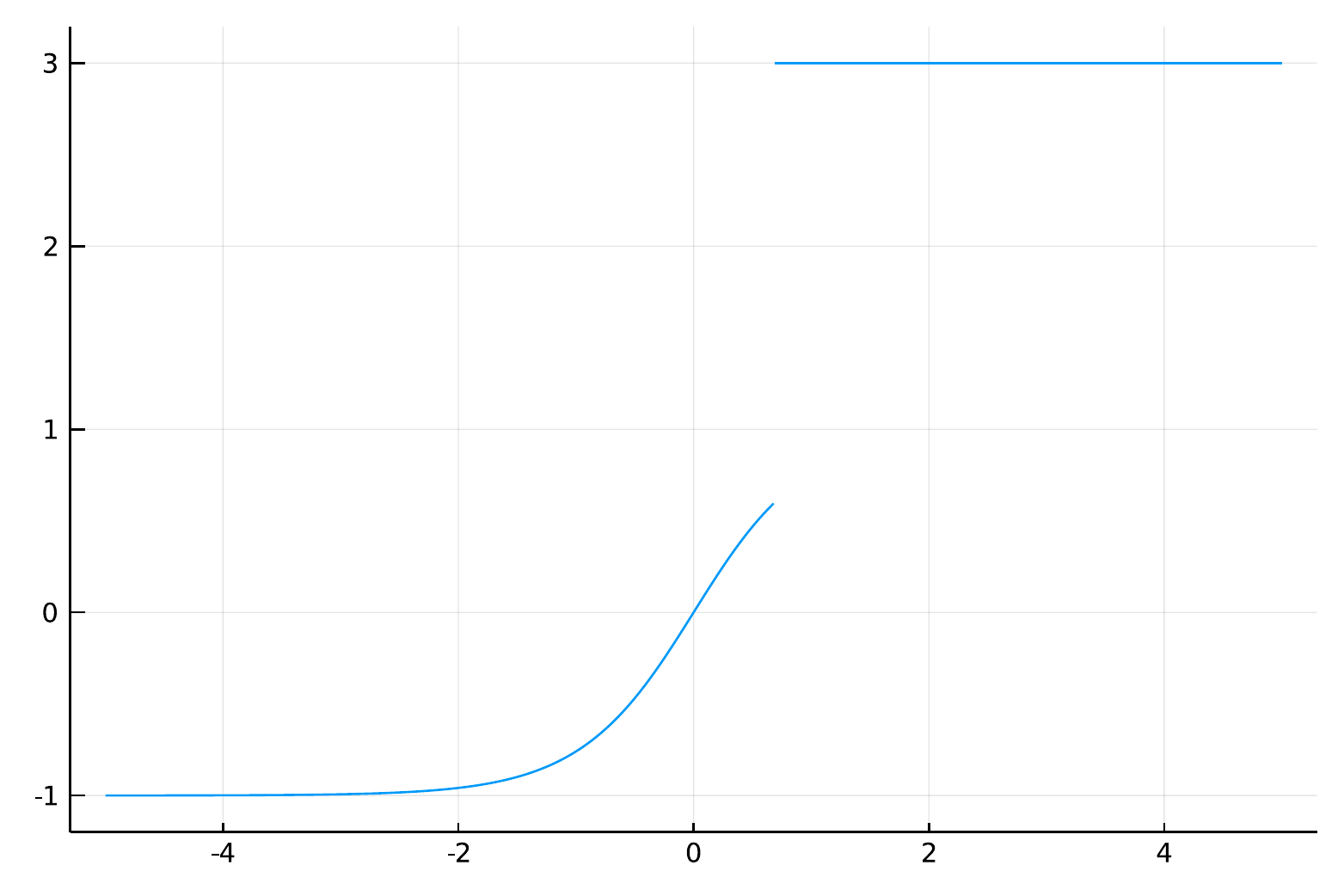}
\caption{A traveling wave profile of class \ref{class4}.}
\label{F.class4}
\end{subfigure}%
\begin{subfigure}{.5\textwidth}
\includegraphics[width=\textwidth]{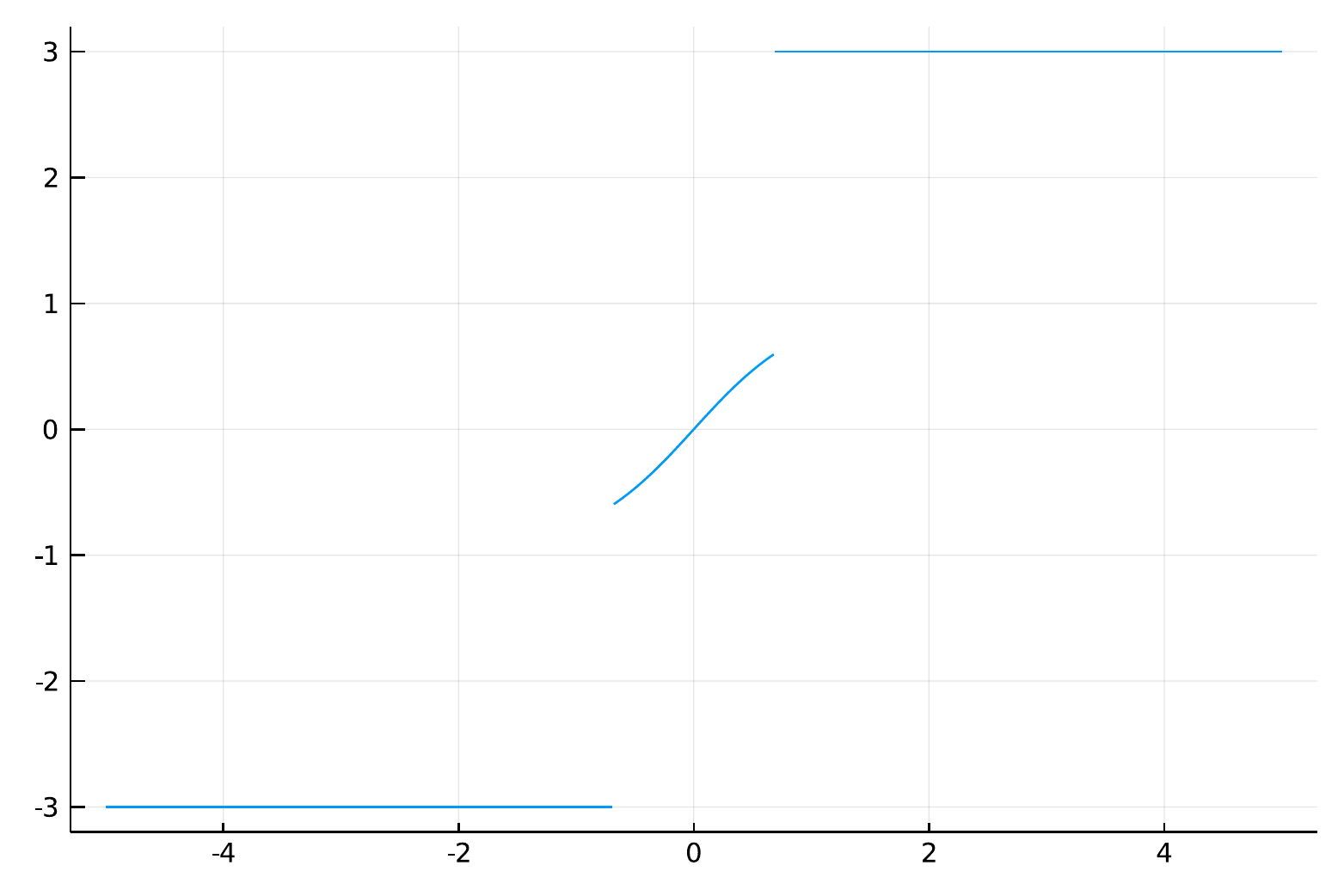}
\caption{A traveling wave profile of class \ref{class5}.}
\label{F.class5}
\end{subfigure}
\begin{subfigure}{\textwidth}
\includegraphics[width=\textwidth]{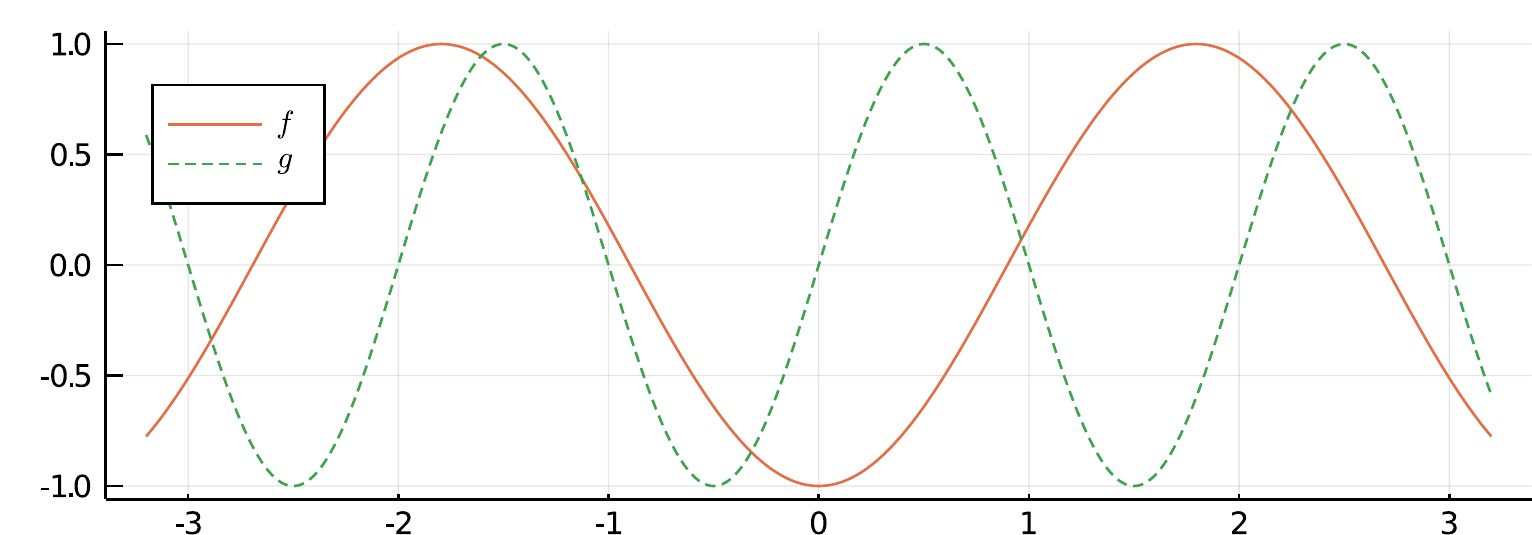}
\caption{Functions $f$ and $g$ used to trace above profiles. Specifically, $f(u) = -\cos(\tfrac74\,u)$ and  $g(u) = \sin(\pi\,u)$.}
\end{subfigure}
\caption{Classes of possibly stable non-degenerate piecewise regular traveling wave profiles, $\uU$ (constant states being omitted). The profiles represented in Figures~\ref{F.class3}, \ref{F.class4} and \ref{F.class5} pass through the characteristic value $\uu_\star=0$ at the characteristic point $x_\star=0$.  All the traveling waves represented have speed $\sigma=0$ and are spectrally and nonlinearly stable by Theorem~\ref{T.classification}.}
\label{F.classes}
\end{figure}
\begin{enumerate}
\item \label{class1} constant states (that is, $\uU$ takes only one value) ;
\item \label{class2} Riemann shocks (that is, $\uU$ takes two values, one value on a half-line, another one on the complementary half-line);
\item \label{class3} continuous fronts (that is, $\uU$ is continuous with distinct finite limits at $+\infty$ and $-\infty$) containing exactly one characteristic point;
\item \label{class4} profiles that are constant on a half-line and jump to a continuous part with a bounded limit at infinity, containing exactly one characteristic point;
\item \label{class5} profiles that are constant near $-\infty$ then jump to a continuous part containing exactly one characteristic point and then jump again to a constant value near $+\infty$. 
\end{enumerate}
Moreover the last and second-to-last possibilities are ruled out in the convex/concave case when $f''$ does not vanish. Let us clarify that the former regularity structure is not sufficient to deduce stability and that for each element of these classes one still needs to satisfy the aforementioned sign conditions at jumps, endstates and characteristic values.
\medskip

To complete this introduction we comment now on the part of our analysis proving nonlinear asymptotic stability from the aforementioned sign conditions. There are at least three obstacles that single out the problem at hand from even recent instances of more classical analyses~\cite{Henry-geometric,KapitulaPromislow-stability,JNRZ-conservation}:
\begin{itemize}
\item background waves are typically discontinuous, which prevents a direct stability analysis relying on naive Taylor expansions;
\item waves that are not piecewise constants contain a characteristic point so that the principal symbols of corresponding linearized operators vanish at some points;
\item Equation~\eqref{eq-u} is quasilinear and no (strong) regularization is present in the equation. 
\end{itemize}
Though much less studied than corresponding questions for either semilinear equations of any type or for quasilinear parabolic equations, the last point is the less challenging of our analysis and it is an issue here mostly because we aim at and obtain results that are optimal in terms of regularity in the class of (piecewise) strong solutions and do not require decay at infinity of perturbations, otherwise we could have followed a more standard strategy involving energy estimates to close in regularity as in, for instance,~\cite{KawashimaYong04,MasciaZumbrun05,BianchiniHanouzetNatalini07}. To achieve such sharp results, as in~\cite{DR1} a key point in our analysis is the identification of a class of perturbations of the linearized operators for which we may obtain decay estimates similar to those for the linearized dynamics. The foregoing class of linear dynamics needs to be suitably small to retain the key properties of the linearized dynamics and suitably large\footnote{In particular, the class must include time-dependent generators.} to be involved in a Duhamel-like formulation of~\eqref{eq-u} on which we may close a nonlinear estimate, lossless from both the points of view of regularity and decay. 

The first point is considerably more uncommon and only two instances have been dealt with so far, in the very recent\footnote{Note moreover that~\cite{DR1} was written as a companion paper to the present contribution and that parts of the key arguments used in~\cite{YangZumbrun20} actually originate in private communications of the second author of the present contribution to the second author of~\cite{YangZumbrun20} (see the Acknowledgments section there).}~\cite{DR1,YangZumbrun20}. Unlike~\cite{YangZumbrun20} (that not only restricts to smooth perturbations but also assumes that these are supported away from reference discontinuities) but as in~\cite{DR1} we make the challenge even greater by authorizing perturbations introducing new discontinuities. Thus, since small discontinuities may disappear in finite time, our analysis includes cases where the structure of discontinuities change with time. To achieve this goal, as in~\cite{DR1} we make the most of the scalar nature of the equations. Indeed in the scalar case the Rankine-Hugoniot condition may be solved by adjusting the shock location so that one strategy to analyze the piecewise regular case is to extend each smooth piece to the whole domain and then glue them according to the dynamics of the Rankine-Hugoniot conditions. This essentially breaks the study of the dynamics into two kinds of elementary problems: the analysis of whole domain problems on one hand, the study of the motions of shock locations into known environments on the other hand. Note that unlike the case treated in~\cite{DR1} where background waves are piecewise constants here the extension to the whole line is by essence artificial since in general smooth parts of stable wave profiles of~\eqref{eq-u} are not parts of a stable front of~\eqref{eq-u}. Yet we may still modify\footnote{The argument strongly echoes the classical reduction from the locally Lipschitz case to the globally Lipschitz case of the Cauchy-Lipschitz theorem.}~\eqref{eq-u} outside values of interest to turn those parts as portions of stable fronts of an equation with the same structure as~\eqref{eq-u}. As in~\cite{DR1} we stress that the notion of solution we use ensures uniqueness by the classical Kru\v zkov theory~\cite{Kruzhkov} and that the non uniqueness in the extension part of our argument is compensated by the fact that no artificial part is revealed by the motion of the shock locations.

At last, to our knowledge, the nonlinear analysis near waves with characteristic points is carried out here for the very first time. It seems that even its impact on the spectral problem is fully analyzed here for the first time, though some partial considerations were already present in~\cite{JNRYZ}. The paramount importance to be able to include characteristic points in the development of a general stability theory for hyperbolic systems originates in the fact that, as easily deduced from entropy constraints, any solution containing two shocks must contain a characteristic point. 

To  provide the reader with some insight on the impact of characteristic points, let us first recall what are the usual expectations for more standard waves with the simplest nontrivial spatial structure such as single-bump solitary waves or monotonic fronts (as opposed to constants on one side or periodic waves on the other). In general, for such stable waves, the best one may expect is that solutions arising from an initial perturbation of the wave profile will converge to some spatial translate of the original wave. This is known as asymptotic orbital stability with asymptotic phase. Thus part of the nonlinear analysis consists in projecting out the non-decaying critical phase dynamics. This typically involves the spectral projector of the linearized dynamics associated with the simple eigenvalue $0$, with (right) eigenfunction the spatial derivative of the wave profile. Separating the phase dynamics from shape deformations may thus be interpreted as imposing some orthogonality with the dual eigenfunction of the adjoint operator. Note that in general the resulting asymptotic phase depends in an intricate nonlinear way on the initial perturbation.

Characteristic points impact dramatically the critical phase dynamics in various striking ways. To begin with, note that the singularity of the linearized operator is reflected in the fact that the element of the kernel of the adjoint operator, dual to the derivative of the profile, is the Dirac mass at the characteristic point, so that the "orthogonality" condition is both very singular and quite simple. Concerning the latter, note in particular that taking the scalar product with a Dirac mass commutes with (local) nonlinear operations. Consistently, at the nonlinear level the presence of a characteristic point pins the critical asymptotic phase. More explicitly, solutions arising from a small initial perturbation converge to the translate of the original wave whose characteristic point agrees in location with the one of the perturbed initial data. From this point of view, in the generic case above-mentioned the classes of non-degenerate stable waves of~\eqref{eq-u} may be split further into three groups: constant states for which direct stability holds, Riemann shocks that exhibits a nearby classical phase dynamics and the three other classes associated with a pinned phase dynamics. We stress that, consistently with the elements of our strategy of proof sketched above, our analysis requires the identification of a suitably large class of linear dynamics retaining the key elements of the linearized dynamics expounded here.

\medskip

Since our contribution answers most of the general questions concerning the large-time dynamics near traveling waves of scalar first-order balance laws, we would like to conclude our introduction by pointing out a question that we do leave for further study. Concerning stable traveling waves of~\eqref{eq-u} whose profile exhibits no characteristic points, we have already proved in~\cite{DR1} that they are also nonlinearly stable as plane waves of multidimensional versions of~\eqref{eq-u}. In contrast, for more general stable waves we only prove that they are transversely stable under perturbations supported away from characteristic points. The restriction somewhat echoes the restriction in~\cite{YangZumbrun20} where profiles have no characteristic points and perturbations are supported away from discontinuities (but for a specific $2\times 2$ system rather than a general scalar equation). We expect that solutions arising from the multidimensional perturbation of general stable plane waves may converge to nearby genuinely multidimensional waves but we leave this for further investigation. In another direction of extension, we point out that even in the one-dimensional case, the derivation of a general framework for the stability analysis of discontinuous solutions of hyperbolic \emph{systems} of balance laws is still largely open.

\medskip

The rest of the present paper is organized as follows. In the next section we gather pieces of information on the structure of the wave profiles of~\eqref{eq-u} and introduce the precise definitions used throughout. Then we state and prove all our instability results and deduce a precise classification of spectrally stable waves. In the second to last section we analyze the nonlinear stability of spectrally stable continuous fronts, thus obtaining the key element missing in the overall strategy towards nonlinear stability derived in~\cite{DR1}. In the final section we complete our nonlinear stability analysis. 

\section{Preliminaries on traveling waves}\label{S.structure}

Prior to tackling stability/instability issues, we set terminology and collect elementary pieces of information on wave profiles. The reader is referred to~\cite{KapitulaPromislow-stability} for general background on traveling waves and to~\cite{Bressan} for elementary background on hyperbolic equations.

\subsection{Structure of profiles}

First we examine the structure of non-degenerate entropy-admissible traveling waves. We assume henceforth that $f\in\cC^2(\RR)$ and $g\in\cC^1(\RR)$.

\begin{Definition}\label{A.traveling-wave}
A \emph{piecewise regular traveling-wave} solution to~\eqref{eq-u} is an entropy solution to~\eqref{eq-u} in the form $(t,x)\mapsto \uU(x-\sigma\,t)$ with $(\uU,\sigma)\in L^\infty(\RR)\times\RR$ such that there exists a closed discrete set $D$ (possibly empty) such that $\uU$ is $\cC^1$ on $\RR\setminus D$.
\end{Definition}

Note that for $(\uU,\sigma,D)$ as above, $\RR\setminus D$ is a union of disjoint open intervals, and 
\begin{equation}\label{eq:ODE}
\forall x\in \RR\setminus D, \quad \big(f'(\uU(x))-\sigma\big)\uU'(x)=g(\uU(x)).
\end{equation}
Since the latter ODE is scalar, a wealth of information on $\uU$ may be derived from it provided it is non-degenerate. In this direction note that if for some $u\in \uU(\RR\setminus D)$, $f'(u)=\sigma$ then necessarily $g(u)=0$ and that the profile ODE is non-degenerate near this value provided $f$ and $g$ are sufficiently regular at $u$ and $g$ vanishes at least as the same order as $f'-\sigma$ at $u$. 

\begin{Proposition}\label{P.monotonic}
Let $(\uU,\sigma,D)$ define a piecewise regular traveling-wave solution to~\eqref{eq-u}. 
Let $X$ be a connected component of $\RR\setminus D$ such that 
\[\forall \uu_\star\in \uU(X),
\qquad f'(\uu_\star)=\sigma \Longrightarrow f''(\uu_\star)\neq 0\]
and such that $F_\sigma:\uU(X)\to\RR$ defined by
\begin{equation}\label{def-F}
\qquad F_\sigma(u)=\begin{cases}\frac{g(u)}{f'(u)-\sigma}&\textrm{if}\ f'(u)-\sigma\neq0\\
\frac{g'(u)}{f''(u)}&\textrm{otherwise}
\end{cases}
\end{equation}
is locally Lipschitz near any of its zeroes. Then $\uU$ is either constant or strictly monotonic on $X$. In particular, if this is true for any connected component then at any $d\in D$, $\uU$ possesses finite right and left limits, $\uU(d^-)$ and $\uU(d^+)$, and if one of the connected component is not lower (resp. upper) bounded then $\uU$ possesses a finite limit at $-\infty$ (resp. at $+\infty$).
\end{Proposition}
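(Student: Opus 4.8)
The plan is to reduce the profile equation \eqref{eq:ODE} on the interval $X$ to the autonomous scalar ODE $\uU'=F_\sigma(\uU)$ and then invoke an ODE uniqueness argument. This reduction is immediate on the open subset of $X$ where $f'(\uU)\neq\sigma$, but it is delicate at characteristic points, where \eqref{eq:ODE} degenerates into the trivial identity $0=g(\uU)$ and carries no information on $\uU'$; reconciling the two regimes is, I expect, the main obstacle.

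First I would collect elementary facts. Since $X\subseteq\RR\setminus D$, $\uU$ is $\cC^1$ on the interval $X$, hence $I\eqdef\uU(X)$ is an interval on which \eqref{eq:ODE} holds at every point; evaluating \eqref{eq:ODE} where $\uU'$ vanishes shows that $g(\uU)=0$ there, and in particular if $\uu_\star\in I$ satisfies $f'(\uu_\star)=\sigma$ then $g(\uu_\star)=0$, $f''(\uu_\star)\neq0$ by hypothesis, and $\uu_\star$ is isolated in $I$ among characteristic values (since $f''\neq 0$ near $\uu_\star$ forces $f'-\sigma$ to be strictly monotone there). A first-order Taylor expansion of $g$ and $f'$ at such a $\uu_\star$, using $f\in\cC^2$, $g\in\cC^1$ and $g(\uu_\star)=0=f'(\uu_\star)-\sigma$, gives $g(u)/(f'(u)-\sigma)\to g'(\uu_\star)/f''(\uu_\star)$ as $u\to\uu_\star$; thus $F_\sigma$ is continuous on $I$.

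The heart of the argument is the claim: \emph{if $\uU'(x_0)=0$ for some $x_0\in X$, then $\uU$ is constant on a neighborhood of $x_0$}. Suppose not; then there is $y_n\to x_0$ with $\uU(y_n)\neq u_0\eqdef\uU(x_0)$, and we already know $g(u_0)=0$. If $f'(u_0)\neq\sigma$, then near $u_0$ one has $F_\sigma=g/(f'-\sigma)$, so $\uU'=F_\sigma(\uU)$ on a neighborhood of $x_0$, $F_\sigma(u_0)=0$, and $F_\sigma$ is locally Lipschitz near its zero $u_0$; uniqueness forces $\uU\equiv u_0$ near $x_0$, a contradiction. If $f'(u_0)=\sigma$, then for large $n$ the value $\uU(y_n)$ is close to $u_0$ and $\neq u_0$, hence not characteristic, so $\uU'(y_n)=g(\uU(y_n))/(f'(\uU(y_n))-\sigma)$, and the Taylor expansion above gives $\uU'(y_n)\to g'(u_0)/f''(u_0)$; comparing with $\uU'(y_n)\to\uU'(x_0)=0$ forces $g'(u_0)=0$, whence $F_\sigma(u_0)=0$ and $F_\sigma$ is again locally Lipschitz near $u_0$. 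One then checks $\uU'=F_\sigma(\uU)$ on a small neighborhood $V$ of $x_0$: on $\{\uU\neq u_0\}$ this is \eqref{eq:ODE} rearranged, while at a point of $V$ where $\uU=u_0$ either $\uU$ is locally constant there (both sides vanish) or it is a limit of points of $\{\uU\neq u_0\}$ (both sides equal $F_\sigma(u_0)=0$ by continuity of $F_\sigma$ and of $\uU'$). Uniqueness again forces $\uU\equiv u_0$ near $x_0$, a contradiction, proving the claim.

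It remains to conclude. If $\uU'$ never vanishes on $X$, then, being continuous on an interval, it has constant sign and $\uU$ is strictly monotone. Otherwise $\uU'(x_0)=0$ for some $x_0$; the set of $x\in X$ at which $\uU$ is locally constant and equal to $c\eqdef\uU(x_0)$ is nonempty (by the claim), open (by definition), and closed in $X$ (if $x_n\to x_\ast$ in $X$ with $\uU\equiv c$ near each $x_n$, then $\uU(x_\ast)=c$ and $\uU'(x_\ast)=0$, so the claim applies at $x_\ast$), hence equals $X$ by connectedness and $\uU$ is constant on $X$. For the last assertions, if this dichotomy holds on every connected component of $\RR\setminus D$ then, since $\uU\in L^\infty(\RR)$, each component carries a bounded monotone (or constant) function, so $\uU$ has finite limits at both endpoints of every component; since $D$ is closed and discrete each $d\in D$ has components of $\RR\setminus D$ immediately to its left and to its right, which yields the finite limits $\uU(d^-),\uU(d^+)$, and a component unbounded below (resp. above) is of the form $(-\infty,b)$ (resp. $(a,+\infty)$), which yields the finite limit of $\uU$ at $-\infty$ (resp. $+\infty$).
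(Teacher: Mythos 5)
Your proof is correct and follows essentially the same route as the paper: reduce the profile equation to the autonomous ODE $\uU'=F_\sigma(\uU)$ and invoke local Lipschitz uniqueness of $F_\sigma$ at its zeroes to show that a vanishing of $\uU'$ forces local (hence, by connectedness, global) constancy. The paper states the reduction $\uU'=F_\sigma(\uU)$ on $X$ without comment; your careful verification of it at characteristic points (and of $F_\sigma(u_0)=0$ when $\uU'(x_0)=0$) simply supplies details the paper leaves implicit.
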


\begin{proof}
Under the foregoing assumption, we have
\[\forall x\in X, \quad \uU'(x)=F_\sigma(\uU(x))\,.\]
The assumption on $F_\sigma$ ensures that if $\uU'$ vanishes somewhere in $X$ then it vanishes everywhere on $X$, hence the claim on monotonicity. The existence of finite limits stems from monotonicity and boundedness of $\uU$. 
\end{proof}

Reciprocally, note that if $(\uU,\sigma)$ are such that there exists a closed discrete set $D$ (possibly empty) such that $\uU$ is $\cC^1$ on $\RR\setminus D$, and $\uU$ possesses left and right limits at any point of $D$ then $(t,x)\mapsto \uU(x-\sigma\,t)$ is an entropic solution provided that
\[\forall x\in \RR\setminus D, \quad \big(f'(\uU(x))-\sigma\big)\uU'(x)=g(\uU(x))\]
and at any $d\in D$ stand both the Rankine-Hugoniot condition,
\[
-\sigma[\uU]_d+[f(\uU)]_d=0\,,
\]
where we use jump notation $[A]_d\eqdef A(d^+)-A(d^-)$ with
\begin{align*}
A(d^+)&\eqdef \lim_{\delta\nearrow 0} A(d+\delta)\,,&
A(d^-)&\eqdef \lim_{\delta\nearrow 0} A(d-\delta)\,,
\end{align*}
and the Oleinik condition: for any $v$ strictly between $\uU(d^+)$ and $\uU(d^-)$
\[
\frac{f(v)-f(\uU(d^-))}{v-\uU(d^-)}\geq\frac{f(v)-f(\uU(d^+))}{v-\uU(d^+)}\,. 
\]
Note that assuming the Rankine-Hugoniot condition, the Oleinik condition implies $f'(\uU(d^-))\geq \sigma$ and $f'(\uU(d^+))\leq \sigma$.

This motivates the following definition.

\begin{Definition}\label{A.generic-traveling-wave}
Let $(\uU,\sigma,D)$ define a piecewise regular entropy-admissible traveling-wave solution to~\eqref{eq-u}. We say that the corresponding traveling wave is \emph{non-degenerate} provided that
\begin{enumerate}
\item for any $\uu_\star\in \uU(\RR\setminus D)$,
\[
\qquad f'(\uu_\star)=\sigma \Longrightarrow f''(\uu_\star)\neq 0\]
and, if $g'(\uu_\star)=0$,\footnote{This possibility is eventually ruled out in Proposition~\ref{P.structure}.} $F_\sigma$ defined by~\eqref{def-F} is locally Lipschitz near $\uu_\star$;
\item at any $d\in D$ 
\[f'(\uU(d^-))-\sigma>0\qquad\textrm{and}\qquad f'(\uU(d^+))-\sigma<0\]
and for any $v$ strictly between $\uU(d^+)$ and $\uU(d^-)$
\[
\frac{f(v)-f(\uU(d^-))}{v-\uU(d^-)}>\frac{f(v)-f(\uU(d^+))}{v-\uU(d^+)}\,;
\]
\item if $\uU$ possesses a finite limit\footnote{Recall that it amounts to $\uU$ being defined in a neighborhood of $+\infty$ (resp. $-\infty$), that is, one of the connected components of $\RR\setminus D$ not to be bounded from above (resp. below).} $\uu_{\infty}$ at $+\infty$ or $-\infty$, then
\[f'(\uu_\infty)\neq\sigma \quad\textrm{and}\quad g'(\uu_\infty)\neq0\,.
\]
\end{enumerate}
\end{Definition}

Note that the second and third conditions discard the possibility that $\uU$ could be constant on a connected component of $\RR\setminus D$ with value a zero of $f'-\sigma$, thus the first condition may be equivalently written as: for any $\uu_\star\in \uU(\RR\setminus D)$,
\[
\qquad f'(\uu_\star)=\sigma \Longrightarrow \left(f''(\uu_\star)\neq 0\quad\textrm{and}\quad 
g'(\uu_\star)\neq0
\right)\,.
\]
Concerning the third point, note that the non-characteristic condition $f'(\uu_\infty)\neq\sigma$ is sufficient to deduce $g(\uu_\infty)=0$ so that the third condition is really the condition that $\uu_\infty$ is non-characteristic and is a simple zero of $g$.

\begin{Proposition}\label{P.structure}
Let $(\uU,\sigma,D)$ define a non-degenerate piecewise regular entropy-admissible traveling-wave solution to~\eqref{eq-u}.
\begin{enumerate}
\item If $\uU$ possesses a finite limit $\uu_{\infty}$ at $+\infty$ or $-\infty$ then $g(\uu_{\infty})=0$.
\item If $\uu_\star\in \uU(\RR\setminus D)$ is a characteristic value, that is, $f'(\uu_\star)=\sigma$, then $g(\uu_\star)=0$, $g'(\uu_\star)\neq0$, and on connected components of $\RR\setminus D$ where $\uU$ takes the value $\uu_\star$, $\uU$ is strictly monotonic with monotonicity given by the sign of $g'(\uu_\star)/f''(\uu_\star)$.
\item If $\uU$ is constant, with value $\uu$, on a connected component $X$ of $\RR\setminus D$, then $g(\uu)=0$, $X$ is unbounded and if $\sup X<+\infty$ (resp. $\inf X>-\infty$) $f'(\uu)-\sigma>0$ (resp. $f'(\uu)-\sigma<0$).
\item On a bounded connected component of $\RR\setminus D$, 
\subitem $\uU$ passes through a characteristic value an odd number of times; 
\subitem the signs of $g'$ alternate along these characteristic values, starting with positive value; 
\subitem the signs of $f'-\sigma$ alternate between characteristic values, starting with negative value.
\item On a connected component of $\RR\setminus D$ bounded from above but not from below on which $\uU$ is not constant,
\subitem  $\uU$ passes through an even (resp. odd) number of characteristic values if $g'(\uu_{-\infty})<0$ (resp. $g'(\uu_{-\infty})>0$), with $\uu_{-\infty}$ the limit of $\uU$ at $-\infty$; 
\subitem the signs of $g'$ alternate along these characteristic values, finishing with positive value; 
\subitem the signs of $f'-\sigma$ alternate between characteristic values, finishing with positive value;
\subitem $f'(\uu_{-\infty})-\sigma$ has the sign of $g'(\uu_{-\infty})$.
\item On a connected component of $\RR\setminus D$ bounded from below but not from above on which $\uU$ is not constant, 
\subitem $\uU$ passes through an even (resp. odd) number of characteristic values if $-g'(\uu_{+\infty})<0$ (resp. $g'(\uu_{+\infty})>0$), with $\uu_{+\infty}$ the limit of $\uU$ at $+\infty$;
\subitem the signs of $g'$ alternate along these characteristic values, starting with positive value; 
\subitem the signs of $f'-\sigma$ alternate between characteristic values, starting with negative value;
\subitem $f'(\uu_{+\infty})-\sigma$ has the sign of $-g'(\uu_{+\infty})$.
\item If $D=\emptyset$, 
\subitem $\uU$ passes through an even (resp. odd) number of characteristic values if ${g'(\uu_{-\infty})g'(\uu_{+\infty})<0}$ (resp. $g'(\uu_{-\infty})g'(\uu_{+\infty})>0$), with $\uu_{\pm\infty}$ the limits of $\uU$ at $\pm\infty$; 
\subitem the signs of $g'$ alternate along $\uu_{-\infty}$, characteristic values, and $\uu_{+\infty}$; 
\subitem the signs of $f'-\sigma$ alternate between characteristic values; 
\subitem $f'(\uu_{+\infty})-\sigma$ (resp. $f'(\uu_{-\infty})-\sigma$) has the sign of $- g'(\uu_{+\infty})$ (resp. $g'(\uu_{-\infty})$). 
\end{enumerate}
\end{Proposition}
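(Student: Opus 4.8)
Everything is driven by Proposition~\ref{P.monotonic}, whose hypotheses hold under our non-degeneracy assumptions on each connected component $X$ of $\RR\setminus D$: on such an $X$, $\uU$ is either constant or strictly monotonic, in the monotonic case $\uU'=F_\sigma(\uU)$ on $X$ with $\uU'$ nowhere vanishing, and $\uU$ has one-sided limits at points of $D$ and at $\pm\infty$ when $X$ is unbounded. Items~1--3 follow almost immediately. For item~1: if $\uU$ is eventually constant near the relevant infinity, \eqref{eq:ODE} forces $g(\uu_\infty)=0$; otherwise $\uU$ is strictly monotonic and bounded there, hence $\uU'=F_\sigma(\uU)$ converges to $F_\sigma(\uu_\infty)$, which must be $0$ (else $\uU$ would be unbounded), and then $g(\uu_\infty)=0$ because $f'(\uu_\infty)\neq\sigma$. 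For item~2: evaluating~\eqref{eq:ODE} at a point where $\uU=\uu_\star$ gives $g(\uu_\star)=0$, while $f''(\uu_\star)\neq0$ and $g'(\uu_\star)\neq0$ are the non-degeneracy condition~1 in the form recorded just after Definition~\ref{A.generic-traveling-wave}; a component on which $\uU\equiv\uu_\star$ is impossible (no component is constant at a zero of $f'-\sigma$, as noted there), so on any component carrying the value $\uu_\star$ the profile is strictly monotonic and $\uU'=F_\sigma(\uu_\star)=g'(\uu_\star)/f''(\uu_\star)\neq0$ at such a point, which fixes the monotonicity. For item~3: $g(\uu)=0$ again by~\eqref{eq:ODE}; a finite right (resp. left) endpoint $d\in D$ of $X$ would force, via non-degeneracy at $d$, that $f'(\uu)-\sigma=f'(\uU(d^-))-\sigma>0$ (resp. $=f'(\uU(d^+))-\sigma<0$), which cannot occur at both ends, so $X$ is unbounded and the announced sign holds when $X$ is bounded on one side.

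Items~4--7 I would establish by a single argument. Take a component $X$ with $\uU$ non-constant --- automatic when $X$ is bounded, by item~3, part of the hypotheses of items~5--6, and for $D=\emptyset$ the only alternative being the constant state of item~3 --- so $\uU$ is strictly monotonic with $\uU'$ of fixed sign, and set $h\eqdef f'(\uU(\cdot))-\sigma$ on $X$. The characteristic points of $X$ are exactly the zeros of $h$, and each such $x_\star$ is a \emph{simple} zero: $h'(x_\star)=f''(\uU(x_\star))\,\uU'(x_\star)\neq0$, and substituting $\uU'(x_\star)=F_\sigma(\uU(x_\star))=g'(\uU(x_\star))/f''(\uU(x_\star))$ even gives the identity $h'(x_\star)=g'(\uU(x_\star))$. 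Since the endpoints of $X$ (whether in $D$ or at $\pm\infty$) are non-characteristic, $h$ is bounded away from $0$ near them, so the characteristic points of $X$ form a finite set $x_1<\dots<x_N$; on the $N+1$ complementary intervals $h$ keeps constant signs $\epsilon_0,\dots,\epsilon_N$, simplicity of the zeros yields $\epsilon_j=-\epsilon_{j-1}$, and $\sign(g'(\uU(x_j)))=\sign(h'(x_j))=\epsilon_j$. This is precisely the content of all the ``alternation'' statements in items~4--7.

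There remains to identify $\epsilon_0$ and $\epsilon_N$, for then $\epsilon_N=(-1)^N\epsilon_0$ yields the parity of $N$, and $\epsilon_0,\epsilon_N$ directly give the ``starting/finishing'' signs of $f'-\sigma$ and, via $\sign(g'(\uU(x_1)))=-\epsilon_0$ and $\sign(g'(\uU(x_N)))=\epsilon_N$, the signs of $g'$ at the first and last characteristic values and their positions relative to $g'(\uu_{\pm\infty})$. At an endpoint $d\in D$, non-degeneracy immediately gives $\epsilon=+1$ at a right endpoint (from $f'(\uU(d^-))-\sigma>0$) and $\epsilon=-1$ at a left endpoint (from $f'(\uU(d^+))-\sigma<0$). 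At an endpoint at $\pm\infty$, the sign of $f'(\uu_{\pm\infty})-\sigma$ is read off~\eqref{eq:ODE}: there $\uU$ reaches $\uu_{\pm\infty}$ from one definite side, with $\uU-\uu_{\pm\infty}$ of the sign of $\uU'$ at $-\infty$ and of $-\uU'$ at $+\infty$; plugging $g(\uU)=g'(\uu_{\pm\infty})(\uU-\uu_{\pm\infty})+o(\cdot)$ into $h\,\uU'=g(\uU)$ and cancelling $\sign(\uU')$ gives $\sign(f'(\uu_{-\infty})-\sigma)=\sign(g'(\uu_{-\infty}))$ and $\sign(f'(\uu_{+\infty})-\sigma)=-\sign(g'(\uu_{+\infty}))$. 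Inserting in $\epsilon_N=(-1)^N\epsilon_0$ the appropriate pair of boundary signs then produces item~4 (both endpoints in $D$), items~5 and~6 (one endpoint in $D$, one at an infinity), and item~7 ($D=\emptyset$).

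The step I expect to be the main obstacle is this last one: correctly linking the direction of monotonicity of $\uU$ on $X$ to the side from which $\uU$ reaches its limit at an infinity, and thence to the signs of $g'(\uu_{\pm\infty})$ and $f'(\uu_{\pm\infty})-\sigma$ --- bookkeeping that must moreover be consistent with the conditions at discontinuities imposed in Definition~\ref{A.generic-traveling-wave} --- together with checking that the resulting sign relations at the infinities do not depend on whether $\uU$ is increasing or decreasing. The alternation mechanism itself is a soft consequence of the simplicity of the zeros of $h$; the only other point deserving a word is the finiteness of the characteristic set on each component, which follows from these zeros being isolated and from $h$ not vanishing near the endpoints of $X$.
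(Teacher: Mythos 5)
Your argument is correct and is essentially the paper's own proof: items 1--3 come from the phase portrait of $F_\sigma$ together with the non-degeneracy conditions, and items 4--7 from combining the prescribed signs of $f'(\uU)-\sigma$ at discontinuities with exactly the two relations the paper isolates, namely $f'(\uU(x))-\sigma\sim g'(\uu_\star)\,(x-x_\star)$ at a characteristic point and $f'(\uU(x))-\sigma=g(\uU(x))/\uU'(x)\sim g'(\uu_{\infty})\,(\uU(x)-\uu_{\infty})/\uU'(x)$ near an infinity; you merely make explicit the alternation/parity bookkeeping that the paper leaves implicit. One reassurance: your count yields $(-1)^N=\sign(g'(\uu_{-\infty}))$ in item 5 (even iff $g'(\uu_{-\infty})>0$), which disagrees with the parity condition as literally printed there but is consistent with the other subitems of item 5 and with items 4 and 7 (and the printed condition in item 6 is self-contradictory as a ``resp.'' pair), so these are typos in the statement and your derivation is the correct one.
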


\begin{proof}
Along the proof we use the vector-field $F_\sigma$ introduced in the proof of Proposition~\ref{P.monotonic}. Note that the monotonicity of $\uU$ in a given connected component of $\RR\setminus D$ is given by the sign of $F_\sigma$ at any value taken on the connected component under consideration. As we have already observed we also know that if $\uU$ is constant on a connected component of $\RR\setminus D$ its value there is not a characteristic value. This proves the second point.

Likewise, in the first point, since $\uu_\infty$ is not a characteristic value, $F_\sigma$ extends to a neighborhood of $\uu_\infty$ thus $\uu_\infty$ must be a zero of $F_\sigma$ that is not a characteristic value, i.e. $g(\uu_\infty)=0$. This proves the first point of the proposition.

To prove the remaining points, since we already know the sign of $f'(\uU(\cdot))-\sigma$ near a discontinuity of $\uU$, we only need to connect its sign near $\pm\infty$ or near a characteristic point (that is, a point $x_\star$ where $\uu_\star\eqdef \uU(x_\star)$ is a characteristic value, that is $f'(\uu_\star)=\sigma$) to the sign of $g'(\uU(\cdot))$. At a characteristic point $x_\star$, with $\uu_\star\eqdef \uU(x_\star)$ we also have $(f'(\uU(\cdot)))'(x_\star)=f''(\uU(x_\star))\,\uU'(x_\star)=g'(\uu_\star)\neq 0$ so that
\begin{equation}\label{signs-characteristic}
f'(\uU(x))-\sigma\,\stackrel{x\to x_\star}{\sim}\,g'(\uu_\star)\,(x-x_\star) \,.
\end{equation}
Near $\pm\infty$, if $\uU$ is defined but not constant, $\uU'$ does not vanish and
\begin{equation}\label{signs-infinity}
f'(\uU(x))-\sigma\,=\,\frac{g(\uU(x))}{\uU'(x)}\,\stackrel{x\to \pm\infty}{\sim}\,g'(\uu_{\infty})\,\frac{\uU(x)-\uu_{\infty}}{\uU'(x)}\,,
\end{equation}
from which stems the claim on signs near $\pm\infty$. The proof is complete.
\end{proof}

\subsection{Notions of stability}

\subsubsection{Nonlinear stability}

We now introduce suitable notions of stability. Our stability results provide a detailed description of the dynamics so that they can be read without much preliminary abstract discussion. In contrast, much more care is needed to ensure that our instability results reflect a genuine instability and not the misuse of a deceptive notion of stability.

With this respect, we recall that it is well-known that the relevant notion of stability, even for smooth traveling waves of smoothing equations, must encode control on deformations of shape but allow for resynchronization of positions. As a preliminary we make two remarks illustrating the dramatic effect of disregarding synchronization. The simplest observation is that since any non-constant traveling wave comes in a family of traveling waves obtained by translating it spatially, direct \emph{asymptotic} stability can not hold if translation operates continuously on the background traveling wave for the topology at hand. Even worse, if near the background wave lie infinitesimally close waves with infinitesimally close but distinct speeds a direct comparison concludes instability since an infinitesimally small initial perturbation will result in a macroscopic shift, whereas the variation in shape remains infinitesimal. For waves with the simplest possible spatial structure, it is sufficient to tune one position parameter and thus to investigate the possibility for a solution $u$ to be written in the form
\[
u(t,x+\sigma t+\psi(t))=\uU(x)+\tu(t,x)
\]
with $(\tu(t,\cdot),\psi'(t))$ small provided $\tu(0,\cdot)$ is sufficiently small initially. This encodes the notion of orbital stability. When the spatial structure of the wave at hand is more complex the notion of stability needs to be even more flexible. The extreme case is well illustrated by periodic waves for which there is essentially an infinite number of positions to adjust. For specific discussions on those the reader is referred to~\cite{R,JNRZ-conservation,R_Roscoff,R_linKdV}. Since here we are dealing with waves with possibly quite wild spatial variations we do need to use a notion of stability at least as versatile as in the periodic case.

Moreover, as already observed in~\cite[Section~4]{JNRYZ}, in a context where discontinuities are present and one aims at using topologies controlling piecewise smoothness, it is even more crucial to synchronize all discontinuities. Indeed, the relevant notion of stability is a close parent to the notion of proximity obtained with the Skohokhod metric on functions with discontinuities, which allows for a near-identity synchronization of jumps.

With this is in mind, let $(\uU,\sigma,D)$ define a non-degenerate piecewise regular entropy-admissible traveling-wave solution to~\eqref{eq-u} and introduce a relevant nonlinear stability framework. Adapting the notion of space-modulated stability, coined in~\cite{JNRZ-conservation} and already used for discontinuous waves in~\cite[Section~4]{JNRYZ}, to a non periodic context, we investigate the existence of an entropic solution $u$ to~\eqref{eq-u} in the form
\begin{equation}\label{Ansatz}
u(t,x+\sigma t+\psi(t,x))=\uU(x)+\tu(t,x)
\end{equation}
with $(\tu,\d_x\psi,\d_t\psi)(t,\cdot)$ small provided they are sufficiently small initially. Note that we aim at a space shift $\psi(t,\cdot)$ regular on $\RR$ and a shape deformation $\tu(t,\cdot)$ regular on $\RR\setminus D$ with limits from both sides at each $d\in D$. We stress that our \emph{positive} stability results, as those in~\cite{DR1}, allow for classes of initial perturbations even larger but they include such configurations as special cases. Our instability results shall show that there does not exist any $\psi$ able to bring $u$ close to $\uU$ in the sense associated with~\eqref{Ansatz}.

\subsubsection{Spectral stability}\label{S.spectral-def}

We also want to exhibit spectral instabilities. To do so, we need to identify spectral problems consistent with the foregoing notion of dynamical stability. First we insert the ansatz~\eqref{Ansatz}.

\begin{Definition}\label{A.piecewise-solution}
On a time interval $I\subset \RR$ we say that an entropy solution to~\eqref{eq-u}, $u\in L^\infty_{\rm loc}(I\times\RR)$, is \emph{piecewise regular with invariant regularity structure} or \emph{invariably piecewise regular} if there exist a closed discrete set $D$ and a local phase shift $\psi\in \cC^1(I\times \RR)$ such that for any\footnote{Obviously in the present definition, the separation of $\sigma t$ from $\psi(t,x)$ is immaterial and done purely to match with~\eqref{Ansatz}.} $t\in I$, $x\in\RR\mapsto x+\sigma t+\psi(t,x)\in\RR$ is bijective and such that $(t,x) \mapsto u(t,x+\sigma t+\psi(t,x))$ is $\cC^1$ on $I\times( \RR\setminus D)$.
\end{Definition}

For $u$ as in~\eqref{Ansatz} being a invariably piecewise regular entropy solution to~\eqref{eq-u} reduces to interior equations
\begin{align*}
\d_t(\tu-\psi\uU')+&\d_x((f'(\uU)-\sigma)(\tu-\psi\uU'))
-g'(\uU)(\tu-\psi\uU')\\
=&-\d_x(f(\uU+\tu)-f(\uU)-f'(\uU)\tu)+g(\uU+\tu)-g(\uU)-g'(\uU)\tu\\
&+\d_x\psi\,(g(\uU+\tu)-g(\uU))-\d_t(\d_x\psi\,\tu)+\d_x(\d_t\psi\,\tu)
\end{align*}
on $\RR\setminus D$, and at any $d\in D$ the Rankine-Hugoniot condition
\[
\d_t\psi\,[\,\uU\,]_d\,-\,[\,(f'(\uU)-\sigma)\tu\,]_d
\,=\,[\,f(\uU+\tu)-f(\uU)-f'(\uU)\tu)\,]_d\,-\,\d_t\psi\,[\,\tu\,]_d
\]
and the Oleinik entropy condition (which we omit to write down). Since we only consider waves satisfying strict entropy condition, entropy condition do not show up at the linearized level.

The foregoing discussion suggests to consider at least a subclass of the following linearized problem
\begin{align*}
\d_t(\tu-\psi\uU')+\d_x((f'(\uU)-\sigma)(\tu-\psi\uU'))
-g'(\uU)(\tu-\psi\uU')&=\tA+\d_x(\tB)&&\textrm{on }\RR\setminus D\,,\\
\d_t\psi\,[\,\uU\,]_d\,-\,[\,(f'(\uU)-\sigma)\tu\,]_d
&=\,-\,[\,\tB\,]_d&&\textrm{at any }d\in D\,.
\end{align*}
Since this linear problem is time-independent, it is natural to analyze it through the family of spectral problems
\begin{align*}
\lambda\,(\tu-\psi\uU')+\d_x((f'(\uU)-\sigma)(\tu-\psi\uU'))
-g'(\uU)(\tu-\psi\uU')&=A+\d_x(B)&&\textrm{on }\RR\setminus D\,,\\
\lambda\psi\,[\,\uU\,]_d\,-\,[\,(f'(\uU)-\sigma)\tu\,]_d
&=\,-\,[\,B\,]_d&&\textrm{at any }d\in D\,,
\end{align*}
(with new $(\psi,\tu)$ playing the role of the value at $\lambda$ of the Laplace transform in time of the old $(\psi,\tu)$). 

For the sake of tractability we relax the foregoing problem into the problem of the determination of the spectrum of a given operator. To do so we choose $\cX$ a functional space of (classes of) locally integrable functions on $\RR\setminus D$ and $\cY$ a space of functions on $D$. We enforce the rather weak\footnote{It is sufficient to know that $\d_x((f'(\uU)-\sigma)w)-g'(\uU)w\in \cX$ implies that $\d_x((f'(\uU)-\sigma)w)$ (defined on $\RR\setminus D$) is locally integrable near any $d\in D$.} condition that for any $w\in\cX$ such that $\d_x((f'(\uU)-\sigma)w)-g'(\uU)w\in \cX$, $(f'(\uU)-\sigma)w$ possesses limits from the left and from the right at any point\footnote{Since $(f'(\uU)-\sigma)$ possesses nonzero limits there this is equivalent to $w$ possessing limits there.} $d\in D$. Then one may define on $\cX\times\cY$, the operator with maximal domain
\[
\cL(w,(y_d)_{d\in D})\eqdef \left(-\d_x((f'(\uU)-\sigma)w)+g'(\uU)w,
\left(y_d\frac{[\,(f'(\uU)-\sigma)\uU'\,]_d}{[\,\uU\,]_d}\,+\,\frac{[\,(f'(\uU)-\sigma)w\,]_d}{[\,\uU\,]_d}\right)_{d\in D}\right)\,.
\] 

\begin{Definition}
We call $\cX\times\cY$-\emph{spectrum} of the linearization about the wave $\uU$ the spectrum of $\cL$. We say that the wave $\uU$ is spectrally unstable if there exists an element of the latter spectrum with positive real part.
\end{Definition}

\begin{Remark}
Note that when relaxing the original problem to the spectrum of $\cL$, that is, when replacing $(\tu,\psi)$ with $(\tw,(y_d)_{d\in D})=(\tu-\psi\uU',(\psi(d))_{d\in D})$ we have essentially reduced the role of $\psi$ to the synchronization of discontinuities. The expectation is that the corresponding inaccuracy only blurs the separation between algebraic growth/decay but does not impact the detection of exponential growth/decay by spectral arguments.
\end{Remark}

For background on unbounded operators and their spectra the reader is referred to~\cite{Davies}. We simply recall that to prove spectral instability it is sufficient to find $\lambda$ with positive real part and
\begin{itemize}
\item either an associated Weyl sequence, that is, a sequence $((w^n,(y_d^n)_{d\in D}))_{n\in\NN}$ of elements of the domain of $\cL$ such that
\[
\frac{\|(\lambda-\cL)(w^n,(y_d^n)_{d\in D})\|_{\cX\times\cY}}{\|(w^n,(y_d^n)_{d\in D})\|_{\cX\times\cY}}
\stackrel{n\to\infty}{\longrightarrow}0
\]
\item or a nonzero element of the kernel of the adjoint of $\lambda-\cL$.
\end{itemize}

\section{Instability mechanisms}\label{S.instabilities}

In the present section, we prove that the criteria expounded in the introduction do provide both spectral and nonlinear instability. 

Since the traveling waves we consider have very diverse global spatial structure, our instability analysis must be infinitesimally localized near the point under consideration (jump, infinity or characteristic). In particular, our arguments do apply to classes of waves that are actually larger than the class of non-degenerate piecewise regular traveling wave we focus on.

In the following propositions, $W^{k,p}$ denotes the Sobolev space of functions whose derivatives up to order $k$ are in $L^p$ and $BUC^k$ the space of functions whose derivatives up to order $k$ are bounded and uniformly continuous.

\subsection{Instabilities at infinity}

Though we have not found in the literature the exact instability results we need, the mechanism for near infinity instability is extremely classical.

\begin{Proposition}\label{P.spectral-instability-infinity}
Let $k\in\NN$ and $f\in \cC^{k+2}(\RR)$, $g\in\cC^{2}(\RR)\cap\cC^{k+1}(\RR)$ and $(\uU,\sigma,D)$ define a non-degenerate piecewise regular entropy-admissible traveling-wave solution to~\eqref{eq-u}. 
\begin{enumerate}
\item If $\uU$ admits a limit $\uu_{\infty}$ at $+\infty$ or $-\infty$ then $g'(\uu_{\infty})+i\RR$ is included in the $\cX\times\cY$-spectrum of the linearization about $\uU$ provided for some neighborhood $I$ of $+\infty$ (resp. $-\infty$) the norm of $\cX$ restricted to smooth functions compactly supported in $I$ is controlled by the $W^{k,p}(I)$-norm and controls the $L^q(I)$-norm, for some $1\leq p,\,q\,\leq\infty$ such that $(p,q)\neq(1,\infty)$. 
\item In particular, if $\uU$ admits a limit $\uu_{\infty}$ at $+\infty$ or $-\infty$ such that $g'(\uu_\infty)>0$ then $\uU$ is spectrally unstable in $BUC^k(\RR\setminus D)\times \ell^\infty(D)$.
\end{enumerate}
\end{Proposition}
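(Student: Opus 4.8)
The plan is to construct an explicit Weyl sequence for each $\lambda\in g'(\uu_\infty)+i\RR$, exploiting the fact that near $\uu_\infty$ the linearized operator $\cL$ is a constant-coefficient transport-like operator up to lower-order and vanishing corrections. Write $\lambda=g'(\uu_\infty)+i\tau$. Near $+\infty$ (the case of $-\infty$ being symmetric), since $\uu_\infty$ is a non-characteristic simple zero of $g$, set $a_\infty\eqdef f'(\uu_\infty)-\sigma\neq0$; by Proposition~\ref{P.structure} and~\eqref{signs-infinity} the sign of $a_\infty$ is that of $g'(\uu_\infty)$, but we will not even need this. The frozen-coefficient spectral equation on $\RR$ reads $\lambda\,w+a_\infty\,\d_x w-g'(\uu_\infty)\,w=0$, i.e. $\d_x w=-(i\tau/a_\infty)\,w$, which has the bounded oscillatory solution $w_\infty(x)=e^{-i\tau x/a_\infty}$. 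The idea is to use truncations of this oscillatory exponential, pushed far out towards $+\infty$, as approximate eigenfunctions.

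First I would make this precise. Pick a fixed cutoff $\chi\in\cC^\infty_c(\RR)$ with $\chi\equiv1$ on $[1,2]$ and $\supp\chi\subset(\tfrac12,3)$, and for $n$ large set $w^n(x)\eqdef \chi(x/n-n)\,e^{-i\tau x/a_\infty}$ — so $w^n$ is supported in the interval $I_n\eqdef(n(n+\tfrac12),3n(n+1))$, which marches off to $+\infty$, hence eventually lies inside any prescribed neighborhood $I$ of $+\infty$; in particular $w^n$ vanishes near every $d\in D$, so $(w^n,0)\in\operatorname{dom}\cL$. Then $(\lambda-\cL)(w^n,0)=\big((\lambda-g'(\uU))w^n+\d_x((f'(\uU)-\sigma)w^n),0\big)$, and on $I_n$ we expand $(f'(\uU)-\sigma)=a_\infty+O(|\uU-\uu_\infty|)$ and $g'(\uU)=g'(\uu_\infty)+O(|\uU-\uu_\infty|)$. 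Applying $\d_x$ and using $\d_x w^n = -(i\tau/a_\infty)w^n + \tfrac1n\chi'(\cdot)e^{-i\tau\cdot/a_\infty}$, the leading oscillatory term cancels exactly and one is left with three kinds of error terms: (i) terms carrying a factor $\sup_{I_n}|\uU-\uu_\infty|$, which tends to $0$ since $\uU\to\uu_\infty$; (ii) terms carrying a factor $\tfrac1n$ from the cutoff derivative, or $\sup_{I_n}|\d_x(f'(\uU))|=\sup_{I_n}|f''(\uU)\uU'|$, which also tends to $0$ because $\uU'=F_\sigma(\uU)\to F_\sigma(\uu_\infty)=0$; (iii) a term $(g'(\uu_\infty)-g'(\uU))w^n$ of type (i). All of these are $o(1)$ times a function of the same profile as $w^n$ supported in $I_n$. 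For the denominator, the hypothesis that the $\cX$-norm controls the $L^q(I)$-norm gives $\|(w^n,0)\|_{\cX\times\cY}\gtrsim \|w^n\|_{L^q(I_n)}$, and since $|w^n|\geq\mathbf 1_{[1,2]}(x/n-n)$ this is $\gtrsim n^{1/q}$ (or $\gtrsim1$ if $q=\infty$), so it is bounded below away from $0$. For the numerator, the hypothesis that the $W^{k,p}(I)$-norm controls the $\cX$-norm on compactly supported smooth functions lets us estimate $\|(\lambda-\cL)(w^n,0)\|_{\cX\times\cY}\lesssim \|\cdots\|_{W^{k,p}(I_n)}$; each $W^{k,p}$-seminorm of the error brings at most $k+1$ derivatives onto $\uU$ and onto $\chi(\cdot/n-n)$, hence a factor controlled by $C\,n^{1/p}$ times $\big(\sup_{I_n}\sum_{j\le k+1}|\uU^{(j)}-\delta_{j0}\uu_\infty|+\tfrac1n\big)$ — here I use $f\in\cC^{k+2}$, $g\in\cC^{k+1}$ so that $\uU^{(j)}$ for $j\le k+1$ are continuous functions of $\uU$ (via repeated differentiation of $\uU'=F_\sigma(\uU)$) all vanishing at $\uu_\infty$ except the zeroth. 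Dividing numerator by denominator produces a bound $C\,n^{1/p-1/q}\cdot o(1)$ when $q<\infty$, or $C\,n^{1/p}\cdot o(1)$ when $q=\infty$ with $p>1$ (so $n^{1/p}\,o(1)$ is not automatically controlled — one must check that the profile decay beats $n^{1/p}$).

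The step I expect to be the main obstacle is precisely this last quantitative matching between the polynomial growth $n^{1/p}$ of the $W^{k,p}$-norm of the truncated profile and the polynomial loss from the denominator, in the edge cases, which is exactly why the statement excludes $(p,q)=(1,\infty)$. When $q<\infty$ the ratio $n^{1/p-1/q}$ can blow up if $p<q$, so one cannot merely use $\|w^n\|_{L^q(I_n)}\gtrsim n^{1/q}$ crudely: the resolution is to choose the cutoff scaling more carefully — instead of width $\sim n$ use width $\sim R_n$ with $R_n\to\infty$ slowly, or better, localize $w^n$ in a window $[T_n,T_n+L]$ of \emph{fixed} length $L$ with $T_n\to+\infty$, so that all $W^{k,p}(I_n)$ and $L^q(I_n)$ norms of the normalized profile are $O(1)$, and only the profile-dependent prefactor $\sup_{x\ge T_n}\big(|\uU(x)-\uu_\infty|+\sum_{1\le j\le k+1}|\uU^{(j)}(x)|\big)$ survives, which tends to $0$ as $T_n\to+\infty$; the condition $(p,q)\neq(1,\infty)$ then enters only to guarantee that the fixed-window $\cX$-norm of a nonzero profile is comparably bounded above and below, i.e. that $W^{k,p}$-control and $L^q$-control of a fixed bump are simultaneously nontrivial. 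Once the window is fixed the estimate is routine: numerator $=o(1)$, denominator $=$ positive constant, hence $\lambda\in\spec\cL$, and since $\lambda$ ranges over $g'(\uu_\infty)+i\RR$ we get the first assertion. The second assertion is then immediate: $BUC^k(\RR\setminus D)$ restricted to functions supported in a fixed bounded window $I$ has norm equivalent to the $W^{k,\infty}(I)$-norm and controls the $L^\infty(I)$-norm, so $(p,q)=(\infty,\infty)\neq(1,\infty)$ applies, $g'(\uu_\infty)+i\RR\subset\spec\cL$, and when $g'(\uu_\infty)>0$ this line meets the open right half-plane, giving spectral instability.
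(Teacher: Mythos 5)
Your construction starts out on exactly the paper's track: the paper takes $w^{(\eps)}(x)=e^{-i\xi x/(f'(\uu_\infty)-\sigma)}\chi(\eps x-\eps^{-1})$, i.e.\ an oscillatory exponential cut off on a window of \emph{growing} width $\sim\eps^{-1}$ centered near $\eps^{-2}$, and shows
\[
\Norm{(\lambda-\cL)(w^{(\eps)},0)}_{W^{k,p}(I)}\lesssim \eps^{1-\frac1p}\,,\qquad
\Norm{w^{(\eps)}}_{L^q(I)}\gtrsim \eps^{-\frac1q}\,,
\]
so the Weyl quotient is $\lesssim\eps^{1-\frac1p+\frac1q}\to0$ precisely when $(p,q)\neq(1,\infty)$. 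Had you finished your own width-$n$ computation you would have landed on the same exponent: your ``$o(1)$'' factor is in fact $O(1/n)$ (it is dominated by the cutoff-derivative term, the profile-dependent terms being exponentially small in $n^2$), so the quotient is $O(n^{\frac1p-\frac1q-1})$, which tends to $0$ exactly under the stated hypothesis. Your worry that $n^{1/p-1/q}$ ``can blow up if $p<q$'' comes from dropping this extra factor of $1/n$.

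The genuine gap is the ``resolution'' you adopt instead: localizing in a window $[T_n,T_n+L]$ of \emph{fixed} length $L$. With a fixed-width cutoff $\chi(\cdot-T_n)$, the error term
\[
(f'(\uU)-\sigma)\,\chi'(\cdot-T_n)\,e^{-i\tau\cdot/a_\infty}
\]
produced by differentiating the cutoff has amplitude comparable to $|a_\infty|\,\Norm{\chi'}_{L^\infty}$, a fixed nonzero constant; it carries no profile-dependent prefactor and does not shrink as $T_n\to+\infty$. Hence the numerator of the Weyl quotient stays bounded below and the sequence is not a Weyl sequence: the fixed-window construction fails for every $(p,q)$. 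Correspondingly, your reading of the hypothesis is off --- $(p,q)\neq(1,\infty)$ has nothing to do with comparability of norms of a fixed bump (those are all finite and nonzero for any $p,q$); it is exactly the condition $1-\tfrac1p+\tfrac1q>0$ that makes the growing-window scaling close. Keep your original ansatz, track the $1/n$ from the cutoff derivative, and the proof is the paper's.
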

\begin{proof}
Since the difference is purely notational we only treat the case where the limit is at $+\infty$. We first observe that the non degeneracy of the profile ODE at $+\infty$ includes that $f'(\uu_{\infty})-\sigma\neq0$ and, since $g'(\uu_\infty)\neq0$, implies that $\uU-\uu_{\infty}$ and its derivatives up to the order $k+2$ converge exponentially fast to zero at $+\infty$.

Now pick some $\chi$ non zero, smooth and compactly supported. Let $\xi\in\RR$ and define, for $\eps>0$, $(y_d^{(\eps)})_{d\in D}=(0)_{d\in D}$ and 
\[
w^{(\eps)}\,:\ \RR\setminus D\to\CC,\,x\mapsto e^{-\frac{i\,\xi x}{f'(\uu_{\infty})-\sigma}}\,\chi\left(\eps\,x-\frac{1}{\eps}\right)\,.
\]
Then 
\[
\frac{\|((g'(\uu_\infty)+i\xi)-\cL)(w^{(\eps)},(y_d^{(\eps)}))\|_{\cX\times\cY}}{\|(w^{(\eps)},(y_d^{(\eps)}))\|_{\cX\times\cY}}
\stackrel{\eps\to0}{\longrightarrow}0
\]
follows from the fact that if $\eps$ is sufficiently small $w^{(\eps)}$ is supported in $I$ and
\begin{align*}
\|(g'(\uu_\infty)+i\xi)w^{(\eps)}+\d_x((f'(\uU)-\sigma)w^{(\eps)})-g'(\uU)w^{(\eps)}\|_{W^{k,p}(I)}
&\lesssim \eps^{1-\frac1p}\\
\|w^{(\eps)}\|_{L^q(I)}&\gtrsim \eps^{-\frac1q}. 
\end{align*}
Hence $(w^{(\eps_n)},(0)_{d\in D})_{n\in\NN}$ with $(\eps_n)_{n\in\NN}$ positive and converging to zero defines a Weyl sequence, and the proof is complete.
\end{proof}

\begin{Proposition}\label{P.nl-instability-infinity}
Let  $f\in\cC^{2}(\RR)$, $g\in\cC^{2}(\RR)$ and  $(\uU,\sigma,D)$ define a non-degenerate piecewise regular entropy-admissible traveling-wave solution to~\eqref{eq-u}. 
If $\uU$ admits a limit $\uu_{\infty}$ at $+\infty$ (resp. $-\infty$) such that $g'(\uu_\infty)>0$, then $\uU$ is nonlinearly unstable in the following sense. There exists $\delta>0$,  and a sequence $(u_n)_{n\in\NN}$ of piecewise regular entropy solutions to~\eqref{eq-u}, each defined on $[0,T_n]$ with $T_n>0$, such that for any $I\subset\RR$ neighborhood of $+\infty$ (resp. $-\infty$), one has for $n$ sufficiently large
\begin{enumerate}
\item $u_n(0,\cdot)-\uU$ is smooth and compactly supported in $I$, and for any $k\in\NN$ and $1\leq q\leq \infty$,
\[\Norm{u_n(0,\cdot)- \uU}_{W^{k,q}(I)}\to 0\quad \text{ as }\quad n\to \infty,\]
\item $(t,x)\mapsto u_n(t,\cdot+\sigma t)-\uU(\cdot)\in\cC^1([0,T_n]\times \RR)$, has compact support in $[0,T_n]\times I$, and for any $\cC^1$ shift $\psi:\RR\to\RR$ such that $(\Id_\RR+\psi)(D)=D$ and $\Norm{\d_x\psi}_{L^\infty(\RR)}\leq 1/2$, and any $1\leq p\leq \infty$,
\[\Norm{u_n(T_n,\,\cdot\,+\sigma\,T_n+\psi(\cdot))-\uU}_{L^p(\RR)}\geq \delta.\]
\end{enumerate}
\end{Proposition}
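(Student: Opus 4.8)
The plan is to construct explicitly the unstable sequence by exploiting the linear instability mechanism at infinity, lifted to the nonlinear level via a comparison/monotonicity argument, while keeping everything compactly supported near the relevant infinity so that the background profile is, to arbitrary order, the constant state $\uu_\infty$ on the support. Without loss of generality take the limit to be at $+\infty$. First I would record the sign information: since the wave is non-degenerate and $g'(\uu_\infty)>0$, Proposition~\ref{P.structure} (or rather the relation~\eqref{signs-infinity} in its proof) gives $f'(\uu_\infty)-\sigma<0$, and moreover $\uU-\uu_\infty$ together with as many derivatives as needed decays exponentially at $+\infty$. Working in the moving frame $y=x-\sigma t$ and setting $v=u(t,\cdot+\sigma t)-\uu_\infty$, the equation becomes $\d_t v+\d_y\big((f(\uu_\infty+v)-f(\uu_\infty))-\sigma v\big)=g(\uu_\infty+v)$; on any region where $v$ is small and the background equals $\uu_\infty$ this is a small perturbation of the linear ODE-in-time dynamics $\d_t v=g'(\uu_\infty)v$, transported at the (negative) speed $f'(\uu_\infty)-\sigma$, i.e. moving to the left, away from $+\infty$.

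The construction goes as follows. Fix a bump $\chi\geq 0$, smooth, compactly supported, nonzero. For large $n$ place a bump of small amplitude $\alpha_n\to 0$ and fixed (or slowly growing) spatial width, centered at a point $R_n\to+\infty$ chosen far enough into the region where $|\uU-\uu_\infty|$ and its derivatives are below $\alpha_n$; this defines $u_n(0,\cdot)=\uU+v_n(0,\cdot)$ with $v_n(0,\cdot)=\alpha_n\,\chi(\cdot-R_n)$. Item~(1) is then immediate: $u_n(0,\cdot)-\uU$ is smooth, compactly supported in any fixed neighborhood $I$ of $+\infty$ once $R_n$ is large, and its $W^{k,q}(I)$ norm is $O(\alpha_n)\to 0$. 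For the evolution, one runs the equation up to a time $T_n$ chosen so that the amplitude has grown to a fixed macroscopic size: roughly $e^{g'(\uu_\infty)T_n}\alpha_n=\delta_0$ for a fixed small $\delta_0$, so $T_n\sim \tfrac{1}{g'(\uu_\infty)}\log(1/\alpha_n)\to+\infty$. One must show that (a) a genuine piecewise regular entropy solution $u_n$ exists on $[0,T_n]$ with $u_n(t,\cdot+\sigma t)-\uU\in\cC^1$, compactly supported in $I$ (propagation of compact support holds by finite speed of propagation, and the support can only travel at bounded speed, so enlarging $I$ is not needed if $R_n$ grows fast enough — in fact the leftward drift $f'(\uu_\infty)-\sigma<0$ means the bump moves toward the interior but, since $R_n\to\infty$ faster than $T_n|f'(\uu_\infty)-\sigma|$, it stays in $I$); (b) at time $T_n$ the perturbation has $L^\infty$ norm at least $\delta$ for some fixed $\delta>0$, and crucially this persists after any admissible resynchronization $\psi$. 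Step (b) is where one uses a sub/supersolution comparison: as long as $\|v_n(t,\cdot)\|_{L^\infty}$ stays below a fixed threshold, the nonlinear and the transport terms are controlled, so one brackets $v_n$ between solutions of the linear equation with slightly perturbed coefficients, yielding both a lower bound $\|v_n(T_n,\cdot)\|_{L^\infty}\gtrsim e^{(g'(\uu_\infty)-o(1))T_n}\alpha_n$ and a matching upper bound keeping us in the small regime until exactly the chosen $T_n$. The positivity-preserving structure (Kru\v zkov / comparison principle, available since the equation is scalar) makes $v_n(t,\cdot)\geq 0$ throughout, which simplifies the lower bound to a comparison with the ODE $\dot z=g'(\uu_\infty)z-C z^2$ for the minimum over the transported support.

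Finally, to defeat resynchronization: the key point is that $u_n(T_n,\cdot+\sigma T_n)$ differs from $\uU$ by a bump of height $\geq c\delta_0$ supported in a set of fixed positive length (the transported support of $\chi$, which has length comparable to its initial length since the characteristic speeds are $O(1)$ on the support), located at a known position $\approx R_n+(f'(\uu_\infty)-\sigma)T_n$; meanwhile $\uU$ itself is within $o(1)$ of $\uu_\infty$ on all of $I$. Any $\cC^1$ shift $\psi$ with $\|\d_x\psi\|_{L^\infty}\leq 1/2$ is a near-identity homeomorphism, hence it can move the location of the bump but cannot shrink its support by more than a factor $2$ nor reduce its height; therefore $u_n(T_n,\cdot+\sigma T_n+\psi(\cdot))-\uU$ still contains a bump of height $\geq c\delta_0/2$ over a set of length $\geq \ell_0/2$ (minus the region where $\uU$ itself deviates from $\uu_\infty$, which is negligible), so its $L^p(\RR)$ norm is bounded below by $\delta:=c'\delta_0\,\ell_0^{1/p}$ uniformly in $p$, with a $p$-independent lower bound obtained by taking $\delta$ to be the infimum over $p$ (or simply using the $L^\infty$ bound for $p=\infty$ and a fixed measure of the superlevel set for finite $p$). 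The main obstacle is step (b)/(a): turning the heuristic "small perturbation of linear growth" into a rigorous statement that the solution exists as a piecewise regular entropy solution on the long time interval $[0,T_n]$ and stays in the small-amplitude regime up to precisely the time the amplitude reaches $\delta$ — this requires a careful continuation argument combining local existence for piecewise regular entropy solutions, finite speed of propagation to control the support, and two-sided comparison estimates; I also need to make sure the bump does not hit a discontinuity point of $D$ nor a characteristic point during $[0,T_n]$, which is ensured by choosing $I$ (and hence the support) inside the neighborhood of $+\infty$ where $\uU$ is continuous and non-characteristic.
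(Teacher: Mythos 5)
Your proposal is correct and is essentially the paper's own argument: a smooth bump of vanishing amplitude $\alpha_n$ placed at $R_n\to\infty$, evolved by the (essentially decoupled) dynamics $\d_t v=g(v)$ transported at speed $f'(v)$ up to the time $T_n\sim\tfrac{1}{g'(\uu_\infty)}\ln(1/\alpha_n)$ at which the amplitude reaches a fixed macroscopic threshold $\delta_0$, the leftward drift coming from $f'(\uu_\infty)-\sigma<0$ being absorbed by taking $R_n$ large enough that the perturbation never leaves the unbounded connected component $X_\infty$. There are two implementation points where the paper's route is slightly slicker. For your acknowledged main obstacle (a $\cC^1$ solution on all of $[0,T_n]$ without shock formation), the paper does not bracket by sub/supersolutions but integrates the characteristic system $\d_t v_\eps=g(v_\eps)$, $\d_t X_\eps=f'(v_\eps)$; the solution stays classical exactly as long as $\d_x X_\eps>0$, and a Gr\"onwall estimate gives $\d_x X_\eps\geq 1-2\,|\d_x v_\eps(0,\cdot)|\,e^{g'(\uu_\infty)t}\max|f''|/g'(\uu_\infty)\geq\tfrac12$ up to $T_n$ because the initial slope times the total growth factor $\delta_0/\alpha_n$ stays small; this yields in one stroke the $\cC^1$ regularity required in item~2, the absence of shocks, and the two-sided control of the support length that your $L^p$ lower bound implicitly uses. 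For defeating resynchronization, your tracking of where the shifted bump lands does work (the hypotheses pin $\psi$ at each $d\in D$ and force $\Id_\RR+\psi$ to map $X_\infty$ onto itself), but the paper avoids any discussion of locations: choosing $\chi\geq0$ and using that $\uU$ is monotone on $X_\infty$, say $\uU\leq\uu_\infty$ there, one has pointwise $(w-\uu_\infty)_+\leq|w-\uU|$ on $X_\infty$, whence by a change of variables
\[
\Norm{\big(u_n(T_n,\cdot+\sigma T_n)-\uu_\infty\big)_+}_{L^p(X_\infty)}\leq 2^{\frac1p}\,\Norm{u_n(T_n,\cdot+\sigma T_n+\psi(\cdot))-\uU}_{L^p(\RR)}\,,
\]
so that the whole matter reduces to a single $\psi$-independent lower bound against the constant $\uu_\infty$, the sign of the perturbation relative to $\uu_\infty$ being preserved by $\d_t v=g(v)$ and dominating the exponentially small contribution of $\uU-\uu_\infty$ on the support.
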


The presence of constraints on possible shifts $\psi$ is due to the fact that, when $1\leq p<\infty$, we measure in a norm that does not weight heavily wrong discontinuities, whereas we would like to keep the discussion localized to the connected component at hand. One may obtain various weaker but easier-to-read statements, for instance replacing the foregoing with: for any  $\psi:\RR\to\RR$ 
\[\Norm{u_n(T_n,\,\cdot\,+\sigma\,T_n+\psi(\cdot))-\uU}_{W^{k,p}(\RR)}
+\Norm{\d_x\psi}_{W^{k,p}(\RR)}\geq \delta\]
after having fixed some $(k,p)\in \NN\times[1,\infty]$ such that $k-1/p>0$ and enforced accordingly regularity on $f$ and $g$. The latter variants hinge on the fact that the finiteness of the left-hand side of the latter inequality implies $(\Id_\RR+\psi)(D)=D$ and that this left-hand side controls $\Norm{\d_x\psi}_{L^\infty(\RR)}$.

\begin{proof}
Since the difference is purely notational we only treat the case where the limit is at $+\infty$. 

To begin with, we show how the $\psi$-dependent conclusion may be derived from a $\psi$-independent inequality. Let $\psi:\RR\to\RR$ be  such that $(\Id_\RR+\psi)(D)=D$ and $\Norm{\d_x\psi}_{L^\infty(\RR)}\leq 1/2$. Then $\Id_\RR+\psi$ is strictly monotonic with derivative everywhere at least $1/2$. Since by assumption $D$ is discrete and possesses a maximum this implies that for any $d\in D$, $\psi(d)=0$ and the image of any connected component of $\RR\setminus D$ by $\Id_\RR+\psi$ is the same connected component. Let us denote $X_\infty$ the connected component neighboring $+\infty$. Since $\uU$ is either strictly monotonic or constant on $X_\infty$, we have either $\uu_\infty=\sup_{X_\infty}\uU$ or $\uu_\infty=\inf_{X_\infty}\uU$. We will assume in the following that the former case holds, and let the reader make the obvious modifications in the opposite case. Then if $T_n$ and $u_n$ are such that $x\mapsto u_n(T_n,\cdot+\sigma T_n)-\uU(\cdot)\in\cC^1(\RR)$ and has compact support in $X_\infty$, then for any $1\leq p\leq\infty$,
\begin{align*}
\Norm{\big(u_n(T_n,\,\cdot\,+\sigma\,T_n)-\uu_\infty\big)_+}_{L^p(X_\infty)}
&\leq\,2^{\frac1p}\,\Norm{\big(u_n(T_n,\,\cdot\,+\sigma\,T_n+\psi(\cdot))-\uu_\infty\big)_+}_{L^p(X_\infty)}\\
&\leq\,2^{\frac1p}\,\Norm{u_n(T_n,\,\cdot\,+\sigma\,T_n+\psi(\cdot))-\uU}_{L^p(\RR)}
\end{align*}
where $(\,\cdot\,)_+$ denotes the positive part. Thus in the following we can concentrate on proving for some $\delta'>0$ and a well-chosen family of solutions a lower bound 
\[
\Norm{\big(u_n(T_n,\,\cdot\,+\sigma\,T_n)-\uu_\infty\big)_+}_{L^p(X_\infty)}\geq \delta'
\]
on the comparison with $\uu_\infty$.

Let us build such a family of solutions, which we find more convenient to parameterize by $\eps>0$ instead of $n\in\NN$. We modify $\uU$ only in $X_\infty$ (defined as above) and use characteristics to study the effect of the perturbation. To do so, we need both to prevent the formation of new shocks and to ensure the confinement of the perturbation in\footnote{More accurately in $\bigcup_t\,\{t\}\times (X_\infty+\sigma\,t)$.} $X_\infty$ on a time interval sufficiently long for the perturbation to grow.
We choose $\delta_0>0$ sufficiently small to enforce 
\begin{align*}
e^{\frac{\delta_0}{g'(\uu_\infty)}\,\max_{|u-\uu_\infty|\leq 8\,\delta_0} |g''(u)|}
\,&\leq 2\,.
\end{align*}
Pick $\chi:\RR\to\RR$ nonnegative and non zero, smooth and compactly supported in $(0,\infty)$ and set for $\eps>0$, $u_\eps(0,\cdot)=\uU+\eps^2\chi(\eps\,\cdot-\eps^{-1})$. For $\eps>0$ and $x\in (\eps^{-2},+\infty) $, we define $v_\eps(\cdot,x)$  and $X_\eps(\cdot,x)$ by the initial data $v_\eps(0,x)=u_\eps(0,x)$ and $X_\eps(0,x)=x$, and the differential equations
\[
\partial_t v_\eps(t,x)=g\big(v_\eps(t,x)\big) \quad  \text{ and } \quad\partial_t X_\eps(t,x)=f'\big(v_\eps(t,x)\big).
\]
By a continuity argument, we have for $\eps>0$ sufficiently small and any 
\[0\leq t\leq \frac{1}{g'(\uu_\infty)}
\ln\left(\frac{2\delta_0}{\eps^2\Norm{\chi}_{L^\infty}}\right)
\eqdef T_\eps,
\]
 that, for any $0\leq s\leq t$, 
 \[ \|v_\eps(s,\cdot)-\uu_{\infty}\|_{L^\infty}\leq 2\,e^{g'(\uu_\infty)\,s}\,\|v_\eps(0,\cdot)-\uu_{\infty}\|_{L^\infty}\leq 8\delta_0\,\]
and hence for any $x\geq\eps^{-2}$,
\begin{align*}
\frac12\,|v_\eps(0,x)-\uu_{\infty}|\,e^{g'(\uu_\infty)\,t}
\leq\,|v_\eps(t,x)-\uu_{\infty}|\leq 2\,|v_\eps(0,x)-\uu_{\infty}| e^{g'(\uu_\infty)\,t}\,.
\end{align*}
Moreover, differentiating spatially the defining differential equations and lowering  $\eps$ if necessary,
\begin{align*}
|\d_xv_\eps(t,x)|&\leq 2\,|\d_x v_\eps(0,x)|  e^{g'(\uu_\infty) t}\,,\\
\d_xX_\eps(t,x)&\geq 1-2\,|\partial_x v_\eps(0,x)|\,\frac{e^{g'(\uu_\infty) t}}{g'(\uu_\infty)}\,\max_{|u-\uu_\infty|\leq 8\,\delta_0} |f''(u)|\,\geq\frac12\,,\\
X_\eps(t,x)-\sigma t&\geq \eps^{-2}\, -\, \big(\sigma-\inf_{|u-\uu_\infty|\leq 8\,\delta_0} f'(u)\big)\,t \,>\, \sup(D)\,.
\end{align*}
Hence  for $\eps>0$ sufficiently small a solution $u_\eps$ to~\eqref{eq-u} on $[0,T_\eps]\times\RR$ is obtained by setting, for $0\leq t\leq T_\eps$ and $x\in\RR$,
\[
u_\eps(t,x)\,=\,
\begin{cases}\uU(x-\sigma t)&\qquad\textrm{if } x\leq X_\eps(t,\eps^{-2})\\
v_\eps(t,X_\eps(t,\cdot)^{-1}(x))
&\qquad\textrm{if } x\geq X_\eps(t,\eps^{-2})
\end{cases}
\]
and it satisfies
for any $k\in\NN$ and $1\leq q\leq \infty$
\begin{align*}
\Norm{\d_x^k(u_\eps(0,\cdot)-\uU)}_{L^{q}(\RR)}
&\,=\,\eps^{k+2-\frac1p}\,\Norm{\d_x^k\chi}_{L^{q}(\RR)}\,,
\end{align*}
and for any $1\leq p\leq \infty$
\begin{align*}
\Norm{\big(u_\eps(T_\eps,\cdot+\sigma T_\eps)-\uu_\infty\big)_+}_{L^{p}(\RR)}
&\,\geq\,
\frac{1}{2^{\frac1p}}
\Norm{\big(v_\eps(T_\eps,\cdot)-\uu_\infty\big)_+}_{L^{p}(\eps^{-2},+\infty)}
\\
&\,\geq\,
\frac{1}{2^{\frac1p}}\,
\frac{2\delta_0}{\eps^2\Norm{\chi}_{L^{\infty}(\RR)}} \Norm{\big(v_\eps(0,\cdot)-\uu_\infty\big)_+}_{L^{p}(\eps^{-2},+\infty)} \,.
\end{align*}
Lowering $\eps$ if necessary, one has
\begin{align*} \Norm{\big(v_\eps(0,\cdot)-\uu_\infty\big)_+}_{L^{p}(\eps^{-2},+\infty)}  &\,\geq\, \Norm{\eps^2\chi(\eps\,\cdot-\eps^{-1})}_{L^{p}(\eps^{-2},+\infty)}  - \Norm{\uU-\uu_\infty}_{L^{p}(\eps^{-2},+\infty)} \\
&\,\geq\, \frac12\eps^{2-1/p}\Norm{\chi}_{L^p(\RR)}
\,\geq\, \frac12\eps^{2}\Norm{\chi}_{L^p(\RR)}\,.
\end{align*}
This achieves the proof with
\[
\delta'\,=\,\delta_0\,\min_{1\leq p\leq\infty}\left(\frac{1}{2^{\frac1p}}\,
\frac{\Norm{\chi}_{L^{p}(\RR)}}{\Norm{\chi}_{L^{\infty}(\RR)}}\right)\,.
\]
\end{proof}

Note that in the latter our instability result is in some sense constructive and positive. We prove that there does exist a family of solutions with explicit initial data and explicit guaranteed time of existence whose growth encodes instability. A negative form showing that for some initial data there do not exist solutions globally defined and suitably small would be somewhat simpler to prove but less instructive.

\subsection{Instabilities at characteristic points}

Instabilities due to characteristic points seem to be pointed out here for the first time. This is probably partly due to the fact that they are of wave-breaking type. They manifest themselves in topologies encoding a sufficient amount of smoothness, and remain harmless in the $L^\infty$ topology. Moreover the condition yielding instability is somewhat counter-intuitive as it amounts to dissipativity of the source term near a characteristic value.

\begin{Proposition}\label{P.spectral-instability-characteristic}
Let $k\in\NN$ and $f\in\cC^{k+2}(\RR)$, $g\in\cC^{k+1}(\RR)$ and $(\uU,\sigma,D)$ define a non-degenerate piecewise regular entropy-admissible traveling-wave solution to~\eqref{eq-u}. 
\begin{enumerate}
\item If $x_\star\in\RR\setminus D$ is a characteristic point, that is $\uU(x_\star)=\uu_\star$ with $f'(\uu_\star)=\sigma$, then $-g'(\uu_\star)\,k$ belongs to the $\cX\times\cY$-spectrum of the linearization about $\uU$ provided that $\delta_{x_\star}$, $\cdots$, $\delta_{x_\star}^{(k)}$ act continuously on $\cX$. 
\item In particular, when $k\geq1$, if at a characteristic point $x_\star\in\RR\setminus D$, $\uu_\star=\uU(x_\star)$ is such that $g'(\uu_\star)<0$ then $\uU$ is spectrally unstable  in $BUC^k(\RR\setminus D)\times \ell^\infty(D)$.
\end{enumerate}
\end{Proposition}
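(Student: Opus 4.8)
The plan is to derive the second assertion from the first: when $k\geq1$ and $g'(\uu_\star)<0$ the value $\lambda\eqdef-g'(\uu_\star)\,k$ is positive, and for $\cX=BUC^k(\RR\setminus D)$ the distributions $\delta_{x_\star},\dots,\delta_{x_\star}^{(k)}$ act continuously on $\cX$ since $x_\star\notin D$; so it suffices to prove the first assertion, for which I fix $\lambda\eqdef-g'(\uu_\star)\,k$. Following the second of the two criteria recalled at the end of Section~\ref{S.spectral-def}, I shall produce a nonzero element of $\ker\big((\lambda-\cL)^*\big)$ --- equivalently, a nonzero continuous functional on $\cX\times\cY$ vanishing on $\mathrm{Ran}(\lambda-\cL)$, which already forces $\lambda\in\spec(\cL)$. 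Because $x_\star\notin D$ I look for such a functional in the form $(\phi,0)$ with $\phi$ in the $(k+1)$-dimensional space $V_k\eqdef\Span\{\delta_{x_\star},\dots,\delta_{x_\star}^{(k)}\}$; the vanishing of the $\cY$-component reflects that $\phi$ is supported away from $D$, so that no jump term survives in the relevant pairing, which then reduces on the interior to the formal adjoint equation $L^*\phi=\lambda\phi$, where $L^*w\eqdef(f'(\uU)-\sigma)\,w'+g'(\uU)\,w$.

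The core of the proof is a short computation resting on~\eqref{signs-characteristic}. Writing $a\eqdef f'(\uU)-\sigma$, one has $a(x_\star)=0$ and $a'(x_\star)=g'(\uu_\star)$, whereas trivially $g'(\uU(x_\star))=g'(\uu_\star)$. Since $a(x_\star)=0$, Leibniz' formula shows that $L^*\delta_{x_\star}^{(j)}=a\,\delta_{x_\star}^{(j+1)}+g'(\uU)\,\delta_{x_\star}^{(j)}$ is a linear combination of $\delta_{x_\star},\dots,\delta_{x_\star}^{(j)}$ alone, in which the coefficient of $\delta_{x_\star}^{(j)}$ is $-(j+1)\,a'(x_\star)+g'(\uU(x_\star))=-j\,g'(\uu_\star)$. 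Hence $V_k$ is $L^*$-invariant and, in the basis $(\delta_{x_\star},\dots,\delta_{x_\star}^{(k)})$, the matrix of $L^*|_{V_k}$ is triangular with diagonal entries $0,-g'(\uu_\star),\dots,-k\,g'(\uu_\star)$. By non-degeneracy $g'(\uu_\star)\neq0$, so these $k+1$ values are pairwise distinct; in particular $\lambda=-k\,g'(\uu_\star)$ is a simple eigenvalue of $L^*|_{V_k}$ and there is a nonzero $\phi=\delta_{x_\star}^{(k)}+\sum_{0\leq j<k}c_j\,\delta_{x_\star}^{(j)}\in V_k$ with $L^*\phi=\lambda\phi$.

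It then remains to upgrade the formal relation $L^*\phi=\lambda\phi$ to the genuine identity $\langle(\lambda-\cL)(w,(y_d)_{d\in D}),(\phi,0)\rangle=0$ for every $(w,(y_d)_{d\in D})$ in the maximal domain of $\cL$. Only the first component of $\cL$ contributes, so this pairing equals $\langle\lambda\,w+\d_x(a\,w)-g'(\uU)\,w,\,\phi\rangle$, which is well defined since $\lambda\,w+\d_x(a\,w)-g'(\uU)\,w\in\cX$ and $\phi\in\cX^*$. As $\phi$ is concentrated at $x_\star$ the pairing only depends on the behaviour near $x_\star$, where $\uU$ is $\cC^{k+1}$ --- here one invokes $f\in\cC^{k+2}$, $g\in\cC^{k+1}$ and the non-degeneracy of the profile ODE at $\uu_\star$, which through $\uU'=F_\sigma(\uU)$ forces exactly this regularity --- and where, crucially, $a$ vanishes, so that each product $a\,\delta_{x_\star}^{(j+1)}$ drops back into $V_k$; integrating by parts locally one obtains $\langle\lambda\,w+\d_x(a\,w)-g'(\uU)\,w,\,\phi\rangle=\langle w,\,(\lambda-L^*)\phi\rangle=0$. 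Consequently $\mathrm{Ran}(\lambda-\cL)$ lies in the proper closed hyperplane $\{v\in\cX\times\cY:\langle v,(\phi,0)\rangle=0\}$, hence $\lambda\in\spec(\cL)$, which proves the first assertion and thereby the second. I expect this last step --- promoting the purely formal action of $L^*$ on Dirac masses to a statement valid on all of the maximal domain of $\cL$, i.e.\ justifying the localization and the integration by parts for merely $\cX$-regular $w$ --- to carry the only genuine difficulty. What makes it manageable at the present, regularity-optimal, level is precisely that the principal coefficient $a=f'(\uU)-\sigma$ vanishes at the characteristic point: this single fact both produces the eigenvalue $-k\,g'(\uu_\star)$ and spares exactly one derivative throughout the duality argument.
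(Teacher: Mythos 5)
Your proof is correct and follows essentially the same route as the paper's: both exhibit a nontrivial combination $\delta_{x_\star}^{(k)}+\sum_{j<k}c_j\,\delta_{x_\star}^{(j)}$ in the kernel of the adjoint of $-k\,g'(\uu_\star)-\cL$, using that $a=f'(\uU)-\sigma$ vanishes at $x_\star$ with $a'(x_\star)=g'(\uu_\star)$ so that the formal adjoint acts triangularly on $\Span\{\delta_{x_\star},\dots,\delta_{x_\star}^{(k)}\}$ with diagonal $0,-g'(\uu_\star),\dots,-k\,g'(\uu_\star)$ (the paper phrases this as a recursion rather than a matrix). The only difference is that you additionally spell out why the formal identity genuinely annihilates $\mathrm{Ran}(\lambda-\cL)$ on the maximal domain --- a verification the paper leaves implicit --- and your account of it (localization at $x_\star$, where the vanishing of $a$ spares one derivative) is sound.
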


Note that the stronger the regularity encoded in $\cX$ the stronger the instability proved in the foregoing proposition. In the limit case where the norm of $\cX$ would control an infinite number of derivatives at $x_\star$ the proposition yields ill-posedness (at the linear level).

\begin{proof}
It is sufficient to prove that there exists $\tw$ a non trivial combination of $\delta_{x_\star}$, $\cdots$, $\delta_{x_\star}^{(k)}$ such that
\[
-g'(\uu_\star)k\,\tw-(f'(\uU)-\sigma)\d_x\tw-g'(\uU)\tw\,=\,0
\]
since then $(\tw,(0)_{d\in D})$ provides a nontrivial element of the kernel of the adjoint of $-g'(\uu_\star)k-\cL$.

Now the claim follows recursively from the fact that 
\[
-(f'(\uU)-\sigma)\delta_{x_\star}^{(1)}-g'(\uU)\delta_{x_\star}
\,=\,0\,
\]
and, for any $\ell\in\NN$, $1\leq \ell\leq k$,
\[
-(f'(\uU)-\sigma)\delta_{x_\star}^{(\ell+1)}-g'(\uU)\delta_{x_\star}^{(\ell)}
\,\in\,\ell\,g'(\uu_\star)\,\delta_{x_\star}^{(\ell)}
\,+\,\Span(\{\delta_{x_\star},\cdots,\delta_{x_\star}^{(\ell-1)}\})\,.
\]
\end{proof}

\begin{Proposition}\label{P.nl-instability-characteristic}
Let  $f\in\cC^{2}(\RR)$, $g\in\cC^{1}(\RR)$ and $(\uU,\sigma,D)$ define a non-degenerate piecewise regular entropy-admissible traveling-wave solution to~\eqref{eq-u}, and assume that there exists $x_\star\in\RR\setminus D$ such that $g(\uu_\star)=0$ and $g'(\uu_\star)<0$. Then $\uU$ is nonlinearly unstable in the following sense. There exists a sequence $(u_n)_{n\in\NN}$ of piecewise regular entropy solutions to~\eqref{eq-u}, each defined on $[0,T_n)$ with $T_n>0$, such that for any $I\subset \RR$ neighborhood of $x_\star$, one has for $n$ sufficiently large
\begin{enumerate}
\item $u_n(0,\cdot)-\uU$ is smooth and compactly supported in $I$, and for any $k\in\NN$ and $1\leq q\leq \infty$, 
\[\Norm{u_n(0,\cdot)- \uU}_{W^{k,q}(I)}\to 0\quad \text{ as} \quad n\to \infty,\]
\item $(t,x)\mapsto u_n(t,\cdot+\sigma t)-\uU\in\cC^1([0,T_n)\times \RR)\cap L^\infty([0,T_n)\times \RR)$, has support in $[0,T_n)\times I$, and
\[ \Norm{\d_x u_n(t,\cdot+\sigma t)}_{L^{\infty}(I)}\to \infty\quad  \text { as $t\to T_n$}.\]
\end{enumerate}
\end{Proposition}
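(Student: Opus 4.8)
The plan is to construct the unstable solutions by the method of characteristics, exactly mirroring the structure of the proof of Proposition~\ref{P.nl-instability-infinity}, but now exploiting the sign configuration near $x_\star$ instead of the growth at infinity. The crucial structural input is~\eqref{signs-characteristic}: since $g(\uu_\star)=0$ and $g'(\uu_\star)<0$, we have $(f'(\uU(x))-\sigma)\sim g'(\uu_\star)(x-x_\star)$ as $x\to x_\star$, so $f'(\uU(\cdot))-\sigma$ is \emph{positive to the left} of $x_\star$ and \emph{negative to the right}; characteristics therefore converge toward $x_\star$, which is the wave-breaking mechanism. First I would work in the moving frame $y=x-\sigma t$ and, as in the earlier proof, modify $f$ and $g$ outside a small neighborhood of the values of interest so that all the relevant bounds are global; since the perturbation will be confined near $x_\star$ this is harmless. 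For $\eps>0$ I would take initial data $u_\eps(0,\cdot)=\uU+\eps\,\chi((\cdot-x_\star)/\eps)$ with $\chi$ smooth, compactly supported, and chosen so that the perturbed profile has, at some point just to the right of $x_\star$, a strictly negative spatial derivative of the combined field — concretely one wants $\d_y(f'(u_\eps(0,y))-\sigma)$ to be bounded above by a negative constant on an $O(\eps)$ interval, which is possible because $\d_y(f'(\uU)-\sigma)(x_\star)=g'(\uu_\star)<0$ already and the perturbation is a lower-order correction on that scale (one can also simply design $\chi$ to reinforce this).

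The heart of the argument is then the characteristic ODE system: define $v_\eps(\cdot,y)$, $Y_\eps(\cdot,y)$ by $v_\eps(0,y)=u_\eps(0,y)$, $Y_\eps(0,y)=y$, and
\[
\d_t v_\eps(t,y)=g(v_\eps(t,y)),\qquad \d_t Y_\eps(t,y)=f'(v_\eps(t,y))-\sigma,
\]
so that $u_\eps(t,Y_\eps(t,y))=v_\eps(t,y)$ as long as $y\mapsto Y_\eps(t,y)$ stays invertible. Differentiating in $y$ gives
\[
\d_t(\d_y v_\eps)=g'(v_\eps)\,\d_y v_\eps,\qquad
\d_t(\d_y Y_\eps)=f''(v_\eps)\,\d_y v_\eps.
\]
The key point is that $J_\eps(t,y)\eqdef\d_y Y_\eps(t,y)$ must reach $0$ in finite time for suitably chosen $y$: from the equation for $v_\eps$ we control $\d_y v_\eps$ (it stays comparable to $\d_y v_\eps(0,y)$, up to a fixed multiplicative constant, on the relevant time horizon, since $g'$ is bounded), and near $y=x_\star$ the quantity $f''(v_\eps)\,\d_y v_\eps$ is, on the $O(\eps)$ support, controlled from above and below by the contribution coming from the underlying profile: $\d_y(f'(\uU)-\sigma)\approx g'(\uu_\star)<0$. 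Integrating the $J_\eps$ equation along characteristics starting just right of $x_\star$, one gets $J_\eps(t,y)\le 1 - c\,t$ for a constant $c>0$ that is bounded away from zero uniformly in $\eps$ (this is where one needs the lower bound on $|f''(\uu_\star)|$ from non-degeneracy and the sign of $g'(\uu_\star)$; the subtle part flagged in the introduction is that the damping $g'(\uu_\star)<0$ in the $v$-equation does \emph{not} rescue the Jacobian, because $\d_y v$ decays at exactly the same exponential rate it would be amplified, so the product $f''(v_\eps)\d_y v_\eps$ does not vanish — it keeps a definite sign). Hence there is $T_\eps\le 1/c$ (in fact $T_\eps$ bounded above \emph{and} below) at which $J_\eps$ first vanishes, and since $\d_y u_\eps(t,Y_\eps(t,y))=\d_y v_\eps(t,y)/J_\eps(t,y)$ with numerator bounded below by $c'\eps\cdot(\text{something nonzero})>0$, we get $\Norm{\d_y u_\eps(t,\cdot)}_{L^\infty}\to\infty$ as $t\to T_\eps$. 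Before $T_\eps$ the map $y\mapsto Y_\eps(t,y)$ is still a diffeomorphism, no new shock forms, and one checks as in the previous proof that $Y_\eps(t,\cdot)$ moves the support by at most $O(T_\eps\cdot\sup|f'-\sigma|)=O(\eps)$ in the relevant region, so the perturbation stays confined in $I$; outside the perturbed characteristics one simply keeps $u_\eps(t,x)=\uU(x-\sigma t)$, and the two pieces glue $\cC^1$ along the characteristic through the edge of the support as long as $\d_y\chi$ vanishes there. Finally, reindexing $\eps=\eps_n\to0$, the initial data satisfy $\Norm{u_n(0,\cdot)-\uU}_{W^{k,q}(I)}=\eps_n^{1-1/q}\Norm{\chi^{(k)}}\cdot\eps_n^{-k}\cdot\eps_n^{\,?}$— more precisely, with the $\eps\,\chi((\cdot-x_\star)/\eps)$ scaling one computes $\Norm{\d_x^k(u_n(0,\cdot)-\uU)}_{L^q}=\eps_n^{\,1-k+1/q}\Norm{\d^k\chi}_{L^q}$, which tends to $0$ precisely when $1-k+1/q>0$; to cover \emph{all} $k$ one instead uses amplitude $\eps_n^{2}$ or, better, a separate slowly-growing exponent, rescaling so that $\Norm{\cdot}_{W^{k,q}(I)}\to0$ for every fixed $k$ while still keeping enough amplitude that $J_\eps$ vanishes in bounded time — e.g. take the perturbation $\eps_n\,\theta(\eps_n)\,\chi((\cdot-x_\star)/\eps_n)$ with $\theta(\eps_n)\to0$ slowly, which does not affect the leading-order sign analysis of $J_\eps$.

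The main obstacle, as the authors themselves emphasize, is the competition between the concentration of characteristics (driving $J_\eps\to0$) and the pointwise damping in values induced by $g'(\uu_\star)<0$. I expect the technical crux to be establishing the \emph{uniform-in-$\eps$ lower bound} $\dot J_\eps\le -c<0$ over the whole interval $[0,T_\eps]$: one must show that on the $O(\eps)$-support of the perturbation the field $f''(v_\eps)\,\d_y v_\eps$ stays negative and bounded away from zero, which requires simultaneously (i) controlling the drift of $v_\eps$ away from $\uu_\star$ so that $f''(v_\eps)$ keeps the sign of $f''(\uu_\star)$, and (ii) controlling $\d_y v_\eps$ from below — note $\d_y v_\eps$ decays like $e^{g'(\uu_\star)t}$, so on a bounded time interval it stays comparable to its (nonzero, sign-definite) initial value; combining with $|f''(\uu_\star)|\ne0$ from non-degeneracy gives the bound. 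A secondary, purely bookkeeping, obstacle is choosing the amplitude scaling so that the $W^{k,q}$-smallness of the initial data holds for \emph{every} $k$ simultaneously; this is handled by the slowly-decaying prefactor $\theta(\eps_n)$ and does not interfere with the blow-up, since blow-up of $J_\eps$ only needs the leading $O(1)$ sign structure coming from the background profile, which is present for all small $\eps$.
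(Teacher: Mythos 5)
Your overall framework (characteristics, blow-up of $\d_x u=\d_x v/\d_x X$ when the Jacobian $J_\eps=\d_x X_\eps$ first vanishes, confinement of the perturbation near $x_\star$) is the same as the paper's, and you correctly identify the competition between concentration and damping as the crux. But the step where you resolve that competition is wrong. You claim $\dot J_\eps\le -c<0$ uniformly on $[0,T_\eps]$, hence $J_\eps(t,y)\le 1-c\,t$ and $T_\eps\le 1/c$ bounded above uniformly in $\eps$. This cannot be true: taking $\eps\to0$ it would force the \emph{unperturbed} wave to break in finite time, whereas $\uU(x-\sigma t)$ is a global classical solution on the connected component containing $x_\star$. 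The actual situation is borderline. Along the characteristic through $x_\star$ one has $\d_x v(t,x_\star)=\uU'(x_\star)e^{g'(\uu_\star)t}$, so $\dot J(t,x_\star)=f''(\uu_\star)\uU'(x_\star)e^{g'(\uu_\star)t}$ decays exponentially (it is \emph{not} bounded away from zero), and since $\uU'(x_\star)=g'(\uu_\star)/f''(\uu_\star)$ the total integral $\int_0^\infty\dot J$ equals exactly $-1$: for the background wave $J(t,x_\star)=e^{g'(\uu_\star)t}$ tends to $0$ but only as $t\to\infty$. Your assertion that blow-up ``only needs the leading $O(1)$ sign structure coming from the background profile'' is therefore exactly backwards — the background contribution alone is insufficient, and the perturbation's contribution to the slope at $x_\star$ is what must tip the limit of the Jacobian strictly below zero.

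The paper handles this by choosing $\chi(0)=0$, $\chi'(0)>0$ and amplitude/width scalings $\eps\,\uU'(x_\star)\chi((\cdot-x_\star)/\eta_\eps)$ with $\eta_\eps=|\ln\eps|^{-1}$. Then $v_\eps(t,x_\star)\equiv\uu_\star$ exactly, everything along that characteristic is explicitly solvable, and
\[
\d_xX_\eps(t,x_\star)\,=\,e^{g'(\uu_\star)t}\Bigl(1+\tfrac{\eps}{\eta_\eps}\chi'(0)\Bigr)-\tfrac{\eps}{\eta_\eps}\chi'(0)\ \stackrel{t\to+\infty}{\longrightarrow}\ -\tfrac{\eps}{\eta_\eps}\chi'(0)\,<\,0\,,
\]
so the Jacobian vanishes at a finite time $T_\eps\sim|g'(\uu_\star)|^{-1}\ln(\eta_\eps/\eps)$, which diverges (rather than staying bounded) as $\eps\to0$; the two-sided exponential bounds on $\d_xv_\eps$ then give the blow-up of $\d_xu_\eps$. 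This choice of width $\eta_\eps$ also fixes your bookkeeping problem: $\Norm{\d_x^k(u_\eps(0,\cdot)-\uU)}_{L^q}=\eps|\ln\eps|^{k-1/q}|\uU'(x_\star)|\Norm{\d_x^k\chi}_{L^q}\to0$ for every $k$ and $q$ simultaneously, whereas your $\eps\,\chi((\cdot-x_\star)/\eps)$ scaling fails already in $W^{1,\infty}$ and your proposed repairs ($\eps^2$ amplitude, slowly decaying prefactor) shrink the crucial slope perturbation at $x_\star$ without the compensating analysis showing it still suffices to push the Jacobian's limit below zero.
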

\begin{proof}
Again we find more convenient to parameterize our family of solutions by $\eps>0$ instead of $n\in\NN$, modify $\uU$ only in a neighborhood of $x_\star$ and use characteristics to study the effect of the perturbation. 

We choose $\delta_0>0$ sufficiently small to enforce 
\begin{align*}
\min_{|u-\uu_\star|\leq 2\,\delta_0} |f''(u)-f''(\uu_\star)|&\leq
\frac12|f''(\uu_\star)|\,,&
\min_{|u-\uu_\star|\leq 2\,\delta_0} |g'(u)-g'(\uu_\star)|&\leq
\frac12|g'(\uu_\star)|\,.
\end{align*}
Pick $\chi:\RR\to\RR$ smooth and compactly supported in $(-1,1)$ such that $\chi(0)=0$ and $\chi'(0)>0$ and set for $\eps>0$, $\eta_\eps\eqdef |\ln \eps|^{-1}$, $I_\eps\eqdef [x_\star-\eta_\eps,x_\star+\eta_\eps]$, and \[
u_\eps(0,\cdot)=\uU+\eps\,\uU'(x_\star)\,\chi\left(\frac{\cdot-x_\star}{\eta_\eps}\right)\,.
\] We may take $\eps$ sufficiently small to ensure that for any $x\in I_\eps$, $|u_\eps(0,x)-\uu_\star|\leq \,\delta_0$ and $\d_xu_\eps(0,x)$ has the sign of $\uU'(x_\star)$. 

Now, for $\eps>0$ and $x\in I_\eps $, we define $v_\eps(\cdot,x)$ and $X_\eps(\cdot,x)$ by the initial data $v_\eps(0,x)=u_\eps(0,x)$ and $X_\eps(0,x)=x$, and the differential equations
\[
\partial_t v_\eps(t,x)=g\big(v_\eps(t,x)\big) \quad  \text{ and } \quad\partial_t X_\eps(t,x)=f'\big(v_\eps(t,x)\big).
\]
For $\eps>0$ sufficiently small, a continuity argument shows that $v_\eps$ and $X_\eps$ are globally defined, and that 
\begin{itemize}
\item for any $t\geq0$ and $x\in I_\eps$, $v_\eps(t,x)-\uu_\star$ has the sign of $\uU'(x_\star)(x-x_\star)$, $\d_x v_\eps(t,x)$ has the sign of $\uU'(x_\star)$ and
\begin{align}
e^{\frac32\,g'(\uu_\star)\,t}\,|v_\eps(0,x)-\uu_\star|\leq\,|v_\eps(t,x)-\uu_\star|&\leq\,e^{\frac12\,g'(\uu_\star)\,t}\,|v_\eps(0,x)-\uu_\star|\,,
\nonumber\\\label{e:wb1}
e^{\frac32\,g'(\uu_\star)\,t}\,|\d_xv_\eps(0,x)|\leq\,|\d_xv_\eps(t,x)|&\leq\,e^{\frac12\,g'(\uu_\star)\,t}\,|\d_xv_\eps(0,x)|\,;
\end{align}
\item for any $t\geq 0$, $\d_xv(t,x_\star)=\d_xv_\eps(0,x)\,e^{g'(\uu_\star)\,t}$, $X_\eps(t,x_\star)=x_\star+\sigma\,t$ and
\begin{align}
\d_xX_\eps(t,x_\star)&=
1+\frac{f''(\uu_\star)}{g'(\uu_\star)} \partial_x v_\eps(0,x_\star) \left( e^{g'(\uu_\star)t}-1\right)
\,=\,e^{g'(\uu_\star)t}\left(1+\frac{\eps}{\eta_\eps}\chi'(0)\right)
-\frac{\eps}{\eta_\eps}\chi'(0)\nonumber\\\label{e:wb0}
&\stackrel{t\to+\infty}{\longrightarrow}
-\frac{\eps}{\eta_\eps}\chi'(0)\,;
\end{align}
\item $t \mapsto X_\eps(t,x_\star-\eta_\eps)-(x_\star+\sigma\,t)$ is increasing and $t \mapsto X_\eps(t,x_\star+\eta_\eps)-(x_\star+\sigma\,t)$ is decreasing.
\end{itemize}

This shows that for $\eps>0$ sufficiently small, with
\[
T_\eps\eqdef \sup\{\,t\geq0\,;\,\forall s\in[0,t],\,\forall x\in I_\eps, \d_xX_\eps(s,x)>0\,\}\,,
\]
a solution $u_\eps$ to~\eqref{eq-u} on $[0,T_\eps)\times\RR$ is obtained by setting, for $0\leq t< T_\eps$ and $x\in\RR$,
\[
u_\eps(t,x)\,=\,
\begin{cases}\uU(x-\sigma t)&\qquad\textrm{if }\quad x\leq X_\eps(t,x_\star-\eta_\eps)\quad\textrm{or}\quad
x\geq X_\eps(t,x_\star+\eta_\eps)\\
v_\eps(t,X_\eps(t,\cdot)^{-1}(x))
&\qquad\textrm{if }\quad X_\eps(t,x_\star-\eta_\eps)\leq x\leq X_\eps(t,x_\star+\eta_\eps)
\end{cases}
\]
and that it satisfies for any $k\in\NN$ and $1\leq q\leq \infty$
\begin{align*}
\Norm{\d_x^k(u_\eps(0,\cdot)-\uU)}_{L^{q}(\RR)}
&\,=\,\frac{\eps}{|\ln(\eps)|^{k-\frac1q}}\,|\uU'(x_\star)|\,\Norm{\d_x^k\chi}_{L^{q}(\RR)}\,,
\end{align*}
and for any $0\leq t<T_\eps$ and $x\in I_\eps$
\begin{equation}\label{e:wb2}
\d_xu_\eps(t,X_\eps(t,x))
\,=\,\frac{\d_xv_\eps(t,x)}{\d_xX_\eps(t,x)}\,.
\end{equation}
For any $\eps>0$ sufficiently small, it follows from the continuity of $\d_xX_\eps$ that $T_\eps>0$ and from~\eqref{e:wb0} that $T_\eps<+\infty$. Combined with~\eqref{e:wb1} and~\eqref{e:wb2} this yields the desired blow-up.
\end{proof}

\subsection{Instabilities of shock positions}

For instabilities due to discontinuities it is crucial to use adapted notions of stability, both at spectral and nonlinear levels. More, within the framework already introduced it is trivial to detect spectral instabilities.

\begin{Proposition}\label{P.spectral-instability-shock}
Let $f\in\cC^{2}(\RR)$, $g\in\cC^{1}(\RR)$ and $(\uU,\sigma,D)$ define a non-degenerate piecewise regular entropy-admissible traveling-wave solution to~\eqref{eq-u}. 
\begin{enumerate}
\item For any $d_0\in D$, $\frac{[\,g(\uU)\,]_{d_0}}{[\,\uU\,]_{d_0}}$ belongs to the $\cX\times\cY$-spectrum of the linearization about $\uU$. 
\item In particular, if for some $d_0\in D$, $\frac{[\,g(\uU)\,]_{d_0}}{[\,\uU\,]_{d_0}}>0$ then $\uU$ is spectrally unstable in $BUC^k({\RR\setminus D})\times \ell^\infty(D)$ for any $k\in \NN$.
\end{enumerate}
\end{Proposition}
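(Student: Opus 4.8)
The plan is to exhibit, for each $d_0\in D$, an explicit eigenpair of $\cL$ associated with the claimed eigenvalue $\lambda_0\eqdef\frac{[\,g(\uU)\,]_{d_0}}{[\,\uU\,]_{d_0}}$, namely the one in which the "shape" component $w$ vanishes identically and only the synchronization datum at $d_0$ is activated. Concretely, I would set $w\equiv 0$ and $(y_d)_{d\in D}$ given by $y_{d_0}=1$ and $y_d=0$ for $d\neq d_0$. Then, looking at the definition of $\cL$, the first component of $\cL(0,(y_d)_{d\in D})$ is $-\d_x((f'(\uU)-\sigma)\cdot 0)+g'(\uU)\cdot 0=0$, and the $D$-component at $d$ is $y_d\,\frac{[\,(f'(\uU)-\sigma)\uU'\,]_d}{[\,\uU\,]_d}+\frac{[\,(f'(\uU)-\sigma)\cdot0\,]_d}{[\,\uU\,]_d}=y_d\,\frac{[\,(f'(\uU)-\sigma)\uU'\,]_d}{[\,\uU\,]_d}$, which vanishes for $d\neq d_0$ and equals $\frac{[\,(f'(\uU)-\sigma)\uU'\,]_{d_0}}{[\,\uU\,]_{d_0}}$ at $d_0$.

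The key identification is then that $(f'(\uU)-\sigma)\uU'=g(\uU)$ pointwise on $\RR\setminus D$ by the profile ODE~\eqref{eq:ODE}, so that $[\,(f'(\uU)-\sigma)\uU'\,]_{d_0}=[\,g(\uU)\,]_{d_0}$, hence $\cL(0,(y_d)_{d\in D})=\big(0,(\lambda_0\,y_d)_{d\in D}\big)=\lambda_0\,(0,(y_d)_{d\in D})$. Since $(0,(y_d)_{d\in D})$ is a nonzero element of $\cX\times\cY$ — indeed $(y_d)_{d\in D}\neq 0$, and it certainly lies in any reasonable $\cY$ such as $\ell^\infty(D)$ — and it lies in the (maximal) domain of $\cL$ because $w\equiv0$ trivially satisfies the regularity constraint imposed on the domain, it is a genuine eigenvector. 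Therefore $\lambda_0\in\spec(\cL)$, which is exactly the first assertion. The second assertion is then immediate: if $\lambda_0>0$ for some $d_0\in D$, then $\cL$ has an eigenvalue, a fortiori a spectral value, with positive real part, so by definition $\uU$ is spectrally unstable; and this holds in the space $BUC^k(\RR\setminus D)\times\ell^\infty(D)$ for every $k\in\NN$ since $w\equiv0\in BUC^k(\RR\setminus D)$ and $(y_d)_{d\in D}\in\ell^\infty(D)$ regardless of $k$.

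Honestly, there is no real obstacle here: the statement is, as the surrounding text says, "trivial to detect" within the chosen functional-analytic framework. The only point requiring a modicum of care is bookkeeping — checking that $(0,(y_d)_{d\in D})$ does belong to the maximal domain of $\cL$ (trivial, since the defining condition on the domain is a condition on $w$ and is vacuous when $w=0$) and that $\cY=\ell^\infty(D)$ is large enough to contain the indicator sequence at $d_0$ (also trivial). One might optionally remark that the very structure of $\cL$ was designed so that the discontinuity-synchronization block acts diagonally with eigenvalues $\frac{[\,g(\uU)\,]_d}{[\,\uU\,]_d}$, which makes this proposition a sanity check confirming that the reduction from $(\tu,\psi)$ to $(\tw,(y_d)_{d\in D})$ correctly captures the shock-position instability mechanism described in the introduction.
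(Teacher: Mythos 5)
Your proposal is correct and is exactly the paper's argument: the paper's one-line proof exhibits the same eigenvector $(0,(\delta_{d,d_0})_{d\in D})$, with the eigenvalue identified via the profile ODE $(f'(\uU)-\sigma)\uU'=g(\uU)$ just as you do. Your write-up merely spells out the computation and the domain/nontriviality checks that the paper leaves to the reader.
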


\begin{proof}
One checks readily that $(w,(y_d)_{d\in D})=(0,(\delta_{d,d_0})_{d\in D})$ provides an eigenvector of $\cL$ for the eigenvalue $\frac{[\,g(\uU)\,]_{d_0}}{[\,\uU\,]_{d_0}}$.
\end{proof}

\begin{Proposition}\label{P.nl-instability-shock}
Let $k\in\NN^\star$, $f\in\cC^{k+1}(\RR)$, $g\in\cC^{k}(\RR)$ and $(\uU,\sigma,D)$ define a non-degenerate piecewise regular entropy-admissible traveling-wave solution to~\eqref{eq-u} satisfying for some  $d_0\in D$, $\frac{[\,g(\uU)\,]_{d_0}}{[\,\uU\,]_{d_0}}>0$. Then $\uU$ is nonlinearly unstable in the following sense. There exist $\delta>0$, a sequence $(u_n)_{n\in\NN}$ of piecewise regular entropy solutions to~\eqref{eq-u}, each defined on $[0,T_n]$ with $T_n>0$, and smooth phase shifts $(\psi_n)_{n\in\NN}$, each defined on $[0,T_n]$, such that $t\mapsto d_0+\sigma\,t+\psi_n(t,d_0)$ is constant equal to $d_0$, $(t,x)\mapsto u_n(t,x+\sigma t+\psi_n(t,x))-\uU(x)\in\cC^1([0,T_n]\times (\RR\setminus \{d_0\}))$ and for any $I\subset \RR$ neighborhood of $d_0$, one has for $n$ sufficiently large
\begin{enumerate}
\item $\psi_n(0,\cdot)$ and $u_n(0,\cdot+\psi_n(0,\cdot))-\uU$ are compactly supported in $I$, and for any $1\leq q\leq \infty$, 
\[\Norm{u_n(0,\cdot+\psi_n(0,\cdot))- \uU}_{W^{k,q}(I\setminus\{d_0\})}
+\Norm{\psi_n(0,\cdot)}_{W^{k+1,q}(I)}\to 0\quad \text{ as} \quad n\to \infty,\]
\item for any $\cC^1$ shift $\psi:\RR\to\RR$ such that $\psi\id{D}=\psi_n(T_n,\cdot)\id{D}$ and $\Norm{\d_x\psi}_{L^\infty(\RR)}\leq 1/2$, and any $1\leq p\leq \infty$,
\[\Norm{u_n(T_n,\,\cdot\,+\sigma\,T_n+\psi(\cdot))-\uU}_{L^p(\RR\setminus \{d_0\})}
\geq \delta.\] 
\end{enumerate}
\end{Proposition}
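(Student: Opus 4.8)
The plan is to exploit the scalar nature of the equation: near $d_0$ the instability is really an instability of the shock position, so I would construct perturbed solutions by simply displacing the discontinuity by a small amount $h_\eps>0$ while keeping the two smooth pieces attached to it as exact solutions of the profile ODE, and then let the Rankine–Hugoniot dynamics push the displacement up to a macroscopic threshold. Concretely, I would parameterize the family by $\eps>0$, set $T_\eps$ to be the (finite, $\eps$-independent up to logarithms) time needed for the shock offset to reach a fixed size, and localize everything to a small neighborhood $I_\eps$ of $d_0$ shrinking slowly so that the perturbation stays confined: since $f'(\uU(d_0^-))-\sigma>0$ and $f'(\uU(d_0^+))-\sigma<0$ (second condition of non-degeneracy), the two smooth branches on either side of $d_0$ are swept \emph{into} the discontinuity by the characteristic flow, which both prevents formation of new shocks and keeps the support bounded.

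First I would record the ODE for the shock location. If $d(t)$ denotes the position (in the frame moving at speed $\sigma$) of the discontinuity of the perturbed solution, and on each side the solution equals $\uU$ evaluated at a shifted argument, then Rankine–Hugoniot gives $\dot d(t) = \frac{[f(\uU)]_{d(t)+\cdots}}{[\uU]_{d(t)+\cdots}}-\sigma$ to leading order, and linearizing about $d_0$ yields $\dot h = \frac{[g(\uU)]_{d_0}}{[\uU]_{d_0}}\,h + O(h^2)$, using $[f(\uU)]_{d_0}=\sigma[\uU]_{d_0}$ and differentiating the Rankine–Hugoniot identity along the profile with~\eqref{eq:ODE}. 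Thus with $\mu\eqdef \frac{[g(\uU)]_{d_0}}{[\uU]_{d_0}}>0$ the offset grows like $e^{\mu t}$ until it reaches some $\delta_0$, at time $T_\eps\sim \mu^{-1}\ln(\delta_0/h_\eps)$, and I would choose the initial offset $h_\eps$ and the window half-width $\eta_\eps=|\ln\eps|^{-1}$ (as in the characteristic-point proof) so that the initial data tends to $\uU$ in every $W^{k,q}(I\setminus\{d_0\})$ while $T_\eps<\infty$. I would define $\psi_n$ so that $\psi_n(t,d_0)=-d(t)$ re-synchronizes the moving discontinuity back to $d_0$ (hence $t\mapsto d_0+\sigma t+\psi_n(t,d_0)\equiv d_0$), extended to a compactly supported $\cC^1$ function of $x$; the initial data are then genuine perturbations of $\uU$ only through a near-identity reparametrization plus a small bump, so the $W^{k+1,q}$ smallness of $\psi_n(0,\cdot)$ and $W^{k,q}$ smallness of the shape part are straightforward.

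For the lower bound at $T_\eps$, the point is that by time $T_\eps$ the discontinuity has moved a macroscopic distance $\delta_0$, so that on one side of $d_0$ a macroscopic slice of the original profile $\uU$ has been overwritten by the value coming from the \emph{other} branch, and on the other side a macroscopic new piece of profile has appeared that was not present in $\uU$; on at least one of these regions $u_\eps(T_\eps,\cdot+\sigma T_\eps)-\uU$ is bounded below in absolute value by a fixed constant on a set of length $\gtrsim\delta_0$, giving an $L^p$ lower bound uniform in $p$. The only subtlety is the effect of an arbitrary admissible resynchronizing shift $\psi$: exactly as in Proposition~\ref{P.nl-instability-infinity}, the constraints $\psi\id{D}=\psi_n(T_n,\cdot)\id{D}$ and $\|\d_x\psi\|_{L^\infty}\le 1/2$ force $\psi$ to map the connected component on each side of $d_0$ to itself and to pin $d_0$, so composing with $\Id+\psi$ changes $L^p$ norms on that component by at most a factor $2^{1/p}$; hence the macroscopic discrepancy survives and $\delta$ can be taken as $\delta_0$ times the same $\min_p 2^{-1/p}$-type constant.

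The main obstacle I expect is the confinement-and-no-new-shock bookkeeping: I need to verify rigorously, via a continuity (bootstrap) argument on $[0,T_\eps]$ in the spirit of the characteristic-flow estimates already used in the previous two proofs, that (i) $\d_x X_\eps>0$ throughout so the perturbed solution stays a graph (no spurious shock forms inside the window), (ii) the two outer characteristics emanating from $d_0\mp\eta_\eps$ reach into the shock rather than escaping, keeping $\supp(u_\eps-\uU)\subset I$, and (iii) the $O(h^2)$ corrections to the shock ODE are genuinely negligible on $[0,T_\eps]$ so that $h(T_\eps)$ does hit the threshold $\delta_0$; all three require choosing $\delta_0$ small depending only on $f,g,\uU$ near $d_0$, and then $\eps$ small, and are the analogue of the $\delta_0$-smallness conditions enforced at the start of the two preceding proofs. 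Everything else — the explicit norm computations for the initial data and the final $L^p$ estimate — is routine.
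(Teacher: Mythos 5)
Your construction is the same as the paper's: displace the discontinuity by $\eps$, keep the two branches as the exact extensions $\uU_\pm$ of the profile ODE across $d_0$, and let the Rankine--Hugoniot dynamics drive the offset; the slope function $s_0(x)=\frac{f(\uU_+(x))-f(\uU_-(x))}{\uU_+(x)-\uU_-(x)}$ satisfies $s_0(d_0)=\sigma$ and $s_0'(d_0)=\frac{[\,g(\uU)\,]_{d_0}}{[\,\uU\,]_{d_0}}$, so the offset grows like $e^{\mu t}$ up to a macroscopic threshold $\sim\eta$ at time $T_\eps\sim\mu^{-1}\ln(\eta/\eps)$. One remark: the ``main obstacle'' you anticipate (bootstrap on $\d_xX_\eps$, confinement of characteristics, no new shocks) is moot in this construction. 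Since each smooth piece is $\uU_\pm(x-\sigma t)$, an exact steady solution in the moving frame, there is no characteristic flow to control and no gradient to monitor; the only dynamical object is the scalar ODE for the shock position, and the only things to verify are that the offset stays inside the fixed neighborhood $I_\eta$ up to $T_\eps$ (immediate from the bound $\psi_{0,\eps}(t)\le 2\eps e^{\mu t}$) and that the strict Oleinik condition persists along the moving shock, which is arranged once and for all by shrinking $\eta$ at the outset.

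There is, however, a genuine gap in your final lower bound. First, the constraint $\psi\id{D}=\psi_n(T_n,\cdot)\id{D}$ does not ``pin $d_0$'': it forces $\psi(d_0)=\psi_n(T_n,d_0)$, i.e.\ the \emph{macroscopic} offset (of order $\eta$), and this forced displacement is precisely the source of the instability. Second, the ``$2^{1/p}$ change-of-variables'' argument borrowed from Proposition~\ref{P.nl-instability-infinity} does not apply here: there the comparison is with the constant $\uu_\infty$, which commutes with composition by $\Id+\psi$, whereas here the comparison function $\uU$ is \emph{not} composed with the shift, so $\Norm{u(T,\cdot+\sigma T+\psi(\cdot))-\uU}_{L^p}$ is not controlled from below by a fixed multiple of $\Norm{u(T,\cdot+\sigma T)-\uU}_{L^p}$. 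Indeed resynchronization can in principle annihilate the ``overwritten slice'' entirely: if both branches were constant near $d_0$ the perturbed solution at time $T_\eps$ would be an exact translate of $\uU$ and an admissible $\psi$ would make the difference vanish. The correct argument must use that at least one branch is strictly monotonic near $d_0$ (guaranteed since $[\,g(\uU)\,]_{d_0}\neq0$ rules out both branches being constant, by Proposition~\ref{P.structure}): on that side, say the left, the Lipschitz constraint gives $x+\psi(x)\geq d_0+\psi(d_0)+\tfrac32(x-d_0)\geq d_0+\eta$ for $x\in[d_0-2\eta,d_0)$, so the point is displaced by at least $\eta$ while staying on the same side of the shock, whence $\abs{\uU_-(x+\psi(x))-\uU_-(x)}\gtrsim\eta\,\abs{\uU'(d_0^-)}$ on an interval of length $2\eta$. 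Without this monotonicity input your estimate does not close.
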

\begin{proof} 
To begin with introduce $\uU_\pm$ the maximal solutions to $\uU_\pm'=F_\sigma(\uU_\pm)$, where $F_\sigma$ is defined in~\eqref{def-F}, with $\uU_\pm(d_0)=\uU(d_0^\pm)$.
There exists $\eta>0$ such that with $I_\eta\eqdef [d_0-32\eta,d_0+32\eta]$ $\uU_-,\uU_+\in\cC^1(I_\eta)$ and for any $x\in I_\eta$, $\uU(x)=\uU_-(x)$ if $x<d_0$ and $\uU(x)=\uU_+(x)$ if $x>d_0$. To analyze the Rankine-Hugoniot condition near $d_0$, lowering $\eta$ if necessary, we also introduce the slope function
\[
s_0\,:\quad I_\eta\to\RR\,,\qquad
x\mapsto \frac{f(\uU_+(x))-f(\uU_-(x))}{\uU_+(x)-\uU_-(x)}
\]
and observe that 
\[
s_0(d_0)=\sigma\qquad\textrm{and}\qquad
s_0'(d_0)=\frac{[\,g(\uU)\,]_{d_0}}{[\,\uU\,]_{d_0}}\,.
\]
By continuity, lowering $\eta$ again we may also enforce that
\begin{align*}
\max_{I_\eta} |\uU'_--\uU'(d_0^-)|&\leq
\frac12|\uU'(d_0^-)|\,,&
e^{32\eta\,\frac{\max_{I_\eta}|s_0''|}{|s_0' (d_0)|}}&\leq 2\,,
\end{align*}
and for any $x\in I_\eta$ and $v$ between $\uU_-(x)$ and $\uU_+(x)$,
\[
\int_0^1 f'(\uU_-(x)+s\,(v-\uU_-(x)))\,\dd s
\,>\,
\int_0^1 f'(\uU_+(x)+s\,(v-\uU_+(x)))\,\dd s\,.\]

Once again we parameterize perturbed solutions with $\eps>0$. For $\eps\in(0,\eta)$, starting from $\psi_{0,\eps}(0)=\eps$ with 
\[
T_\eps
\eqdef 
\frac{1}{\frac{[\,g(\uU)\,]_{d_0}}{[\,\uU\,]_{d_0}}}
\ln\left(\frac{8\eta}{\eps}\right)
\]
we may solve on $[0,T_\eps]$ the Rankine-Hugoniot equation 
\[
\sigma+\psi_{0,\eps}'=s_0(d_0 +\psi_{0,\eps}(\cdot))
\]
and obtain that for any $t\in[0,T_\eps]$
\[
\frac12\,\eps\,e^{\frac{[\,g(\uU)\,]_{d_0}}{[\,\uU\,]_{d_0}}\,t}\leq\psi_{0,\eps}(t)\leq 2\,\eps\,e^{\frac{[\,g(\uU)\,]_{d_0}}{[\,\uU\,]_{d_0}}\,t}\,.
\]
Then for $\eps\in(0,\eta)$, a solution $u_\eps$ to~\eqref{eq-u} on $[0,T_\eps]\times\RR$ is obtained by setting, for $0\leq t\leq T_\eps$ and $x\in\RR$,
\[
u_{\eps}(t,x) = \begin{cases}
\uU_-(x-\sigma t) & \text{if }\ x-\sigma t\in I_\eta\qquad\text{and}\qquad x-\sigma t<d_0+\psi_{0,\eps}(t)\,,\\
\uU_+(x-\sigma t) & \text{if }\ x-\sigma t\in I_\eta\qquad\text{and}\qquad x-\sigma t>d_0+\psi_{0,\eps}(t)\,,\\
\uU(x-\sigma t) & \text{if }\ x-\sigma t\in \RR\setminus I_\eta\,,
\end{cases}
\]
and, picking $\chi:\RR\to\RR$ smooth and compactly supported such that $\chi(0)=1$, a suitable phase shift is obtained by setting for $0\leq t\leq T_\eps$ and $x\in\RR$,
\[
\psi_\eps(t,x)\,=\,\psi_{0,\eps}(t)\,\chi\left(|\ln(\eps)|\,(x-d_0)\right)
\]
provided that $\eps$ is sufficiently small.

Now we turn to the instability bound. Fix $\eps\in(0,\eta)$ sufficiently small. Let $\psi:\RR\to\RR$ be a $\cC^1$ shift such that $\psi\id{D}=\psi_\eps(T_\eps,\cdot)\id{D}$ and $\Norm{\d_x\psi}_{L^\infty(\RR)}\leq 1/2$. Then since, for $x\in[d_0-2\eta,d_0)$,
\[
x+\psi(x)\,\geq\,d_0+\psi(d_0)+\frac32\,(x-d_0)
\,\geq\,d_0\,+\,\eta\,,
\]
we have, for $1\leq p\leq\infty$
\[
\Norm{u_\eps(T_\eps,\,\cdot\,+\sigma\,T_\eps+\psi(\cdot))-\uU}_{L^p([d_0-2\eta,d_0))}
\,\geq\,(2\,\eta)^{\frac1p}\,\frac{\eta}{2}\,|\uU'(d_0^-)|\,.
\]
This achieves the proof with
\[
\delta\eqdef \frac{\eta}{2}\,|\uU'(d_0^-)|\,\min(\{1,2\,\eta\})\,.
\]
\end{proof}

\subsection{Specialization to periodic traveling waves}

We conclude this section by briefly addressing on a specific example the following question: when $\uU$ possesses some global symmetry, is it possible to restore some stability by constraining perturbations to share the same or a related symmetry? We discuss here the case where the symmetry is periodicity, that is, invariance under translations by a discrete set of periods.

A direct inspection of the proofs of Propositions~\ref{P.spectral-instability-characteristic},~\ref{P.nl-instability-characteristic},~\ref{P.spectral-instability-shock} and~\ref{P.nl-instability-shock} show that one may modify them to enforce that perturbations share the same periodicity as the background wave. At the spectral level a more relevant and somewhat more intricate question is: for which Floquet parameter does the instability occur?

To be more explicit, following~\cite[Section~4.2]{JNRYZ} we introduce the relevant Bloch transform/inverse Fourier series. Let $(\uU,\sigma,D)$ define a non-degenerate piecewise regular entropy-admissible traveling-wave solution to~\eqref{eq-u}, that is periodic with period $X_0\in(0,+\infty)$. Choose $x_0\in\RR$ a point of continuity of $\uU$ and set $D_0\eqdef D\cap(x_0,x_0+X_0)$ so that $D=D_0+X_0\,\ZZ$. For functions $w$ on $\RR$ and $y$ on $D$, we introduce representations
\begin{align*}
w(x)&=\int_{-\pi/X_0}^{\pi/X_0} e^{i\xi x}\check w(\xi,x) \dd\xi\,,
\quad x\in\RR\,,\\
y_{jX_0+d}&=\int_{-\pi/X_0}^{\pi/X_0} e^{i\xi jX_0}\check y_d(\xi) \dd\xi\,,
\quad (j,d)\in \ZZ\times D_0\,,
\end{align*}
where each $\check w(\xi,\cdot)$ is $X_0$-periodic. For sufficiently smooth $w$ and sufficiently localized $(y_d)_d$, the former transforms are defined pointwise by
\begin{align*}
\check{w}(\xi,x)&\eqdef \sum_{k\in\ZZ}e^{i\frac{2k\pi}{X_0}x}\,\widehat{w}\big(\tfrac{2\pi k}{X_0}+\xi\big)=
\frac{X_0}{2\pi}\sum_{k\in\ZZ}e^{-i\xi(x+kX_0)}w(x+kX_0)\,,\\
\check{y}_d(\xi)&\eqdef 
\frac{X_0}{2\pi}\sum_{j\in\ZZ}e^{-ijX_0\,\xi}\,y_{jX_0+d}\,,
\end{align*}
where $\ \widehat{\,\,}\ $ denotes (a suitable choice of) the Fourier transform, given by
\[
\widehat{w}(\xi)\,=\,\frac{1}{2\pi}\int_{\RR} e^{-i\xi x}w(x) \dd x\,,
\quad \xi\in\RR\,.
\]
General definitions follow by a density argument in $L^2$ (respectively, $\ell^2$) based on Parseval identities
$$
\|\check{w}\|_{L^2\left(\left(-\frac{\pi}{X_0},\frac{\pi}{X_0}\right);L^2((x_0,x_0+X_0))\right)}
=\sqrt{\frac{X_0}{2\pi}}\|w\|_{L^2(\RR)}
\,,
\quad
\|\check{y}\|_{L^2\left(\left(-\frac{\pi}{X_0},\frac{\pi}{X_0}\right);\ell^2(D_0)\right)}=\sqrt{\frac{X_0}{2\pi}}\|(y_d)_d\|_{\ell^2(\ZZ)}\,.
$$
In particular the Bloch transform identifies $L^2(\RR)$ with $L^2\left(\left(-\pi/X_0,\pi/X_0\right);L^2((x_0,x_0+X_0))\right)$,  and this may be extended, for $k\in\NN$, to identify $H^k(\RR\setminus D)$ with $L^2\left(\left(-\frac{\pi}{X_0},\frac{\pi}{X_0}\right);H_{\rm per}^k((x_0,x_0+X_0)\setminus D_0)\right)$ by
observing
$$
\|(\d_x+i\xi)^\ell\check{w}\|_{L^2\left(\left(-\frac{\pi}{X_0},\frac{\pi}{X_0}\right);L^2((x_0,x_0+X_0))\right)}
=\sqrt{\frac{X_0}{2\pi}}\,\|\d_x^\ell w\|_{L^2(\RR)}\,,\qquad \ell\in\NN\,,
$$
where $H_{\rm per}^k((x_0,x_0+X_0)\setminus D_0)$ is the $H^k((x_0,x_0+X_0)\setminus D_0)$-closure of $X_0$-periodic functions of $\cC^\infty(\RR\setminus D)$, hence is a set of $H^k((x_0,x_0+X_0)\setminus D_0)$ functions satisfying suitable periodic boundary conditions as soon as $k>1/2$.

Applying the above transformations to the resolvent problems for $\cL$ diagonalize them into single-cell problems parameterized by the Floquet exponent $\xi$. As a consequence, the spectrum of $\cL$ on $H^k(\RR\setminus D)\times\ell^2(D)$ (for some $k\in\NN$) is seen to coincide with union over $\xi\in [-\pi/X_0,\pi/X_0]$ of the spectra of $\cL_\xi$ on $H_{\rm per}^k((x_0,x_0+X_0)\setminus D_0)\times\ell^2(D_0)$, defined with maximal domain by 
\begin{align*}
\cL_\xi&(w,(y_d)_{d\in D_0})\\
&\eqdef \left(-(\d_x+i\xi)((f'(\uU)-\sigma)w)+g'(\uU)w,
\left(y_d\frac{[\,(f'(\uU)-\sigma)\uU'\,]_d}{[\,\uU\,]_d}\,+\,\frac{[\,(f'(\uU)-\sigma)w\,]_d}{[\,\uU\,]_d}\right)_{d\in D_0}\right)\,.
\end{align*}

\begin{Proposition}\label{P.instability-periodic}
Let $k\in\NN$ and $f\in\cC^{k+2}(\RR)$, $g\in\cC^{k+1}(\RR)$ and $(\uU,\sigma,D)$ define a  non-degenerate piecewise regular entropy-admissible traveling-wave solution to~\eqref{eq-u} of period $X_0>0$. 
\begin{enumerate}
\item If $x_\star\in\RR\setminus D$ is a characteristic point then every $-g'(\uu_\star)\,\ell$ with $\ell\in\NN$ and $0\leq\ell\leq k-1$ belongs to the $H_{\rm per}^k((x_0,x_0+X_0)\setminus D_0)\times\ell^2(D_0)$-spectrum of the Bloch symbol $\cL_\xi$ of the linearization about $\uU$ at any Floquet $\xi\in [-\pi/X_0,\pi/X_0]$.
\item For any $d_0\in D$, $\frac{[\,g(\uU)\,]_{d_0}}{[\,\uU\,]_{d_0}}$ belongs to the $H_{\rm per}^k((x_0,x_0+X_0)\setminus D_0)\times\ell^2(D_0)$-spectrum of the Bloch symbol $\cL_\xi$ of the linearization about $\uU$ at any Floquet $\xi\in [-\pi/X_0,\pi/X_0]$.
\item Nonlinear instabilities of Propositions~\ref{P.nl-instability-characteristic} and~\ref{P.nl-instability-shock} may also be obtained under perturbations of period $X_0$.
\end{enumerate}
\end{Proposition}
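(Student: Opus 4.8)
The plan is to establish Proposition~\ref{P.instability-periodic} by showing that the instability witnesses constructed in the proofs of Propositions~\ref{P.spectral-instability-characteristic}, \ref{P.nl-instability-characteristic}, \ref{P.spectral-instability-shock} and~\ref{P.nl-instability-shock} are either already periodic or can be periodized without spoiling their defining properties, and then to read off the claimed statements via the Bloch decomposition recorded just above. The key observation driving the whole argument is that all these instability mechanisms are \emph{infinitesimally localized} near a single characteristic point $x_\star$ or a single discontinuity $d_0$, hence are insensitive to the global periodic structure except through a bookkeeping of how the witness is distributed over the lattice $x_\star+X_0\ZZ$ (resp.\ $d_0+X_0\ZZ$) and which Floquet exponent $\xi$ this distribution selects.

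First, for the spectral statements I would revisit the proof of Proposition~\ref{P.spectral-instability-characteristic}: there the kernel element of the adjoint of $-g'(\uu_\star)\ell-\cL$ is a finite linear combination of $\delta_{x_\star},\dots,\delta_{x_\star}^{(\ell)}$, built recursively from the two displayed identities. For the periodic problem, the adjoint of $-g'(\uu_\star)\ell-\cL_\xi$ acts on a single cell $(x_0,x_0+X_0)\setminus D_0$; since $x_\star$ lies strictly inside a connected component of $\RR\setminus D$, one may take $x_0$ so that $x_\star\in(x_0,x_0+X_0)$, and the very same combination of Dirac masses at $x_\star$ (now viewed as a periodic distribution, which imposes no extra constraint because the support avoids the cell boundary) solves the same recursion verbatim — the terms $f'(\uU)-\sigma$ and $g'(\uU)$ are unchanged locally and the $i\xi$ modification in $\cL_\xi$ only multiplies $w$ by a bounded function, which is absorbed in the lower-order span. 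Because $\delta_{x_\star}^{(\ell)}\in (H^k_{\rm per})^*$ precisely when $\ell\le k-1$, this yields $-g'(\uu_\star)\ell\in\spec(\cL_\xi)$ for $0\le\ell\le k-1$ and every $\xi$, giving the first item. For the second item I would simply repeat the proof of Proposition~\ref{P.spectral-instability-shock}: the eigenvector $(0,(\delta_{d,d_0})_{d\in D_0})$ of $\cL$ already lies in $\{0\}\times\ell^2(D_0)$ and is plainly an eigenvector of $\cL_\xi$ for the eigenvalue $[g(\uU)]_{d_0}/[\uU]_{d_0}$ independently of $\xi$, since the $i\xi$ term acts only on the $w$-component.

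For the third item I would periodize the explicit families built in the proofs of Propositions~\ref{P.nl-instability-characteristic} and~\ref{P.nl-instability-shock}. In the wave-breaking construction one replaces the single bump $\eps\,\uU'(x_\star)\chi((\cdot-x_\star)/\eta_\eps)$ by the $X_0$-periodic sum $\sum_{j\in\ZZ}\eps\,\uU'(x_\star)\chi((\cdot-x_\star-jX_0)/\eta_\eps)$; for $\eps$ small the cut-offs have disjoint supports contained in disjoint translates of $I_\eps$, so the characteristics analysis, the bounds~\eqref{e:wb1}, \eqref{e:wb0}, \eqref{e:wb2} and the definition of $T_\eps$ all hold cell by cell by translation invariance of~\eqref{eq-u}, the norms in item (1) become the corresponding periodic (per-period, or $L^q_{\rm loc}$) norms with the same decay in $\eps$, and derivative blow-up at each $x_\star+jX_0$ is identical. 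The same periodization applies to the shock-position construction: one shifts every discontinuity $d_0+jX_0$ by the common $\psi_{0,\eps}(t)$, uses the $X_0$-periodic phase $\psi_\eps(t,x)=\psi_{0,\eps}(t)\sum_j\chi(|\ln\eps|(x-d_0-jX_0))$, and the Rankine--Hugoniot ODE for $\psi_{0,\eps}$, the bounds on $\psi_{0,\eps}(t)$, the Oleinik inequalities and the final lower bound on $[d_0-2\eta,d_0)$ are all unchanged; the resulting solution is $X_0$-periodic in $x$ and the instability estimate is proved per period exactly as before.

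The main obstacle, and the only point requiring genuine care, is the reconciliation of the \emph{class of spectra} in the periodic statement with the spectra of the operators $\cL$ on the whole line: one must be careful that the Dirac-mass witnesses for $\cL_\xi$ genuinely belong to the dual of $H^k_{\rm per}((x_0,x_0+X_0)\setminus D_0)$ and satisfy the (periodic) boundary conditions implicit in that space, which is why the hypothesis $0\le\ell\le k-1$ appears and why one needs $x_\star$ strictly interior to the cell — a choice of $x_0$ that is always available since $x_\star\notin D$. A secondary, purely technical, obstacle in the nonlinear part is to ensure the periodized perturbation still qualifies as a piecewise regular entropy solution with the stated invariant regularity structure on the new (now periodic) discontinuity set $D\cup(x_\star$-type lattice$)$ or $D$; this follows, as in the non-periodic proofs, from the disjointness of supports for small $\eps$ together with the strict-entropy (Oleinik) inequalities already arranged there, so no new ideas are needed. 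I would therefore present the proof as: (i) cite the earlier proofs, (ii) point out the one-line modifications ($x_0$ interior to the cell; $i\xi$ absorbed in lower-order span; periodic summation of bumps/shifts with disjoint supports), and (iii) invoke the Bloch identification displayed before the statement to pass between $\cL$ and $(\cL_\xi)_\xi$.
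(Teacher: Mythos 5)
Your proposal is correct and matches the paper's intent: the paper explicitly leaves this proof to the reader as a ``relatively straightforward adaptation'' of the earlier arguments, and your periodization of the infinitesimally localized witnesses (disjoint supports for small $\eps$, translation invariance cell by cell) together with the Bloch identification is exactly that adaptation. One imprecision worth correcting: the combination of Dirac masses solving the adjoint recursion for $\cL_\xi$ is \emph{not} ``the very same'' as for $\cL$ --- since $i\xi\,(f'(\uU)-\sigma)\,\delta_{x_\star}^{(m)}$ is nonzero for $m\geq 1$, the lower-order coefficients must be re-solved and become $\xi$-dependent (only the eigenvalues are $\xi$-independent), which is precisely the caveat the paper records right after the statement; your own ``absorbed in the lower-order span'' observation already supplies the needed fix.
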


Since it is a relatively straightforward adaptation of the proofs expounded here-above we leave the proof\footnote{Including the precise statements of the periodic versions of the nonlinear instabilities.} of the foregoing proposition to the reader. We point out however that, in the spectral part related to characteristic points, eigenvectors of the adjoint operators do depend on the Floquet exponent $\xi$ whereas the corresponding eigenvalues do not.

\bigskip

The instability mechanisms identified in the foregoing subsections leave hardly any room for a stabilization by the choice of a suitable topology (in the framework of piecewise smooth solutions). The only reasonable exception we see is the stabilization of instabilities at infinities by the introduction of spatial weights, essentially as monostable fronts of reaction-diffusion systems are proved to be stable in suitable exponentially weighted spaces. Note however that, though quite common, the latter choice breaks the invariance by spatial translations of the original problem.

\section{Classification of traveling waves}

The remaining part of the paper is essentially devoted to stability results. Yet, first, we anticipate those and pause to combine them with the instability results obtained so far so as to obtain a full classification of stable non-degenerate piecewise regular entropy-admissible traveling-wave solutions to~\eqref{eq-u} under the following generic assumption on $(f,g)$.

\begin{Assumption}\label{A.generic}
For any $(u,v)\in \RR^2$, if 
\[
g(u)=0\,,\quad g(v)=0\,,\quad 
g'(u)\geq0\,,\quad g'(v)\geq0\,,\quad
\textrm{and}\quad
f'(u)=f'(v)\,,
\]
then $u=v$.
\end{Assumption}

We stress that, for any $(k,\ell)\in\NN^\star$, the set where Assumption~\ref{A.generic} holds is indeed a dense $G_\delta$ set in $\cC^k(\RR)\times\cC^\ell(\RR)$. To check that it is a $G_\delta$ set, we observe that it coincides with $\cap_{n\in\NN^\star} \Phi_n^{-1}((0,+\infty))$, where for $n\in\NN^\star$,
\[
\Phi_n\,:\quad (f,g)\,\longrightarrow\,
\min_{\substack{(u,v)\\|u|\leq n\,,\ |v|\leq n\,,\\|u-v|\geq \frac1n}}
\left(|g(u)|+|g(v)|+(g'(u))_-+(g'(u))_-+(g'(v))_++|f'(u)-f'(v)|\right)
\]
where $(\,\cdot\,)_-$ denotes the negative part. The proof of the density claim is easy to derive from the density of polynomial functions and is left to the reader.

Note that Assumption~\ref{A.generic} is designed to ensure, in view of Propositions~\ref{P.spectral-instability-characteristic} and~\ref{P.nl-instability-characteristic}, that stable non-degenerate waves take at most one characteristic value $\uu_\star$.

\medskip

The following Theorem proves the classification announced in the introduction and illustrated in Figure~\ref{F.classes}.

\begin{Theorem}\label{T.classification}
Under Assumption~\ref{A.generic}, for any $(\uU,\sigma,D)$ defining a  non-degenerate piecewise regular entropy-admissible traveling-wave solution to~\eqref{eq-u}, spectral and nonlinear stability coincide and they are equivalent to $\uU$ taking one of the following forms  
\begin{enumerate}
\item $\uU$ is constant with value $\uu$ such that $g'(\uu)<0$;
\item $\uU$ is a Riemann shock --- that is, for some $x_0\in\RR$ and $(\uu_-,\uu_+)\in\RR^2$ with $\uu_-\neq\uu_+$, $\uU$ is constant equal to $\uu_-$ on $(-\infty,x_0)$ and constant equal to $\uu_+$ on $(x_0,+\infty)$, --- with values satisfying $g'(\uu_-)<0$ and $g'(\uu_+)<0$;
\item $\uU$ is a continuous front --- that is, $\uU$ is continuous with distinct limits $\uu_{-\infty}$ at $-\infty$, $\uu_{+\infty}$ at $+\infty$, $\uu_{-\infty}\neq\uu_{+\infty}$, --- with endstates satisfying $g'(\uu_{-\infty})<0$ and $g'(\uu_{+\infty})<0$;
\item $\uU$ has only one discontinuity at a point $d_0$, joining a constant part to a non-constant part, with endstates $\uu_{\pm\infty}$ at $\pm\infty$ satisfying $g'(\uu_{\pm\infty})<0$ and jumps satisfying $\frac{[g(\uU)]_{d_0}}{[\uU]_{d_0}}<0$;
\item $\uU$ has two discontinuities at points $d_-<d_+$, being constant equal to $\uu_-$ on $(-\infty,d_-)$, constant equal to $\uu_+$ on $(d_+,+\infty)$, and non constant on $(d_-,d_+)$, with endstates satisfying $g'(\uu_{\pm})<0$ and jumps satisfying $\frac{[g(\uU)]_{d_{\pm}}}{[\uU]_{d_{\pm}}}<0$.
\end{enumerate}
\end{Theorem}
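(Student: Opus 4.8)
The plan is to establish Theorem~\ref{T.classification} in two movements: first deducing structural rigidity of stable profiles from Proposition~\ref{P.structure} together with the instability criteria of Section~\ref{S.instabilities}, and then matching the resulting classes with the dynamical stability results to be proved in the remaining sections. For the first movement, I would argue by elimination. Let $(\uU,\sigma,D)$ be non-degenerate and spectrally stable. By Propositions~\ref{P.spectral-instability-infinity}, \ref{P.spectral-instability-characteristic} and~\ref{P.spectral-instability-shock}, stability forces: $g'(\uu_\infty)<0$ at every endstate (so $g'(\uu_\infty)\neq0$ by non-degeneracy, hence $<0$); $g'(\uu_\star)>0$ is impossible but also $g'(\uu_\star)\le0$ is impossible, so there is \emph{no} characteristic value with $g'(\uu_\star)<0$ and none with $g'(\uu_\star)>0$, thus \emph{no characteristic value at all} except possibly ones with $g'(\uu_\star)=0$, which are excluded by the non-degeneracy reformulation noted after Definition~\ref{A.generic-traveling-wave}; wait — more carefully, instability at characteristic points only triggers when $g'(\uu_\star)<0$, so stability merely forbids $g'(\uu_\star)<0$, and the complementary sign $g'(\uu_\star)>0$ is forbidden too because then Proposition~\ref{P.structure}(4)--(7) would propagate to an endstate or discontinuity with the wrong sign. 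This is the crux of the combinatorial argument and I return to it below. Finally stability forces $\frac{[g(\uU)]_{d}}{[\uU]_{d}}<0$ (again $\neq0$, hence $<0$, using non-degeneracy of the jump) at every $d\in D$.

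The key step — and the one I expect to be the main obstacle — is the combinatorial bookkeeping that converts these \emph{local} sign constraints plus Proposition~\ref{P.structure} into the \emph{global} statement that $D$ has at most two points, that the non-constant part is connected and contains exactly one characteristic value, and that Assumption~\ref{A.generic} rules out more than one characteristic value overall. Concretely: on each bounded connected component of $\RR\setminus D$ the profile passes through an odd number of characteristic values with $g'$ alternating and starting positive (Proposition~\ref{P.structure}(4)); stability forbids $g'(\uu_\star)<0$, which by alternation forces \emph{exactly one} characteristic value there, with $g'>0$. On a component unbounded on one side and non-constant, Proposition~\ref{P.structure}(5)--(6) similarly forces exactly one characteristic value with $g'>0$ (the even-number-with-zero case being excluded since $g'(\uu_\infty)\neq0$ and a zero number would contradict $f'(\uu_\infty)-\sigma$ having the sign of $\pm g'(\uu_\infty)<0$ being incompatible with the discontinuity sign conditions, etc.). Likewise for $D=\emptyset$, Proposition~\ref{P.structure}(7) gives exactly one characteristic value. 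So every non-constant component carries exactly one characteristic value with $g'>0$. Now Assumption~\ref{A.generic} enters: two characteristic values $\uu_\star^{(1)}\neq\uu_\star^{(2)}$ would both be zeros of $g$ with $g'\ge0$ and both satisfy $f'(\cdot)=\sigma$, forcing $\uu_\star^{(1)}=\uu_\star^{(2)}$, a contradiction. Hence there is at most one non-constant component, and $\uU$ is constant on every other component. Since a constant component is unbounded (Proposition~\ref{P.structure}(3)), there are at most two constant components (one near $-\infty$, one near $+\infty$), so $\# D\le 2$; enumerating the cases ($D=\emptyset$ with $\uU$ constant or a continuous front; $\#D=1$ with the non-constant part on one side; $\#D=2$ with the non-constant part in the middle; plus the degenerate sub-case where even the "non-constant" slot is constant, giving a Riemann shock) yields exactly the five listed forms, with the stated sign conditions.

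For the converse and for the coincidence of spectral and nonlinear stability, I would invoke the dynamical results of the later sections: constant states are handled directly (they are the $g'(\uu)<0$ equilibria, trivially asymptotically stable); Riemann shocks are covered by the companion paper~\cite{DR1}; and the three genuinely front-like classes \ref{class3}, \ref{class4}, \ref{class5} are covered by the nonlinear stability analysis of spectrally stable continuous fronts carried out in the second-to-last section, glued to constant pieces via the Rankine--Hugoniot shock-tracking reduction described in the introduction and completed in the final section. Concretely, each of classes~\ref{class4} and~\ref{class5} is built from a spectrally stable continuous-front piece (containing the unique characteristic point) joined across good-sign discontinuities to constant states with $g'<0$; the shock-position dynamics are stable precisely because $\frac{[g(\uU)]_d}{[\uU]_d}<0$, and the constant far-fields are stable because $g'(\uu_\infty)<0$, so the gluing of the three elementary stable dynamics yields nonlinear asymptotic stability of the whole wave. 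One must also check that these five configurations are genuinely realizable as entropy-admissible non-degenerate profiles — but that is exactly the content of the profile-ODE discussion in Section~\ref{S.structure} (the Oleinik strict inequality and $f'(\uU(d^\pm))-\sigma$ sign conditions are compatible with the monotonicity forced by $F_\sigma$), and Figure~\ref{F.classes} exhibits explicit examples. The only subtlety worth flagging is that the regularity hypotheses needed to invoke the spectral instability propositions ($f\in\cC^{k+2}$, etc.) and the nonlinear ones ($\cC^2$) must be assumed ambient throughout, which the theorem statement and Assumption~\ref{A.generic} do implicitly via "non-degenerate piecewise regular entropy-admissible traveling-wave solution".
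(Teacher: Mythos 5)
There is a genuine gap at the pivotal step of your combinatorial argument. From Assumption~\ref{A.generic} you correctly conclude that all characteristic \emph{values} with $g'\geq 0$ coincide, but you then infer ``hence there is at most one non-constant component.'' That inference is invalid: nothing prevents two (or more) non-constant connected components of $\RR\setminus D$ from each passing through the \emph{same} characteristic value $\uu_\star$ with $g'(\uu_\star)>0$. This is not a corner case --- it is exactly the family highlighted in the introduction, obtained by piecing together portions of a stable front profile via the Rankine--Hugoniot condition; such waves are perfectly admissible, non-degenerate, satisfy all your local sign constraints at infinities and at characteristic points, and yet must be declared unstable for the classification to hold. Excluding them is the actual crux of the paper's proof: one reduces to a discontinuity $d_0$ flanked by two non-constant pieces, takes the nearest characteristic points $x_-<d_0<x_+$ on either side, and (in the case $g'(\uU(x_\pm))\geq 0$, where Assumption~\ref{A.generic} forces $\uU(x_-)=\uU(x_+)=\uu_\star$ with $g'(\uu_\star)>0$) performs a sign computation along the profile ODE showing that $[g(\uU)]_{d_0}$ has the sign of $-(g'(\uu_\star))^2/f''(\uu_\star)$ while $[\uU]_{d_0}$ has the sign of $-g'(\uu_\star)/f''(\uu_\star)$, whence $[g(\uU)]_{d_0}/[\uU]_{d_0}>0$ and Propositions~\ref{P.spectral-instability-shock}--\ref{P.nl-instability-shock} apply. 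Your write-up never produces this jump-sign computation, so the passage from ``one characteristic value'' to ``one non-constant component'' (and hence to $\#D\leq 2$ with constant exterior pieces) is unsupported.

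Apart from this, your overall architecture matches the paper's (instability by elimination using Propositions~\ref{P.spectral-instability-infinity}, \ref{P.spectral-instability-characteristic}, \ref{P.spectral-instability-shock} and their nonlinear counterparts; stability deferred to the results of the later sections), and the remaining bookkeeping --- constant components are unbounded, so at most two of them, so at most two discontinuities once the above is settled --- is fine. A secondary weakness: your treatment of the half-unbounded components via Proposition~\ref{P.structure}(5)--(6) is left as a parenthetical sketch (``the even-number-with-zero case being excluded since \dots etc.''); the clean statement you need is that a non-constant component with no characteristic point forces an endstate with $g'(\uu_\infty)>0$ (instability at infinity), which is how the paper phrases it and which you should spell out rather than gesture at.
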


\begin{proof}
As announced, the proof of the stability part of the statement is postponed to later sections. We only prove here that non-degenerate piecewise regular entropy-admissible waves that enter in none of the above categories satisfy one of the instability criteria of the foregoing sections. 

Let $(\uU,\sigma,D)$ define a  non-degenerate piecewise regular entropy-admissible traveling-wave solution to~\eqref{eq-u}. Since the non-degeneracy conditions imply that, on bounded connected components of $\RR\setminus D$, $\uU$ cannot be constant, we only to prove that if near a discontinuity point $d_0\in D$, $\uU$ is constant neither on the left nor on the right, then the wave is unstable.

Let $d_0$ be such a point and $X_-$ and $X_+$ be the neighboring connected components of $\RR\setminus D$, respectively on the left and on the right of $d_0$. We first show that the absence of characteristic point on either $X_-$ or $X_+$ implies instability. Indeed it follows from Proposition~\ref{P.structure} that if on $X_{\pm}$ there is no characteristic point then $X_{\pm}$ is unbounded and the corresponding endstate $\uu_\infty$ satisfies $g'(\uu_\infty)>0$.

Now, let $x_-$ denote the largest characteristic point of $X_-$ and $x_+$ denote the smallest characteristic point of $X_+$. If $g'(x_+)\geq 0$ and $g'(x_-)\geq0$, it follows from Assumption~\ref{A.generic} and Proposition~\ref{P.structure} that $\uU(x_-)=\uU(x_+)\eqdef \uu_\star$ and $g'(\uu_\star)>0$. On $(x_-,d_0)$, since $g(\uU(\cdot))$ does not vanish, it has the sign of $g'(\uU(x_-))\,\uU'(x_-)=(g'(\uu_\star))^2/f''(\uu_\star)$, and likewise $\uU(\cdot)-\uu_\star$ has the sign of $\uU'(x_-)=g'(\uu_\star)/f''(\uu_\star)$. Similarly on $(d_0,x_+)$, $g(\uU(\cdot))$ has the sign of $-g'(\uU(x_+))\,\uU'(x_+)=-(g'(\uu_\star))^2/f''(\uu_\star)$ and $\uU(\cdot)-\uu_\star$ has the sign of $-\uU'(x_+)=-g'(\uu_\star)/f''(\uu_\star)$. As a consequence, $[g(\uU)]_{d_0}$ has the sign of $-(g'(\uu_\star))^2/f''(\uu_\star)$ and $[\uU]_{d_0}=[\uU-\uu_\star]_{d_0}$ has the sign of $-g'(\uu_\star)/f''(\uu_\star)$ so that $[g(\uU)]_{d_0}/[\uU]_{d_0}$ has the sign of $g'(\uu_\star)$, thus is positive.

In any case $\uU$ is spectrally (resp. nonlinearly) unstable either by Proposition~\ref{P.spectral-instability-infinity} (resp.~\ref{P.nl-instability-infinity}), Proposition~\ref{P.spectral-instability-characteristic} (resp.~\ref{P.nl-instability-characteristic}) or Proposition~\ref{P.spectral-instability-shock} (resp.~\ref{P.nl-instability-shock}).
\end{proof}

\begin{Corollary}
Assuming that $f$ is either strictly convex or strictly concave, for any $(\uU,\sigma,D)$ defining a non-degenerate piecewise regular entropy-admissible traveling-wave solution to~\eqref{eq-u}, spectral and nonlinear stability coincide and they are equivalent to $\uU$ taking one of the following forms
\begin{itemize}
\item $\uU$ is constant with value $\uu$ such that $g'(\uu)<0$;
\item $\uU$ is a Riemann shock with values $\uu_\pm$ satisfying $g'(\uu_\pm)<0$;
\item $\uU$ is a continuous front --- with endstates $\uu_{\pm\infty}$ satisfying $g'(\uu_{\pm\infty})<0$.
\end{itemize}
\end{Corollary}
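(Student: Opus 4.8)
The plan is to read this statement off Theorem~\ref{T.classification}, using that strict convexity or concavity of $f$ trivializes both the genericity hypothesis and the last two alternatives of that theorem. Indeed, since $f''$ never vanishes $f'$ is strictly monotone, hence injective, so $f'(u)=f'(v)$ forces $u=v$; in particular Assumption~\ref{A.generic} holds and Theorem~\ref{T.classification} applies. The positive (stability) direction of the Corollary then follows directly, and it only remains to see that the fourth and fifth alternatives of Theorem~\ref{T.classification} --- those featuring a discontinuity across which $\uU$ is constant on one side and non-constant on the other --- cannot be stable, so that the stable profiles are exactly the constants, Riemann shocks and continuous fronts satisfying the stated negativity conditions on $g'$.

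I would establish this by a direct sign-chase based on Proposition~\ref{P.structure}, in the convex case $f''>0$ (the concave case being symmetric, e.g.\ by applying the convex case to $(x,u)\mapsto(-x,-u)$, or by reversing the inequalities below). In that case, if a characteristic value $\uu_\star$ exists it is unique, and $f'-\sigma$ is negative below $\uu_\star$ and positive above it. Let $d_0$ be a discontinuity near which $\uU$ is constant on, say, the left, with value $\uu_-$, and non-constant on the right. By Proposition~\ref{P.structure}, $g(\uu_-)=0$ and $f'(\uu_-)-\sigma>0$, hence $\uu_->\uu_\star$, while non-degeneracy at $d_0$ gives $f'(\uU(d_0^+))-\sigma<0$, hence $\uU(d_0^+)<\uu_\star$. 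On the right connected component $\uU$ is strictly monotone by Proposition~\ref{P.monotonic}; it must take the value $\uu_\star$ there, for otherwise items 4--6 of Proposition~\ref{P.structure} force an endstate $\uu_\infty$ with $g'(\uu_\infty)>0$, already yielding instability by Propositions~\ref{P.spectral-instability-infinity} and~\ref{P.nl-instability-infinity}. Reaching $\uu_\star$ from $\uU(d_0^+)<\uu_\star$ forces $\uU$ strictly increasing on that component, so just to the right of $d_0$ one has $\uU'=g(\uU)/(f'(\uU)-\sigma)>0$ with $f'(\uU)-\sigma<0$, whence $g(\uU)<0$ there; passing to the limit, $g(\uU(d_0^+))\leq 0$, and in fact $g(\uU(d_0^+))<0$ since $g(\uU(d_0^+))=0$ would make $\uU(d_0^+)$ a non-characteristic zero of $F_\sigma$ and force the right piece to be constant by Proposition~\ref{P.monotonic}. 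As $g(\uu_-)=0$, this gives $[\,g(\uU)\,]_{d_0}=g(\uU(d_0^+))<0$ and $[\,\uU\,]_{d_0}=\uU(d_0^+)-\uu_-<0$, hence $[\,g(\uU)\,]_{d_0}/[\,\uU\,]_{d_0}>0$, so $\uU$ is spectrally and nonlinearly unstable by Propositions~\ref{P.spectral-instability-shock} and~\ref{P.nl-instability-shock}. The case where $\uU$ is constant on the right of $d_0$ is symmetric. Applying this at the unique discontinuity of a fourth-alternative profile and at a discontinuity bordering a constant end of a fifth-alternative profile rules both out, finishing the proof.

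I do not expect a serious obstacle, the argument being pure bookkeeping on top of Proposition~\ref{P.structure}. The one delicate point is to obtain a \emph{strictly} positive jump ratio, rather than merely a nonnegative one, so that Propositions~\ref{P.spectral-instability-shock} and~\ref{P.nl-instability-shock} genuinely apply; this is exactly where one uses that a non-characteristic zero of $F_\sigma$ cannot border a non-constant piece, together with the companion remark that the non-constant component cannot avoid the characteristic value without instead producing an unstable endstate at infinity.
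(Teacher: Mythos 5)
Your proposal is correct and follows essentially the same route as the paper: observe that strict convexity/concavity makes $f'$ injective so Assumption~\ref{A.generic} holds automatically and Theorem~\ref{T.classification} applies, then rule out the fourth and fifth classes by showing that at any discontinuity bordering a constant piece the jump ratio $[\,g(\uU)\,]_{d}/[\,\uU\,]_{d}$ is necessarily positive. The paper phrases this last step more compactly (both $[\,\uU\,]_d$ and $[\,g(\uU)\,]_d$ carry the sign of $-f''$, by monotonicity of $f'$ and by the same sign-chase as at the end of the proof of Theorem~\ref{T.classification}), whereas you carry out the explicit convex-case computation, including the correct justification of strictness via the fact that a non-characteristic zero of $F_\sigma$ would force the adjacent piece to be constant; the two arguments are the same in substance.
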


\begin{proof}
First we point out that Assumption~\ref{A.generic} is automatically satisfied when $f$ is either strictly convex or strictly concave. Thus it only remains to prove that under the latter assumption the last and former-to-last cases of Theorem~\ref{T.classification} cannot happen.

Let $(\uU,\sigma,D)$ define a non-degenerate wave satisfying all the conditions of one of the two categories to be discarded except for the jump conditions. Pick $d\in D$. On one hand, we observe that since $[f'(\uU)]_d<0$, $[\uU]_d$ has the sign of $-f''$. On the other hand, a computation similar to the one at the end of the proof of Theorem~\ref{T.classification} yields that $[g(\uU)]_d$ has the sign of $-f''$. Thus $[g(\uU)]_d/[\uU]_d>0$. Hence the result.
\end{proof}

\section{Stable continuous fronts}\label{S.continous_fronts}

We begin the part of the paper devoted to stability results by the investigation of the dynamics near stable continuous fronts. This is the key missing point to carry out the strategy introduced in~\cite{DR1} and complete the stability analysis of more complex patterns.

\subsection{Detailed structure}

Given the importance of the pattern under consideration, we find convenient to store the detailed conditions on $(f,g)$ from which arises the existence of a stable continuous front.

\begin{Assumption}\label{A.front}
Assume that $(\uu_-,\uu_\star,\uu_+)\in\RR^3$, $\uu_-<\uu_\star<\uu_+$, that on a neighborhood of $[\uu_-,\uu_+]$, $f$ is $\cC^3$ and $g$ is $\cC^2$, and that the following conditions hold
\begin{enumerate}
\item $ $\\\vspace{-3em}
\begin{align*}
g(\uu_-)&=0\,,
&g(\uu_\star)&=0\,,
&g(\uu_+)&=0\,,\\
g'(\uu_-)&<0\,,
&g'(\uu_\star)&>0\,,
&g'(\uu_+)&<0\,;
\end{align*}
\item for any $u \in (\uu_-,\uu_+)\setminus\{\uu_\star\}$, $g(u)\neq 0$;
\item $f''(\uu_\star)\neq0$ and, for any $u \in [\uu_-,\uu_+]\setminus\{\uu_\star\}$, $f'(u)\neq f'(\uu_\star)$.
\end{enumerate}
Then we set $\sigma\eqdef f'(\uu_\star)$, label $\{\uu_{-\infty},\uu_{+\infty}\}=\{\uu_-,\uu_+\}$ according to
\begin{align*}
f'(\uu_{-\infty})&<\sigma\,,
&f'(\uu_{+\infty})&>\sigma\,,
\end{align*}
and define $\uU$ as the solution to $\uU(0)=\uu_\star$ and, for any $x\in\RR$, 
\[
\uU'(x)=F_\sigma (\uU(x))\,,
\]
where $F_\sigma\,:\ [\uu_-,\uu_+]\to\RR$ is as in~\eqref{def-F}.
\end{Assumption}

Note that, without loss of generality by the translation invariance of the equation, we have enforced that the single characteristic point of $\uU$ is $0$.

Notice also that the fact that $\uU$ is indeed globally defined stems from a scalar phase-portrait argument. Likewise, the following lemma follows from standard ordinary differential arguments.

\begin{Lemma}\label{L.front}
Let $(\uU,\sigma)$ be as in Assumption~\ref{A.front}. Then $(\uU,\sigma,\emptyset)$ defines a non-degenerate piecewise regular entropy-admissible traveling-wave solution to~\eqref{eq-u}, $\uU\in \cC^2(\RR)$,
\[
\dsp\lim_{x\to\pm\infty}\frac{\uU''(x)}{\uU'(x)}=\frac{g'(\uu_{\pm\infty})}{f'(\uu_{\pm\infty})-\sigma}\,,
\]
and there exists $C>0$ such that for any $x\in \RR$,
\begin{align*}
\abs{\uU(x)-\uu_{\pm\infty} }&\leq C\, \exp\left(\frac{g'(\uu_{\pm\infty})}{f'(\uu_{\pm \infty})-\sigma}x\right)\,,
\end{align*}
Moreover, for any $k\in\NN$ such that $f$ is $\cC^{k+1}$ and $g$ is $\cC^k$, there exists $C'>0$ such that for any $x\in \RR$, and any $\ell\in\NN$, $1\leq \ell\leq k$,
\begin{align*}
\abs{\uU^{(\ell)}(x)}&\leq C'\, \min\left(\left\{
\exp\left(\frac{g'(\uu_{+\infty})}{f'(\uu_{+ \infty})-\sigma}x\right),\exp\left(\frac{g'(\uu_{-\infty})}{f'(\uu_{- \infty})-\sigma}x\right)\right\}\right)\,.
\end{align*}
\end{Lemma}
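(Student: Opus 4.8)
The plan is to read everything off the scalar autonomous ODE $\uU'=F_\sigma(\uU)$, exploiting that by Assumption~\ref{A.front} the right-hand side is regular (the apparent singularity at $\uu_\star$ being removable since $g$ vanishes there to the same order as $f'-\sigma$, i.e. order one, with $F_\sigma(\uu_\star)=g'(\uu_\star)/f''(\uu_\star)\neq0$). First I would record that $\uU$ is a monotone heteroclinic: by the sign conditions $f'(\uu_{-\infty})<\sigma$ and $g'(\uu_\star)>0$ give $F_\sigma>0$ between the equilibria $\uu_{-\infty}$ and $\uu_{+\infty}$, so $\uU$ is strictly increasing, globally defined (no blow-up since it is trapped in the compact interval $[\uu_-,\uu_+]$), with $\uU\to\uu_{\pm\infty}$ as $x\to\pm\infty$. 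The non-degeneracy/entropy-admissibility claim and the fact that $(\uU,\sigma,\emptyset)$ solves~\eqref{eq-u} are then immediate from the reciprocal characterization stated just before Definition~\ref{A.generic-traveling-wave} (there are no discontinuities, so only the interior ODE and the non-degeneracy at the single characteristic point and at the two endstates need checking, and these are exactly the hypotheses of Assumption~\ref{A.front}). Regularity $\uU\in\cC^2$ follows by bootstrapping $\uU'=F_\sigma(\uU)$ with $F_\sigma\in\cC^1$ (since $f\in\cC^3$, $g\in\cC^2$), and more generally $\uU\in\cC^{k+1}$ when $f\in\cC^{k+1}$, $g\in\cC^k$.

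Next comes the asymptotic analysis near $x=+\infty$ (the $-\infty$ case being symmetric). The point $\uu_{+\infty}$ is a hyperbolic sink of the ODE: $F_\sigma'(\uu_{+\infty})=g'(\uu_{+\infty})/(f'(\uu_{+\infty})-\sigma)<0$ because $g'(\uu_{+\infty})<0$ and $f'(\uu_{+\infty})-\sigma>0$. Writing $v=\uU-\uu_{+\infty}$ one has $v'=F_\sigma'(\uu_{+\infty})v+O(v^2)$, so by the standard stable-manifold/Gronwall estimate $v(x)\to0$ at the exponential rate $\exp\!\big(\frac{g'(\uu_{+\infty})}{f'(\uu_{+\infty})-\sigma}x\big)$, which gives the stated bound on $\abs{\uU-\uu_{+\infty}}$. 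For the ratio $\uU''/\uU'$: differentiating $\uU'=F_\sigma(\uU)$ gives $\uU''=F_\sigma'(\uU)\,\uU'$, hence $\uU''/\uU'=F_\sigma'(\uU(x))\to F_\sigma'(\uu_{+\infty})=\frac{g'(\uu_{+\infty})}{f'(\uu_{+\infty})-\sigma}$ by continuity of $F_\sigma'$ and $\uU(x)\to\uu_{+\infty}$. This is clean and does not even need the rate.

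For the derivative bounds $\abs{\uU^{(\ell)}(x)}\le C'\min\{\dots\}$ I would argue that each $\uU^{(\ell)}$ with $\ell\ge1$ decays at the $+\infty$-rate and at the $-\infty$-rate, so it decays at least as the minimum of the two exponentials. Since for $\ell\ge2$, $\uU^{(\ell)}$ is a polynomial in $\uU',\dots,\uU^{(\ell-1)}$ with coefficients that are smooth functions of $\uU$ (Fa\`a di Bruno applied to $\uU'=F_\sigma(\uU)$), and each such polynomial term contains at least one factor $\uU'$, it suffices to control $\uU'$ itself: near $+\infty$, $\uU'=F_\sigma(\uU)=F_\sigma'(\uu_{+\infty})\,(\uU-\uu_{+\infty})+O((\uU-\uu_{+\infty})^2)$ inherits the $+\infty$-rate from the bound on $\uU-\uu_{+\infty}$, and symmetrically near $-\infty$. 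Away from infinity everything is bounded by continuity on compacts, and the two one-sided exponential bounds can be glued into the single $\min$ bound by a standard argument (on any half-line one of the two dominates; adjusting $C'$ handles the bounded middle region).

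I expect the only mildly delicate step to be making the exponential-rate estimate near the equilibria fully rigorous — in particular checking that it is genuinely the stable (attracting) direction and that the higher derivatives do not pick up a worse rate — but since $\uu_{\pm\infty}$ are hyperbolic and one-dimensional, this is the textbook sink estimate and presents no real obstacle; most of the work is bookkeeping. The statement that $(\uU,\sigma,\emptyset)$ is entropy-admissible is essentially vacuous here because $D=\emptyset$, so there are no Rankine--Hugoniot or Oleinik conditions to verify, and strict entropy admissibility at the (absent) discontinuities is automatic.
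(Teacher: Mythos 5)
Your argument is correct and is precisely the ``standard ordinary differential arguments'' that the paper invokes without writing them out: monotone heteroclinic trapped in $[\uu_-,\uu_+]$ after removing the singularity of $F_\sigma$ at $\uu_\star$, hyperbolic equilibria at the endstates giving the exponential rates, $\uU''/\uU'=F_\sigma'(\uU)$ for the limit of the logarithmic derivative, and the chain-rule/induction $\uU^{(\ell)}=G_\ell(\uU)$ with $G_\ell$ divisible by $F_\sigma$ to reduce the higher-derivative bounds to the bound on $\uU'$. Two small corrections: the sign of $F_\sigma$ on $(\uu_-,\uu_+)$ is that of $g'(\uu_\star)/f''(\uu_\star)$, so $\uU$ is strictly \emph{monotone} but decreasing when $f''(\uu_\star)<0$ (in which case $\uu_{+\infty}=\uu_-$); this changes nothing in the rest of the argument but your assertion that $F_\sigma>0$ and $\uU$ is increasing covers only one of the two cases. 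Also, the Hadamard quotient at $\uu_\star$ loses one derivative, so $f\in\cC^{k+1}$ and $g\in\cC^k$ give $F_\sigma\in\cC^{k-1}$ and hence $\uU\in\cC^{k}$ (not $\cC^{k+1}$); this is still exactly the regularity the lemma needs for $\uU^{(\ell)}$, $1\leq\ell\leq k$.
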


\begin{Remark}[Instability]
It follows from Proposition~\ref{P.structure} that non-degenerate piecewise regular entropy-admissible traveling-wave solutions to~\eqref{eq-u} that are continuous are either constants or continuous strictly monotonic fronts. Combining Section~\ref{S.instabilities} with Proposition~\ref{P.structure}, one derives that such continuous strictly monotonic fronts are unstable unless they are generated as in Assumption~\ref{A.front}. Indeed, Propositions~\ref{P.spectral-instability-infinity} and~\ref{P.nl-instability-infinity} yield that stables ones must have limits $\uu_{\pm\infty}$ at $\pm\infty$ satisfying $g'(\uu_{-\infty})<0$ and $g'(\uu_{+\infty})<0$, and thus by Proposition~\ref{P.structure} they must also have an odd number of characteristic points with $g'$ alternating sign on those characteristic values. Hence from Propositions~\ref{P.spectral-instability-characteristic} and~\ref{P.nl-instability-characteristic} stems that such stable fronts possess exactly one characteristic point. 
\end{Remark}

\subsection{Spectral stability}

Let $(\uU,\sigma)$ be given by Assumption~\ref{A.front}. Specializing the discussion of Section~\ref{S.spectral-def} to the simpler case where the discontinuity set is empty, we consider the operator
\[
\cL\eqdef -(f'(\uU)-\sigma)\cD, \quad \cD\eqdef \left(\d_x -\frac{\uU''}{\uU'}\right)
=\uU'\,\d_x\left(\frac{1}{\uU'}\,\cdot\,\right)
\]
with maximal domain for various choices of functional space $\cX$.

To serve as such functional spaces, we introduce for $n\in\NN$
\[ X^n_\star(\RR) \eqdef \{ a\in BUC^n(\RR)\ ; \ a(0)=0\} .\]
We shall mostly consider $\cL$ on $X^1_\star(\RR)$. As a preliminary we first elucidate the interplay between this constrained spectrum and the original unconstrained spectrum.

\begin{Lemma}\label{L.lin-0}
Let $(\uU,\sigma)$ be given by Assumption~\ref{A.front} and assume that $k\in\NN$ is such that $f$ is $\cC^{k+2}$ and $g$ is $\cC^{k+1}$. Then the spectrum of $\cL$ on $BUC^k(\RR)$ is the union of $\{0\}$ and the spectrum of $\cL$ on $X^k_\star(\RR)$.
\end{Lemma}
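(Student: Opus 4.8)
The plan is to analyze the eigenvalue/resolvent equation $(\lambda-\cL)a=b$ pointwise, exploiting that $\cL$ acts as a first-order scalar ODE operator. Concretely, $(\lambda-\cL)a=b$ reads
\[
\lambda\,a+(f'(\uU)-\sigma)\left(a'-\frac{\uU''}{\uU'}a\right)=b\,,
\]
which, after the substitution $a=\uU'\,v$ (legitimate away from $x=0$, where $\uU'$ vanishes simply since $f''(\uu_\star)\neq0$ and $g'(\uu_\star)>0$), becomes $\lambda\,\uU'\,v+(f'(\uU)-\sigma)\uU'\,v'=b$, i.e. a genuinely first-order linear ODE for $v$ on each of the half-lines $(-\infty,0)$ and $(0,+\infty)$. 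The key structural fact I would extract from Assumption~\ref{A.front} and Lemma~\ref{L.front} is the sign pattern recorded in~\eqref{signs-characteristic}: $f'(\uU(x))-\sigma$ behaves like $g'(\uu_\star)\,x$ near $0$ with $g'(\uu_\star)>0$, so the characteristic point $0$ is a \emph{repeller} for the characteristic ODE $\dot x=f'(\uU(x))-\sigma$, while at $\pm\infty$ the ratio $g'(\uu_{\pm\infty})/(f'(\uu_{\pm\infty})-\sigma)$ is negative, so characteristics are swept \emph{into} the interior from both ends.

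\textbf{Key steps.} First I would show $0$ is an eigenvalue of $\cL$ on $BUC^k(\RR)$: indeed $\cL\uU'=0$ since $\cD\uU'=0$, and $\uU'\in BUC^k(\RR)$ by Lemma~\ref{L.front} (with the required regularity on $f,g$), but $\uU'(0)\neq0$ so $\uU'\notin X^k_\star(\RR)$; this already shows $0$ lies in the $BUC^k$-spectrum but is excluded by the constraint. Second, I would prove the inclusion $\spec_{X^k_\star}(\cL)\subset\spec_{BUC^k}(\cL)$ together with $0\in\spec_{BUC^k}(\cL)$ by a direct argument: if $\lambda\neq0$ and $\lambda-\cL$ is invertible on $BUC^k(\RR)$, then it is invertible on $X^k_\star(\RR)$, because $X^k_\star$ is the kernel of the bounded functional $a\mapsto a(0)$ and one checks that $(\lambda-\cL)^{-1}$ preserves this kernel — this uses that $(\cL a)(0)=-(f'(\uU(0))-\sigma)(\cD a)(0)=0$ automatically (the principal symbol vanishes at the characteristic point!), so $(\lambda-\cL)a$ evaluated at $0$ equals $\lambda\,a(0)$, whence $b(0)=0\Rightarrow a(0)=0$ when $\lambda\neq0$. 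Conversely, for the inclusion $\spec_{BUC^k}(\cL)\setminus\{0\}\subset\spec_{X^k_\star}(\cL)$, given $\lambda\neq0$ with $\lambda-\cL$ invertible on $X^k_\star$, I would construct the inverse on all of $BUC^k$ by writing, for $b\in BUC^k(\RR)$, $a=(\lambda-\cL)^{-1}_{X^k_\star}(b-b(0)) + \tfrac{b(0)}{\lambda}\,\Theta_\lambda$ where $\Theta_\lambda\in BUC^k(\RR)$ is a particular bounded solution of $(\lambda-\cL)\Theta_\lambda=b(0)$ (a constant right-hand side), with $\Theta_\lambda(0)=1/\lambda$; existence and $BUC^k$-regularity of such a $\Theta_\lambda$ is obtained by solving the first-order ODE outward from $0$ on each half-line and checking exponential control at $\pm\infty$ via the negative sign of $g'(\uu_{\pm\infty})/(f'(\uu_{\pm\infty})-\sigma)$, together with continuity and $\cC^k$-matching at $0$ guaranteed because the coefficient $f'(\uU)-\sigma$ vanishes simply there.

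\textbf{Main obstacle.} The delicate point — and the one I would spend the most care on — is the behavior at the characteristic point $x=0$: showing that the resolvent construction produces functions that are genuinely $\cC^k$ (indeed $BUC^k$) \emph{through} $0$, not merely on each half-line. Because $f'(\uU(x))-\sigma\sim g'(\uu_\star)x$ vanishes to first order, dividing by it in the ODE $v'=\big(b/(\uU'(f'(\uU)-\sigma))-\lambda v\big)/(f'(\uU)-\sigma)$ threatens a $1/x^2$ singularity; the repelling sign $g'(\uu_\star)>0$ is exactly what tames it, forcing the bounded solution to pick up the right Taylor coefficients at $0$ so that the apparent singularities cancel — this is the linear shadow of the wave-breaking analysis in the proof of Proposition~\ref{P.nl-instability-characteristic} and of the Dirac-mass kernel structure in Proposition~\ref{P.spectral-instability-characteristic}. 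I expect the cleanest route is to change variables to the characteristic coordinate (the flow time of $\dot x=f'(\uU(x))-\sigma$), under which $\cL$ becomes $-\d_s$ on each of two copies of $\RR$ glued at $s=\pm\infty$, reducing everything to elementary exponential estimates, and then to transfer $\cC^k$-regularity back through the change of variables using the smooth nondegenerate structure near $0$; the bookkeeping of how many derivatives survive is where the hypotheses "$f\in\cC^{k+2}$, $g\in\cC^{k+1}$" get used.
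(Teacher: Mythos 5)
Your first two steps coincide with the paper's proof: $0$ lies in the $BUC^k$-spectrum because $\uU'\in\ker\cL$ while $\uU'(0)\neq0$ excludes it from $X^k_\star(\RR)$, and for $\lambda\neq0$ the identity $(\cL a)(0)=0$ (forced by the vanishing of $f'(\uU)-\sigma$ at the characteristic point) shows that any resolvent of $\cL$ on $BUC^k(\RR)$ preserves $X^k_\star(\RR)$ and hence restricts to a resolvent of the constrained operator. Where you diverge is the reverse inclusion, and there your route is substantially heavier than it needs to be and, as written, not fully justified. Decomposing $b=(b-b(0))+b(0)$ forces you to produce a bounded $BUC^k$ particular solution $\Theta_\lambda$ of $(\lambda-\cL)\Theta_\lambda=\mathbf 1$, which you propose to obtain by integrating the ODE outward from $0$ and checking exponential control at $\pm\infty$. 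That construction selects a bounded solution only when $\Re\lambda$ dominates the rates $g'(\uu_{\pm\infty})$ governing the homogeneous solutions at infinity; for a general nonzero $\lambda$ in the resolvent set of the constrained operator nothing in your argument guarantees this, and the $\cC^k$-matching through $x=0$ would still require the bookkeeping you defer. (There is also a normalization slip: with $(\lambda-\cL)\Theta_\lambda=\mathbf 1$ and $\Theta_\lambda(0)=1/\lambda$, the correct combination is $a=(\lambda-\cL)^{-1}_{X^k_\star}(b-b(0))+b(0)\,\Theta_\lambda$, not $\tfrac{b(0)}{\lambda}\,\Theta_\lambda$.) The paper sidesteps all of this by subtracting, instead of the constant $b(0)$, the multiple $b(0)\,\uU'/\uU'(0)$ of the kernel element: the remainder $b-b(0)\,\uU'/\uU'(0)$ still vanishes at $0$, and since $\cL\uU'=0$ the required particular solution is explicitly $\tfrac{b(0)}{\lambda}\,\uU'/\uU'(0)$ for every $\lambda\neq0$ --- no ODE to solve, no decay estimate, no regularity matching at the characteristic point. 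Your ``main obstacle'' is thus an artifact of the choice of complement; to repair your version you should either take the paper's decomposition outright, or obtain $\Theta_\lambda$ by applying the constrained resolvent to $\mathbf 1-\uU'/\uU'(0)$ and adding $\tfrac1\lambda\,\uU'/\uU'(0)$, which is the same formula in disguise. A minor factual slip besides: since $f'(\uu_{-\infty})<\sigma<f'(\uu_{+\infty})$, the characteristics of $\dot x=f'(\uU(x))-\sigma$ are swept outward toward $\pm\infty$, away from the repelling point $0$, not ``into the interior''.
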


\begin{proof}
For the sake of clarity, in the present proof we denote $\cL$ the operator on $BUC^k(\RR)$ and $\cL_\star$ the operator on $X^k_\star(\RR)$. To begin with, note that $0$ does belong to the spectrum of $\cL$ since $\uU'$ belongs to its kernel. 

Now we fix $\lambda\in\CC^\star$. First consider the spectral problem $(\lambda-\cL)v=A$. Evaluating at $x=0$ shows that it contains the constraint $v(0)=A(0)/\lambda$ so that if $\lambda$ is in the resolvent set of $\cL$, $(\lambda-\cL)^{-1}$ leaves $X^k_\star(\RR)$ invariant thus its restriction provides a resolvent for $\cL_\star$. Reciprocally, note that if $\lambda$ is in the resolvent set of $\cL_\star$ then a resolvent for $\cL$ is obtained through
\begin{equation}\label{eq:unstar}
(\lambda-\cL)^{-1}A\,=\,\frac{1}{\lambda}\,A(0)\,\frac{\uU'}{\uU'(0)}
+(\lambda-\cL_\star)^{-1}\left(A-\,A(0)\,\frac{\uU'}{\uU'(0)}\right)\,.
\end{equation}
\end{proof}

\begin{Remark}[Spectral projector]
Our subsequent analysis contains that indeed $0$ does not belong to the spectrum of $\cL$ on $X^k_\star(\RR)$ when $k\geq1$, so that considering the limit $\lambda\to0$ in~\eqref{eq:unstar} gives that $0$ is a simple eigenvalue of $\cL$ on $BUC^k(\RR)$ when $k\geq1$, with associated spectral projector $\Pi$ given by 
\[
\Pi A\,=\,A(0)\,\frac{\uU'}{\uU'(0)}\,.
\]
In particular it turns out that $X^k_\star(\RR)$ is simply $(\Id-\Pi)BUC^k(\RR)$. Obviously these observations are consistent with the fact that $\uU'$ lies in the kernel of $\cL$ (acting on any $BUC^k(\RR)$, $k\in\NN$) and that $\delta_0$ lies in the kernel of its adjoint.
\end{Remark}

To carry out our nonlinear analysis along the strategy in~\cite{DR1}, we need to analyze a class of operators close to $\cL$, sufficiently large to induce no regularity loss in Duhamel formulations but sufficiently small to retain the main features of $\cL$ expounded above. Our choice is to analyze operators $\cL_a$ defined as $\cL$ but with formula 
\[
\cL_a\eqdef -a\,\left(\d_x -\frac{\uU''}{\uU'}\right)
\,=\,-a\,\uU'\,\d_x\left(\frac{1}{\uU'}\,\cdot\,\right)
\]
when $a$ is sufficiently close to $(f'(\uU)-\sigma)$ in $X^1_\star(\RR)$.

Unlike the analysis in~\cite{DR1}, here the derivation of higher-order regularity estimates requires a specific analysis. To perform it we introduce, for $k\in\NN$, $L_{a,k}$ operating on $BUC^0(\RR)$ with maximal domain, defined by
\[
L_{a,k}\eqdef -a\,\left(\d_x -\frac{\uU''}{\uU'}\right)-k\,a'\,,
\]
and note that when $k\in\NN$ is such that $a\in BUC^k(\RR)$, $f\in\cC^{k+3}(\RR)$ and $g\in\cC^{k+2}(\RR)$, 
\begin{equation}\label{eq:augmented-derivative}
\begin{pmatrix}
\cL_a\,v\\\d_x(\cL_a\,v)\\\vdots\\\d_x^k(\cL_a\,v)
\end{pmatrix}
\,=\,
\begin{pmatrix}
\cL_a\,v\\L_{a,1}\,\d_xv\\\vdots\\L_{a,k}\,\d_x^kv
\end{pmatrix}
+*
\begin{pmatrix}
v\\\d_xv\\\vdots\\\d_x^kv
\end{pmatrix}
\end{equation}
where $*$ denotes an operator strictly lower triangular and bounded on $X^1_\star(\RR)\times (BUC^0)^{k-1}(\RR)$.

Since it is simpler, we begin by considering spectrum problems for $L_{a,k}$, $k\in\NN$. Elementary considerations --- for instance based on a direct comparison with $-a'(0)\,x\,\d_x-k\,a'(0)$ as in~\cite[Appendix~A]{JNRYZ} --- show that if $a\in X^1_\star$, $a'(0)>0$ and $\Re(\lambda)>-k\,a'(0)$, there exists an open interval $I_0$ containing $0$ such that for any $A\in BUC^0(I_0)$ there exists a unique $v\in BUC^0(I_0)$ such that $(\lambda-L_{a,k})v=A$, and $v$ is given by
\begin{equation}\label{eq:local-spectral}
v(x)=\begin{cases}\displaystyle
\qquad\frac{A(0)}{\lambda+k\,a'(0)}\,,&\qquad x=0\,,\\
\displaystyle
\int_{0}^x\frac{e^{-\int_y^x\frac{1}{a}(\lambda+k a'-a\frac{\uU''}{\uU'})}}{a(y)}
A(y)\dd y\,,&\qquad x\neq0\,.
\end{cases}
\end{equation}
Note that if moreover $a$ does not vanish on $\RR^\star$ then the uniqueness part (with associated formula~\eqref{eq:local-spectral}) of the latter discussion may be extended from $I_0$ to $\RR$ by standard ordinary differential equations arguments, though the existence part may fail.

The non-vanishing of $a$ when $a$ is sufficiently close to $(f'(\uU)-\sigma)$ in $X^1_\star(\RR)$ follows from the following lemma.

\begin{Lemma}\label{L.non-vanishing}
Let $(\uU,\sigma)$ be given by Assumption~\ref{A.front}. There exist $c>0$ and $C>0$ such that for any $a\in X^1_\star(\RR)$
\begin{align*}
a(x)&\,\geq (c-C\Norm{a-(f'(\uU)-\sigma)}_{W^{1,\infty}})\,\min(\{1,|x|\})
&\qquad \textrm{when }x>0\,,\\
a(x)&\,\leq -(c-C\Norm{a-(f'(\uU)-\sigma)}_{W^{1,\infty}})\,\min(\{1,|x|\})
&\qquad \textrm{when }x<0\,.
\end{align*}
\end{Lemma}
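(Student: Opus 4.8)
The plan is to reduce the statement to a uniform lower bound for $(f'(\uU)-\sigma)/\min(\{1,|x|\})$ away from $x=0$, together with a careful treatment of the behavior near $x=0$, where $a(0)=0$ is built into the definition of $X^1_\star(\RR)$. First I would set $b\eqdef f'(\uU)-\sigma$ and recall from Assumption~\ref{A.front} and Proposition~\ref{P.structure} (specifically the sign relations underlying~\eqref{signs-characteristic} and~\eqref{signs-infinity}) that $b(0)=0$, $b'(0)=f''(\uu_\star)\,\uU'(0)=g'(\uu_\star)>0$, $b(x)>0$ for $x>0$, and $b(x)<0$ for $x<0$; moreover, by Lemma~\ref{L.front}, $b$ converges to the nonzero limits $f'(\uu_{\pm\infty})-\sigma$ (of the appropriate signs) at $\pm\infty$. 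Consequently $x\mapsto b(x)/\min(\{1,|x|\})$ extends to a continuous function on $\RR$ (using $b(0)=0$ and $b'(0)>0$ to handle the point $x=0$) that is bounded below on every compact set, has positive liminf as $x\to+\infty$ and negative limsup as $x\to-\infty$; hence there exists $c>0$ with $b(x)\geq 2c\,\min(\{1,|x|\})$ for $x>0$ and $b(x)\leq -2c\,\min(\{1,|x|\})$ for $x<0$. (One should double the constant here so as to absorb the perturbation term below.)

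Next I would estimate the difference $a-b$. Since $a,b\in X^1_\star(\RR)$ both vanish at $0$, for $x\in[-1,1]$ one has $|a(x)-b(x)|=\big|\int_0^x (a'-b')\big|\leq \Norm{a-b}_{W^{1,\infty}}\,|x|=\Norm{a-b}_{W^{1,\infty}}\,\min(\{1,|x|\})$, while for $|x|\geq 1$ trivially $|a(x)-b(x)|\leq\Norm{a-b}_{W^{1,\infty}}=\Norm{a-b}_{W^{1,\infty}}\,\min(\{1,|x|\})$. Thus in all cases $|a(x)-b(x)|\leq \Norm{a-b}_{W^{1,\infty}}\,\min(\{1,|x|\})$, with $\Norm{\cdot}_{W^{1,\infty}}$ the $W^{1,\infty}(\RR)$-norm. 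Combining with the previous paragraph, for $x>0$,
\[
a(x)\,\geq\,b(x)-|a(x)-b(x)|\,\geq\,\big(2c-\Norm{a-b}_{W^{1,\infty}}\big)\,\min(\{1,|x|\})\,,
\]
and symmetrically $a(x)\leq -\big(2c-\Norm{a-b}_{W^{1,\infty}}\big)\,\min(\{1,|x|\})$ for $x<0$. Setting $C\eqdef 1$ (and keeping $c$ as above, or relabeling $2c$ as $c$) gives the claimed inequalities.

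The only genuinely delicate point is the claim that $b(x)/\min(\{1,|x|\})$ is bounded away from zero \emph{uniformly on all of $(0,+\infty)$} and on $(-\infty,0)$, which needs three regimes: near $x=0$ one uses the Taylor expansion $b(x)=b'(0)x+o(x)$ with $b'(0)=g'(\uu_\star)>0$; on a fixed compact annulus $\delta\leq|x|\leq R$ one uses continuity together with the strict sign of $b$ there (which follows from monotonicity of $\uU$ plus $f'(u)\neq\sigma$ for $u\neq\uu_\star$ in Assumption~\ref{A.front}); and for $|x|\geq R$ one uses the convergence $b(x)\to f'(\uu_{\pm\infty})-\sigma\neq 0$ from Lemma~\ref{L.front}. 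Each regime is routine, but stitching them into a single constant $c$ is where the argument has to be written with a little care; everything else is a one-line perturbation estimate exploiting that membership in $X^1_\star(\RR)$ forces vanishing at the characteristic point.
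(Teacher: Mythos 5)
Your proposal is correct and follows essentially the same route as the paper: the paper's entire proof is the one-line perturbation estimate $\abs{a(x)-(f'(\uU(x))-\sigma)}\leq\min(\{\Norm{a-(f'(\uU)-\sigma)}_{L^{\infty}},|x|\,\Norm{a'-(f'(\uU))'}_{L^{\infty}}\})$, with the uniform lower bound $\pm(f'(\uU(x))-\sigma)\geq 2c\min(\{1,|x|\})$ on $\pm x>0$ left implicit as a consequence of Assumption~\ref{A.front} and Lemma~\ref{L.front}. You have simply written out in full the three-regime argument (near $0$, compact sets, near $\pm\infty$) that the paper treats as immediate.
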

\begin{proof}
Immediate from
\[
\abs{a(x)-(f'(\uU(x))-\sigma)}
\,\leq\,
\min\left(\left\{
\Norm{a-(f'(\uU)-\sigma)}_{L^{\infty}},
|x|\,\Norm{a'-(f'(\uU))'}_{L^{\infty}}
\right\}\right)\,.
\]
\end{proof}
From Lemma~\ref{L.non-vanishing}, we derive that when $a$ is sufficiently close to $(f'(\uU)-\sigma)$ in $X^1_\star(\RR)$,
\begin{equation}\label{eq:non-vanishing}
\exists c_0>0\,,\qquad
\forall x\in\RR\,,\qquad
\begin{cases}
a(x)\,\geq c_0\,\min(\{1,|x|\})
&\qquad \textrm{when }x>0\,,\\
a(x)\,\leq -c_0\,\min(\{1,|x|\})
&\qquad \textrm{when }x<0\,.
\end{cases}
\end{equation}

To complete the argument and provide resolvent bounds of contraction type, we introduce weights independent of $a$ according to the following lemma.

\begin{Lemma}\label{L.chi}
Let $(\uU,\sigma)$ be given by Assumption~\ref{A.front} and denote for any $k\in\NN$
\begin{equation}\label{eq:theta_k}
\theta_k\eqdef\min(\{k\,g'(\uu_\star),-g'(\uu_+),-g'(\uu_-)\})\,.
\end{equation}
There exists $C_0>0$ such that for any  $k\in\NN$, there exists $\chi_k\in L^1(\RR)\cap BUC^0(\RR)$ such that for any $a\in \cC^1(\RR)\cap W^{1,\infty}(\RR)$,
\begin{equation}\label{eq:theta_ak}
\theta_{a,k}\eqdef\inf_{\RR} \left(k\,a'-a\frac{\uU''}{\uU'}+a\chi_k\right)\geq \theta_k - C_0(1+k)\Norm{a-(f'(\uU)-\sigma)}_{W^{1,\infty}(\RR)}\,.
\end{equation}
\end{Lemma}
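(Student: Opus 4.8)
\emph{Construction of the weight.} The plan is to build $\chi_k$ as an explicit, $a$-independent function designed to ``fix up'' the coefficient $-\frac{\uU''}{\uU'}$ near $\pm\infty$, where Lemma~\ref{L.front} gives the asymptotics $\frac{\uU''}{\uU'}(x)\to\frac{g'(\uu_{\pm\infty})}{f'(\uu_{\pm\infty})-\sigma}$. First I would analyze the target quantity when $a=f'(\uU)-\sigma$: then $k\,a'-a\frac{\uU''}{\uU'}=k\,f''(\uU)\uU'-(f'(\uU)-\sigma)\frac{\uU''}{\uU'}$, and using the profile ODE $(f'(\uU)-\sigma)\uU'=g(\uU)$ together with $f''(\uU)\uU'=(f'(\uU))'$ one rewrites this as $k\,(f'(\uU))'-g'(\uU)$ plus lower-order terms; near $\pm\infty$ this tends to $-g'(\uu_{\pm\infty})>0$, while near the characteristic point $x=0$ one has $(f'(\uU))'(0)=g'(\uu_\star)\neq0$ but $a(0)=0$, so the product $a\frac{\uU''}{\uU'}$ vanishes there and the leading term is $k\,g'(\uu_\star)$. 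So without any weight the infimum of $k\,a'-a\frac{\uU''}{\uU'}$ is already $\geq\theta_k$ \emph{up to} behavior on compact sets, and the only role of $\chi_k$ is to lift the quantity on the compact region $\{|x|\leq R\}$ (for suitable $R$) where $-\frac{\uU''}{\uU'}$ may dip below $\theta_k/(f'(\uU)-\sigma)$ or where no sign information is available. Concretely I would take $\chi_k$ supported in a fixed compact set, of the form $\chi_k=M_k\,\eta$ for a fixed nonnegative bump $\eta$ with $\eta\equiv1$ on that compact set, and choose the constant $M_k$ large enough; crucially $a>0$ for $x>0$ and $a<0$ for $x<0$ by~\eqref{eq:non-vanishing}, so $a\chi_k$ has an indefinite sign, which forces a slightly more careful construction — one uses instead $\chi_k(x)=M_k\,\eta(x)\,\mathrm{sgn}(x)$ divided by something, or simply $\chi_k=M_k\,\eta$ only where it helps, patched with the correct sign, so that $a\,\chi_k\geq c\,M_k\,|x|\,\eta(x)$ near $0$ and $a\chi_k$ is uniformly bounded below by a large positive constant on the annulus where the bad behavior sits. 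The point is that all of this is done once, for $a=f'(\uU)-\sigma$, and $\chi_k$ depends only on $\uU$, $k$ (and $\eta$).

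\emph{Perturbation estimate.} With $\chi_k$ fixed, I would then compare the quantity for general $a\in\cC^1\cap W^{1,\infty}$ against the one for $f'(\uU)-\sigma$. Writing $b\eqdef a-(f'(\uU)-\sigma)$, one has
\[
k\,a'-a\frac{\uU''}{\uU'}+a\chi_k
\;=\;\Big(k\,(f'(\uU)-\sigma)'-(f'(\uU)-\sigma)\frac{\uU''}{\uU'}+(f'(\uU)-\sigma)\chi_k\Big)
\;+\;\Big(k\,b'-b\frac{\uU''}{\uU'}+b\chi_k\Big),
\]
and the second bracket is bounded in absolute value by
\[
k\,\|b'\|_{L^\infty}+\|b\|_{L^\infty}\Big(\Big\|\tfrac{\uU''}{\uU'}\Big\|_{L^\infty}+\|\chi_k\|_{L^\infty}\Big)
\;\leq\;C_0(1+k)\,\|b\|_{W^{1,\infty}},
\]
using that $\uU''/\uU'$ is bounded (it converges at $\pm\infty$ by Lemma~\ref{L.front} and is continuous) and that $\|\chi_k\|_{L^\infty}$ is a constant depending only on $\uU$ and $k$; one absorbs the $k$-dependence of $\|\chi_k\|_{L^\infty}$ into $C_0(1+k)$ by noting $\chi_k$ may be taken with $L^\infty$ norm growing at most linearly in $k$ (since only $M_k$, chosen $\lesssim k$, grows). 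Taking the infimum over $x\in\RR$ and using that the first bracket has infimum $\geq\theta_k$ by the construction above yields~\eqref{eq:theta_ak}.

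\emph{Main obstacle.} The step I expect to require the most care is the first one: verifying that the \emph{infimum over all of $\RR$} of $k\,(f'(\uU)-\sigma)'-(f'(\uU)-\sigma)\frac{\uU''}{\uU'}+(f'(\uU)-\sigma)\chi_k$ is $\geq\theta_k$, uniformly, with a \emph{single} $\chi_k$. Away from a fixed compact set this is the asymptotic computation $\to-g'(\uu_{\pm\infty})$ (and one must check the sign of the $o(1)$ correction, or simply enlarge the compact set so that the quantity is $\geq\theta_k$ outside it — here the exact value $\theta_k=\min\{kg'(\uu_\star),-g'(\uu_+),-g'(\uu_-)\}$ is what makes this possible, since $-g'(\uu_{\pm\infty})\geq\theta_k$ strictly unless $k$ is large). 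On the compact set, the delicate interplay is at $x=0$: there $(f'(\uU)-\sigma)(0)=0$ so the $\chi_k$ term vanishes and one is left with $k\,(f'(\uU)-\sigma)'(0)=k\,f''(\uu_\star)\uU'(0)=k\,f''(\uu_\star)\,F_\sigma(\uu_\star)=k\,g'(\uu_\star)$ (since $F_\sigma(\uu_\star)=g'(\uu_\star)/f''(\uu_\star)$), which is exactly the $k\,g'(\uu_\star)$ entry of $\theta_k$ — so no weight can help at $x=0$, and one must ensure the weight is not \emph{needed} there, i.e.\ that the quantity without weight is continuous and equals $kg'(\uu_\star)>\theta_k$-ish at $0$, hence stays above $\theta_k$ on a neighborhood of $0$; then $\chi_k$ only has to handle the remaining compact annulus, where $f'(\uU)-\sigma$ is bounded away from zero with a definite sign, so a large multiple of a correctly-signed bump does the job. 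Keeping the sign of $a\chi_k$ consistent with $a$ across $x=0$ is the one genuinely fiddly point.
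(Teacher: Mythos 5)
Your overall strategy is the same as the paper's: decompose $a=(f'(\uU)-\sigma)+b$, design a single $a$-independent weight $\chi_k$ making the unperturbed quantity $k\,(f'(\uU))'-(f'(\uU)-\sigma)\frac{\uU''}{\uU'}+(f'(\uU)-\sigma)\chi_k$ everywhere $\geq\theta_k$, and absorb the $b$-contribution into $C_0(1+k)\Norm{b}_{W^{1,\infty}}$; your explicit remark that $\Norm{\chi_k}_{L^\infty}$ must grow at most linearly in $k$ is a point the paper leaves implicit, and your identification of the limits $k\,g'(\uu_\star)$ at $x=0$ and $-g'(\uu_{\pm\infty})$ at $\pm\infty$ is exactly the paper's computation.

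The one genuine gap is your insistence that $\chi_k$ be supported in a fixed compact set. When $\theta_k=-g'(\uu_{+\infty})$ --- which happens for every $k$ with $k\,g'(\uu_\star)\geq-g'(\uu_{+\infty})$, provided $-g'(\uu_{+\infty})\leq-g'(\uu_{-\infty})$ --- the unweighted quantity converges to $\theta_k$ at $+\infty$ but may do so from below on a whole neighborhood of $+\infty$ (the sign of the exponentially small correction depends on $f''$, $g''$ and the orientation of the front), and then no compactly supported weight can lift it above $\theta_k$; your hedge of enlarging the compact set does not apply, since the inequality $-g'(\uu_{\pm\infty})\geq\theta_k$ fails to be strict precisely for large $k$, as you note yourself without resolving the case. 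The paper sidesteps this by taking $\chi_k$ to be exactly the needed correction, truncated at zero with the sign of $x$: for $x>0$, $\chi_k(x)=\max\big(\big\{\big(\theta_k-k\,(f'(\uU))'(x)+(f'(\uU(x))-\sigma)\tfrac{\uU''(x)}{\uU'(x)}\big)/(f'(\uU(x))-\sigma),\,0\big\}\big)$, the corresponding $\min$ with $0$ for $x<0$, and the value $0$ at $x=0$. This function is bounded near $0$ because numerator and denominator vanish to the same order there, decays exponentially near $\pm\infty$ (hence lies in $L^1$ without being compactly supported), and satisfies $(f'(\uU)-\sigma)\chi_k\geq0$ pointwise. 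Replacing your bump by this formula closes the argument; the rest of your plan then goes through as written.
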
 
\begin{proof}
We define
\begin{equation}\label{def-chi}
\chi_k(x)=\begin{cases}
0&\textrm{when }x=0\,,\\
\max\left(\left\{\frac{\theta_k-k\,(f'(\uU))'(x)+(f'(\uU(x))-\sigma)\frac{\uU''(x)}{\uU'(x)}}{(f'(\uU(x))-\sigma)},0\right\}\right)
&\textrm{when }x>0\,,\\
\min\left(\left\{\frac{\theta_k-k\,(f'(\uU))'(x)+(f'(\uU(x))-\sigma)\frac{\uU''(x)}{\uU'(x)}}{(f'(\uU(x))-\sigma)},0\right\}\right)
&\textrm{when }x<0\,.
\end{cases}
\end{equation}
Now we observe that
\[ \lim_{x\to 0 } k\,(f'(\uU))'(x)=k\,g'(\uu_\star)\,,\]
and that the convergences
\[k\,(f'(\uU))'(x)-(f'(\uU(x))-\sigma)\frac{\uU''(x)}{\uU'(x)}
\stackrel{x\to\pm\infty}{\longrightarrow} -g'(\uu_{\pm\infty})\]
are exponentially fast. Thus $\chi_k\in L^1(\RR)\cap BUC^0(\RR)$. Since the function $\chi_k$ is designed to ensure
\[ k(f'(\uU))'-(f'(\uU)-\sigma)\frac{\uU''}{\uU'}+(f'(\uU)-\sigma)\chi_k\geq \theta_k\,,\]
the result follows. 
\end{proof}

\begin{Lemma}\label{L.lin-1}
Let $(\uU,\sigma)$ be defined by Assumption~\ref{A.front}, $k\in \NN$ and a corresponding $\chi_k$ is given by Lemma~\ref{L.chi}. If $a\in X^1_\star(\RR)$ satisfies~\eqref{eq:non-vanishing} then for any $\lambda\in\CC$ such that 
\[
\Re(\lambda)>-\theta_{a,k}\,,\] 
with $\theta_{a,k}$ as in~\eqref{eq:theta_ak} and any $A\in BUC^0(\RR)$, there exists a unique $v\in BUC^0(\RR)$ such that 
\[
(\lambda-L_{a,k})\,v\,=\,A
\]
and, moreover,
\[
\Norm{e^{-\int_{0}^{\cdot}\chi_k}v}_{L^{\infty}(\RR)}
\,\leq\,\frac{1}{\Re(\lambda)+\theta_{a,k}}\,\Norm{e^{-\int_{0}^{\cdot}\chi_k}A}_{L^{\infty}(\RR)}\,.
\]
\end{Lemma}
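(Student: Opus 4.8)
The plan is to solve the first-order linear ODE $(\lambda - L_{a,k})v = A$ explicitly and then read off the weighted bound from the explicit integral formula. Recall that $L_{a,k} = -a\,(\d_x - \tfrac{\uU''}{\uU'}) - k\,a'$, so the equation reads, on $\RR\setminus\{0\}$,
\[
a\,v' \;=\; \Big(\lambda + k\,a' - a\,\tfrac{\uU''}{\uU'}\Big)\,v \;-\; A\,.
\]
Since $a$ satisfies~\eqref{eq:non-vanishing}, $a$ is nonzero on $\RR^\star$ and we are in the situation described just after~\eqref{eq:local-spectral}: the unique $BUC^0$ solution (if it exists) is forced to be
\[
v(x) \;=\; \int_0^x \frac{e^{-\int_y^x \frac{1}{a}(\lambda + k\,a' - a\frac{\uU''}{\uU'})}}{a(y)}\,A(y)\,\dd y\,,\qquad x\neq 0\,,
\]
with $v(0)=A(0)/(\lambda+k\,a'(0))$ imposed by evaluating the equation at $x=0$ (here $a'(0)$ is positive and large enough since $a$ is close to $f'(\uU)-\sigma$, whose derivative at $0$ is $g'(\uu_\star)>0$). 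So the real content is (i) existence, i.e. that this formula indeed defines a bounded continuous function on all of $\RR$, and (ii) the weighted contraction estimate.

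For the estimate, I would set $w \eqdef e^{-\int_0^{\cdot}\chi_k}\,v$ and $B \eqdef e^{-\int_0^{\cdot}\chi_k}\,A$, and rewrite the formula as
\[
w(x)\;=\;\int_0^x \frac{e^{-\int_y^x \frac{1}{a}\big(\lambda + k\,a' - a\frac{\uU''}{\uU'} + a\,\chi_k\big)}}{a(y)}\,B(y)\,\dd y\,.
\]
By the definition~\eqref{eq:theta_ak} of $\theta_{a,k}$, one has $k\,a' - a\frac{\uU''}{\uU'} + a\,\chi_k \ge \theta_{a,k}$ pointwise, hence $\Re\big(\lambda + k\,a' - a\frac{\uU''}{\uU'} + a\,\chi_k\big) \ge \Re(\lambda) + \theta_{a,k} \eqdef \mu > 0$. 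For $x>0$ and $0\le y\le x$, $\int_y^x \frac1a(\cdots)$ has real part at least $\mu\int_y^x \frac{\dd z}{a(z)}$, and the change of variables $s = \int_y^x \frac{\dd z}{a(z)}$ (legitimate since $a>0$ on $(0,x]$, with $a(y)\,\dd s = -\dd y$ when $y$ is the variable) turns the estimate into $|w(x)| \le \Norm{B}_\infty \int_0^{\infty} e^{-\mu s}\,\dd s = \Norm{B}_\infty/\mu$; the case $x<0$ is symmetric because $a<0$ on $[x,0)$ and the signs conspire the same way. Taking the supremum over $x$ gives the claimed bound $\Norm{w}_\infty \le \Norm{B}_\infty/(\Re(\lambda)+\theta_{a,k})$. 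This same computation, read near $x=0$, also shows that the integral converges and matches the value $B(0)/\mu$ at $x=0$, giving continuity at $0$; continuity away from $0$ and the bound at $\pm\infty$ (where $\chi_k\in L^1$ so the weight is bounded above and below, and where the exponential weight in the integrand decays because $k\,a'-a\frac{\uU''}{\uU'}$ tends to $-g'(\uu_{\pm\infty})>0$) give $v\in BUC^0(\RR)$, completing existence.

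The main obstacle is the verification of (i), that is, that the candidate $v$ genuinely lies in $BUC^0(\RR)$ and in the (maximal) domain of $L_{a,k}$, rather than merely being a formal solution: one must check integrability of the kernel near $y=0$ and the matching of one-sided limits at $0$ with the prescribed value $A(0)/(\lambda+k\,a'(0))$ — this is where the lower bound~\eqref{eq:non-vanishing} on $a$ (forcing $a(y)\sim a'(0)\,y$ near $0$, integrable singularity tamed by the exponential) and the precise choice of $\chi_k$ in Lemma~\ref{L.chi} (ensuring $\mu>0$ uniformly) are both essential. The uniqueness half is comparatively routine: any two $BUC^0$ solutions differ by a bounded solution of the homogeneous equation $a\,v' = (\lambda + k\,a' - a\frac{\uU''}{\uU'})v$, whose nonzero solutions grow like $e^{\int_0^x \frac{1}{a}(\lambda+ka'-a\frac{\uU''}{\uU'})}$, which is unbounded at one of $\pm\infty$ precisely because $\Re(\lambda) > -\theta_{a,k} \ge -k\,g'(\uu_\star)$ forces blow-up through $x=0$ unless the solution vanishes there, and then globally.
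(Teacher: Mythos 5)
Your proposal follows the paper's proof essentially verbatim: it reduces to the explicit formula~\eqref{eq:local-spectral}, conjugates by the weight $e^{-\int_0^{\cdot}\chi_k}$, and bounds the resulting kernel exactly --- your change of variables $s=\int_y^x \dd z/a(z)$ is the same computation as the paper's integration of the perfect derivative $\d_y\bigl(e^{-\int_y^x\frac1a(\Re(\lambda)+ka'-a\frac{\uU''}{\uU'}+a\chi_k)}\bigr)$, and both yield the factor $1/(\Re(\lambda)+\theta_{a,k})$. One sign slip to fix: the equation is $a\,v'=A-(\lambda+ka'-a\tfrac{\uU''}{\uU'})v$, not its negative, so nonzero homogeneous solutions behave like $e^{-\int\frac1a(\lambda+ka'-a\frac{\uU''}{\uU'})}$ and blow up as $x\to 0^{\pm}$ (not at $\pm\infty$); it is precisely this blow-up at the characteristic point that forces uniqueness in $BUC^0(\RR)$, consistently with the integral formula you then (correctly) use for the estimate.
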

\begin{proof}
Thanks to~\eqref{eq:non-vanishing} and $\Re(\lambda)>-\theta_{a,k}\geq -k\,a'(0)$, one may rely directly on the arguments expounded above and focus on~\eqref{eq:local-spectral} that yields for $x\neq0$
\[
e^{-\int_{0}^{x}\chi_k}v(x)
\,=\,
\int_{0}^x\frac{e^{-\int_y^x\frac{1}{a}(\lambda+k a'-a\frac{\uU''}{\uU'}+a\,\chi_k)}}{a(y)} e^{-\int_{0}^{y}\chi_k}\,A(y)\,\dd y
\]
thus
\begin{align*}
&\abs{e^{-\int_{0}^{x}\chi_k}v(x)}\\
&\,\leq\,
\frac{\Norm{e^{-\int_{0}^{\cdot}\chi_k}A}_{L^{\infty}(\RR)}}{\Re(\lambda)+\theta_{a,k}}
\,\times\begin{cases}\displaystyle
\int_{0}^x\frac{e^{-\int_y^x\frac{1}{a}(\Re(\lambda)+k a'-a\frac{\uU''}{\uU'}+a\,\chi_k)}}{a(y)}\,\left(\Re(\lambda)+k a'-a\frac{\uU''}{\uU'}+a\,\chi_k\right)(y)\,\dd y&
\qquad x>0\\\displaystyle
-\int_{x}^0\frac{e^{-\int_y^x\frac{1}{a}(\Re(\lambda)+k a'-a\frac{\uU''}{\uU'}+a\,\chi_k)}}{a(y)}\,\left(\Re(\lambda)+k a'-a\frac{\uU''}{\uU'}+a\,\chi_k\right)(y)\,\dd y&
\qquad x<0
\end{cases}\\
&\,\leq\,\frac{\Norm{e^{-\int_{0}^{\cdot}\chi_k}A}_{L^{\infty}(\RR)}}{\Re(\lambda)+\theta_{a,k}}\,.
\end{align*}
Hence the result.
\end{proof}

Now we turn to the consideration of $\cL_a$ on $X^1_\star$. The key observation is that
\begin{equation}\label{eq:key-derivative}
\cD\,\cL_a\,v\,\eqdef\,\uU'\d_x\left(\frac{1}{\uU'}\cL_a\,v\right)
\,=\,L_{a,1}\, \cD\, v
\,=\,L_{a,1}\left(\d_xv-\frac{\uU''}{\uU'}v\right)\,.
\end{equation}
To go further, we show that, when $v\in X^1_\star$, $\Norm{v}_{W^{1,\infty}}$ is controlled by a multiple of $\Norm{\d_xv-\tfrac{\uU''}{\uU'}v}_{L^{\infty}}$.

\begin{Lemma}\label{L.norm-equivalence}
Let $(\uU,\sigma)$ be defined by Assumption~\ref{A.front} and a corresponding $\chi\eqdef\chi_1$ be obtained from Lemma~\ref{L.chi} (with $k=1$). Then $\Norm{\,\cdot\,}_{\star}$, given by
\begin{equation}\label{def-norm-star}
\Norm{v}_{\star}\,\eqdef\,
\Norm{e^{-\int_{0}^{\cdot}\chi}\left(\d_xv-\tfrac{\uU''}{\uU'}v\right)}_{L^{\infty}(\RR)}\,,\qquad
v\in X^1_\star(\RR)\,,
\end{equation}
defines a norm on $X^1_\star(\RR)$ equivalent to $\Norm{\,\cdot\,}_{W^{1,\infty}(\RR)}$.
\end{Lemma}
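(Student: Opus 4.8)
The statement asks us to show that $\Norm{\,\cdot\,}_\star$ defined in \eqref{def-norm-star} is a norm on $X^1_\star(\RR)$ equivalent to $\Norm{\,\cdot\,}_{W^{1,\infty}(\RR)}$. My plan is to treat this as two one-sided estimates, exploiting the fact that the exponential weight $e^{\pm\int_0^\cdot \chi}$ is bounded above and below since $\chi=\chi_1\in L^1(\RR)\cap BUC^0(\RR)$ by Lemma~\ref{L.chi}. Indeed, $\int_0^x\chi$ has finite limits as $x\to\pm\infty$ and is continuous, so there exist constants $0<m\leq M<\infty$ with $m\leq e^{-\int_0^x\chi}\leq M$ for all $x\in\RR$. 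Consequently the weighted $L^\infty$ norm in \eqref{def-norm-star} is equivalent to $\Norm{\d_x v-\tfrac{\uU''}{\uU'}v}_{L^\infty(\RR)}$, and it suffices to prove the equivalence of this unweighted quantity with $\Norm{v}_{W^{1,\infty}(\RR)}$.

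The easy direction is the upper bound $\Norm{\d_x v-\tfrac{\uU''}{\uU'}v}_{L^\infty}\lesssim \Norm{v}_{W^{1,\infty}}$. This is immediate once we check that $\uU''/\uU'$ is bounded on $\RR$: near $\pm\infty$ this ratio converges to $g'(\uu_{\pm\infty})/(f'(\uu_{\pm\infty})-\sigma)$ by Lemma~\ref{L.front}, and on any compact set it is continuous since $\uU'=F_\sigma(\uU)$ never vanishes (the profile is strictly monotonic, Proposition~\ref{P.monotonic}, so $\uU'$ has a constant sign and, near $0$, $\uU'(0)=F_\sigma(\uu_\star)=g'(\uu_\star)/f''(\uu_\star)\neq 0$). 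Hence $\Norm{\uU''/\uU'}_{L^\infty}<\infty$ and the bound follows with constant $1+\Norm{\uU''/\uU'}_{L^\infty}$.

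The substantive direction is the reverse bound $\Norm{v}_{W^{1,\infty}}\lesssim \Norm{\d_x v-\tfrac{\uU''}{\uU'}v}_{L^\infty}$ for $v\in X^1_\star(\RR)$, i.e.\ $v(0)=0$. The key device is the identity $\d_x v-\tfrac{\uU''}{\uU'}v=\uU'\,\d_x(v/\uU')=\cD v$, so that setting $w\eqdef \cD v$ we may integrate: $v(x)=\uU'(x)\int_0^x \frac{w(y)}{\uU'(y)}\,\dd y$ (the boundary term vanishes because $v(0)=0$). To turn this into an $L^\infty$ bound on $v$, I will bound $\bigl|\uU'(x)\int_0^x \uU'(y)^{-1}\,\dd y\bigr|$ uniformly in $x$. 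Here is where the structure from Lemma~\ref{L.chi} enters, essentially rephrased: the point of introducing $\chi$ is exactly that $-a\tfrac{\uU''}{\uU'}+a\chi$ (with $a=f'(\uU)-\sigma$, $k=1$) is bounded below by $\theta_1>0$; equivalently $\frac{d}{dx}\log\bigl(\uU'(x)\,e^{-\int_0^x\chi}\bigr)$ has the sign of $-x$ times a positive quantity, so $|\uU'|$ decays at least like $e^{\theta_1|x|/\max|f'(\uU)-\sigma|}$ in the appropriate weighted sense near $\pm\infty$, while near $0$ it is bounded away from $0$. Running the weighted Grönwall-type estimate already carried out in the proof of Lemma~\ref{L.lin-1} (take $\lambda=0$, $a=f'(\uU)-\sigma$, $k=1$ there, noting $\theta_{a,1}=\theta_1>0$ in that case) gives precisely $\Norm{e^{-\int_0^\cdot\chi}v}_{L^\infty}\leq \theta_1^{-1}\Norm{e^{-\int_0^\cdot\chi}w}_{L^\infty}$, and then the boundedness of the weight converts this to $\Norm{v}_{L^\infty}\lesssim \Norm{w}_{L^\infty}$. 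Finally $\Norm{\d_x v}_{L^\infty}\leq \Norm{w}_{L^\infty}+\Norm{\uU''/\uU'}_{L^\infty}\Norm{v}_{L^\infty}\lesssim \Norm{w}_{L^\infty}$, completing the equivalence; that $\Norm{\,\cdot\,}_\star$ is genuinely a norm (positive definiteness) then follows since $\Norm{v}_\star=0$ forces $\cD v\equiv 0$, hence $v$ proportional to $\uU'$, hence $v\equiv 0$ by $v(0)=0$ and $\uU'(0)\neq 0$.

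The main obstacle is the reverse inequality, and specifically the uniform control of $\uU'(x)\int_0^x \uU'(y)^{-1}\,\dd y$; the cleanest way to obtain it is not to redo the estimate from scratch but to observe that $v\mapsto \cD v$ is, up to the sign factor $-a$, exactly the map $L_{a,1}$ appearing in \eqref{eq:key-derivative}, so the resolvent bound of Lemma~\ref{L.lin-1} at $\lambda=0$ (applicable since $\theta_{a,1}=\theta_1>0$ for $a=f'(\uU)-\sigma$) delivers the required weighted estimate immediately.
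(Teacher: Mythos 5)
Your overall architecture coincides with the paper's: both directions reduce, via the boundedness of the weight $e^{-\int_0^\cdot\chi}$ and of $\uU''/\uU'$, to showing that $\Norm{v}_{L^\infty(\RR)}$ is controlled by $\Norm{\d_xv-\tfrac{\uU''}{\uU'}v}_{L^\infty(\RR)}$ for $v\in X^1_\star(\RR)$, and both use the representation $v(x)=\uU'(x)\int_0^x w(y)/\uU'(y)\,\dd y$, so that everything hinges on a uniform bound for $\uU'(x)\int_0^x\uU'(y)^{-1}\,\dd y$. The problem is how you discharge this last step. The relation between $v$ and $w=\cD v$ reads $\cL_a v=-a\,w$, i.e.\ it involves $L_{a,0}=\cL_a$ and not $L_{a,1}$: the identity \eqref{eq:key-derivative} says that $L_{a,1}$ governs $\cD v$ once $\cL_a$ governs $v$, not that $\cD$ is $-a^{-1}L_{a,1}$. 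Indeed $L_{a,1}=-a\,\cD-a'$, and the extra $-a'$ term is not negligible ($a'(0)=g'(\uu_\star)\geq\theta_1$, so it cannot be absorbed). If you quote Lemma~\ref{L.lin-1} with $k=1$ at $\lambda=0$, the equation it solves, $-L_{a,1}u=A$, rewrites as $\cD(au)=A$, so what it bounds is $e^{-\int_0^\cdot\chi}\,v/a$ with $v=au$, not $e^{-\int_0^\cdot\chi}\,v$; your displayed inequality with constant $\theta_1^{-1}$ does not follow as stated. Using instead the natural index $k=0$ fails for a different reason: by \eqref{eq:theta_k}, $\theta_0=\min(\{0,-g'(\uu_+),-g'(\uu_-)\})=0$, so the resolvent bound of Lemma~\ref{L.lin-1} degenerates precisely at $\lambda=0$. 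Relatedly, your intermediate claim that $-a\tfrac{\uU''}{\uU'}+a\chi$ is bounded below by $\theta_1>0$ is false: that quantity vanishes at $x=0$ since $a(0)=0$; the positive lower bound in Lemma~\ref{L.chi} crucially uses the $k\,a'$ term.

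The gap is reparable in two ways. The paper's route is direct and bypasses Lemma~\ref{L.lin-1}: by Lemma~\ref{L.front}, $\uU''/\uU'$ converges to $g'(\uu_{\pm\infty})/(f'(\uu_{\pm\infty})-\sigma)$ at $\pm\infty$, with the sign that makes $\uU'$ exponentially decaying, so for $R_0$ large there is $c_0>0$ with $-c_0\,\uU''/\uU'\geq1$ on $[R_0,+\infty)$ and $c_0\,\uU''/\uU'\geq1$ on $(-\infty,-R_0]$, whence $\int_{R_0}^x\uU'(x)/\uU'(y)\,\dd y\leq c_0\,\uU'(x)\int_{R_0}^x\bigl(-\uU''(y)/(\uU'(y))^2\bigr)\,\dd y\leq c_0$ for $x\geq R_0$ (and symmetrically on the left); this is exactly the uniform kernel bound you need. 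Alternatively, your idea can be salvaged by making the substitution $u=v/a$ explicit: $u\in BUC^0(\RR)$ because $a$ vanishes only at $0$, where $v$ also vanishes and $a'(0)\neq0$, and $-L_{a,1}u=\cD v$, so Lemma~\ref{L.lin-1} with $k=1$, $\lambda=0$ gives $\Norm{e^{-\int_0^\cdot\chi}\,v/a}_{L^\infty(\RR)}\leq\theta_{a,1}^{-1}\Norm{e^{-\int_0^\cdot\chi}\,\cD v}_{L^\infty(\RR)}$, after which $|v|\leq\Norm{a}_{L^\infty}\,|v/a|$ concludes, with constant $\Norm{a}_{L^\infty}/\theta_1$ rather than $\theta_1^{-1}$. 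As written, however, the decisive estimate is not justified.
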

\begin{proof}
That $\Norm{\,\cdot\,}_{\star}$ defines a seminorm is obvious. Since $\chi\in L^1(\RR)$ and $\uU''/\uU'\in L^\infty(\RR)$ by  Lemma~\ref{L.front}, it is sufficient to prove that when $v\in X^1_\star(\RR)$, $\Norm{v}_{L^{\infty}(\RR)}$ is controlled by a multiple of $\Norm{\d_xv-\tfrac{\uU''}{\uU'}v}_{L^{\infty}(\RR)}$. Yet, when $v\in X^1_\star(\RR)$, with $w\eqdef\d_xv-\tfrac{\uU''}{\uU'}v=\uU'\,\d_x(v/\uU')$, we have for any $x\in\RR$,
\begin{equation}\label{eq:v-w}
v(x)\,=\,\int_0^x\,\frac{\uU'(x)}{\uU'(y)}\,w(y)\,\dd y
\,.
\end{equation}
Thus, since $\uU'$ does not vanish, it is sufficient to prove that 
\[
\sup_{x\in\RR}\quad\left|\int_0^x\,\frac{\uU'(x)}{\uU'(y)}\,\dd y\right| <+\infty\,.
\]
To prove the latter we observe that from Lemma~\ref{L.front} stems that for $R_0$ sufficient large there exists $c_0>0$ such that 
\[
-c_0\,\frac{\uU''(x)}{\uU'(x)}\,\geq\,1\qquad\textrm{when }x\geq R_0
\qquad\textrm{and}\qquad
c_0\,\frac{\uU''(x)}{\uU'(x)}\,\geq\,1\qquad \textrm{when }x\leq -R_0\,.
\]
From this follows for $x\geq R_0$
\[
\int_{R_0}^x\,\frac{\uU'(x)}{\uU'(y)}\,\dd y
\,\leq\,c_0\uU'(x)\int_{R_0}^x\,\left(-\frac{\uU''(y)}{(\uU'(y))^2}\right)\,\dd y
\,\leq\,c_0
\]
and for $x\leq -R_0$
\[
\int_{x}^{-R_0}\,\frac{\uU'(x)}{\uU'(y)}\,\dd y
\,\leq\,-c_0\uU'(x)\int_{x}^{-R_0}\,\left(-\frac{\uU''(y)}{(\uU'(y))^2}\right)\,\dd y
\,\leq\,c_0\,.
\]
Hence the claim.
\end{proof}

Note that in particular the foregoing lemma contains that for any $(A,B)\in (X^1_\star(\RR))^2$, $A=B$ if and only if $\d_xA-\tfrac{\uU''}{\uU'}A=\d_xB-\tfrac{\uU''}{\uU'}B$. Reciprocally it follows readily from~\eqref{eq:v-w} that for any $w\in BUC^0$ there exists a $v\in X^1_\star$ such that $\d_xv-\tfrac{\uU''}{\uU'}v=w$.

The derivation of the following lemma from~\eqref{eq:key-derivative} and Lemmas~\ref{L.lin-1} and~\ref{L.norm-equivalence} is now immediate.
\begin{Lemma}\label{L.lin-2}
Let $(\uU,\sigma)$ be given by Assumption~\ref{A.front} and $a\in X^1_\star(\RR)$ satisfying~\eqref{eq:non-vanishing}. Let $\chi\eqdef\chi_1$ be obtained from Lemma~\ref{L.chi} (with $k=1$). For any $\lambda\in\CC$ such that 
\[
\Re(\lambda)>-\theta_{a,1}\,,\] 
with $\theta_{a,1}$ as in~\eqref{eq:theta_ak}, for any $A\in X^1_\star(\RR)$, there exists a unique $v\in X^2_\star(\RR)$ such that 
\[
(\lambda-\cL_{a})\,v\,=\,A
\]
and, moreover,
\[
\Norm{v}_{\star}
\,\leq\,\frac{1}{\Re(\lambda)+\theta_{a,1}}\,\Norm{A}_{\star}
\]
where $\Norm{\,\cdot\,}_{\star}$ is defined by~\eqref{def-norm-star}. 
\end{Lemma}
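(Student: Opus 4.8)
The plan is to solve the resolvent equation $(\lambda-\cL_a)v=A$ by conjugating it with the operator $\cD$, which by the commutation identity~\eqref{eq:key-derivative}, $\cD\,\cL_a=L_{a,1}\,\cD$, turns $\cL_a$ into the transport operator $L_{a,1}$ already treated in Lemma~\ref{L.lin-1}. The other ingredient is the structural information recorded around Lemma~\ref{L.norm-equivalence}: the map $v\mapsto\cD v=\uU'\,\d_x(v/\uU')$ is a bijection from $X^1_\star(\RR)$ onto $BUC^0(\RR)$, with inverse given by~\eqref{eq:v-w}, and $\Norm{\,\cdot\,}_{\star}$ is by its very definition~\eqref{def-norm-star} the pull-back under $\cD$ of the weighted norm $\Norm{e^{-\int_0^{\cdot}\chi}\,\cdot\,}_{L^{\infty}(\RR)}$ in which Lemma~\ref{L.lin-1} provides a contraction-type resolvent bound. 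The hypotheses of that lemma are met here: since $a$ satisfies~\eqref{eq:non-vanishing} one has $a'(0)\geq c_0>0$, and since $a(0)=0$ the infimum in~\eqref{eq:theta_ak} is at most its value $a'(0)$ at $x=0$, so $\Re(\lambda)>-\theta_{a,1}\geq-a'(0)$.

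Concretely, given $A\in X^1_\star(\RR)$, I would set $B\eqdef\cD A\in BUC^0(\RR)$, use Lemma~\ref{L.lin-1} with $k=1$ to produce the unique $w\in BUC^0(\RR)$ solving $(\lambda-L_{a,1})w=B$ with
\[
\Norm{e^{-\int_0^{\cdot}\chi}w}_{L^{\infty}(\RR)}\,\leq\,\frac{1}{\Re(\lambda)+\theta_{a,1}}\,\Norm{e^{-\int_0^{\cdot}\chi}B}_{L^{\infty}(\RR)}\,=\,\frac{1}{\Re(\lambda)+\theta_{a,1}}\,\Norm{A}_{\star}\,,
\]
and then take $v$ to be the unique element of $X^1_\star(\RR)$ with $\cD v=w$, so that $\Norm{v}_{\star}=\Norm{e^{-\int_0^{\cdot}\chi}w}_{L^{\infty}(\RR)}$ and the announced estimate holds. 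That this $v$ actually solves $(\lambda-\cL_a)v=A$ follows by applying $\cD$ and using~\eqref{eq:key-derivative}: one gets $\cD\big((\lambda-\cL_a)v\big)=\lambda\,\cD v-L_{a,1}\,\cD v=(\lambda-L_{a,1})w=B=\cD A$, and $\big((\lambda-\cL_a)v\big)(0)=\lambda v(0)+a(0)w(0)=0=A(0)$ because $a(0)=0$, so injectivity of $\cD$ on $X^1_\star(\RR)$ forces equality. Uniqueness in $X^2_\star(\RR)$ is the same argument run backwards: $(\lambda-\cL_a)v=0$ gives $(\lambda-L_{a,1})\cD v=0$, hence $\cD v=0$ by the uniqueness part of Lemma~\ref{L.lin-1}, hence $v=0$.

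The only point that is not purely mechanical --- and the step I expect to be the main obstacle --- is verifying that the $v$ constructed above lies in $X^2_\star(\RR)$, not merely in $X^1_\star(\RR)$, and, correlatively, that the formal manipulation~\eqref{eq:key-derivative} is licit at the regularity at hand. I would deal with this by a short bootstrap on the defining relations: from $(\lambda-\cL_a)v=A$ one has $\cD v=(A-\lambda v)/a$, which, using~\eqref{eq:non-vanishing} to control $a$ away from the origin and the explicit local representation~\eqref{eq:local-spectral} (whose numerator vanishes at $0$ matching the simple zero of $a$), carries one more degree of smoothness near $x=0$ than is a priori clear; substituting this into $v'=\tfrac{\uU''}{\uU'}v+\cD v$ and invoking the smoothness and exponential decay of $\uU$ from Lemma~\ref{L.front} then promotes $v$ to $X^2_\star(\RR)$. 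Everything else is a verbatim transcription of Lemma~\ref{L.lin-1} through the $\cD$-conjugation, so no additional work is required.
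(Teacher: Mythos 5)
Your argument is correct and is exactly the paper's intended route: the paper dispatches this lemma in one line as ``immediate from~\eqref{eq:key-derivative} and Lemmas~\ref{L.lin-1} and~\ref{L.norm-equivalence}'', i.e.\ conjugation by $\cD$ reducing $\cL_a$ to $L_{a,1}$, the bijectivity of $\cD:X^1_\star(\RR)\to BUC^0(\RR)$ with inverse~\eqref{eq:v-w}, and the fact that $\Norm{\,\cdot\,}_{\star}$ is the pull-back of the weighted norm of Lemma~\ref{L.lin-1}. You merely spell out the details the authors leave implicit (including the $X^2_\star$ regularity bootstrap, which the paper does not discuss either), so no further work is needed.
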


From the strictly lower triangular structure in~\eqref{eq:augmented-derivative} and Lemmas~\ref{L.lin-0},~\ref{L.chi},~\ref{L.lin-1} and~\ref{L.lin-2}, stems spectral stability in $BUC^k(\RR)$, $k\in\NN$.

\begin{Proposition}\label{P.spectral-stability-front}
Let $(\uU,\sigma)$ be defined by Assumption~\ref{A.front} and $k\in\NN$ be such that $f\in\cC^{k+3}(\RR)$ and $g\in\cC^{k+2}(\RR)$. Then the spectrum of $\cL$ on $BUC^k(\RR)$ is included in
\[ 
\{\,\lambda\,;\,\Re(\lambda)\leq -\theta\,\}\cup\{0\}
\]
with
\begin{equation}\label{eq:theta}
\theta\eqdef\min(\{\,g'(\uu_\star),-g'(\uu_+),-g'(\uu_-)\})>0\,,
\end{equation}
and $0$ is a simple eigenvalue with eigenvector $\uU'$.
\end{Proposition}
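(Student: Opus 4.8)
The plan is to bootstrap from the scalar resolvent estimates already in hand. First I would record the base case $k=0$: since $\cL_a=L_{a,0}$, Lemma~\ref{L.lin-1} applied with $k=0$, $a=(f'(\uU)-\sigma)$ (which trivially satisfies~\eqref{eq:non-vanishing}), and the weight $\chi_0$ shows that the resolvent of $\cL$ on $BUC^0(\RR)$ exists and is bounded on $\{\Re(\lambda)>-\theta_{(f'(\uU)-\sigma),0}\}$; and by~\eqref{eq:theta_ak} with $a=(f'(\uU)-\sigma)$ one has $\theta_{(f'(\uU)-\sigma),0}\geq\theta_0=\min(\{0,-g'(\uu_+),-g'(\uu_-)\})$. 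Since the profile equation forces $f'(\uu_\pm)-\sigma$ to have the sign of $g'(\uu_\pm)<0$ (cf. Proposition~\ref{P.structure}), one checks $\theta_0>0$, so the $BUC^0$-spectrum of $\cL$ lies in $\{\Re(\lambda)\leq-\theta_0\}$ — already inside $\{\Re\lambda\leq-\theta\}\cup\{0\}$ because here there is no $0$ eigenvalue issue; actually at $k=0$, $\uU'\notin BUC^0$-kernel is irrelevant, and the estimate is clean.

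Next, for $k\geq1$ I would argue by induction on $k$, using the key identity~\eqref{eq:key-derivative}, $\cD\cL_a v=L_{a,1}\cD v$, together with the augmented-derivative decomposition~\eqref{eq:augmented-derivative}. The strategy: to solve $(\lambda-\cL)v=A$ with $A\in BUC^k(\RR)$, decompose $A=\Pi A+(\Id-\Pi)A$ where $\Pi A=A(0)\uU'/\uU'(0)$; the $\Pi A$ piece is handled by~\eqref{eq:unstar} (it contributes the $\{0\}$ and, for $\lambda\neq0$, a term with the stated regularity), so it remains to invert $\lambda-\cL$ on $X^k_\star(\RR)$. For that, apply $(\d_x,\dots,\d_x^k)$ to the equation: by~\eqref{eq:augmented-derivative} the vector $(v,\d_xv,\dots,\d_x^kv)$ solves a system whose diagonal part is $(\cL_a, L_{a,1}\d_x,\dots,L_{a,k}\d_x^k)$ — here with $a=(f'(\uU)-\sigma)$ — plus a strictly lower-triangular bounded perturbation $*$. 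Lemma~\ref{L.lin-2} (with $a=(f'(\uU)-\sigma)$, so $\theta_{a,1}\geq\theta_1=\min(\{g'(\uu_\star),-g'(\uu_+),-g'(\uu_-)\})=\theta$) inverts the top block on $X^1_\star$, while Lemma~\ref{L.lin-1} inverts each $L_{a,\ell}\d_x^\ell$ block on $BUC^0$ for $\Re(\lambda)>-\theta_{a,\ell}\geq-\theta_\ell\geq-\theta$ (using $\theta_\ell=\min(\{\ell g'(\uu_\star),-g'(\uu_\pm)\})\geq\theta_1=\theta$ for $\ell\geq1$). The strict lower-triangularity then lets one solve the full system by back-substitution on $\{\Re(\lambda)>-\theta\}$, with bounds in the weighted norms $\Norm{e^{-\int_0^\cdot\chi_\ell}\,\cdot\,}_{L^\infty}$, which are equivalent to $L^\infty$ since $\chi_\ell\in L^1$; passing from the $w$-variable back to $v$ on the top block uses Lemma~\ref{L.norm-equivalence}. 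This yields boundedness of $(\lambda-\cL_\star)^{-1}$ on $X^k_\star(\RR)$ for $\Re(\lambda)>-\theta$, hence the desired spectral localization on $BUC^k(\RR)$ via Lemma~\ref{L.lin-0}.

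Finally I would address the eigenvalue $0$: Lemma~\ref{L.lin-0} gives $\spec_{BUC^k}(\cL)=\{0\}\cup\spec_{X^k_\star}(\cL)$, and the argument above shows $0\notin\spec_{X^k_\star}(\cL)$ when $k\geq1$ (the weighted resolvent bounds extend to a neighborhood of $\lambda=0$ since $\theta>0$). Then, as noted in the Remark on the spectral projector, taking $\lambda\to0$ in~\eqref{eq:unstar} shows $0$ is a simple eigenvalue of $\cL$ on $BUC^k(\RR)$ with spectral projector $\Pi A=A(0)\uU'/\uU'(0)$ and eigenvector $\uU'$; simplicity follows because $\Pi$ has rank one and $\Id-\Pi$ maps onto $X^k_\star(\RR)$ on which $\cL$ is invertible.

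The main obstacle I expect is bookkeeping the back-substitution in~\eqref{eq:augmented-derivative} carefully enough that the weighted norms chain together without losing the uniform constant: each block lives in a different weighted space (weight $\chi_\ell$), and one must check the off-diagonal $*$ entries, which are bounded $X^1_\star\times(BUC^0)^{k-1}\to\cdot$, remain bounded between the weighted spaces — this is where the $L^1$-integrability of the $\chi_\ell$ (giving two-sided comparability of weighted and unweighted $L^\infty$ norms) is essential, and where one must take the constant $C_0(1+k)\Norm{a-(f'(\uU)-\sigma)}_{W^{1,\infty}}$ in~\eqref{eq:theta_ak} to be zero by choosing $a=(f'(\uU)-\sigma)$ exactly, so that $\theta_{a,\ell}\geq\theta_\ell\geq\theta$ with no loss. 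Beyond that the argument is a routine assembly of Lemmas~\ref{L.lin-0}--\ref{L.lin-2}.
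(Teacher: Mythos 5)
Your proposal assembles exactly the ingredients the paper itself cites for this proposition (the lower-triangular structure of~\eqref{eq:augmented-derivative} together with Lemmas~\ref{L.lin-0}, \ref{L.chi}, \ref{L.lin-1} and~\ref{L.lin-2}, applied with $a=f'(\uU)-\sigma$ so that no loss occurs in~\eqref{eq:theta_ak}), and for $k\geq1$ your back-substitution argument, the reduction of the eigenvalue $0$ via Lemma~\ref{L.lin-0}, and the identification of the rank-one spectral projector are all correct and faithful to the paper's (one-line) proof.

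The one genuine error is in your base case $k=0$: from~\eqref{eq:theta_k} one has $\theta_0=\min(\{0\cdot g'(\uu_\star),-g'(\uu_+),-g'(\uu_-)\})=0$, not $\theta_0>0$ as you assert (the sign of $f'(\uu_\pm)-\sigma$ plays no role in this minimum). Consequently Lemma~\ref{L.lin-1} with $k=0$ only localizes the $BUC^0$-spectrum in the closed left half-plane, and no refinement of the argument can do better: for any real $\lambda$ with $\max(\{-g'(\uu_\star),g'(\uu_{+\infty})\})<\lambda<0$ the function vanishing on $\RR_-$ and behaving like $x^{-\lambda/g'(\uu_\star)}$ near $0^+$ (obtained by integrating $(\,(f'(\uU)-\sigma)w)'=(g'(\uU)-\lambda)w$ on $(0,+\infty)$) is a bounded, uniformly continuous eigenfunction of $\cL$ in its maximal $BUC^0$-domain, so the strip $\{-\theta<\Re\lambda<0\}$ is \emph{not} spectrum-free on $BUC^0$. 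These H\"older-singular eigenfunctions are excluded from $BUC^k$ precisely when $k\geq1$, which is consistent with the paper's own Remark restricting the invertibility of $\cL$ on $X^k_\star(\RR)$ to $k\geq1$; the statement (and your proof) should therefore be read with $k\geq1$, and you should simply delete the $k=0$ base case rather than try to repair it — your induction for $k\geq1$ never uses it, since the top block is handled directly by Lemma~\ref{L.lin-2}.
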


\subsection{Linear estimates}

With the resolvent estimates of the time-independent operators obtained in Lemmas~\ref{L.lin-1} (with $k=0$) and~\ref{L.lin-2}, we may apply general theorems on evolution systems. See for instance~\cite[Chapter~5, Theorem~3.1]{Pazy} with $X=X^0_\star(\RR)$ and $Y=X^1_\star(\RR)$.

\begin{Proposition}\label{P.linear}
Let $(\uU,\sigma)$ be given by Assumption~\ref{A.front}. There exist $\eps_0>0$ and $C_0$ such that if $T>0$ and $a\in \cC^1([0,T];X^1_\star(\RR))$ satisfies for any $t\in[0,T]$ 
\[
\Norm{a(t,\cdot)-(f'(\uU)-\sigma)}_{W^{1,\infty}(\RR)}\leq \eps_0
\]
then the family of operators $\cL_{a(t,\cdot)}$ generates an evolution system $\cS_a$ on $X^0_\star(\RR)$ and for any\\
$0\leq s\leq t<T$, $\cS_a(s,t)(X^1_\star(\RR))\subset X^1_\star(\RR)$ and for any $v_0\in X^1_\star(\RR)$
\[
\Norm{\cS_a(s,t)\,v_0}_{W^{1,\infty}(\RR)}\leq 
C_0\,e^{-\theta\,(t-s)}\,e^{C_0\,\int_s^t\Norm{a(\tau,\cdot)-(f'(\uU)-\sigma)}_{W^{1,\infty}(\RR)}\,\dd\tau}\,\Norm{v_0}_{W^{1,\infty}(\RR)}\,,
\]
with $\theta$ as in~\eqref{eq:theta}.
\end{Proposition}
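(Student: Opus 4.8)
The plan is to recast the evolution generated by the time-dependent family $\cL_{a(t,\cdot)}$ into the abstract framework of non-autonomous linear equations of hyperbolic type, for which the existence of an evolution system is classical (as recalled in~\cite[Chapter~5, Theorem~3.1]{Pazy} with $X=X^0_\star(\RR)$ and $Y=X^1_\star(\RR)$), and then to track the decay rate through the construction so as to recover the sharp time-integrated bound rather than the cruder uniform one provided by the black-box statement. Throughout, fix the weights $\chi_0$ and $\chi\eqdef\chi_1$ from Lemma~\ref{L.chi} and work with the weighted sup-norms $\Norm{v}_{w_0}\eqdef\Norm{e^{-\int_0^\cdot\chi_0}\,v}_{L^\infty(\RR)}$ on $X=X^0_\star(\RR)$ and $\Norm{\,\cdot\,}_\star$ (from~\eqref{def-norm-star}) on $Y=X^1_\star(\RR)$; since $\chi_0,\chi\in L^1(\RR)$, the former is equivalent to $\Norm{\,\cdot\,}_{L^\infty(\RR)}$ and, by Lemma~\ref{L.norm-equivalence}, the latter to $\Norm{\,\cdot\,}_{W^{1,\infty}(\RR)}$, with constants depending only on $(\uU,\sigma)$.

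First I would pick $\eps_0>0$ small enough that Lemma~\ref{L.non-vanishing} forces~\eqref{eq:non-vanishing} for each $a(t,\cdot)$ and, using Lemma~\ref{L.chi} with $k=0$ (so $\theta_0=0$) and $k=1$ (so $\theta_1=\theta$), that $\theta_{a(t,\cdot),0}\geq -1$ and $\theta_{a(t,\cdot),1}\geq\theta/2>0$ uniformly in $t\in[0,T]$. Then, since $a(t,0)=0$ implies $(\cL_{a(t,\cdot)}v)(0)=0$, Lemma~\ref{L.lin-1} with $k=0$ shows each $\cL_{a(t,\cdot)}$ generates a $\cC^0$-semigroup on $X$ with $\Norm{(\lambda-\cL_{a(t,\cdot)})^{-1}}_{w_0\to w_0}\leq(\Re\lambda+\theta_{a(t,\cdot),0})^{-1}$ for $\Re\lambda>-\theta_{a(t,\cdot),0}$; hence $\{\cL_{a(t,\cdot)}\}_{t\in[0,T]}$ is a stable family on $X$ (with constant $M=1$ in $\Norm{\,\cdot\,}_{w_0}$). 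Similarly, Lemma~\ref{L.lin-2} (whose conclusion $(\lambda-\cL_{a(t,\cdot)})^{-1}(X^1_\star)\subset X^2_\star\subset Y$ also shows the resolvents are consistent between $X$ and $Y$) yields $\Norm{(\lambda-\cL_{a(t,\cdot)})^{-1}}_{\star\to\star}\leq(\Re\lambda+\theta_{a(t,\cdot),1})^{-1}$ for $\Re\lambda>-\theta_{a(t,\cdot),1}$, so the family is stable on $Y$ as well, again with $M=1$ in $\Norm{\,\cdot\,}_\star$. Finally, $Y$ is dense in $X$, and since $\cL_a v=-a\,\cD v$ with $(\cL_a v)(0)=0$ one has $\cL_{a(t,\cdot)}\in B(Y,X)$ with $\Norm{\cL_{a(t,\cdot)}v}_{X}\lesssim\Norm{a(t,\cdot)}_{L^\infty}\Norm{v}_{Y}$, while $\Norm{(\cL_{a(t,\cdot)}-\cL_{a(s,\cdot)})v}_{X}\lesssim\Norm{a(t,\cdot)-a(s,\cdot)}_{L^\infty}\Norm{v}_{Y}$ together with $a\in\cC^1([0,T];X^1_\star(\RR))$ gives continuity of $t\mapsto\cL_{a(t,\cdot)}\in B(Y,X)$. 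These are precisely the hypotheses under which the cited theorem produces a unique, strongly continuous evolution system $\cS_a$ on $X$ with $\cS_a(s,t)(Y)\subset Y$ for $0\le s\le t<T$.

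The remaining --- and only delicate --- point is to upgrade the uniform bound provided by the theorem to the claimed integral one. For this I would revisit the construction of $\cS_a$ as the strong limit of products $\prod_j e^{(t_{j+1}-t_j)\,\cL_{a(\tau_j,\cdot)}}$ over refining partitions $s=t_0<\dots<t_N=t$ with $\tau_j\in[t_j,t_{j+1}]$. Because the stability constant on $Y$ is $M=1$ in $\Norm{\,\cdot\,}_\star$, the Hille--Yosida theorem gives $\Norm{e^{h\,\cL_{a(\tau,\cdot)}}}_{\star\to\star}\leq e^{-h\,\theta_{a(\tau,\cdot),1}}$ for $h\ge0$, whence $\Norm{\prod_j e^{h_j\,\cL_{a(\tau_j,\cdot)}}}_{\star\to\star}\leq\prod_j e^{-h_j\,\theta_{a(\tau_j,\cdot),1}}$; using~\eqref{eq:theta_ak} in the form $\theta_{a(\tau,\cdot),1}\geq\theta-C\Norm{a(\tau,\cdot)-(f'(\uU)-\sigma)}_{W^{1,\infty}(\RR)}$ for some $C=C(\uU,\sigma)$, and letting the partition refine --- the Riemann sums converging since $\tau\mapsto\Norm{a(\tau,\cdot)-(f'(\uU)-\sigma)}_{W^{1,\infty}(\RR)}$ is continuous --- one obtains, after a routine lower-semicontinuity argument to pass the bound to the limit,
\[
\Norm{\cS_a(s,t)\,v_0}_{\star}\,\leq\,e^{-\theta(t-s)}\,e^{C\int_s^t\Norm{a(\tau,\cdot)-(f'(\uU)-\sigma)}_{W^{1,\infty}(\RR)}\,\dd\tau}\,\Norm{v_0}_{\star}\,.
\]
Converting $\Norm{\,\cdot\,}_\star$ to $\Norm{\,\cdot\,}_{W^{1,\infty}(\RR)}$ at both ends via Lemma~\ref{L.norm-equivalence} and absorbing all constants depending only on $(\uU,\sigma)$ into a single $C_0$ yields the statement. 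The essential difficulty is thus not in invoking the abstract machinery but in the bookkeeping: confirming that the theorem genuinely applies with these maximal-domain operators and, above all, that the sharp $L^1$-in-time dependence on $a-(f'(\uU)-\sigma)$ persists through the limiting construction, which requires opening the proof of the cited theorem rather than merely quoting its conclusion.
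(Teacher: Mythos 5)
Your proposal is correct and follows essentially the same route as the paper, which simply invokes~\cite[Chapter~5, Theorem~3.1]{Pazy} with $X=X^0_\star(\RR)$, $Y=X^1_\star(\RR)$ on the strength of the resolvent estimates of Lemma~\ref{L.lin-1} (with $k=0$) and Lemma~\ref{L.lin-2}. Your additional bookkeeping --- checking stability with constant $M=1$ in the weighted norms and passing the time-dependent rate $\theta_{a(\tau,\cdot),1}$ through the product construction to obtain the $L^1$-in-time exponent --- is exactly the refinement the paper leaves implicit, and it is sound.
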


Likewise our study of the decay of higher-order derivatives will be derived from the following lemma. Here to use~\cite[Chapter~5, Theorem~3.1]{Pazy} with $X=BUC^0(\RR)$ and $Y=BUC^1(\RR)$, we rely on~\cite[Chapter~5, Theorem~2.3]{Pazy} to reduce the verification of assumption~$(H_2)$ there to another application of Lemma~\ref{L.lin-1}, with index $k+1$.

\begin{Lemma}\label{L.linear}
Let $(\uU,\sigma)$ be given by Assumption~\ref{A.front}. For any $k\in\NN$, there exists $C_k>0$ such that if $T>0$ and $a\in \cC^1([0,T];X^2_\star(\RR))$ are such that for any $t\in[0,T]$, $a(t,\cdot)$ satisfies~\eqref{eq:non-vanishing}, then the family of operators $L_{a(t,\cdot),k}$ generates an evolution system $S_{a,k}$ on $BUC^0(\RR)$ and for any $0\leq s\leq t<T$ and any $v_0\in BUC^0(\RR)$
\[
\Norm{S_{a,k}(s,t)\,v_0}_{L^\infty(\RR)}\leq 
C_k\,e^{-\theta_k\,(t-s)}\,e^{C_k\,\int_s^t\Norm{a(\tau,\cdot)-(f'(\uU)-\sigma)}_{W^{1,\infty}(\RR)}\,\dd\tau}\,\Norm{v_0}_{L^\infty(\RR)}\,,
\]
with $\theta_k$ as in~\eqref{eq:theta_k}.
\end{Lemma}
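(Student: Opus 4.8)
The plan is to follow exactly the route used for Proposition~\ref{P.linear}: realize $S_{a,k}$ as the evolution system generated by the non-autonomous family $\{L_{a(t,\cdot),k}\}_{t\in[0,T]}$ by invoking the abstract generation theorem \cite[Chapter~5, Theorem~3.1]{Pazy} with $X=BUC^0(\RR)$ and $Y=BUC^1(\RR)$, and, as indicated just before the statement, to reduce the regularity hypothesis there to one more application of Lemma~\ref{L.lin-1}. The two resolvent inputs are: Lemma~\ref{L.lin-1} with index $k$ on $X$, which, in the $a$-independent weighted norm $\Norm{e^{-\int_0^\cdot\chi_k}\,\cdot\,}_{L^\infty(\RR)}$, provides quasi-contractivity of $L_{a(t,\cdot),k}$ with rate $\theta_{a(t,\cdot),k}$; and Lemma~\ref{L.lin-1} with index $k+1$, which controls the resolvent on $Y$ once one notices the elementary identity $\d_x\big(L_{a,k}v\big)=L_{a,k+1}\big(\d_xv\big)+r_{a}(v)$, where $r_a$ is an operator of order zero, bounded $Y\to X$ with norm depending only on $\Norm{a}_{W^{2,\infty}(\RR)}$ and on $(\uU,\sigma)$ (this is one line of the triangular structure in~\eqref{eq:augmented-derivative}); via \cite[Chapter~5, Theorem~2.3]{Pazy} this is what is needed to secure hypothesis $(H_2)$, i.e. $Y$-invariance of $S_{a,k}$ with a uniformly controlled loss.

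Next I would check the hypotheses. For stability $(H_1)$, I would first use that $t\mapsto a(t,\cdot)\in X^2_\star(\RR)$ is $\cC^1$, hence $\sup_{[0,T]}\Norm{a(t,\cdot)}_{W^{2,\infty}(\RR)}<\infty$, so that Lemma~\ref{L.chi} yields a lower bound for $\theta_{a(t,\cdot),k}$ uniform in $t$; combined with the weighted-norm contraction from Lemma~\ref{L.lin-1} and with the equivalence between $\Norm{e^{-\int_0^\cdot\chi_k}\,\cdot\,}_{L^\infty(\RR)}$ and $\Norm{\,\cdot\,}_{L^\infty(\RR)}$ (with constants depending only on $\Norm{\chi_k}_{L^1(\RR)}$, since $\chi_k\in L^1(\RR)$ by Lemma~\ref{L.chi}), this gives $(H_1)$ with $M$ depending only on $(\uU,\sigma)$. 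Hypotheses $(H_2)$ and $(H_3)$ — continuity and $\cC^1$-differentiability of $t\mapsto L_{a(t,\cdot),k}\in\mathcal B(Y,X)$, together with the $Y$-resolvent bound — then follow from $a\in\cC^1([0,T];X^2_\star(\RR))$ and the index-$(k+1)$ estimate noted above.

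To extract the stated decay I would then run the quasi-contraction with a time-dependent rate: for $v_0\in BUC^1(\RR)$ the map $t\mapsto\Norm{e^{-\int_0^\cdot\chi_k}S_{a,k}(s,t)v_0}_{L^\infty(\RR)}$ obeys a differential inequality with rate $-\theta_{a(t,\cdot),k}$, a consequence of the resolvent characterization in Lemma~\ref{L.lin-1}, hence by Gronwall it is bounded by $\exp\!\big(\int_s^t(-\theta_{a(\tau,\cdot),k})\,\dd\tau\big)\Norm{e^{-\int_0^\cdot\chi_k}v_0}_{L^\infty(\RR)}$; inserting the lower bound $\theta_{a(\tau,\cdot),k}\geq\theta_k-C_0(1+k)\Norm{a(\tau,\cdot)-(f'(\uU)-\sigma)}_{W^{1,\infty}(\RR)}$ from Lemma~\ref{L.chi}, passing back to $\Norm{\,\cdot\,}_{L^\infty(\RR)}$ at the cost of a factor $e^{2\Norm{\chi_k}_{L^1(\RR)}}$, and extending from $BUC^1(\RR)$ to $BUC^0(\RR)$ by density, one obtains the claim with $C_k$ depending only on $k$ and $(\uU,\sigma)$ (through $C_0$, $\theta_k$, $\Norm{\chi_k}_{L^1(\RR)}$ and the $Y\to X$ loss).

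The main obstacle, as for Proposition~\ref{P.linear}, is hypothesis $(H_2)$: establishing that $S_{a,k}$ leaves $Y=BUC^1(\RR)$ invariant with a loss uniform in $a$ and $T$. This is where the index shift $k\rightsquigarrow k+1$ is essential and where one must verify that the remainder $r_a$ in $\d_x(L_{a,k}v)=L_{a,k+1}\d_xv+r_a(v)$ is genuinely in $\mathcal B(Y,X)$ with the stated dependence; a secondary but pervasive point is bookkeeping — ensuring that the weighted-norm equivalence constant, the stability constant $M$, and the $Y$-loss constant depend only on $(\uU,\sigma)$ and $k$, so that all of the $a$-dependence is carried by the single factor $e^{C_k\int_s^t\Norm{a(\tau,\cdot)-(f'(\uU)-\sigma)}_{W^{1,\infty}(\RR)}\,\dd\tau}$.
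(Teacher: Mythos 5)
Your proposal is correct and follows essentially the same route as the paper, which establishes this lemma precisely by applying \cite[Chapter~5, Theorem~3.1]{Pazy} with $X=BUC^0(\RR)$ and $Y=BUC^1(\RR)$, using \cite[Chapter~5, Theorem~2.3]{Pazy} to reduce hypothesis $(H_2)$ to an application of Lemma~\ref{L.lin-1} with index $k+1$, exactly as you describe. Your additional bookkeeping (the zeroth-order remainder in $\d_x(L_{a,k}v)=L_{a,k+1}\d_xv+r_a(v)$, the weighted-norm equivalence via $\chi_k\in L^1(\RR)$, and the Gr\"onwall argument with time-dependent rate $\theta_{a(\tau,\cdot),k}$) is a faithful expansion of what the paper leaves implicit.
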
 

\subsection{Nonlinear stability under shockless perturbations}

To deduce nonlinear stability from Proposition~\ref{P.linear} we still need a lemma ensuring that, thanks to invariance by spatial translation of~\eqref{eq-u}, at the nonlinear level one may also restrict to perturbations in $X^1_\star(\RR)$.
\begin{Lemma}\label{L.shift}
Let $(\uU,\sigma)$ be given by Assumption~\ref{A.front}. 
For any $C_0>1$, there exists $\eps_0>0$ such that for any $v_0\in W^{1,\infty}(\RR)$ satisfying
\[
\Norm{v_0}_{W^{1,\infty}(\RR)}\leq \eps_0,
\]
there exists a unique $x_{\star,v_0}\in \RR$ such that 
\[
\uU(x_{\star,v_0})+v_0(x_{\star,v_0})=\uu_\star\,,
\]
and it satisfies
\[
\abs{x_{\star,v_0}}\leq  \frac{C_0}{\abs{\uU'(0)}}\Norm{v_0}_{L^{\infty}(\RR)}\,.
\]
One has $\uU+v_0=(\uU+\widetilde v_0)(\cdot-x_{\star,v_0})$ with $\widetilde v_0\in X^1_\star(\RR)$ and
\[\Norm{\widetilde v_0}_{W^{1,\infty}(\RR)} \leq \Norm{v_0}_{W^{1,\infty}(\RR)}\ + \ \Norm{\uU'}_{W^{1,\infty}(\RR)} |x_{\star,v_0}|\,.
\]
\end{Lemma}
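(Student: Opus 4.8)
The plan is to construct $x_{\star,v_0}$ by a quantitative implicit-function/fixed-point argument applied to the scalar equation $\uU(x)+v_0(x)=\uu_\star$. First I would record the two structural facts we rely on: since $\uU$ is $\cC^2$ with $\uU'$ nonvanishing and $\uU(0)=\uu_\star$, there is a neighborhood $(-r_0,r_0)$ of $0$ and a constant $c_0>0$ with $|\uU'(x)|\geq c_0$ and $|\uU(x)-\uu_\star-\uU'(0)\,x|\leq C_1\,x^2$ on that neighborhood; and since $\uU$ is strictly monotonic globally with $\uU(\pm\infty)=\uu_{\pm\infty}\neq\uu_\star$, the value $\uu_\star$ is attained by $\uU$ only at $0$ and with a uniform gap $|\uU(x)-\uu_\star|\geq m_0>0$ outside $(-r_0,r_0)$. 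Then for $\Norm{v_0}_{L^\infty}<m_0$ any solution of $\uU(x)+v_0(x)=\uu_\star$ must lie in $(-r_0,r_0)$, which localizes the problem.

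Next, on $(-r_0,r_0)$ I would rewrite the equation as a fixed point: $x = \Phi(x) \eqdef \dfrac{\uu_\star-\uU(x)-v_0(x)+\uU'(0)\,x}{\uU'(0)}$, whose fixed points are exactly the solutions. Using $\Phi'(x)=1-\dfrac{\uU'(x)+v_0'(x)}{\uU'(0)}=\dfrac{\uU'(0)-\uU'(x)-v_0'(x)}{\uU'(0)}$ together with $|\uU'(0)-\uU'(x)|\leq C_1'|x|$ and $|v_0'|\leq\Norm{v_0}_{W^{1,\infty}}$, one gets $|\Phi'|\leq \tfrac12$ on a possibly smaller interval $(-r_1,r_1)$ provided $\Norm{v_0}_{W^{1,\infty}}\leq\eps_0$ is small enough; and $|\Phi(0)|=|v_0(0)|/|\uU'(0)|\leq\Norm{v_0}_{L^\infty}/|\uU'(0)|$. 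Choosing $\eps_0$ so that $\Norm{v_0}_{L^\infty}/|\uU'(0)|\leq r_1/2$, the closed ball $\{|x|\leq r_1\}$ (hence a fortiori $\{|x|\leq 2\Norm{v_0}_{L^\infty}/|\uU'(0)|\}$, which one arranges to be contained in it) is stable under $\Phi$, so Banach's fixed point theorem gives a unique $x_{\star,v_0}$ there; uniqueness on the whole line then follows from the localization in the previous step. The bound $|x_{\star,v_0}|\leq C_0\Norm{v_0}_{L^\infty}/|\uU'(0)|$ comes from $|x_{\star,v_0}|=|\Phi(x_{\star,v_0})|\leq|\Phi(0)|+\tfrac12|x_{\star,v_0}|$, i.e.\ $|x_{\star,v_0}|\leq 2|v_0(0)|/|\uU'(0)|\leq 2\Norm{v_0}_{L^\infty}/|\uU'(0)|$, so any prescribed $C_0>1$ works after shrinking $\eps_0$ if one wants $C_0$ close to $2$; for $C_0\in(1,2]$ one instead notes $|x_{\star,v_0}|$ can be made as small as wished relative to the admissible interval, but in fact the sharp constant is $2/(1+o(1))$ and the statement as written with arbitrary $C_0>1$ follows by taking $\eps_0=\eps_0(C_0)$ small enough that $|\Phi'|\leq 1-1/C_0$ on the relevant ball, giving $|x_{\star,v_0}|\leq C_0|v_0(0)|/|\uU'(0)|\leq C_0\Norm{v_0}_{L^\infty}/|\uU'(0)|$.

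Finally I would define $\widetilde v_0 \eqdef (\uU+v_0)(\cdot+x_{\star,v_0})-\uU$, so that $\uU+v_0=(\uU+\widetilde v_0)(\cdot-x_{\star,v_0})$ by construction, and $\widetilde v_0(0)=\uU(x_{\star,v_0})+v_0(x_{\star,v_0})-\uU(0)=\uu_\star-\uu_\star=0$, hence $\widetilde v_0\in X^1_\star(\RR)$ (it is $W^{1,\infty}$ as a sum of a translate of $v_0$ and $\uU(\cdot+x_{\star,v_0})-\uU$, both in $W^{1,\infty}$, using $\uU'\in W^{1,\infty}$ from Lemma~\ref{L.front}). The stated estimate is immediate: $\widetilde v_0 = v_0(\cdot+x_{\star,v_0}) + \big(\uU(\cdot+x_{\star,v_0})-\uU\big)$, the first term has the same $W^{1,\infty}$ norm as $v_0$ (translation invariance), and the second is bounded in $W^{1,\infty}$ by $\Norm{\uU'}_{W^{1,\infty}(\RR)}\,|x_{\star,v_0}|$ by the mean value inequality applied to $\uU$ and to $\uU'$. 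I expect the only genuinely delicate point to be bookkeeping the nesting of radii $r_0>r_1$ and the smallness threshold $\eps_0=\eps_0(C_0)$ so that the fixed-point ball is simultaneously $\Phi$-invariant, a strict contraction, and contained in the globally-localizing interval; everything else is routine.
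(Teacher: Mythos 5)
Your proposal is correct and follows essentially the same route as the paper: localize the equation $\uU+v_0=\uu_\star$ to a neighbourhood of $0$ using the uniform gap $|\uU-\uu_\star|\geq\delta$ away from the characteristic point, then exploit the non-vanishing of $\uU'$ near $0$; the paper merely replaces your Banach fixed-point step by the more elementary observation that $\uU+v_0-\uu_\star$ is strictly monotonic and changes sign on $[-r,r]$ while keeping the sign of $\uU-\uu_\star$ outside, and extracts the constant $C_0$ from the mean value inequality $|\uU(x_{\star,v_0})-\uU(0)|\geq C_0^{-1}|\uU'(0)|\,|x_{\star,v_0}|$ rather than from your contraction constant $1-1/C_0$. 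The one point to tidy up in your write-up is that the gap estimate must be arranged at the radius $r_1$ of the contraction ball (not only at the larger $r_0$), so that no solution can hide in the annulus $r_1\leq|x|<r_0$ --- harmless, since strict monotonicity of $\uU$ provides such a gap at every radius after shrinking $\eps_0$.
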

\begin{proof}
Since  $\uU$ is strictly monotonic and $\uU'(0)\neq0$, we may fix $r>0$ and $\delta>0$ such that
\[
|\uU'(x)|\geq C_0^{-1}|\uU'(0)|\text{ when }|x|\leq r
\qquad\textrm{and}\qquad
|\uU(x)-\uu_\star|\geq \delta\text{ when }|x|\geq r\,.
\]
Hence by choosing $\eps_0$ to enforce
\[
\Norm{\d_xv_0}_{L^{\infty}(\RR)}\leq \frac12 C_0^{-1}|\uU'(0)|
\qquad\textrm{and}\qquad
\Norm{v_0}_{L^{\infty}(\RR)}\leq \frac12 \delta\,,
\]
we have that the function $\uU+v_0-\uu_\star$ vanishes exactly once. Indeed the latter function has the sign of $\uU-\uu_\star$ on $(-\infty,-r]$ and on $[r,\infty)$ and is strictly monotonic and changing sign on $[-r,r]$. The estimate on $x_{\star,v_0}$ stems from $x_{\star,v_0}\in [-r,r]$ and $\uU(x_{\star,v_0})-\uU(0)=v_0(x_{\star,v_0})$. The last estimate on $\widetilde v_0\eqdef\uU(\cdot+x_{\star,v_0})-\uU+v_0(\cdot+x_{\star,v_0})$ is immediate.
\end{proof}

\begin{Theorem}\label{T.stable-front}
Let $(\uU,\sigma)$ be given by Assumption~\ref{A.front}, assume that on a neighborhood of $[\uu_-,\uu_+]$, $f$ is $\cC^4$ and $g$ is $\cC^3$, and as in~\eqref{eq:theta} set
\[\theta\eqdef \min(\{g'(\uu_\star),-g'(\uu_{+\infty}),-g'(\uu_{-\infty})\})>0.\]
There exist $\eps>0$ and $C>0$ such that for any $\psi_0\in\RR$ and any $v_0\in BUC^1(\RR)$ satisfying
\begin{equation}\label{perturbation0}
\Norm{v_0}_{W^{1,\infty}(\RR)}\leq \eps,
\end{equation}
the initial datum $u\id{t=0}=\uU(\cdot-\psi_0)+v_0$ generates a unique global classical solution to~\eqref{eq-u}, $u\in BUC^1(\RR^+\times\RR)$, and there exists $\psi_\infty\in\RR$ such that 
\begin{equation}\label{phase-perturbation}
|\psi_\infty-\psi_0|\leq C\,\Norm{v_0}_{L^{\infty}(\RR)}
\end{equation}
and for any $t\geq 0$
\begin{equation}\label{sonic-point}
u(t,\sigma t+\psi_\infty)=\uu_\star
\end{equation}
and
\begin{equation}\label{asymptotic-stability}
\Norm{u(t,\cdot+(\sigma t+\psi_\infty))-\uU}_{W^{1,\infty}(\RR)}\leq 
C\,e^{-\theta t}\,\Norm{v_0}_{W^{1,\infty}(\RR)}\,.
\end{equation}
\end{Theorem}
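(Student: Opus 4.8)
The plan is to follow the strategy outlined in the introduction and already carried out for Riemann shocks in~\cite{DR1}: reduce to a perturbation vanishing at the characteristic point, show that this pinned structure is preserved by the flow, and then close a Duhamel estimate built on the time-dependent family $\cL_{a(t,\cdot)}$. First I would use the translation invariance of~\eqref{eq-u} together with Lemma~\ref{L.shift}: applying it to $v_0(\cdot+\psi_0)$ produces $x_\star\in\RR$ with $|x_\star|\leq C\Norm{v_0}_{L^{\infty}(\RR)}$ such that, setting $\psi_\infty\eqdef\psi_0+x_\star$, the initial datum reads $u\id{t=0}=(\uU+\tv_0)(\cdot-\psi_\infty)$ with $\tv_0\in X^1_\star(\RR)$ and $\Norm{\tv_0}_{W^{1,\infty}(\RR)}\lesssim\Norm{v_0}_{W^{1,\infty}(\RR)}$; this already yields~\eqref{phase-perturbation}, with $\psi_\infty$ pinned at time $0$ rather than identified as an a posteriori limit. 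Then I look for the solution in the form $u(t,x)=(\uU+\tu(t,\cdot))(x-\sigma t-\psi_\infty)$, so that $\tu(0,\cdot)=\tv_0$ and $\partial_t\tu=-(f'(\uU+\tu)-\sigma)\,\partial_y(\uU+\tu)+g(\uU+\tu)$ on $\RR$, and the statement reduces to showing that this problem has a unique global $BUC^1$ solution (uniqueness coming from classical theory for $\cC^1$ solutions, or from the Kru\v{z}kov theory) with $\tu(t,0)=0$ for all $t\geq0$ and $\Norm{\tu(t,\cdot)}_{W^{1,\infty}(\RR)}\leq C\,e^{-\theta t}\Norm{\tv_0}_{W^{1,\infty}(\RR)}$, from which~\eqref{sonic-point} and~\eqref{asymptotic-stability} follow.

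The pinning is a direct consequence of the equation: evaluating it at $y=0$ and using $\uU(0)=\uu_\star$, $f'(\uu_\star)=\sigma$, $g(\uu_\star)=0$ and the identity $f''(\uu_\star)\uU'(0)=g'(\uu_\star)$ (itself a consequence of $\uU'(0)=F_\sigma(\uu_\star)$), one finds that $m(t)\eqdef\tu(t,0)$ solves a scalar linear ODE of the form $m'=\beta(t)\,m$ with $\beta$ bounded and continuous as soon as $\tu\in BUC^1$; since $m(0)=\tv_0(0)=0$, uniqueness forces $m\equiv 0$, hence $\tu(t,\cdot)\in X^1_\star(\RR)$ for all $t$. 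This is not only~\eqref{sonic-point}: it is also what makes the linear theory of Section~\ref{S.continous_fronts} applicable, since it guarantees that $a(t,\cdot)\eqdef f'(\uU+\tu(t,\cdot))-\sigma$ vanishes at $y=0$, so $a(t,\cdot)\in X^1_\star(\RR)$ and is $W^{1,\infty}$-close to $f'(\uU)-\sigma$ whenever $\tu$ is small.

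Using $g(\uU)=(f'(\uU)-\sigma)\uU'$ and $(f'(\uU)-\sigma)\tfrac{\uU''}{\uU'}=g'(\uU)-f''(\uU)\uU'$, the $\tu$-equation rewrites as $\partial_t\tu=\cL_{a(t,\cdot)}\tu+\mathcal{R}(t,\cdot)$, where the point of this particular splitting is that $\mathcal{R}$ carries no derivative of $\tu$ (the quasilinear top-order term is entirely absorbed into $\cL_{a(t,\cdot)}$), vanishes at $y=0$, and is at least quadratic in $\tu$; with $f\in\cC^4$, $g\in\cC^3$, the bounds of Lemma~\ref{L.front} and the boundedness of $\uU''/\uU'$ and its derivatives, one gets $\Norm{\mathcal{R}(t,\cdot)}_{W^{1,\infty}(\RR)}\lesssim\Norm{\tu(t,\cdot)}_{W^{1,\infty}(\RR)}^2$. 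On the maximal interval of existence of a $BUC^1$ solution, as long as $\Norm{\tu}_{W^{1,\infty}}$ stays below the threshold $\eps_0$ of Proposition~\ref{P.linear}, that proposition provides the evolution system $\cS_a$ with $e^{-\theta(t-s)}$ decay, and Duhamel's formula $\tu(t)=\cS_a(0,t)\tv_0+\int_0^t\cS_a(s,t)\mathcal{R}(s)\,\dd s$ together with a continuity argument on $\mathcal{E}(t)\eqdef\sup_{0\leq s\leq t}e^{\theta's}\Norm{\tu(s,\cdot)}_{W^{1,\infty}(\RR)}$, for a fixed $\theta'\in(\theta/2,\theta)$, yields $\mathcal{E}(t)\lesssim\Norm{\tv_0}_{W^{1,\infty}(\RR)}$ once $\tv_0$ is small enough (the factor $e^{C_0\int_s^t\Norm{a-(f'(\uU)-\sigma)}_{W^{1,\infty}}}$ staying bounded because $\int_0^\infty e^{-\theta'\tau}\dd\tau<\infty$); reinjecting this bound once more into the Duhamel formula, using $2\theta'>\theta$, upgrades the decay rate to the sharp $\theta$ of~\eqref{asymptotic-stability}. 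Finally, the uniform $W^{1,\infty}$ control, combined with the fact that for perturbations close to $\uU$ characteristics cannot collide — since $f'(\uu_{-\infty})<\sigma<f'(\uu_{+\infty})$ and $g'(\uu_\star)>0$ the characteristic flow is expanding near the characteristic point and outgoing elsewhere, precisely the opposite of the wave-breaking scenario of Proposition~\ref{P.nl-instability-characteristic} — prevents shock formation, so the standard continuation criterion makes the solution global, with $u\in BUC^1(\RR^+\times\RR)$.

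The main obstacle is everything concentrated at the characteristic point $y=0$, where the principal symbol $f'(\uU)-\sigma$ degenerates. One must verify that $\mathcal{R}(t,\cdot)$ genuinely belongs to $X^1_\star(\RR)$ with a truly quadratic $W^{1,\infty}$ bound, the seemingly singular coefficients being controlled thanks to $\tu(t,0)=0$, the strict monotonicity of $\uU$, and the exponential estimates of Lemma~\ref{L.front}; and one must apply Proposition~\ref{P.linear}, whose hypothesis asks for $a\in\cC^1([0,T];X^1_\star(\RR))$, even though $t\mapsto\partial_t a(t,\cdot)$ is only $BUC^0(\RR)$-valued at this regularity level — a point dealt with by a routine mollification of $\tu$ followed by passage to the limit. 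Establishing the pinning and engineering the splitting so that it feeds exactly into the resolvent estimates behind Proposition~\ref{P.spectral-stability-front} and Proposition~\ref{P.linear} is really the heart of the matter; once those are in place, the nonlinear closure and the global-existence argument are comparatively soft.
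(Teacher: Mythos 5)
Your proposal is correct and follows essentially the same route as the paper: reduction to $X^1_\star$ perturbations via Lemma~\ref{L.shift} and translation invariance, propagation of the pinning $\tu(t,0)=0$ (you argue by evaluating the equation at the characteristic point, the paper by following the characteristic curve through it — equivalent), the same quasilinear splitting $\partial_t\tu=\cL_{a(t,\cdot)}\tu+\cN(\tu)$ with the remainder quadratic in $W^{1,\infty}$ thanks to the profile identity $(f'(\uU)-\sigma)\uU''/\uU'=g'(\uU)-f''(\uU)\uU'$, and closure by Duhamel with Proposition~\ref{P.linear} and a continuity argument. The only cosmetic differences are your two-step decay rate ($\theta'$ then upgraded to $\theta$) where the paper bootstraps directly at rate $\theta$, and your explicit remark on mollifying $a$ in time, which the paper leaves implicit.
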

\begin{proof}
Translating spatially by $\psi_0$ and replacing $v_0$ with $v_0(\cdot+\psi_0)$ reduces the theorem to the case $\psi_0=0$ . Then, assuming $\psi_0=0$ and applying lemma~\ref{L.chi} show that the general case with $\psi_\infty=x_{\star,v_0}$ may be deduced from the theorem restricted to perturbations $v_0\in X^1_\star(\RR)$ yielding asymptotic phase shifts $\psi_\infty=0$. From now on we restrict to $\psi_0=0$ and $v_0\in X^1_\star(\RR)$.

The local well-posedness at the level of classical solutions is well-known so that we may focus on proving global bounds, which also imply global existence. To begin with we point out that considering, as in the proof of Proposition~\ref{P.nl-instability-characteristic}, the characteristic curve arising from the characteristic point $0$ shows that a classical solution satisfying~\eqref{sonic-point} initially satisfies it for any later time. Now, to study a solution $u$ on an interval $I$, we put it under the form 
\[u(t,x)=\uU(x-\sigma t)+v(t,x-\sigma t)\]
and observe that $v$ satisfies for $t\in I$, $v(t,\cdot)\in X^1_\star(\RR)$ and
\begin{equation}\label{eq-tu}
\d_tv(t,\cdot)-\cL_{f'(\uU+v(t,\cdot))-\sigma}\,v(t,\cdot)
\,=\,\cN(v(t,\cdot)),
\end{equation}
where for any function $w$
\begin{align*}
\cN(w)&\eqdef\left(f'(\uU)-f'(\uU+w)\right)\frac{\uU''}{\uU'}\,w+g(\uU+w)-g(\uU)-g'(\uU)w\\
&\qquad-\left(f'(\uU+w)-f'(\uU)-f''(\uU)w\right)\uU'\,.
\end{align*}
Choose $\delta>0$ such that $f\in\cC^4([\uu_--\delta,\uu_++\delta])$ and $g\in\cC^3([\uu_--\delta,\uu_++\delta])$. Then there exists $C_1$ such that for any $w\in X^1_\star(\RR)$ satisfying $\Norm{w}_{L^{\infty}(\RR)}\leq \delta$, we have $f'(\uU+w)-\sigma\in X^1_\star(\RR)$, $\cN(w)\in X^1_\star(\RR)$ and
\begin{align*}
\Norm{(f'(\uU+w)-\sigma)-(f'(\uU)-\sigma)}_{W^{1,\infty}(\RR)}&\leq 
C_1\,\Norm{w}_{W^{1,\infty}(\RR)}\,,\\
\Norm{\cN(w)}_{W^{1,\infty}(\RR)}&\leq C_1\,\Norm{w}_{W^{1,\infty}(\RR)}^2\,.
\end{align*}
Moreover pick $C_0\geq 1$ and $\eps_0>0$ as in Proposition~\ref{P.linear}.

If $v_0\in X^1_\star(\RR)$ satisfies 
\[
\Norm{v_0}_{W^{1,\infty}(\RR)}\leq\frac{1}{2\,C_0}\min\left(\left\{\delta,\frac{\eps_0}{C_1}\right\}\right)
\]
then when $T\geq0$ is such that the corresponding solution is defined on $[0,T]$ and satisfies for any $t\in [0,T]$
\[
\Norm{v(t,\cdot)}_{W^{1,\infty}(\RR)}
\leq 2\,C_0\,e^{-\theta\,t}\,\Norm{v_0}_{W^{1,\infty}(\RR)}
\]
there holds that for any $t\in [0,T]$
\[
v(t,\cdot)=
\cS_{f'(\uU+v)-\sigma}(0,t)(v_0)
+\int_0^t\cS_{f'(\uU+v)-\sigma)}(s,t)\,\cN(v(s))\,\dd s\,,\]
thus, by the Gr\"onwall lemma, for any $t\in [0,T]$,
\[
\Norm{v(t,\cdot)}_{W^{1,\infty}(\RR)}
\leq C_0\,e^{-\theta\,t}\,\Norm{v_0}_{W^{1,\infty}(\RR)}\,\times 
\exp\left(\frac{2\,C_0\,C_1}{\theta}\,\Norm{v_0}_{W^{1,\infty}(\RR)}
\left(1+e^{\frac{2\,C_0\,C_1}{\theta}\,\Norm{v_0}_{W^{1,\infty}(\RR)}}\right)\right)\,.
\]

By choosing $C\eqdef 2\,C_0$ and taking $\eps>0$ sufficiently small to guarantee
\begin{align*}
\eps&\leq\frac{1}{2\,C_0}\min\left(\left\{\delta,\frac{\eps_0}{C_1}\right\}\right)&
\textrm{and}&&
\exp\left(\frac{2\,C_0\,C_1}{\theta}\,\eps
\left(1+e^{\frac{2\,C_0\,C_1}{\theta}\,\eps}\right)\right)
&<2
\end{align*}
one concludes the proof by a continuity argument.
\end{proof}

We also prove that the exponential decay in time holds for higher order derivatives without further restriction on sizes of perturbations.
\begin{Proposition}\label{P.high-order-front} Under the assumptions of Theorem~\ref{T.stable-front}, if one assumes additionally that $f\in\cC^{k+3}(\RR)$, $g\in \cC^{k+2}(\RR)$ with $k\in\NN$, $k\geq 2$ then there exists $C_k>0$, depending on $f$, $g$, $\uU$ and $k$ but not on the initial data $v_0$, such that if $v_0\in BUC^k(\RR)$ additionally to constraints in Theorem~\ref{T.stable-front}, then the global unique classical solution to~\eqref{eq-u} emerging from the initial data $\uU(\cdot-\psi_0)+v_0$ satisfies $u\in BUC^k(\RR^+\times\RR)$ and, with the same $\psi_\infty$ as in Theorem~\ref{T.stable-front}, for any $t\geq0$,
\begin{align*}
&\Norm{\d_x^k u(t,\cdot+(\sigma\,t+\psi_\infty))-\d_x^k\uU}_{L^{\infty}(\RR)}\\
&\ \leq 
C_k \left(e^{-\theta_1\,t}\,t^{(k-1)\delta_{\geq k_0}(1)}\,\Norm{v_0}_{W^{1,\infty}(\RR)}
+\sum_{\ell=2}^ke^{-\theta_\ell\,t}\,t^{(k-\ell)\delta_{\geq k_0}(\ell)}\,\Norm{\d_x^\ell v_0}_{L^{\infty}(\RR)}\right)\,,
\end{align*}
where $(\theta_\ell)_{\ell\in\NN}$ are as in~\eqref{eq:theta_k}, $k_0\in\NN^\star$ is the smallest index such that $\theta_{k_0}=\theta_{k_0+1}$ and $\delta_{\geq k_0}(\ell)=1$ if $\ell\geq k_0$ and $\delta_{\geq k_0}(\ell)=0$ if $\ell< k_0$. 
\end{Proposition}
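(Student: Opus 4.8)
The plan is to bootstrap from Theorem~\ref{T.stable-front}, which already handles $\ell=0,1$, by differentiating the nonlinear equation~\eqref{eq-tu} and iterating on the order $\ell$ of the derivative. First I would reduce, exactly as in the proof of Theorem~\ref{T.stable-front}, to the case $\psi_0=0$ and $v_0\in X^1_\star(\RR)$, so that with $u(t,x)=\uU(x-\sigma t)+v(t,x-\sigma t)$ one has $v(t,\cdot)\in X^1_\star(\RR)$ solving~\eqref{eq-tu}, and Theorem~\ref{T.stable-front} gives the decay $\Norm{v(t,\cdot)}_{W^{1,\infty}(\RR)}\leq C\,e^{-\theta\,t}\,\Norm{v_0}_{W^{1,\infty}(\RR)}$ together with $\theta\geq\theta_1$. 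The key structural identity is~\eqref{eq:augmented-derivative}: differentiating the linear part $\cL_{f'(\uU+v)-\sigma}v$ a total of $\ell$ times produces $L_{a,\ell}\,\d_x^\ell v$ up to strictly-lower-triangular bounded terms involving lower-order derivatives, with $a=f'(\uU+v(t,\cdot))-\sigma$. So for each $\ell$, $\d_x^\ell v$ satisfies an equation of the form
\[
\d_t(\d_x^\ell v)-L_{a,\ell}\,(\d_x^\ell v)\,=\,R_\ell(t,\cdot)\,,
\]
where $R_\ell$ collects: the $\ell$-th derivative of $\cN(v)$; the lower-triangular corrections from~\eqref{eq:augmented-derivative}; and commutator terms arising because $a$ is time-dependent. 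Crucially $R_\ell$ depends only on $\d_x^jv$ for $j\leq\ell$ (and on $\d_x^{\ell+1}$ only through the already-controlled quantity $\d_x\uU$, not $v$), and is quadratically small in a suitable sense — this is where one uses that $\cN$ vanishes quadratically and that $f\in\cC^{k+3}$, $g\in\cC^{k+2}$ give enough regularity for the chain rule.

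Next I would invoke Lemma~\ref{L.linear}: since $a(t,\cdot)=f'(\uU+v(t,\cdot))-\sigma$ stays $W^{1,\infty}$-close to $f'(\uU)-\sigma$ (by Theorem~\ref{T.stable-front}, shrinking $\eps$ if needed so~\eqref{eq:non-vanishing} holds), the family $L_{a(t,\cdot),\ell}$ generates an evolution system $S_{a,\ell}$ with
\[
\Norm{S_{a,\ell}(s,t)\,v_0}_{L^\infty(\RR)}\leq C_\ell\,e^{-\theta_\ell(t-s)}\,e^{C_\ell\int_s^t\Norm{a(\tau,\cdot)-(f'(\uU)-\sigma)}_{W^{1,\infty}(\RR)}\dd\tau}\,\Norm{v_0}_{L^\infty(\RR)}\,,
\]
and since the integral of $\Norm{a-(f'(\uU)-\sigma)}_{W^{1,\infty}}$ is bounded by $C\eps/\theta$ uniformly in $t$ (again by Theorem~\ref{T.stable-front}), this is effectively $S_{a,\ell}(s,t)$ with decay rate $\theta_\ell$ up to a uniform constant. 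Writing Duhamel's formula $\d_x^\ell v(t,\cdot)=S_{a,\ell}(0,t)(\d_x^\ell v_0)+\int_0^tS_{a,\ell}(s,t)R_\ell(s,\cdot)\,\dd s$ and feeding in the inductive bounds on $\Norm{\d_x^j v(s,\cdot)}_{L^\infty}$ for $j<\ell$, one estimates the convolution integral. The appearance of the polynomial prefactors $t^{(k-\ell)\delta_{\geq k_0}(\ell)}$ is exactly the standard resonance phenomenon: when $\theta_j=\theta_\ell$ for some $j<\ell$ (which happens precisely once $\ell\geq k_0$, since $\theta_\ell=\min\{\ell g'(\uu_\star),-g'(\uu_+),-g'(\uu_-)\}$ saturates at $-\min\{g'(\uu_\pm)\}$), convolving $e^{-\theta_\ell s}$ against $e^{-\theta_\ell(t-s)}s^{m}$ produces $e^{-\theta_\ell t}t^{m+1}$, and each step up in $\ell$ past $k_0$ adds one power of $t$; before $k_0$ the rates are strictly ordered, $\theta_j>\theta_\ell$ would be wrong — rather $\theta_j<\theta_\ell$ for $j<\ell<k_0$ since $\theta_\ell=\ell g'(\uu_\star)$ there — wait, I mean the lower-order source terms decay \emph{faster}, so no growth accumulates and the prefactor is $t^0$. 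Careful bookkeeping of which of the finitely many cases ($\ell<k_0$ versus $\ell\geq k_0$, and whether the dominant contribution to $R_\ell$ comes from the $j=1$ term carrying $\Norm{v_0}_{W^{1,\infty}}$ or from a $j\geq2$ term carrying $\Norm{\d_x^j v_0}_{L^\infty}$) yields precisely the stated sum.

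The main obstacle, I expect, is twofold. First, one must handle the time-dependence of $a$ honestly: the identity~\eqref{eq:augmented-derivative} is stated for fixed $a$, so differentiating the genuinely nonlinear, time-dependent equation in $x$ produces extra terms in $R_\ell$ of the form $(\d_x^m a)(\d_x^{\ell-m+1}v)$ with $m\geq1$; these are not literally "strictly lower triangular" but they are still controlled because $\d_x^m a$ is, via the chain rule, a polynomial in $\d_x^j v$ ($j\leq m$) with coefficients smooth functions of $\uU+v$, hence small, and the top-order factor $\d_x^{\ell-m+1}v$ has order $\leq\ell$ — one must verify the only term of order exactly $\ell$ is the benign one already absorbed into $L_{a,\ell}$. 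Second, one must argue that $\d_x^\ell v(t,\cdot)$ genuinely lies in $BUC^0(\RR)$ and is a mild solution of the stated equation for each finite $\ell\leq k$ — i.e. a propagation-of-regularity argument showing $u\in BUC^k(\RR^+\times\RR)$ — which is again classical for quasilinear scalar equations with this much smoothness but should be stated; one does this by an a priori bound on $[0,T]$ combined with local well-posedness in $BUC^k$ and a continuation argument. Everything else is routine Gr\"onwall-plus-Duhamel, and no smallness beyond that of Theorem~\ref{T.stable-front} is needed since the $\ell\geq2$ equations are linear in $\d_x^\ell v$ with already-small time-dependent coefficients.
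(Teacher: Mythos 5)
Your proposal is correct and follows essentially the same route as the paper: the paper's proof is exactly the combination of the augmented-derivative identity~\eqref{eq:augmented-derivative}, the evolution-system bounds of Lemma~\ref{L.linear}, a Duhamel formula in which the quadratic term $\Norm{v(s,\cdot)}_{L^{\infty}(\RR)}\,\Norm{\d_x^k v(s,\cdot)}_{L^{\infty}(\RR)}$ is absorbed by Gr\"onwall, and a recursion on the order of differentiation, with the polynomial prefactors produced precisely by the resonances $\theta_j=\theta_\ell$ that you describe. (One verbal slip only: for $j<\ell<k_0$ the lower-order sources decay \emph{more slowly}, at rate $\theta_j<\theta_\ell$, which is why the convolution is dominated by $e^{-\theta_j t}$ with no polynomial factor --- and this is what your bookkeeping actually uses.)
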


\begin{proof}
Again propagation of regularity is classical so that we may focus on proving bounds. We use here notation and reductions from the proof of Theorem~\ref{T.stable-front}.

Under the above conditions, by relying on~\eqref{eq:augmented-derivative}, Lemma~\ref{L.linear} and composition properties of Sobolev norms, we obtain that there exist a constant $C_k'>0$, depending on $k$, $f$, $g$, $\uU$ and the choice of $\eps$ in Theorem~\ref{T.stable-front} but not on $v_0$, such that for any $t\geq0$
\begin{align*}
\Norm{\d_x^k v(t,\cdot)}_{L^{\infty}(\RR)}
&\leq C_k'\,e^{-\theta_k\,t}\,\Norm{\d_x^k v_0}_{L^{\infty}(\RR)}
+C_k'\int_0^t\,e^{-\theta_k\,(t-s)}\,\Norm{v(s,\cdot)}_{W^{k-1,\infty}(\RR)}\dd s\\
&+C_k'\int_0^t\,e^{-\theta_k\,(t-s)}\,\Norm{v(s,\cdot)}_{L^{\infty}(\RR)}\,\Norm{\d_x^kv(s,\cdot)}_{L^{\infty}(\RR)}\dd s\,.
\end{align*}
Applying the Gr\"onwall lemma and arguing recursively proves the result.
\end{proof}

\subsection{Nonlinear stability under perturbations with small shocks}

Following the strategy already used in~\cite{DR1}, we may extend Theorem~\ref{T.stable-front} to the case where the perturbation contains a finite number of well-separated strictly entropic discontinuities under strict convexity/concavity assumptions of the advective flux $f$. The basic tenet is that infinitesimally small discontinuities travel approximately along characteristic curves of the background wave. Thus the motions of these small shocks may be predicted accurately. In order to lighten the notational complexity, as in~\cite{DR1}, we limit our setting to the case of one discontinuity.

For stable continuous fronts, small shocks introduced in perturbed initial data persist forever but their position drift toward infinity and their amplitude converge exponentially fast to zero. We provide a description of the solution $u$ as regular on
\[
\Omega^\phi\eqdef\RR_+\times\RR\setminus\{\,(t,\phi(t))\,|\,t\geq0\,\}
\]
where $\phi$ follows the position of the shock. In order for $u$ to satisfy the equation in distributional sense we require $u$ to satisfy it in a classical sense on $\Omega^\phi$ and that it also satisfies the Rankine-Hugoniot condition, for any $t\geq0$
\begin{equation}\label{RH}
 f(u_r(t))-f(u_l(t))=\phi'(t)  (u_r(t)-u_l(t))
\end{equation}
where $u_l(t)=u(t,\phi(t)^-)\eqdef\lim_{\delta\searrow 0} u(t,\phi(t)-\delta)$ and $u_r(t)=u(t,\phi(t)^+)\eqdef\lim_{\delta\searrow 0} u(t,\phi(t)+\delta)$. We will also enforce its admissibility as an entropy solution through the (strict) Lax condition
\begin{equation}\label{Lax}
f'(u_l(t))\,>\,\phi'(t)\,>\,f'(u_r(t))
\end{equation}
that implies the (strict) Oleinik condition provided that $f''$ does not vanish on $[u_l(t),u_r(t)]$.

\begin{Proposition}\label{P.small-shock}
Let $(\uU,\sigma)$ be given by Assumption~\ref{A.front}, assume that on a neighborhood of $[\uu_-,\uu_+]$, $f$ is $\cC^4$ and $g$ is $\cC^3$, and that $\delta_0>0$ (resp. $\delta_0<0$) is such that $f''(\uU)$ does not vanish on $[\delta_0,+\infty)$ and $f''(\uu_{+\infty})\neq0$ (resp. on $(-\infty,\delta_0]$ and $f''(\uu_{-\infty})\neq0$) and as in~\eqref{eq:theta} set
\[\theta\eqdef \min(\{g'(\uu_\star),-g'(\uu_\infty),-g'(\uu_{-\infty})\})>0.\]
There exists $\eps>0$ and $C>0$ such that for any $\psi_0\in\RR$ and any $v_0\in BUC^1(\RR^\star)$ satisfying
\begin{equation}\label{perturbation0-shock}
\Norm{v_0}_{W^{1,\infty}(\RR^\star)}\leq \eps\,,\qquad\qquad
f'(\uU(\delta_0)+v_0(0^-))<f'(\uU(\delta_0)+v_0(0^+))\,,
\end{equation}
the initial datum $u\id{t=0}=\uU(\cdot-\psi_0)+v_0(\cdot-(\psi_0+\delta_0))$ generates a unique global entropy solution to~\eqref{eq-u}, and there exists $\phi\in\cC^2(\RR^+)$ with initial data $\phi(0)=\psi_0+\delta_0$ such that $u\in BUC^1(\Omega^\phi)$ and 
\begin{enumerate}
\item there exists $\psi_\infty\in\RR$ such that 
\[
|\psi_\infty-\psi_0|\leq C\,\Norm{v_0}_{L^{\infty}(\RR)}
\]
and for any $t\geq 0$
\[
u(t,\sigma t+\psi_\infty)=\uu_\star
\]
and
\[
\Norm{u(t,\cdot)-\uU(\cdot-(\sigma t+\psi_\infty))}_{W^{1,\infty}(\RR\setminus\{\phi(t)\})}
\leq C\,e^{-\theta t}\,\Norm{v_0}_{W^{1,\infty}(\RR^\star)}
\]
\item there exists $(\phi_\infty,\phi_{{\rm as},\infty})\in\RR^2$ such that 
\begin{align*}
|\phi_\infty-\phi_\infty^0|&\leq C\,\Norm{v_0}_{W^{1,\infty}(\RR^\star)}\,,&
|\phi_{{\rm as},\infty}|&\leq C\,\Norm{v_0}_{W^{1,\infty}(\RR^\star)}\,,
\end{align*}
and, for any $t\geq 0$,
\begin{align*}
\abs{\phi(t)-(\phi_{{\rm as},\infty}+\phi_{\rm as}(t))}&\leq C\,e^{-\theta t}\,\Norm{v_0}_{W^{1,\infty}(\RR^\star)}\,,&
\abs{\phi(t)-(\phi_\infty+f'(\uu_\infty)\,t)}&\leq C\,e^{-\theta t}\,,\\
\abs{\phi'(t)-\phi_{\rm as}'(t)}&\leq C\,e^{-\theta t}\,\Norm{v_0}_{W^{1,\infty}(\RR^\star)}\,,&
\abs{\phi'(t)-f'(\uu_\infty)}&\leq C\,e^{-\theta t}\,,
\end{align*}
with $\uu_\infty=\uu_{+\infty}$ (resp. $\uu_\infty=\uu_{-\infty}$), $\phi_{\rm as}$ the solution to
\[
\phi_{\rm as}(0)=\psi_0+\delta_0
\qquad\textrm{and}\qquad
(\forall t\geq0\,,\quad 
\phi_{\rm as}'(t)=f'(\uU(\phi_{\rm as}(t)-(\sigma t+\psi_\infty))))\,,
\]
and
\[
\phi_\infty^0\eqdef \psi_0+\delta_0+\int_{\delta_0}^{+\infty}\left(1-\frac{f'(\uu_{+\infty})-\sigma}{f'(\uU(\xi))-\sigma}\right)\,\dd \xi\qquad
\Bigg(\textrm{resp.}\quad+\int^{\delta_0}_{-\infty}\left(\frac{f'(\uu_{-\infty})-\sigma}{f'(\uU(\xi))-\sigma}-1\right)\,\dd \xi\Bigg)
\]
\end{enumerate}
\end{Proposition}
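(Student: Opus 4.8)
The plan is to adapt the strategy of~\cite{DR1}, decomposing the perturbed Cauchy problem into two whole-line Cauchy problems near $\uU$ and a scalar ordinary differential equation governing the shock location in those (then known) backgrounds. I only discuss $\delta_0>0$, the case $\delta_0<0$ being symmetric, and by the translation invariance of~\eqref{eq-u} I reduce to $\psi_0=0$. First I would write the datum as $\uU+v_0(\cdot-\delta_0)$, extend $(v_0|_{(-\infty,0)})(\cdot-\delta_0)$ and $(v_0|_{(0,+\infty)})(\cdot-\delta_0)$ to $v_0^l,v_0^r\in BUC^1(\RR)$ with $W^{1,\infty}$-norm controlled by $\Norm{v_0}_{W^{1,\infty}(\RR^\star)}$, and apply Theorem~\ref{T.stable-front} (and Proposition~\ref{P.high-order-front} when extra regularity is wanted) to produce global classical solutions $u^l,u^r$ to~\eqref{eq-u}, each converging exponentially fast in $W^{1,\infty}$ to a translate $\uU(\cdot-\sigma t-\psi_\infty^l)$, $\uU(\cdot-\sigma t-\psi_\infty^r)$ with $|\psi_\infty^l|+|\psi_\infty^r|\leq C\Norm{v_0}_{L^\infty}$. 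Since $\delta_0>0$ the genuine characteristic point belongs to the left piece, so I would set $\psi_\infty\eqdef\psi_\infty^l$; the extension on the right is harmless and its own (artificial) characteristic point will never be seen.

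Next I would define $\phi$ as the maximal solution of the Rankine--Hugoniot ODE
\[
\phi'(t)=\frac{f(u^r(t,\phi(t)))-f(u^l(t,\phi(t)))}{u^r(t,\phi(t))-u^l(t,\phi(t))}\,,\qquad \phi(0)=\delta_0\,,
\]
and run a bootstrap on $y(t)\eqdef\phi(t)-\sigma t-\psi_\infty$ to show that it is eventually increasing to $+\infty$ and stays in the region where $\uU$ is non-characteristic and $f''(\uU)$ does not vanish, and that the jump remains strictly entropic. The key points are: while $y$ stays in that region, Theorem~\ref{T.stable-front} yields $|u^{l,r}(t,\phi(t))-\uU(y(t)+\psi_\infty-\psi_\infty^{l,r})|\leq C\Norm{v_0}_{W^{1,\infty}(\RR^\star)}e^{-\theta t}$; by Lemma~\ref{L.front}, $\uU-\uu_{+\infty}$ and $\uU'$ decay at $+\infty$ at a rate no smaller than $\theta/(f'(\uu_{+\infty})-\sigma)$, so the mismatch between $\psi_\infty^l$ and $\psi_\infty^r$ is absorbed along the shock and $|u^r(t,\phi(t))-u^l(t,\phi(t))|\leq C\Norm{v_0}_{W^{1,\infty}(\RR^\star)}e^{-\theta t}$; and since $f'(\uU)-\sigma\geq c_{\delta_0}>0$ on $[\delta_0,+\infty)$, this gives $y'(t)=f'(\uU(y(t)))-\sigma+O(\Norm{v_0}_{W^{1,\infty}(\RR^\star)}e^{-\theta t})\geq c_{\delta_0}/2>0$ for $\Norm{v_0}$ small, closing the confinement; meanwhile~\eqref{perturbation0-shock}, the non-vanishing of $f''$ near the traces, and continuity in $t$ propagate the strict ordering of $f'(u^l(t,\phi(t)))$, $\phi'(t)$, $f'(u^r(t,\phi(t)))$.

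Then I would set $u$ equal to $u^l$ for $x<\phi(t)$ and to $u^r$ for $x>\phi(t)$: it is classical on $\Omega^\phi$, satisfies~\eqref{RH} and the strict Lax condition~\eqref{Lax}, hence the Oleinik condition (as $f''$ does not vanish between the two traces), and has initial datum $\uU+v_0(\cdot-\delta_0)$; by the Kru\v zkov theory~\cite{Kruzhkov} it is the unique entropy solution issuing from that datum, which makes the construction independent of the extensions and gives $u\in BUC^1(\Omega^\phi)$. The bounds $|\psi_\infty-\psi_0|\leq C\Norm{v_0}_{L^\infty}$, $u(t,\sigma t+\psi_\infty)=\uu_\star$ and the $W^{1,\infty}$-decay of $u(t,\cdot)-\uU(\cdot-(\sigma t+\psi_\infty))$ off $\phi(t)$ then read off from Theorem~\ref{T.stable-front} applied to $u^l$ together with the exponential convergence of $u^r$ and the exponential decay of $\uU-\uu_{+\infty}$ at $+\infty$. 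For the last bullet I would compare $\phi$ with $\phi_{\rm as}$: setting $y_{\rm as}\eqdef\phi_{\rm as}-\sigma t-\psi_\infty$, it solves the autonomous equation $y_{\rm as}'=f'(\uU(y_{\rm as}))-\sigma$, monotone to $+\infty$; subtracting the ODEs gives $(\phi-\phi_{\rm as})'(t)=(f'\circ\uU)'(\zeta(t))(\phi-\phi_{\rm as})(t)+O(\Norm{v_0}_{W^{1,\infty}(\RR^\star)}e^{-\theta t})$ for some $\zeta(t)$ between $y(t)$ and $y_{\rm as}(t)$, and since $(f'\circ\uU)'=g'(\uU)-(f'(\uU)-\sigma)\uU''/\uU'$ decays exponentially at $+\infty$, a Gr\"onwall/Duhamel argument produces $\phi_{{\rm as},\infty}\eqdef\lim_t(\phi-\phi_{\rm as})(t)$ with $|\phi_{{\rm as},\infty}|\leq C\Norm{v_0}_{W^{1,\infty}(\RR^\star)}$ and the claimed rates for $\phi-\phi_{\rm as}$ and $\phi'-\phi_{\rm as}'$; finally, the change of variables $\xi=y_{\rm as}(s)$ in $\phi_{\rm as}(t)-f'(\uu_{+\infty})t-\psi_\infty=\delta_0+\int_0^t\big(f'(\uU(y_{\rm as}(s)))-f'(\uu_{+\infty})\big)\dd s$ identifies $\lim_t(\phi(t)-f'(\uu_{+\infty})t)$ with $\phi_\infty^0$ up to the $\psi_\infty$-shift and an $O(\Norm{v_0}_{W^{1,\infty}(\RR^\star)})$ error, and yields the remaining rates.

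I expect the main obstacle to be the second step: simultaneously confining $y(t)$ to the non-characteristic, $f''(\uU)$-non-vanishing region (so that the glued object is genuinely entropic), controlling the phase mismatch $\psi_\infty^l-\psi_\infty^r$ through the near-constancy of $\uU$ at the relevant infinity, and propagating the strict Lax inequalities from $t=0$. Once this confinement is established, the identification step is a direct application of Kru\v zkov uniqueness exactly as in~\cite{DR1}, and the asymptotics of $\phi$ reduce to elementary scalar ODE comparisons.
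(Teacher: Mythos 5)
Your overall architecture coincides with the paper's: extend the two smooth pieces of the datum to the whole line, propagate each by Theorem~\ref{T.stable-front}, glue along the curve defined by the Rankine--Hugoniot ODE with the slope function $s_f$, identify the result as the unique Kru\v zkov solution, and compare $\phi$ with $\phi_{\rm as}$ by a scalar ODE/Gr\"onwall argument whose coefficient $f''(\uU)\uU'$ decays exponentially. The one structural difference is in the handling of the asymptotic phases: the paper builds the second extension so that $\uU+v_{0,-}$ crosses $\uu_\star$ exactly at the $\psi_\infty$ already produced by the first extension, forcing both whole-line solutions to converge to the \emph{same} translate of $\uU$; you instead allow $\psi_\infty^l\neq\psi_\infty^r$ and absorb the mismatch using the exponential decay of $\uU'$ at the relevant infinity along the outgoing shock, where $\phi(t)-\sigma t\sim (f'(\uu_{+\infty})-\sigma)\,t$. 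That variant is sound (Lemma~\ref{L.front} supplies precisely the needed rate, which converts into $e^{g'(\uu_{+\infty})t}\leq e^{-\theta t}$) and slightly simplifies the extension lemma.

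The one genuine gap is your justification for propagating the strict entropy condition. ``Continuity in $t$'' plus the initial strict inequality in~\eqref{perturbation0-shock} only gives the strict Lax ordering~\eqref{Lax} on a maximal open time interval; since the jump amplitude $W(t)\eqdef u^r(t,\phi(t))-u^l(t,\phi(t))$ tends to zero exponentially, there is no uniform lower bound against which to close a continuity bootstrap, and you must exclude that $W$ reaches $0$ in finite time (after which the discontinuity could reopen with the non-entropic sign). The paper's mechanism is a backward-uniqueness argument: $W$ solves a scalar ODE $W'(t)=\Phi(t,W(t))$ with $\Phi(t,0)=0$ and $\Phi$ uniformly Lipschitz in its second argument (along the shock curve one has $W'=\big(s_g(u^l,u^r)-\d_x\big(s_f(u^l,u^r)\big)\big)\,W$ up to the explicit form of $\Phi$ given in the paper, with $s_f$, $s_g$ the slope functions of $f$ and $g$), so $W(0)\neq0$ forces $W(t)\neq0$ with fixed sign for all $t\geq0$; combined with the non-vanishing of $f''$ on a convex neighborhood of the traces this yields the strict Oleinik condition. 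Your proof needs this (short) argument in place of the continuity claim; the rest goes through.
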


We point out that~\eqref{perturbation0-shock} could be replaced with the more patently perturbative
\[
\Norm{v_0}_{W^{1,\infty}(\RR^\star)}\leq \eps\,,\qquad\qquad
\sign(f''(\uU(\delta_0))\,\uU'(\delta_0))\times(v_0(0^+)-v_0(0^-))>0\,.
\]

It is also clear from the proof that also holds a higher-order version of Proposition~\ref{P.small-shock}, in the spirit of Proposition~\ref{P.high-order-front}.

\begin{proof}
Again, by using invariance by translation, we reduce to the case $\psi_0=0$. Furthermore, since both cases are completely analogous, we restrict to the case $\delta_0<0$. 

The strategy of the proof is the following. Given $v_0$ satisfying~\eqref{perturbation0-shock}, we define two extensions $v_{0,\pm}$, defined on $\RR$, satisfying $v_{0,+}=v_0(\cdot-\delta_0)$ on $(\delta_0,+\infty)$ and $v_{0,-}=v_0(\cdot-\delta_0)$ on $(-\infty,\delta_0)$, and fulfilling the hypotheses of Theorem~\ref{T.stable-front}. We may then consider $u_\pm$ the two global unique classical solutions to~\eqref{eq-u}  emerging from the initial data $u_\pm\id{t=0}=\uU +v_{0,\pm}$. The solution $u$ is constructed by patching together $u_+$ and $u_-$ along the curve $t\mapsto(t,\phi(t))$ defined through the Rankine-Hugoniot condition.

We first provide an extension lemma.

\begin{Lemma}\label{L.extension-small-shock}
\begin{enumerate}
\item There exists $C'\geq1$ such that for any $\delta_0<0$ and any $w_{0}\in BUC^1((0,+\infty))$, there exists $v_{0,+}\in BUC^1(\RR)$ such that 
\begin{align*}
v_{0,+}\id{(\delta_0,+\infty)}&=w_0(\cdot-\delta_0),&\qquad
\Norm{v_{0,+}}_{W^{1,\infty}(\RR)}
&\leq C'\,\Norm{w_0}_{W^{1,\infty}((0,+\infty))}\,.
\end{align*}
\item For any $\delta_0<0$, there exists $C'\geq1$ such that for any $v_{0}\in BUC^1(\RR^\star)$ and any $\psi_\infty>\delta_0/2$, there exists $v_{0,-}\in BUC^1(\RR)$ such that
\begin{align*} 
v_{0,-}\id{(-\infty,\delta_0)}&=(v_0\id{(-\infty,0)})(\cdot-\delta_0),&\qquad
\Norm{v_{0,-}}_{W^{1,\infty}(\RR)}
&\leq C'\,\Norm{v_0}_{W^{1,\infty}(\RR^\star)}\,,
\end{align*} 
and $v_{0,-}(\psi_\infty)=v_0(\psi_\infty)$.
\end{enumerate}
\end{Lemma}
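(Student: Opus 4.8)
The plan is to reduce both claims to a single universal $\cC^1$ reflection operator, dealing in part~(2) with the extra interpolation constraint at $\psi_\infty$ by superimposing a localized bump. For $h$ with $h,h'$ bounded and uniformly continuous on $[0,+\infty)$ (which, for a $BUC^1$ function on $(0,+\infty)$, just amounts to taking its $\cC^1$ extension up to $0$), I would set $Eh$ on $\RR$ by $Eh=h$ on $[0,+\infty)$ and $(Eh)(x)\eqdef 3h(-x)-2h(-2x)$ for $x<0$; the coefficients are precisely those enforcing $(Eh)(0^-)=h(0^+)$ and $(Eh)'(0^-)=h'(0^+)$, so $Eh\in BUC^1(\RR)$ (only finite sums and affine precompositions are involved, which preserve uniform continuity and give explicit bounds) and $\|Eh\|_{W^{1,\infty}(\RR)}\le C_\flat\,\|h\|_{W^{1,\infty}((0,+\infty))}$ for a universal constant $C_\flat$. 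Likewise let $E_-$ be the analogous operator extending functions given on $(-\infty,0]$. Part~(1) is then immediate: set $v_{0,+}\eqdef(Ew_0)(\,\cdot-\delta_0)$; by construction $v_{0,+}\in BUC^1(\RR)$ agrees with $w_0(\,\cdot-\delta_0)$ on $(\delta_0,+\infty)$, and translation invariance of the $W^{1,\infty}$-norm gives $\|v_{0,+}\|_{W^{1,\infty}(\RR)}\le C_\flat\,\|w_0\|_{W^{1,\infty}((0,+\infty))}$ with $C_\flat$ independent of $\delta_0$.

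For part~(2), I would first extend only the left piece: set $\tilde v\eqdef\big(E_-(v_0|_{(-\infty,0)})\big)(\,\cdot-\delta_0)$, which lies in $BUC^1(\RR)$, equals $v_0(\,\cdot-\delta_0)$ on $(-\infty,\delta_0)$, and satisfies $\|\tilde v\|_{W^{1,\infty}(\RR)}\le C_\flat\,\|v_0\|_{W^{1,\infty}(\RR^\star)}$. Then, fixing once and for all $\chi_0\in\cC^\infty_c(\RR)$ with $\chi_0(0)=1$ and $\supp\chi_0\subset(-1,1)$, and putting $r_0\eqdef|\delta_0|/4$, I would define
\[
v_{0,-}\eqdef\tilde v+\big(v_0(\psi_\infty)-\tilde v(\psi_\infty)\big)\,\chi_0\!\left(\frac{\,\cdot-\psi_\infty}{r_0}\right)\,.
\]
The hypothesis $\psi_\infty>\delta_0/2$ forces $\psi_\infty-\delta_0>|\delta_0|/2>r_0$, so the bump is supported inside $(\delta_0,+\infty)$; hence $v_{0,-}\in BUC^1(\RR)$, it still coincides with $v_0(\,\cdot-\delta_0)$ on $(-\infty,\delta_0)$, and evaluating at $\psi_\infty$ gives $v_{0,-}(\psi_\infty)=v_0(\psi_\infty)$. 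For the norm bound, since $\psi_\infty-\delta_0>0$ the reflection formula yields $|\tilde v(\psi_\infty)|\le 5\,\|v_0\|_{L^\infty(\RR^\star)}$, so the bump's amplitude is $\lesssim\|v_0\|_{L^\infty(\RR^\star)}$ and its derivative costs an extra factor $r_0^{-1}$; combining with the estimate on $\tilde v$ gives $\|v_{0,-}\|_{W^{1,\infty}(\RR)}\le C'\,\|v_0\|_{W^{1,\infty}(\RR^\star)}$ with $C'$ depending on $\delta_0$ (through $r_0$ and $\|\chi_0'\|_{L^\infty}$) but not on $v_0$ nor on $\psi_\infty$.

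I do not expect any serious obstacle here: the only point needing care — and the source of the asymmetric role of $\delta_0$ in the two parts — is that in part~(2) the prescribed value $v_0(\psi_\infty)$ must be restored by a correction confined strictly to the right of $\delta_0$, while the only lower bound available on the gap $\psi_\infty-\delta_0$ is of order $|\delta_0|$; this is precisely what forces the $|\delta_0|$-dependence of $C'$, whereas part~(1) carries no interpolation constraint and is settled by a mere translate of the universal reflection.
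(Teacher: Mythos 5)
Your construction is correct, and it is exactly the classical reflection-extension argument (Hestenes-type, as in the Adams reference) that the paper invokes when it leaves this lemma to the reader; the coefficients $3$ and $-2$ do enforce $\cC^1$ matching, the translation invariance gives the $\delta_0$-uniform constant in part (1), and the bump of radius $|\delta_0|/4$ fits inside $(\delta_0,+\infty)$ thanks to $\psi_\infty-\delta_0>|\delta_0|/2$, which correctly accounts for the $\delta_0$-dependence of $C'$ in part (2). No gaps.
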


The foregoing lemma is a variation on classical and easy-to-prove extension lemmas (see for instance~\cite{Adams}) and its proof is left to the reader. Now we apply its first part with $w_0=v_0\id{(0,+\infty)}$ and obtain some $v_{0,+}$. Then we apply Theorem~\ref{T.stable-front} with initial perturbation $v_{0,+}$ and receive a corresponding solution $u_+$ and a corresponding (asymptotic) shift $\psi_\infty$. Note that by taking $\eps$ sufficiently small we may ensure $|\psi_\infty|\leq\delta_0/4$. Afterwards we apply the second part of the extension lemma to $v_0$ and $\psi_\infty$ and receive some $v_{0,-}$. At last we apply again Theorem~\ref{T.stable-front} this time with initial perturbation $v_{0,-}$ and receive a corresponding solution $u_-$, with the asymptotic shift $\psi_\infty$ prescribed by $v_{0,+}$ since $\uU(\psi_\infty)+v_{0,-}(\psi_\infty)=\uU(\psi_\infty)+v_{0,+}(\psi_\infty)=\uu_\star$.

We shall construct our solution, $u$, through the following formula
\begin{equation}\label{patch}
u(t,x)=
\begin{cases} 
u_-(t,x) & \text{ if } x<\phi(t),\\
u_+(t,x) & \text{ if } x>\phi(t),
\end{cases}
\end{equation}
where the discontinuity curve, described by $\phi$, is defined through the Rankine-Hugoniot condition
\[
\left(u_+(t,\phi(t))-u_-(t,\phi(t))\right)\phi'(t)\,=\, f(u_+(t,\phi(t)))-f(u_-(t,\phi(t)))\,.
\]

To this aim, we introduce the slope function associated with $f$,
\begin{equation}\label{slope-function}
s_f:\,\RR\times\RR\to\RR,\quad (a,b)\mapsto \int_0^1 f'\big(a+\tau(b-a)\big)\,\dd \tau.
\end{equation}
Since $s_f\in \cC^1(\RR\times\RR)$, the map $(t,x)\mapsto s_f(u_-(t,x),u_+(t,x))$ belongs to $BUC^1(\RR_+\times\RR)$, hence there exists a unique $\phi\in\cC^2(\RR_+)$ satisfying $\phi(0)=\delta_0$ and for any $t\geq 0$, 
\[\phi'(t)\,=\, s_f(u_-(t,\phi(t)),u_+(t,\phi(t)))\,.\]
As a result we obtain that $u$ defined by~\eqref{patch} satisfies~\eqref{eq-u} on $\Omega^\phi$, the Rankine-Hugoniot condition along $\{(t,\phi(t));t\geq0\}$, and thus is a weak solution to~\eqref{eq-u}. By design, the initial condition $u(0,\cdot)=\uU+v_0(\cdot-\delta_0)$ also holds. It only remains to study the asymptotic behavior of $\phi$ and the entropy condition.

We now verify the claimed estimates on $\phi$. To begin with we study $\phi_{\rm as}$. First note that since 
\[
\sup_{(-\infty,\frac{3\delta_0}{4}]}f'\circ\uU\ <\ \sigma
\]
a continuity argument shows for some $c>0$ independent of $v_0$ (satisfying the assumptions of the proposition) that for any $t\geq 0$,
\begin{align}\label{ineq:phias-intermediate}
\phi_{\rm as}'(t)&\leq \sigma-c\,,
&\quad\phi_{\rm as}(t)&\leq \delta_0+(\sigma-c)\,t\,.
\end{align}
Moreover it follows from Lemma~\ref{L.front}, that for some $C'$ independent of $v_0$, for any $t\geq0$,
\begin{equation}\label{est-phias}
|\phi_{\rm as}'(t)-f'(\uu_{-\infty})|\leq C'\exp\left( \frac{g'(\uu_{-\infty})}{f'(\uu_{- \infty})-\sigma}(\phi_{\rm as}(t)-(\psi_\infty+\sigma t))\right)\,.
\end{equation}
Inserting~\eqref{ineq:phias-intermediate} in~\eqref{est-phias} shows that for some $C_0>0$ independent of $v_0$, for any $t\geq0$,
\begin{equation}\label{ineq:phias-intermediate-bis}
\phi_{\rm as}(t)\leq \delta_0+C_0+f'(\uu_{-\infty})\,t\,.
\end{equation}
Then by inserting~\eqref{ineq:phias-intermediate-bis} in~\eqref{est-phias} we deduce that for some $C''$ independent of $v_0$, for any $t\geq 0$,
\[
|\phi_{\rm as}'(t)-f'(\uu_{-\infty})|\leq C''\,e^{g'(\uu_{-\infty})\,t}\,.
\]
In particular, integrating the latter shows for some $C'''$ independent of $v_0$, for any $t\geq0$,
\[
|\phi_{\rm as}(t)-(\phi_{\infty,{\rm as}}+f'(\uu_{-\infty})t)|
\leq\, C'''\,e^{g'(\uu_{-\infty})\,t}\,,
\]
with 
\[
\phi_{\infty,{\rm as}}=\delta_0
+\int^{\delta_0-\psi_\infty}_{-\infty}\left(\frac{f'(\uu_{-\infty})-\sigma}{f'(\uU(\xi-\psi_\infty))-\sigma}-1\right)\,\dd \xi\,,\]
so that
\[\abs{\phi_{\infty,{\rm as}}-\phi^0_{\infty}}\leq C'''\,\Norm{v_0}_{L^{\infty}(\RR^\star)}\,.
\]

Now, using again Lemma~\ref{L.front} shows that there exists a constant $C_0'>0$ independent of $v_0$ (satisfying the assumptions of the proposition) such that, for any $t\geq0$,
\begin{align}\label{ineq:phi-intermediate}
\left|\phi'(t)-\phi_{\rm as}'(t)\right|
&\leq C_0'\,e^{-\theta\,t}\,\Norm{v_0}_{W^{1,\infty}(\RR^\star)}
+C_0'\,
\left|\phi(t)-\phi_{\rm as}(t)\right|
\,e^{\frac{g'(\uu_{-\infty})}{f'(\uu_{- \infty})-\sigma}(\max(\{\phi_{\rm as}(t),\phi(t)\})-\sigma t)}\,,
\end{align}
Thus for some constant $C_0''>0$ independent of $v_0$, the Gr\"onwall lemma shows that, if $t$ is such that, for any $0\leq s\leq t$, $\phi(s)\leq \delta_0+C_0+1+f'(\uu_{-\infty})\,s$, then 
\[
\left|\phi(t)-\phi_{\rm as}(t)\right|\leq C_0''\,\Norm{v_0}_{W^{1,\infty}(\RR^\star)}\,.
\]
Hence, enforcing $C_0''\eps<1$, a continuity argument shows that for any $t\geq0$, 
\[
\phi(t)\leq \delta_0+C_0+1+f'(\uu_{-\infty})\,t\,,\qquad
\left|\phi(t)-\phi_{\rm as}(t)\right|\leq C_0''\,\Norm{v_0}_{W^{1,\infty}(\RR^\star)}\,.
\] 
Inserting the latter bounds in~\eqref{ineq:phi-intermediate} provides the expected bound on $\left|\phi'(t)-\phi_{\rm as}'(t)\right|$. At last, integrating this bound and combining with bounds on $\phi_{\rm as}$ conclude the study of $\phi$.

To achieve the proof, we need to ensure that lessening $\eps$ if necessary, the constructed weak solution is an entropy solution. By assumption there is a convex neighborhood of $\overline{\{\uU(x)\ ;\ x\leq\delta_0\}}$ on which $f''$ does not vanish and taking $\eps$ sufficiently small we may ensure that for any $t\geq0$, both $u_+(t,\phi(t))$ and $u_-(t,\phi(t))$ belong to this neighborhood. Thus it is sufficient to check that $w(t)\eqdef u_+(t,\psi(t))- u_-(t,\psi(t))$ has the correct sign for any $t\geq0$. Since $w$ takes the correct sign at $t=0$, it amounts to checking that $w$ does not vanish. This follows from the Cauchy-Lipschitz theorem since $w(0)\neq0$ and $(\forall t\geq0, w'(t)=\Phi(t,w(t)))$ with $\Phi$ such that ($\forall t\geq0$, $\Phi(t,0)=0$). Indeed the latter claim follows from a straightforward computation with $\Phi$ explicitly given by 
\begin{align*}
\Phi\,:\,(t,z)\mapsto&\ s_g(u_+(t,\psi(t)),u_-(t,\psi(t)))\,z\\
&+\Big(s_f(u_+(t,\psi(t)),u_+(t,\psi(t))-z)-s_f(u_+(t,\psi(t)),u_+(t,\psi(t)))\Big)\d_x u_+(t,\psi(t))\\
&-\Big(s_f(u_-(t,\psi(t))+z,u_-(t,\psi(t)))-s_f(u_-(t,\psi(t)),u_-(t,\psi(t)))\Big)\d_x u_-(t,\psi(t)).
\end{align*}
and one readily checks that $\Phi$ is jointly continuous, and uniformly Lipschitz in $z$.
\end{proof}

\subsection{Transverse stability in the multidimensional framework}\label{S.multiD}

In the present subsection we discuss possible generalizations of Theorem~\ref{T.stable-front} to multidimensional settings. In particular we temporarily replace~\eqref{eq-u} with
\begin{equation}\label{eq-u-multiD}
\d_t u+\Div \big(f(u)\big)=g(u)
\end{equation}
where the spatial variable $x$ belongs to $\RR^d$, $d\in\NN$, and $f:\RR\to\RR^d$.

Starting from spatial dimension $2$ the range of possible geometries for discontinuities becomes too wide to be reasonably covered here, even if one restricts to a few typical cases. Therefore, as in~\cite{DR1}, in the multidimensional case we only consider shockless perturbations.

For the sake of clarity and without loss of generality, we fix the direction of propagation of the reference plane front, split spatial variables accordingly $x=(\xi,y)\in\RR\times\RR^{d-1}$, and correspondingly $f=(\fpar,\fperp)$. We consider a plane wave $\uu$, 
\[\uu(t,x)=\uU(\xi-(\psi_0+\sigma t))\,,\] 
with $\psi_0\in\RR$, and $(\uU,\sigma)$ generating for nonlinearities $(\fpar,g)$ a one-dimensional stable continuous front in the sense of Assumption~\ref{A.front}.

Note that the profile of the plane wave $\uu$ takes the characteristic value $\uu_\star$ on an hyperplane. A general initial perturbation may bend this characteristic hyperplane whereas the time dynamics cannot restore the unperturbed shape so that the plane wave $\uu$ cannot be asymptotically stable (even in an orbital sense). The best one may expect is that
\begin{enumerate} 
\item near the plane wave under consideration there exists a genuinely multidimensional family of traveling waves, continuously parameterized by the $\uu_\star$-level set;
\item this family is asymptotically stable in the sense that a solution arising from the perturbation of an element of the family converges to a possibly different element of the same wave family.
\end{enumerate}
We anticipate that the study of the latter would require arguments significantly different from the rest of our other investigations. Yet the reader is referred to Subsection~\ref{S.multiple-characteristics} for the analysis of a similar situation.

To bypass the foregoing, we restrict here to initial perturbations that are localized away from the characteristic hyperplane and thus do not alter its shape. Incidentally let us point out that this restriction is conceptually similar to those made in~\cite{YangZumbrun20} where perturbations are assumed to be initially supported away from discontinuities.

\begin{Theorem}\label{T.multiD}
Let $(\uU,\sigpar)$ be given by Assumption~\ref{A.front} with nonlinearities $(\fpar,g)$, assume that on a neighborhood of $[\uu_-,\uu_+]$, $f$ is $\cC^4$ and $g$ is $\cC^3$, and set
\[\theta\eqdef \min(\{-g'(\uu_{+\infty}),-g'(\uu_{-\infty})\})>0.\]
For any $r_0>0$, there exist $\eps>0$ and $C>0$ such that for any $\psi_0\in\RR$ and any $v_0\in BUC^1(\RR^d)$ satisfying
\begin{align*}
\Norm{v_0}_{W^{1,\infty}(\RR^d)}&\leq \eps\,,&
\supp v_0&\subset\,\{\,(\xi,y)\,;\,|\xi|\geq r_0\,\}\,,
\end{align*}
the initial datum $u\id{t=0}=(\uU+v_0)(\cdot-\psi_0)$ generates a unique global classical solution to~\eqref{eq-u-multiD}, $u\in BUC^1(\RR^+\times\RR^d)$, such that, with $\sigma\eqdef f'(\uu_\star)$, for any $t\geq 0$,
\[
\supp (u(t,\cdot+(\sigma t+\psi_0))-\uU)\,\subset\,\{\,(\xi,y)\,;\,|\xi|\geq r_0\,\}\,,
\]
and
\begin{align*}
\Norm{u(t,\cdot+(\sigma t+\psi_0))-\uU}_{L^{\infty}(\RR^d)}
&\leq\,C\,e^{-\theta t}\,\Norm{v_0}_{L^{\infty}(\RR^d)}\,,\\
\Norm{u(t,\cdot+(\sigma t+\psi_0))-\uU}_{W^{1,\infty}(\RR^d)}
&\leq\,C\,e^{-\theta t}\,\Norm{v_0}_{W^{1,\infty}(\RR^d)}\,.
\end{align*}
\end{Theorem}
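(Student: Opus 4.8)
The plan is to build the solution by the method of characteristics in the frame moving at the longitudinal speed $\sigma$, and to propagate smallness of the perturbation together with a lower bound on the Jacobian of the associated flow by a continuity argument, the one-dimensional structure of Assumption~\ref{A.front} and the decay estimates of Lemma~\ref{L.front} doing all the work. By translation invariance of~\eqref{eq-u-multiD} one reduces to $\psi_0=0$, so the datum is $\uU(\xi)+v_0$ with $\supp v_0\subset\{|\xi|\ge r_0\}$, where $x=(\xi,y)\in\RR\times\RR^{d-1}$ and $f=(\fpar,\fperp)$. For $x_0=(\xi_0,y_0)$ introduce $(X(t,x_0),U(t,x_0))\in\RR^d\times\RR$ solving $X(0,x_0)=x_0$, $U(0,x_0)=\uU(\xi_0)+v_0(x_0)$ and $\d_t X=f'(U)=(\fpar'(U),\fperp'(U))$, $\d_t U=g(U)$. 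The decisive structural point is that the value equation $\d_t U=g(U)$ is autonomous and decoupled from the position, so $U(t,x_0)$ is ruled by the scalar flow of $g$ alone; in particular a characteristic issued from a point where $v_0=0$ is a characteristic of the reference plane wave and carries the value $\uU(\widetilde X_{\shortparallel}(t,x_0))$, with $\widetilde X_{\shortparallel}\eqdef X_{\shortparallel}-\sigma t$ the longitudinal position in the moving frame. The whole statement then reduces to showing that, for $\Norm{v_0}_{W^{1,\infty}}$ small, $x_0\mapsto X(t,x_0)$ is for every $t\ge0$ a $\cC^1$-diffeomorphism of $\RR^d$ with Jacobian bounded above and below by positive constants; granting this, $u(t,\cdot)\eqdef U(t,(X(t,\cdot))^{-1}(\cdot))$ is the announced global solution in $BUC^1(\RR^+\times\RR^d)$, which is the unique entropy solution by Kru\v{z}kov's theory.

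Next I would use the geometry of the profile. Since $0$ is the only characteristic point, $\fpar'(\uU(\xi))-\sigma$ keeps a constant sign on $\{\xi>0\}$ and on $\{\xi<0\}$, equal respectively to $\fpar'(\uu_{+\infty})-\sigma>0$ and to $\fpar'(\uu_{-\infty})-\sigma<0$, hence is bounded away from $0$ on $\{|\xi|\ge r_0\}$; moreover on the corresponding ranges of values $g$ vanishes only at the attractive zeros $\uu_{\pm\infty}$. Consequently, for $\Norm{v_0}_{W^{1,\infty}}$ small, the scalar dynamics $\d_t U=g(U)$ keeps $U(t,x_0)$, when $\xi_0\ge r_0$ (resp. $\xi_0\le -r_0$), in a fixed small neighborhood of $[\,\uU(r_0),\uu_{+\infty}\,]$ (resp. of $[\,\uu_{-\infty},\uU(-r_0)\,]$), uniformly away from $\uu_\star$ and converging exponentially to $\uu_{+\infty}$ (resp. $\uu_{-\infty}$); in particular the curves are globally defined, and $\d_t\widetilde X_{\shortparallel}=\fpar'(U)-\sigma$ stays bounded away from $0$ with the sign of $\fpar'(\uu_{\pm\infty})-\sigma$. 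Hence $\widetilde X_{\shortparallel}(t,x_0)\ge r_0$ for all $t$ when $\xi_0\ge r_0$ and $\widetilde X_{\shortparallel}(t,x_0)\le -r_0$ when $\xi_0\le -r_0$, whereas characteristics from $\{|\xi_0|<r_0\}$ remain reference characteristics; since the flow is a diffeomorphism this gives $u(t,\cdot+\sigma t)=\uU$ on the (growing) image of $\{|\xi_0|<r_0\}$, hence $\supp\big(u(t,\cdot+\sigma t)-\uU\big)\subset\{|\xi|\ge r_0\}$ for all $t\ge0$.

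Third I would obtain the decay and close the argument. For $\xi_0\ge r_0$ set $W(t)\eqdef U(t,x_0)-\uU(\widetilde X_{\shortparallel}(t,x_0))$; then $W(0)=v_0(x_0)$ and a direct computation gives $\d_t W=a(t)\,W+\mathcal{O}(|W|^2)$ with $a(t)=g'(\uU(\widetilde X_{\shortparallel}(t)))-\uU'(\widetilde X_{\shortparallel}(t))\,\fpar''(\uU(\widetilde X_{\shortparallel}(t)))$, and $a(t)-g'(\uu_{+\infty})$ is integrable on $\RR^+$ because $\widetilde X_{\shortparallel}(t)\gtrsim r_0+ct$ and $\uU-\uu_{+\infty}$, $\uU'$ decay exponentially by Lemma~\ref{L.front}; this yields $|W(t)|\le C(r_0)\,e^{-\theta t}\,|v_0(x_0)|$ with $\theta=\min(\{-g'(\uu_{+\infty}),-g'(\uu_{-\infty})\})$, and since $u(t,x)-\uU(x-\sigma t)=W(t,x_0)$ whenever $\widetilde X_{\shortparallel}(t,x_0)=\xi-\sigma t$ (the case $\xi_0\le -r_0$ being symmetric, $W\equiv0$ for $|\xi_0|<r_0$), this is the claimed $L^\infty$ bound. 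For the $W^{1,\infty}$ bound and, crucially, for the control of the flow, differentiate the characteristic system in $x_0$: $D_{x_0}U(t)=D_{x_0}U(0)\,\exp\!\big(\int_0^t g'(U)\big)$ decays like $e^{-\theta t}$ and is time-integrable, so $D_{x_0}X(t)=\Id+\int_0^t f''(U(s))\otimes D_{x_0}U(s)\,\dd s$ has a block structure in which the $y_0$-columns are $\Id+\mathcal{O}(\Norm{v_0}_{W^{1,\infty}})$, the cross terms are $\mathcal{O}(1)$, and the longitudinal diagonal entry $\d_{\xi_0}X_{\shortparallel}(t)$ equals its value for the reference flow, namely $\big(\fpar'(\uU(\widetilde X_{\shortparallel}(t)))-\sigma\big)\big/\big(\fpar'(\uU(\xi_0))-\sigma\big)$, up to $\mathcal{O}(\Norm{v_0}_{W^{1,\infty}})$ by a Gr\"onwall comparison with the reference $1$D flow; on $\{|\xi_0|\ge r_0\}$ this ratio is pinched between two positive constants depending only on $r_0$, again by Lemma~\ref{L.front}. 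Hence $\det D_{x_0}X(t)\ge c(r_0)>0$ for $\Norm{v_0}_{W^{1,\infty}}\le\eps$ small, which closes the continuity argument; combined with $\nabla_x u(t,\cdot)=\big((D_{x_0}U)(D_{x_0}X)^{-1}\big)\circ(X(t,\cdot))^{-1}$ and the decay of $D_{x_0}\big(U-\uU(\widetilde X_{\shortparallel})\big)$, it delivers the $W^{1,\infty}$ decay at rate $\theta$.

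The main obstacle is precisely this last point: since~\eqref{eq-u-multiD} carries no regularization, everything hinges on ruling out shock formation uniformly in time, that is, on a uniform positive lower bound for $\det D_{x_0}X$. It holds because on the support region the longitudinal Jacobian of the \emph{reference} flow is given by the closed formula above, which Lemma~\ref{L.front} bounds above and below, so that the perturbed flow inherits the same control once $\Norm{v_0}_{W^{1,\infty}}$ is small; the transverse coupling only enters through the lower-order, time-integrable cross terms and is harmless.
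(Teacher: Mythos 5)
Your construction is correct, and it reaches the conclusion by a genuinely different route from the paper. The paper works in Eulerian variables: it shears out the transverse drift via $\Phiperp$, writes the quasilinear perturbation equation for $v(t,x)=u(t,x+\sigma t+(0,\Phiperp(\xi)))-\uU(\xi)$ together with the equations for $\nabla_y v$ and $\uU'\d_\xi(v/\uU')$, and reduces everything to weighted resolvent bounds of contraction type for the frozen-coefficient operators $\cL_{a,A}$ and $L_{a,A}$ on $BUC^0_{r_0}(\RR^d)$, which then feed an evolution-system/Duhamel/Gr\"onwall closure exactly as in the one-dimensional Theorem~\ref{T.stable-front}. You instead solve the full nonlinear problem directly by characteristics and control the flow map. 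The decisive points of your argument all check out: since $\d_tU=g(U)$ decouples, $D_{x_0}U(t)=D_{x_0}U(0)\,e^{\int_0^tg'(U)}$, so $D_{x_0}X(t)-\Id=\bigl(\int_0^tf''(U(s))e^{\int_0^sg'(U)}\dd s\bigr)\otimes D_{x_0}U(0)$ is an explicit rank-one perturbation, whence $\det D_{x_0}X(t)=1+D_{x_0}U(0)\cdot c(t)$; comparing with the reference identity $1+\uU'(\xi_0)c^{\rm ref}_{\shortparallel}(t)=\bigl(\fpar'(\uU(\widetilde X_{\shortparallel}(t)))-\sigma\bigr)/\bigl(\fpar'(\uU(\xi_0))-\sigma\bigr)$, which Lemma~\ref{L.front} pinches between positive constants on $\{|\xi_0|\geq r_0\}$, gives the uniform Jacobian lower bound, and the coefficient $a(t)=g'(\uU(\widetilde X_{\shortparallel}))-\uU'(\widetilde X_{\shortparallel})\fpar''(\uU(\widetilde X_{\shortparallel}))$ in your $W$-equation is precisely the zeroth-order coefficient of the paper's linearized operator read along characteristics, so the rates agree. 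What each approach buys: yours is more elementary and self-contained, and it makes transparent why the support condition is what saves the day (the flow never compresses because the reference longitudinal Jacobian has a closed form bounded below away from the characteristic hyperplane); the paper's Eulerian framework is chosen for uniformity with the rest of the analysis — it is the formulation that survives the addition of discontinuities and the extension-and-patching arguments of Sections~4 and~5, where a Lagrangian flow map would break down. The only places where I would ask you to expand are the uniform-continuity bookkeeping needed to land in $BUC^1(\RR^+\times\RR^d)$ and the short bootstrap absorbing the $\mathcal O(|W|^2)$ term, both routine.
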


\begin{proof}
Since the details of the proof are completely similar to the ones involved in the proof of Theorem~\ref{T.stable-front}, we only outline its main features.

To derive a formulation as perturbative as possible, we introduce first $\Fperp$ and $\Phiperp$ through 
\begin{align*}
\Fperp(u)&\eqdef\begin{cases}\frac{\fperp'(u)-\fperp'(\uu_\star)}{\fpar'(u)-\sigpar}&\textrm{if}\ u\neq \uu_\star\\
\frac{\fperp''(\uu_\star)}{\fpar''(\uu_\star)}&\textrm{otherwise}
\end{cases}\,,&
\Phiperp(\xi)&\eqdef\int_0^\xi\,\Fperp(\uU(\eta))\,\dd\eta\,,
\end{align*}
so as to consider $v$ defined from the solution to study, $u$, by
\[
v(t,x)=u(t,x+\sigma\,t+(0,\Phiperp(\xi)))-\uU(\xi)\,.
\]
Then we observe that from~\eqref{eq-u-multiD} stems
\[
\d_tv(t,\cdot)-\cL_{\fpar'(\uU+v(t,\cdot))-\sigpar,\Fperp(\uU+v(t,\cdot))-\Fperp(\uU)}\,v(t,\cdot)
\,=\,\cN(v(t,\cdot)),
\]
where for any function $w$
\begin{align*}
\cN(w)&\eqdef\left(\fpar'(\uU)-\fpar'(\uU+w)\right)\frac{\uU''}{\uU'}\,w+g(\uU+w)-g(\uU)-g'(\uU)w\\
&\qquad-\left(\fpar'(\uU+w)-\fpar'(\uU)-\fpar''(\uU)w\right)\uU'\,.
\end{align*}
and for any functions $a$ and $A$
\[
\cL_{a,A}w\eqdef -a\,\left(\d_\xi w
+A\cdot \nabla_y w-\frac{\uU''}{\uU'} w\right)
\,=\,
-a\,\left(\uU'\d_\xi\left(\frac{w}{\uU'}\right)
+A\cdot \nabla_y w\right)
\,.
\]
From the latter follows, for $1\leq\ell\leq (d-1)$,
\begin{align*}
\d_t&(\d_{y_\ell}v(t,\cdot))-\cL_{\fpar'(\uU+v(t,\cdot))-\sigpar,\Fperp(\uU+v(t,\cdot))-\Fperp(\uU)}\,(\d_{y_\ell}v(t,\cdot))\\
&\,=\,\d_{y_\ell}(\cN(v(t,\cdot)))
-(\fpar'(\uU+v)-\sigpar)\,\d_{y_\ell}v(t,\cdot)\,\Fperp'(\uU+v(t,\cdot))\cdot \nabla_y v(t,\cdot)\\
&-\fpar''(\uU+v)\,\d_{y_\ell}v(t,\cdot)\,\left(\d_\xi v(t,\cdot)
+(\Fperp(\uU+v(t,\cdot))-\Fperp(\uU))\cdot \nabla_y v(t,\cdot)-\frac{\uU''}{\uU'} v(t,\cdot)\right),
\end{align*}
and
\begin{align*}
\d_t&\left(\uU'\d_\xi\left(\frac{v(t,\cdot)}{\uU'}\right)\right)
-L_{\fpar'(\uU+v(t,\cdot))-\sigpar,\Fperp(\uU+v(t,\cdot))-\Fperp(\uU)}\,\left(\uU'\d_\xi\left(\frac{v(t,\cdot)}{\uU'}\right)\right)\\
&\,=\,
\uU'\d_\xi\left(\frac{\cN(v(t,\cdot))}{\uU'}\right)
-\d_\xi\left[(\fpar'(\uU+v)-\sigpar)\,(\Fperp(\uU+v(t,\cdot))-\Fperp'(\uU))\right]\cdot \nabla_y v(t,\cdot),
\end{align*}
with 
\[
L_{a,A}w\eqdef -a\,\left(\d_\xi w+A\cdot \nabla_y w\right)
+\left(a\frac{\uU''}{\uU'}-\d_\xi a\right)w\,.
\]

To close the proof along the arguments of the proof of Theorem~\ref{T.stable-front}, it is sufficient to prove sharp exponential decay for evolution systems generated by $L_{a(t,\cdot),A(t,\cdot)}$ and $\cL_{a(t,\cdot),A(t,\cdot)}$ acting on 
\[
BUC^0_{r_0}(\RR^d)\eqdef
\left\{\,v\in BUC^0(\RR^d)\,;\,
\supp v\subset\,\{\,(\xi,y)\,;\,|\xi|\geq r_0\,\}\right\}
\]
when both $a(t,\cdot)-(\fpar'(\uU)-\sigpar)$ and $A(t,\cdot)$ are sufficiently small uniformly in time. In turn the latter is essentially a consequence of the following lemma.
\end{proof}

\begin{Lemma}
Let $(\uU,\sigpar)$ be given by Assumption~\ref{A.front} with nonlinearities $(\fpar,g)$, assume that on a neighborhood of $[\uu_-,\uu_+]$, $f$ is $\cC^4$ and $g$ is $\cC^3$, and set
\[\theta\eqdef \min(\{-g'(\uu_{+\infty}),-g'(\uu_{-\infty})\})>0.\]
For any $r_0>0$, there exist $\eps>0$, $C>0$ and $\chi, \tchi\in L^1(\RR)\cap BUC^0(\RR)$ such that for any $(a,A)\in \cC^1(\RR^d)\cap W^{1,\infty}(\RR^d)$ satisfying
\begin{align*}
\Norm{a-(\fpar'(\uU)-\sigpar)}_{W^{1,\infty}(\RR^d)}
\,\leq\,\eps
\end{align*}
the following holds with
\[
\theta_a\eqdef\theta-C\Norm{a-(\fpar'(\uU)-\sigpar)}_{W^{1,\infty}(\RR^d)}
\]
\begin{enumerate}
\item For any $\lambda\in\CC$ such that 
\[
\Re(\lambda)>-\theta_a\,,
\] 
and any $\alpha\in BUC_{r_0}^0(\RR^d)$, there exists a unique $v\in BUC_{r_0}^0(\RR^d)$ such that 
\[
(\lambda-\cL_{a,A})\,v\,=\,\alpha
\]
and, moreover,
\[
\Norm{x\mapsto e^{-\int_{0}^{\xi}\chi}v(x)}_{L^{\infty}(\RR^d)}
\,\leq\,\frac{1}{\Re(\lambda)+\theta_{a}}\,\Norm{x\mapsto e^{-\int_{0}^{\xi}\chi}\alpha(x)}_{L^{\infty}(\RR^d)}\,.
\]
\item For any $\lambda\in\CC$ such that 
\[
\Re(\lambda)>-\theta_a\,,
\] 
and any $\alpha\in BUC_{r_0}^0(\RR^d)$, there exists a unique $v\in BUC_{r_0}^0(\RR^d)$ such that 
\[
(\lambda-L_{a,A})\,v\,=\,\alpha
\]
and, moreover,
\[
\Norm{x\mapsto e^{-\int_{0}^{\xi}\tchi}v(x)}_{L^{\infty}(\RR^d)}
\,\leq\,\frac{1}{\Re(\lambda)+\theta_{a}}\,\Norm{x\mapsto e^{-\int_{0}^{\xi}\tchi}\alpha(x)}_{L^{\infty}(\RR^d)}\,.
\]
\end{enumerate}
\end{Lemma}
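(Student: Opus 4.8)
The plan is to reduce both parts of the lemma to the one-dimensional resolvent estimates already obtained in Lemmas~\ref{L.lin-1} and~\ref{L.lin-2}, treating the transverse variable $y$ as a parameter-carrying perturbation that, thanks to the support restriction $|\xi|\geq r_0$, stays uniformly away from the characteristic point $\xi=0$. First I would observe that on the region $\{|\xi|\geq r_0\}$ the coefficient $\fpar'(\uU)-\sigpar$ is bounded away from zero (say by some $c_{r_0}>0$), so that once $\eps$ is small enough $a$ is likewise bounded below in modulus on this region and $\cL_{a,A}$, $L_{a,A}$ become genuinely non-characteristic first-order operators there. The key structural point, exactly as in~\eqref{eq:key-derivative}, is the factorization
\[
\cD\,\cL_{a,A}\,v\,=\,L_{a,1}^{(A)}\,\cD v\,,\qquad \cD v=\uU'\d_\xi\!\left(\tfrac{v}{\uU'}\right),
\]
so it suffices to treat $L_{a,A}$-type operators and transfer the bound to $\cL_{a,A}$ via the norm-equivalence argument of Lemma~\ref{L.norm-equivalence} restricted to $\{|\xi|\geq r_0\}$.

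Next I would solve the resolvent equation $(\lambda-L_{a,A})v=\alpha$ by the method of characteristics in the $\xi$-variable with $y$ frozen, as in the derivation of~\eqref{eq:local-spectral}: along the flow of the field $(a(\xi,y),a(\xi,y)A(\xi,y))$ one integrates an ODE whose zeroth-order coefficient is $\lambda$ plus the same combination $k\,a'-a\tfrac{\uU''}{\uU'}$ appearing in Lemma~\ref{L.chi}. Since the characteristics enter the support region from $\xi=\pm r_0$ and travel outward to $\pm\infty$ (this is where one uses the sign of $\fpar'(\uU)-\sigpar$ near $\pm\infty$ from Assumption~\ref{A.front} and the smallness of $A$), the integral formula gives a bona fide bounded solution supported in $\{|\xi|\geq r_0\}$, with uniqueness from the ODE structure. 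The weights $\chi,\tchi$ are obtained exactly as in Lemma~\ref{L.chi}, defined now with the $k=0$ and $k=1$ choices of $\theta_{a,k}$ but only on $\{|\xi|\geq r_0\}$ (and extended by zero on $[-r_0,r_0]$), which is legitimate precisely because the characteristic point is excluded; the exponential localization of $\uU''/\uU'$ from Lemma~\ref{L.front} still guarantees $\chi,\tchi\in L^1(\RR)\cap BUC^0(\RR)$. Conjugating by $e^{-\int_0^\xi\chi}$ and estimating the resulting integral kernel by $1$, as in the proof of Lemma~\ref{L.lin-1}, yields the contraction-type bound with $\theta_a=\theta-C\Norm{a-(\fpar'(\uU)-\sigpar)}_{W^{1,\infty}}$, the extra $C$-term absorbing both the coefficient perturbation and the contribution of $A\cdot\nabla_y$ (which enters only through the direction of the characteristics, not through an additional zeroth-order term, so it is controlled linearly in $\Norm{A}_{W^{1,\infty}}$).

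The main obstacle I anticipate is handling the transverse transport term $A\cdot\nabla_y v$ uniformly in $y$: unlike the purely one-dimensional case the characteristics are now curves in $\RR^d$, and one must check that for $A$ small they neither re-enter the excluded slab $\{|\xi|<r_0\}$ nor spend unbounded "time" before escaping to large $|\xi|$, so that the integrating-factor estimate closes with a loss only of size $O(\Norm{A}_{W^{1,\infty}})$ in the exponential rate. A clean way to do this is to note that along a characteristic $\dot\xi = a = (\fpar'(\uU)-\sigpar)+O(\eps)$ keeps a fixed sign on $\{|\xi|\geq r_0\}$, so $\xi$ is monotone there and the characteristic leaves any compact $\xi$-interval in finite, uniformly bounded time; the $y$-motion is then irrelevant to the decay rate and only enters the final bound through the fact that the weight $\chi(\xi)$ and the threshold $\theta$ are $y$-independent. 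Once this monotonicity is secured, both items follow by the same computation, with $\tchi$ playing for $L_{a,A}$ the role $\chi$ plays for $\cD\cL_{a,A}$, and the proof of Theorem~\ref{T.multiD} then proceeds verbatim along the Duhamel-and-Gr\"onwall scheme of Theorem~\ref{T.stable-front}.
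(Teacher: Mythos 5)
Your core argument is the paper's: the paper's proof consists precisely of writing the resolvent as an integral along the characteristics $X(\cdot;x)=(\Xi,Y)(\cdot;x)$ of the full field $a\,(1,A)$ (so not "with $y$ frozen"), with an integrating factor built from a weight as in Lemma~\ref{L.chi}, and estimating the kernel exactly as in Lemma~\ref{L.lin-1}. Your key observations are the right ones: $\d_s\Xi=a$ keeps a fixed sign on $\{|\xi|\geq r_0\}$, so the slab $\{|\xi|<r_0\}$ is invariant under the backward flow (whence $v\in BUC^0_{r_0}$), the contributing portion of each backward characteristic is a single interval contained in $\{|\xi|\geq r_0\}$, and on that region $a$ is bounded away from zero so the weight can be chosen to realize the rate $\theta=\min(\{-g'(\uu_{+\infty}),-g'(\uu_{-\infty})\})$ without the $g'(\uu_\star)$ penalty of the one-dimensional Lemmas~\ref{L.lin-1}--\ref{L.lin-2}. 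Two of your side remarks are off but harmless: no smallness of $A$ is required (the $\xi$-component of the field is $a$ alone, and $A\cdot\nabla_y$ contributes no zeroth-order term, so $\theta_a$ should not and does not lose an $O(\Norm{A}_{W^{1,\infty}})$ term), and the backward characteristics \emph{do} enter the slab — what saves the estimate is that $\alpha$ vanishes there and $\Xi$ is monotone, not that they avoid it.

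The one step that, as written, does not deliver the statement is your plan to obtain part~(i) from part~(ii) via the factorization $\cD\,\cL_{a,A}=L\,\cD$ and the norm equivalence of Lemma~\ref{L.norm-equivalence}. Part~(i) is a purely zeroth-order $L^\infty$ estimate for $\cL_{a,A}$ itself with data $\alpha$ that is merely continuous; differentiating the resolvent equation requires $\cD\alpha$ to exist and would produce a bound on $\Norm{e^{-\int_0^\cdot\chi}\cD v}$ in terms of $\Norm{e^{-\int_0^\cdot\chi}\cD\alpha}$, which is a different (first-order) statement. The fix is already in your own toolkit: $\cL_{a,A}$ and $L_{a,A}$ are both of the form (transport along $a\,(1,A)$) plus (multiplication by a $\xi$-dependent potential, respectively $a\tfrac{\uU''}{\uU'}$ and $a\tfrac{\uU''}{\uU'}-\d_\xi a$), so the direct characteristics formula you set up for $L_{a,A}$ applies verbatim to $\cL_{a,A}$ with $\chi$ in place of $\tchi$ — which is exactly what the paper does. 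The factorization~\eqref{eq:key-derivative} is only needed later, in the proof of Theorem~\ref{T.multiD}, to derive the evolution equation satisfied by $\uU'\d_\xi(v/\uU')$ to which part~(ii) is then applied.
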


\begin{proof}
One may proceed as in the one-dimensional case, with generalized formula
\begin{align*}
e^{-\int_{0}^{\xi}\chi}
v(x)&\eqdef \int_{-\infty}^0 e^{\int_s^0\left(\left(a\frac{\uU''}{\uU'}-a\chi\right)(\Xi(\tau;x))-\lambda\right)\,\dd \tau}
e^{-\int_{0}^{\Xi(s;x)}\chi}\,
\alpha(X(s;x))\,\dd s\,,&
\textrm{in case i.}\,,\\
e^{-\int_{0}^{\xi}\tchi}
v(x)&\eqdef \int_{-\infty}^0 e^{\int_s^0\left(\left(a\frac{\uU''}{\uU'}-\d_\xi a-a\tchi\right)(\Xi(\tau;x))-\lambda\right)\,\dd \tau}
e^{-\int_{0}^{\Xi(s;x)}\tchi}\,
\alpha(X(s;x))\,\dd s\,,&
\textrm{in case ii.}\,,
\end{align*}
where $X(\cdot;x)=(\Xi,Y)(\cdot;x)$ is such that $X(0;x)=x$ and 
\[
\forall s\in\RR,\qquad\d_sX(s;x)=a(X(s;x))\,(1,A(X(s;x)))\,.
\]
\end{proof}

\section{General stable waves}

In this section we extend our stability results initiated in Section~\ref{S.continous_fronts} to stable waves of a more general form. We first consider in Section~\ref{S.other_classes} classes of stable waves involved in Theorem~\ref{T.classification} and thus conclude its proof.
Then, relaxing Assumption~\ref{A.generic}, we consider in Section~\ref{S.multiple-characteristics} some waves possessing several characteristic points.

\medskip

\subsection{Stable waves of generic equations}\label{S.other_classes}

To begin with, we recall some results from~\cite{DR1}, namely~\cite[Proposition~2.2]{DR1} (for constant states) and\footnote{Actually one of the variants of~\cite[Theorem~3.2]{DR1} along the lines of~\cite[Remark~3.3]{DR1}.}~\cite[Theorem~3.2]{DR1} (for Riemann shocks).

\begin{Proposition}[\cite{DR1}]\label{P.constant-classical}
Let $\uu\in\RR$ and $f$, $g$ $\cC^2$ in a neighborhood of $\uu$ such that 
\[
g(\uu)=0\qquad\textrm{and}\qquad g'(\uu)<0\,.
\]
Then for any $C_0>1$, there exists $\eps>0$ such that for any $v_0\in BUC^1(\RR)$ satisfying 
\[\Norm{v_0}_{W^{1,\infty}(\RR)}\leq \eps\,,\]
the initial data $u\id{t=0}=\uu+v_0$ generates a global unique classical solution to~\eqref{eq-u}, ${u\in BUC^1(\RR^+\times\RR)}$, and it satisfies for any $t\geq0$
\begin{align*}
\Norm{u(t,\cdot)-\uu}_{L^{\infty}(\RR)}&\leq \Norm{v_0}_{L^{\infty}(\RR)}C_0\,e^{g'(\uu)\,t}\ ;\\
\Norm{\d_x u(t,\cdot)}_{L^{\infty}(\RR)}&\leq \Norm{\d_x v_0}_{L^{\infty}(\RR)} C_0\,e^{g'(\uu)\,t}\,.
\end{align*}
\end{Proposition}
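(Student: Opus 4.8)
The plan is to transport the problem to the study of ODEs along characteristics, exactly as in the instability proofs of Section~\ref{S.instabilities} but now exploiting the \emph{good} sign $g'(\uu)<0$ to obtain decay rather than growth. First I would reduce to proving a priori bounds: local well-posedness of~\eqref{eq-u} at the $BUC^1$ level is classical, so it suffices to show that on any interval of existence $[0,T]$ the stated bounds hold, which then also prevents blow-up and yields global existence by a standard continuation argument. For a smooth solution $u$ on $[0,T]\times\RR$ write $v(t,x)\eqdef u(t,x)-\uu$ and introduce, for each $x_0\in\RR$, the characteristic curve $X(\cdot;x_0)$ and the value $V(\cdot;x_0)$ solving
\[
\d_t V(t;x_0)=g(\uu+V(t;x_0))\,,\qquad \d_t X(t;x_0)=f'(\uu+V(t;x_0))\,,
\]
with data $V(0;x_0)=v_0(x_0)$, $X(0;x_0)=x_0$. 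Along these curves $u(t,X(t;x_0))=\uu+V(t;x_0)$, so controlling $u$ amounts to controlling $V$ and to checking that $x_0\mapsto X(t;x_0)$ stays a diffeomorphism.

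The first key step is the pointwise decay of $V$. Since $g(\uu)=0$ and $g'(\uu)<0$, a continuity/bootstrap argument gives, for $\eps$ small enough that $|V(t;x_0)|$ stays in a neighborhood of $0$ where $g'<g'(\uu)/C_0<0$, the estimate
\[
|V(t;x_0)|\,\leq\,C_0\,|v_0(x_0)|\,e^{g'(\uu)\,t}
\]
on the whole interval of existence; this is the scalar comparison already used (with the opposite sign) in the proof of Proposition~\ref{P.nl-instability-infinity}. Feeding this back shows $\|u(t,\cdot)-\uu\|_{L^\infty}\leq C_0\|v_0\|_{L^\infty}e^{g'(\uu)t}$ as soon as the characteristic foliation is a bijection, and closes the bootstrap on the smallness of values. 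The second key step is the estimate on $\d_x u$: differentiating the ODEs in $x_0$ gives a linear system for $(\d_{x_0}V,\d_{x_0}X)$, and one checks that $\d_{x_0}X(t;x_0)\geq 1/2$ (say) for all $t$, because the only source term is $f''\cdot\d_{x_0}V$ multiplied by $\int_0^t e^{g'(\uu)s}\,ds$, which is bounded uniformly in $t$ thanks to $g'(\uu)<0$; simultaneously $|\d_{x_0}V(t;x_0)|\leq C_0|\d_x v_0(x_0)|e^{g'(\uu)t}$. Since $\d_x u(t,X(t;x_0))=\d_{x_0}V(t;x_0)/\d_{x_0}X(t;x_0)$, the two facts combine to give $\|\d_x u(t,\cdot)\|_{L^\infty}\leq C_0\|\d_x v_0\|_{L^\infty}e^{g'(\uu)t}$.

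The main obstacle — really the only delicate point — is the uniform-in-time lower bound $\d_{x_0}X(t;x_0)\geq c>0$, i.e. the absence of shock formation. This is where the dissipative sign $g'(\uu)<0$ is essential: without it the integrating factor $\int_0^t e^{g'(\uu)s}\,ds$ would grow and characteristics could cross. I would make this rigorous by a continuity argument run jointly with the value bound: on the maximal subinterval where $|V|\leq\delta_0$ and $\d_{x_0}X\geq 1/2$, the explicit Duhamel representation of $\d_{x_0}X$ shows it is in fact bounded below by $1-C\eps$, so the subinterval is open and closed, hence all of $[0,T]$, provided $\eps$ (hence $\|v_0\|_{W^{1,\infty}}$) is chosen small depending only on $C_0$, $\delta_0$, and $\sup_{|u-\uu|\leq\delta_0}(|f''(u)|+|g''(u)|)$. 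Once this is in place, everything above is routine, and one reads off the claimed constants.
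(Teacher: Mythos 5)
The present paper does not reprove this statement (it is imported verbatim from \cite{DR1}), so the comparison is with the companion paper and with the closely related computations here; your argument is the natural one and is precisely the favorable-sign mirror of the characteristics computations written out in the proof of Proposition~\ref{P.nl-instability-infinity}: propagate $V$, $\d_{x_0}V$ and $\d_{x_0}X$ along characteristics, obtain exponential decay of the first two from $g'(\uu)<0$, and deduce a uniform lower bound on $\d_{x_0}X$ (absence of wave-breaking) from the time-integrability of $\d_{x_0}V$. The argument is correct as a whole. One precision is needed in the first step: a bound $|V(t)|\le C_0\,|v_0(x_0)|\,e^{g'(\uu)t}$ with rate exactly $g'(\uu)$ and $C_0$ arbitrarily close to $1$ does \emph{not} follow from confining $V$ to a region where $g'\le g'(\uu)/C_0$ (that only yields the strictly weaker rate $e^{(g'(\uu)/C_0)t}$, and $e^{(g'(\uu)/C_0)t}/e^{g'(\uu)t}\to\infty$); one must instead write $\tfrac{\dd}{\dd t}\ln|V|=g(\uu+V)/V$ and bound the integrated error $\int_0^t\big|\tfrac{g(\uu+V)}{V}-g'(\uu)\big|\,\dd s\le \tfrac12\max|g''|\int_0^t|V|\,\dd s\lesssim \eps/|g'(\uu)|$, which is exactly the role of the condition imposed on $\delta_0$ at the beginning of the proof of Proposition~\ref{P.nl-instability-infinity}. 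The same integrated-error device is what produces the constant $C_0$ (rather than some fixed constant) in the bounds for $\d_{x_0}V$ and hence for $\d_x u=\d_{x_0}V/\d_{x_0}X$; with that adjustment your proof goes through and matches the source.
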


\begin{Assumption}\label{A.Riemann}
Assume that $(\uu_-,\uu_+)\in\RR^2$, $\uu_-<\uu_+$, that on a neighborhood of $[\uu_-,\uu_+]$, $f$ and $g$ are $\cC^2$, and that the following conditions hold
\begin{enumerate}
\item $ $\\\vspace{-3em}
\begin{align*}
g(\uu_-)&=0\,,
&g(\uu_+)&=0\,,\\
g'(\uu_-)&<0\,,
&g'(\uu_+)&<0\,;
\end{align*}
\item  with $\sigma\eqdef (f(\uu_+)-f(\uu_-))/(\uu_+-\uu_-)$, we have $(f'(\uu_+)-\sigma)(f'(\uu_+)-\sigma)<0$ and labeling $\{\uu_{-\infty},\uu_{+\infty}\}=\{\uu_-,\uu_+\}$ according to
\begin{align*}
f'(\uu_{-\infty})&>\sigma\,,
&f'(\uu_{+\infty})&<\sigma\,,
\end{align*}
for any $u \in (\uu_-,\uu_+)$, 
\[
\frac{f(u)-f(\uu_{-\infty})}{u-\uu_{-\infty}}>
\frac{f(u)-f(\uu_{+\infty})}{u-\uu_{+\infty}}\,.
\]
\end{enumerate}
Then we define $\uU$ as for any $x\in\RR$, 
\[
\uU(x)=\begin{cases}
\uu_{-\infty} &\text{ if }  x<0\\
\uu_{+\infty} &\text{ if }  x>0
\end{cases}\,.
\]
\end{Assumption}

\begin{Theorem}[\cite{DR1}]\label{T.shock}
Let $(\uU,\sigma)$ be given by Assumption~\ref{A.Riemann}. For any $C_0>1$, there exist $\eps>0$ and $C>0$ such that for any $\psi_0\in\RR$ and $v_0\in BUC^1(\RR^\star)$ satisfying
\[
\begin{array}{rl}
\Norm{v_0}_{W^{1,\infty}(\RR^\star)}&\leq\eps\,,
\end{array}
\]
there exists $\psi\in\cC^2(\RR^+)$ with initial data $\psi(0)=\psi_0$ such that the entropy solution to~\eqref{eq-u}, $u$, generated by the initial data $u(0,\cdot)=(\uU+v_0)(\cdot-\psi_0)$ is global, belongs to $BUC^1(\Omega^\psi)$ and satisfies for any $t\geq 0$
\begin{align*}
\Norm{u(t,\cdot+\psi(t))-\uu_{\pm\infty}}_{L^{\infty}(\RR^\pm)}&\leq \Norm{v_0}_{L^{\infty}(\RR^\pm)} C_0\,e^{g'(\uu_{\pm\infty})\,t}\,,\\
\Norm{\d_x u(t,\cdot+\psi(t))}_{L^{\infty}(\RR^\pm)}&\leq \Norm{\d_xv_0}_{L^{\infty}(\RR^\pm)} C_0\,e^{g'(\uu_{\pm\infty})\,t}\,,
\end{align*}
and moreover there exists $\psi_\infty$ such that 
\[
\abs{\psi_\infty-\psi_0}\,\leq \Norm{v_0}_{L^{\infty}(\RR^\star)} C\,,
\]
and for any $t\geq 0$
\[
\abs{\psi(t)-(\psi_\infty+t\,\sigma)}\,\leq \Norm{v_0}_{L^{\infty}(\RR^\star)} C\, e^{\max(\{g'(\uu_{+\infty}),g'(\uu_{-\infty})\})\, t}\,.
\]
\end{Theorem}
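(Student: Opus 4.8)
The plan is to use the extension-and-gluing strategy sketched in the introduction, which here is especially clean because both one-sided states of the reference profile are \emph{constants}. By translation invariance of~\eqref{eq-u} we reduce to $\psi_0=0$, so that the initial datum is $\uU+v_0$ with $v_0\in BUC^1(\RR^\star)$ small. First I would pick $\cC^1$ extensions $v_{0,-}$ of $v_0\id{(-\infty,0)}$ and $v_{0,+}$ of $v_0\id{(0,+\infty)}$ to all of $\RR$, chosen essentially lossless on the relevant side — one may extend by a short $\cC^1$ interpolation to a constant, so that $\Norm{\d_x v_{0,\mp}}_{L^\infty(\RR)}=\Norm{\d_x v_0}_{L^\infty(\RR^\mp)}$ and $\Norm{v_{0,\mp}}_{L^\infty(\RR)}$ exceeds $\Norm{v_0}_{L^\infty(\RR^\mp)}$ only by an arbitrarily small relative amount, the slack in the hypothesis $C_0>1$ absorbing this. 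Applying Proposition~\ref{P.constant-classical} around the constant states $\uu_{-\infty}$ and $\uu_{+\infty}$ to the data $\uu_{-\infty}+v_{0,-}$ and $\uu_{+\infty}+v_{0,+}$ then provides, for $\eps$ small, two global classical solutions $u_-,u_+\in BUC^1(\RR^+\times\RR)$ of~\eqref{eq-u} with
\[
\Norm{u_{\mp}(t,\cdot)-\uu_{\mp\infty}}_{L^\infty(\RR)}\leq C_0\,e^{g'(\uu_{\mp\infty})t}\,\Norm{v_0}_{L^\infty(\RR^\mp)}\,,
\]
and the same bound for $\d_x u_\mp$ with $\d_x v_0$ on the right-hand side.

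Next I would glue these along a Rankine--Hugoniot curve. Since $(t,x)\mapsto s_f(u_-(t,x),u_+(t,x))$ belongs to $BUC^1(\RR^+\times\RR)$ with $s_f$ as in~\eqref{slope-function}, the scalar ODE $\psi'(t)=s_f(u_-(t,\psi(t)),u_+(t,\psi(t)))$, $\psi(0)=0$, has a unique global solution $\psi\in\cC^2(\RR^+)$, and the analogue of~\eqref{patch} (namely $u=u_-$ on $\{x<\psi(t)\}$ and $u=u_+$ on $\{x>\psi(t)\}$) defines a weak solution of~\eqref{eq-u} that is $BUC^1$ on $\Omega^\psi$ and satisfies~\eqref{RH} along $\{(t,\psi(t));t\geq0\}$. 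To see that it is the sought entropy solution, observe that $u_-(t,\psi(t))$ and $u_+(t,\psi(t))$ stay within $2C_0\eps$ of $\uu_{-\infty}$ and $\uu_{+\infty}$, so for $\eps$ small the jump $u_+(t,\psi(t))-u_-(t,\psi(t))$ never vanishes and the strict condition~\eqref{Lax}, together with the accompanying strict Oleinik inequality — an open condition around the reference shock thanks to the strict inequalities built into Assumption~\ref{A.Riemann} — persists for all $t\geq0$; Kru\v zkov's theory~\cite{Kruzhkov} then identifies $u$ as \emph{the} entropy solution emanating from $\uU+v_0$, in particular global and independent of the arbitrary choices made in the extension step.

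It remains to read off the estimates. The pointwise bounds on $u(t,\cdot+\psi(t))-\uu_{\pm\infty}$ and $\d_x u(t,\cdot+\psi(t))$ over $\RR^\pm$ are immediate from the corresponding bounds on $u_\pm$, since translating by $\psi(t)$ does not change sup-norms. For the shock location, $s_f(\uu_{-\infty},\uu_{+\infty})=(f(\uu_+)-f(\uu_-))/(\uu_+-\uu_-)=\sigma$, hence
\[
\abs{\psi'(t)-\sigma}\ \lesssim\ \abs{u_-(t,\psi(t))-\uu_{-\infty}}+\abs{u_+(t,\psi(t))-\uu_{+\infty}}\ \lesssim\ e^{\max(\{g'(\uu_{+\infty}),g'(\uu_{-\infty})\})\,t}\,\Norm{v_0}_{L^\infty(\RR^\star)}\,,
\]
and since $g'(\uu_{\pm\infty})<0$ this is integrable; setting $\psi_\infty\eqdef\int_0^\infty(\psi'(s)-\sigma)\,\dd s$ gives $\abs{\psi_\infty}\lesssim\Norm{v_0}_{L^\infty(\RR^\star)}$ and $\abs{\psi(t)-(\psi_\infty+t\,\sigma)}\leq\int_t^\infty\abs{\psi'(s)-\sigma}\,\dd s\lesssim e^{\max(\{g'(\uu_{+\infty}),g'(\uu_{-\infty})\})\,t}\,\Norm{v_0}_{L^\infty(\RR^\star)}$. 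Undoing the reduction to $\psi_0=0$ then finishes the proof.

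The main obstacle, as in all arguments of this type, is the interplay between the \emph{artificial} extension and uniqueness: one must ensure that patching together solutions built from essentially arbitrary extensions nevertheless produces the honest Kru\v zkov solution, which requires both that the curve $\psi$ never drifts into a region where the gluing is not governed by states close to $\uu_{\mp\infty}$ — automatic here because $u_\pm$ remain uniformly close to constants on the whole line — and that the strict entropy inequality is never lost, which is handled by the openness of the Lax/Oleinik conditions encoded in Assumption~\ref{A.Riemann}. The decay and phase estimates, by contrast, are a direct transcription of Proposition~\ref{P.constant-classical} together with an elementary integration.
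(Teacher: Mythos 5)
Your argument is correct and is essentially the intended proof: the paper itself only cites~\cite{DR1} for Theorem~\ref{T.shock}, and the extension--evolution--gluing scheme you use (two whole-line solutions driven by Proposition~\ref{P.constant-classical}, patched along the Rankine--Hugoniot curve built from the slope function~\eqref{slope-function}, with admissibility preserved because the strict Lax/Oleinik inequalities of Assumption~\ref{A.Riemann} are open around the macroscopic jump) is exactly the strategy of~\cite[Theorem~3.2]{DR1} and of the paper's own analogous proofs of Proposition~\ref{P.small-shock} and Theorem~\ref{T.mixed}. The one point worth making explicit is the extension step: a $\cC^1$ interpolation over a length $\delta$ costs an additive $\delta\,\Norm{\d_x v_0}_{L^{\infty}}$ on $\Norm{v_{0,\pm}}_{L^{\infty}(\RR)}$ rather than a multiplicative factor of $\Norm{v_0}_{L^{\infty}(\RR^\pm)}$, so to land on the stated constant $C_0$ you should choose $\delta$ depending on $v_0$ (which is harmless, and is indeed absorbed by the slack $C_0>1$ as you indicate).
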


The reader is referred to~\cite{DR1} for other versions of the foregoing stability results including perturbations with small shocks, higher-regularity descriptions and multi-dimensional counterparts. 

\bigskip

The remaining stable waves involved in Theorem~\ref{T.classification} are neither continuous nor piecewise constant, and as such involve both characteristic points and discontinuities. For this kind of pattern, even when initial perturbations are smooth and supported away from discontinuities, we need to apply more than a simple uniform translation so as to synchronize the perturbed solution with the background wave since both characteristic points and discontinuity locations require fitting. This leads to results of space-modulated asymptotic stability instead of orbital asymptotic stability.

For the reader's convenience, we collect in Assumptions~\ref{A.mixed-single},~\ref{A.mixed-single-bis} and~\ref{A.mixed-double} the detailed conditions on $(f,g)$ from which arises the existence of such stable waves, and represent each case in Figure~\ref{F.composites}.

\begin{figure}[!phtb]
\begin{center}
\begin{subfigure}{.6\textwidth}
\includegraphics[width=\textwidth]{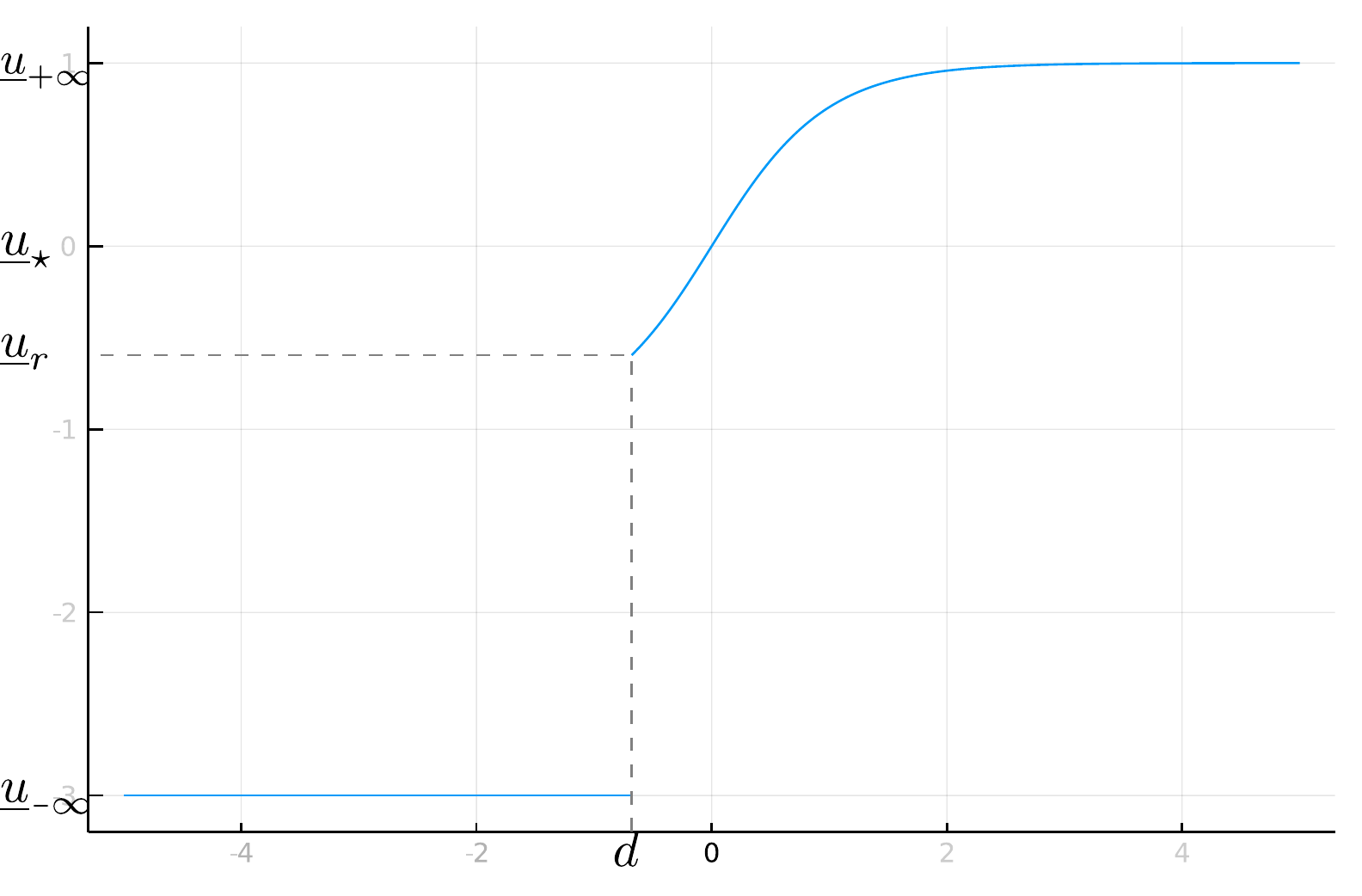}
\caption{A profile satisfying Assumptions~\ref{A.mixed-single}.}
\end{subfigure}
\begin{subfigure}{.6\textwidth}
\includegraphics[width=\textwidth]{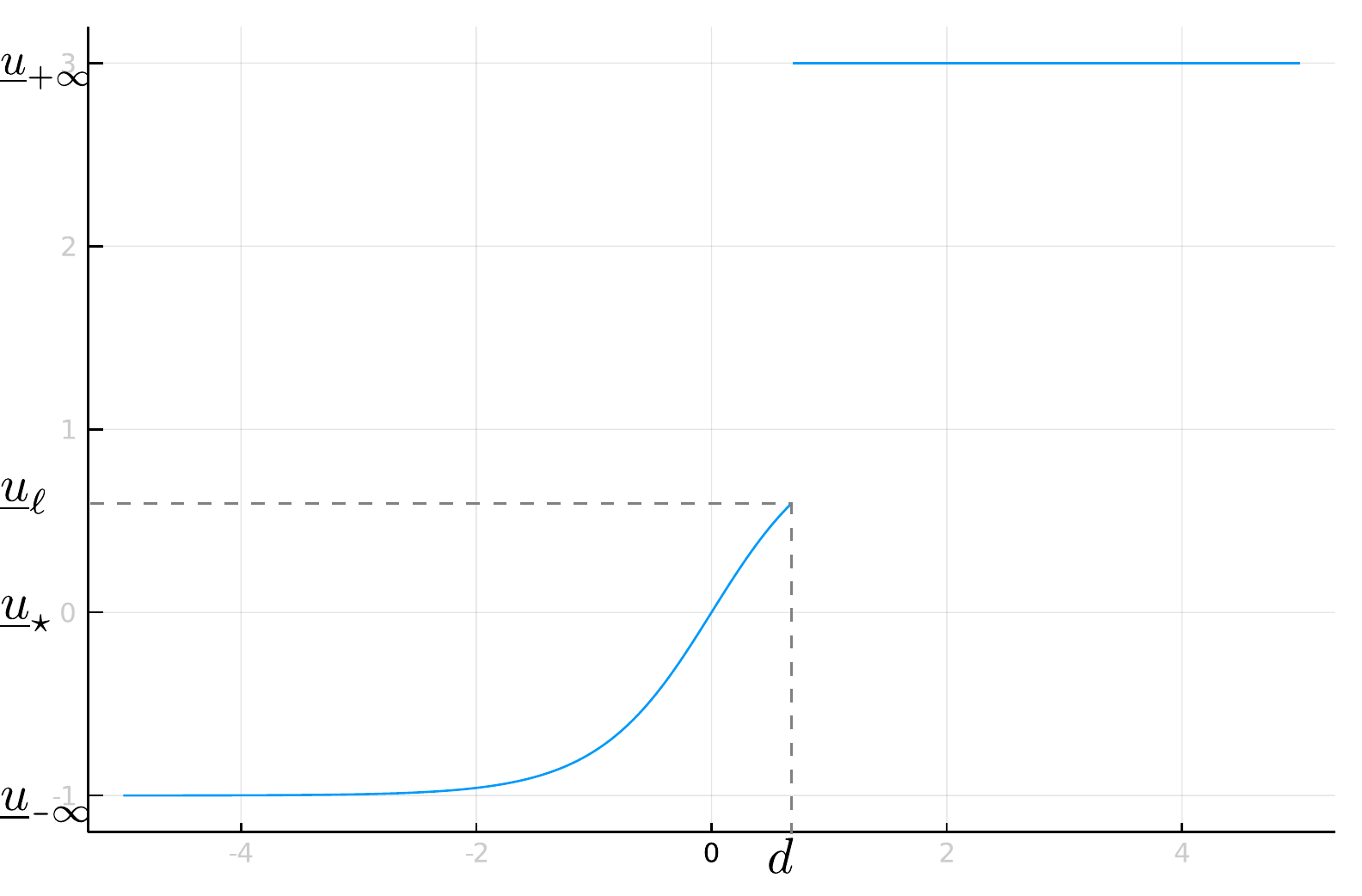}
\caption{A profile satisfying Assumptions~\ref{A.mixed-single-bis}.}
\end{subfigure}
\begin{subfigure}{.6\textwidth}
\includegraphics[width=\textwidth]{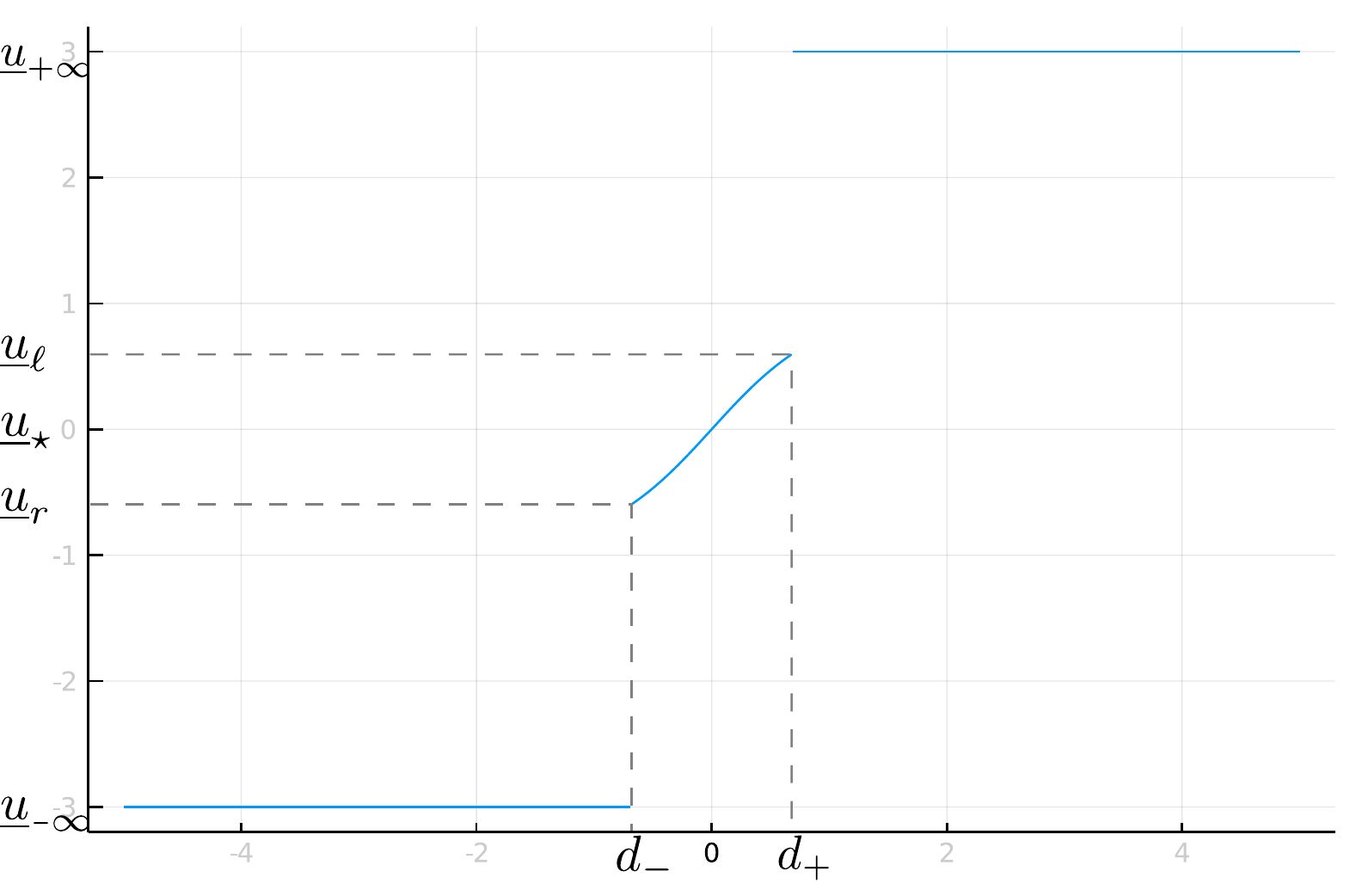}
\caption{A profile satisfying Assumptions~\ref{A.mixed-double}.}
\end{subfigure}
\end{center}
\caption{Stable waves of class~\ref{class4} or~\ref{class5} in Theorem~\ref{T.classification}. The functions $f$ and $g$ used to trace the profiles are as in Figure~\ref{F.classes}, specifically, $f(u) = -\cos(\tfrac74\,u)$ and  $g(u) = \sin(\pi\,u)$. }
\label{F.composites}
\end{figure}

\begin{Assumption}\label{A.mixed-single}
Let $(\uu_{-\infty},\uu_r,\uu_\star,\uu_{+\infty})\in\RR^4$ be 4 distinct values and assume that on a neighborhood of $[\min(\{\uu_{-\infty},\uu_r,\uu_{+\infty}\}),\max(\{\uu_{-\infty},\uu_r,\uu_{+\infty}\})]$, $f$ is $\cC^4$ and $g$ is $\cC^3$, and that the following conditions hold
\begin{enumerate}
\item $ $\\\vspace{-3em}
\begin{align*}
g(\uu_{-\infty})&=0\,,
&g(\uu_\star)&=0\,,
&g(\uu_{+\infty})&=0\,,\\
g'(\uu_{-\infty})&<0\,,
&g'(\uu_\star)&>0\,,
&g'(\uu_{+\infty})&<0\,;
\end{align*}
\item for any $u \in [\min(\{\uu_r,\uu_{+\infty}\}),\max(\{\uu_r,\uu_{+\infty}\})]\setminus\{\uu_\star,\uu_{+\infty}\}$, $g(u)\neq 0$;
\item $f''(\uu_\star)\neq0$, and, for any $u \in [\min(\{\uu_r,\uu_{+\infty}\}),\max(\{\uu_r,\uu_{+\infty}\})]\setminus\{\uu_\star\}$, $f'(u)\neq f'(\uu_\star)$;
\item  with $\sigma\eqdef f'(\uu_\star)$, we have 
\begin{align*}
f(\uu_r)-f(\uu_{-\infty})&=\sigma\,(\uu_r-\uu_{-\infty})\,,
&\frac{g(\uu_r)}{\uu_r-\uu_{-\infty}}&<0\,,\\
f'(\uu_{-\infty})&>\sigma\,,
&f'(\uu_r)&<\sigma\,,
\end{align*}
and, for any $u \in (\min(\{\uu_{-\infty},\uu_r\}),\max(\{\uu_{-\infty},\uu_r\}))$, 
\[
\frac{f(u)-f(\uu_{-\infty})}{u-\uu_{-\infty}}>
\frac{f(u)-f(\uu_r)}{u-\uu_r}\,.
\]
\end{enumerate}
Then we define 
\begin{enumerate}
\item $\uU_+$ on an open interval $\uI_+$ as the maximal solution to $\uU_+(0)=\uu_\star$ and, for any $x\in \uI_+$, 
\[
\uU_+'(x)=F_\sigma (\uU_+(x))\,,
\]
where $F_\sigma$ is defined on a neighborhood of $[\min(\{\uu_r,\uu_{+\infty}\}),\max(\{\uu_r,\uu_{+\infty}\})]$ as in~\eqref{def-F};
\item $d\in(-\infty,0)\cap \uI_+$ by $\uU_+(d)=\uu_r$; 
\end{enumerate}
and we set $D=\{d\}$ and, for $x\in\RR$, 
\[
\uU(x)=\begin{cases}
\uu_{-\infty} &\text{ if }  x<d\\
\uU_+(x) &\text{ if }  x>d
\end{cases}\,.
\]
\end{Assumption}

Note that in the foregoing Assumption the fact that $d$ is well-defined stems from the strict monotonicity of $\uU_+$ and sign considerations on $f'-\sigma$. Likewise one may check that $[d,+\infty)\subset \uI_+$ and $\lim_{x\to+\infty}\uU_+(x)=\uu_{+\infty}$.

\begin{Assumption}\label{A.mixed-single-bis}
Let $(\uu_{-\infty},\uu_\star,\uu_\ell,\uu_{+\infty})\in\RR^4$ be 4 distinct values and assume that on a neighborhood of $[\min(\{\uu_{-\infty},\uu_\ell,\uu_{+\infty}\}),\max(\{\uu_{-\infty},\uu_\ell,\uu_{+\infty}\})]$, $f$ is $\cC^4$ and $g$ is $\cC^3$, and that the following conditions hold
\begin{enumerate}
\item $ $\\\vspace{-3em}
\begin{align*}
g(\uu_{-\infty})&=0\,,
&g(\uu_\star)&=0\,,
&g(\uu_{+\infty})&=0\,,\\
g'(\uu_{-\infty})&<0\,,
&g'(\uu_\star)&>0\,,
&g'(\uu_{+\infty})&<0\,;
\end{align*}
\item for any $u \in [\min(\{\uu_{-\infty},\uu_\ell\}),\max(\{\uu_{-\infty},\uu_\ell\})]\setminus\{\uu_\star,\uu_{-\infty}\}$, $g(u)\neq 0$;
\item $f''(\uu_\star)\neq0$, and, for any $u \in [\min(\{\uu_{-\infty},\uu_\ell\}),\max(\{\uu_{-\infty},\uu_\ell\})]\setminus\{\uu_\star\}$, $f'(u)\neq f'(\uu_\star)$;
\item  with $\sigma\eqdef f'(\uu_\star)$, we have 
\begin{align*}
f(\uu_{+\infty})-f(\uu_\ell)&=\sigma\,(\uu_{+\infty}-\uu_\ell)\,,
&\frac{-g(\uu_\ell)}{\uu_{+\infty}-\uu_\ell}&<0\,,\\
f'(\uu_\ell)&>\sigma\,,
&f'(\uu_{+\infty})&<\sigma\,,
\end{align*}
and, for any $u \in (\min(\{\uu_\ell,\uu_{+\infty}\}),\max(\{\uu_\ell,\uu_{+\infty}\}))$, 
\[
\frac{f(u)-f(\uu_\ell)}{u-\uu_\ell}>\frac{f(u)-f(\uu_{+\infty})}{u-\uu_{+\infty}}\,.
\]
\end{enumerate}
Then we define 
\begin{enumerate}
\item $\uU_-$ on an open interval $\uI_-$ as the maximal solution to $\uU_-(0)=\uu_\star$ and, for any $x\in \uI_-$, 
\[
\uU_-'(x)=F_\sigma (\uU_-(x))\,,
\]
where $F_\sigma$ is defined on a neighborhood of $[\min(\{\uu_{-\infty},\uu_\ell\}),\max(\{\uu_{-\infty},\uu_\ell\})]$ as in~\eqref{def-F};
\item $d\in(0,+\infty)\cap \uI_-$ by $\uU_-(d)=\uu_\ell$; 
\end{enumerate}
and we set $D=\{d\}$ and, for $x\in\RR$, 
\[
\uU(x)=\begin{cases}
\uu_{+\infty} &\text{ if }  x>d\\
\uU_-(x) &\text{ if }  x<d
\end{cases}\,.
\]
\end{Assumption}

\begin{Assumption}\label{A.mixed-double}
Let $(\uu_{-\infty},\uu_r,\uu_\star,\uu_\ell,\uu_{+\infty})\in\RR^5$ be 5 distinct values and assume that on a neighborhood of $[\min(\{\uu_{-\infty},\uu_r\uu_\ell,\uu_{+\infty}\}),\max(\{\uu_{-\infty},\uu_r,\uu_\ell,\uu_{+\infty}\})]$, $f$ is $\cC^4$ and $g$ is $\cC^3$, and that the following conditions hold
\begin{enumerate}
\item $ $\\\vspace{-3em}
\begin{align*}
g(\uu_{-\infty})&=0\,,
&g(\uu_\star)&=0\,,
&g(\uu_{+\infty})&=0\,,\\
g'(\uu_{-\infty})&<0\,,
&g'(\uu_\star)&>0\,,
&g'(\uu_{+\infty})&<0\,;
\end{align*}
\item for any $u \in [\min(\{\uu_r,\uu_\ell\}),\max(\{\uu_r,\uu_\ell\})]\setminus\{\uu_\star\}$, $g(u)\neq 0$;
\item $f''(\uu_\star)\neq0$, and, for any $u \in [\min(\{\uu_r,\uu_\ell\}),\max(\{\uu_r,\uu_\ell\})]\setminus\{\uu_\star\}$, $f'(u)\neq f'(\uu_\star)$;
\item  with $\sigma\eqdef f'(\uu_\star)$, we have 
\begin{align*}
f(\uu_{+\infty})-f(\uu_\ell)&=\sigma\,(\uu_{+\infty}-\uu_\ell)\,,
&\frac{-g(\uu_\ell)}{\uu_{+\infty}-\uu_\ell}&<0\,,\\
f'(\uu_\ell)&>\sigma\,,
&f'(\uu_{+\infty})&<\sigma\,,
\end{align*}
and, for any $u \in (\min(\{\uu_\ell,\uu_{+\infty}\}),\max(\{\uu_\ell,\uu_{+\infty}\}))$, 
\[
\frac{f(u)-f(\uu_\ell)}{u-\uu_\ell}>\frac{f(u)-f(\uu_{+\infty})}{u-\uu_{+\infty}}\,;
\]
\item with the same $\sigma$, we also have 
\begin{align*}
f(\uu_r)-f(\uu_{-\infty})&=\sigma\,(\uu_r-\uu_{-\infty})\,,
&\frac{g(\uu_r)}{\uu_r-\uu_{-\infty}}&<0\,,\\
f'(\uu_{-\infty})&>\sigma\,,
&f'(\uu_r)&<\sigma\,,
\end{align*}
and, for any $u \in (\min(\{\uu_{-\infty},\uu_r\}),\max(\{\uu_{-\infty},\uu_r\}))$, 
\[
\frac{f(u)-f(\uu_{-\infty})}{u-\uu_{-\infty}}>
\frac{f(u)-f(\uu_r)}{u-\uu_r}\,.
\]
\end{enumerate}
Then we define 
\begin{enumerate}
\item $\uU_{\rm int}$ on an open interval $\uI_{\rm int}$ as the maximal solution to $\uU_{\rm int}(0)=\uu_\star$ and, for any $x\in \uI_{\rm int}$, 
\[
\uU_{\rm int}'(x)=F_\sigma (\uU_{\rm int}(x))\,,
\]
where $F_\sigma$ is defined on a neighborhood of $[\min(\{\uu_r,\uu_\ell\}),\max(\{\uu_r,\uu_\ell\})]$ as in~\eqref{def-F};
\item $d_+\in(0,+\infty)\cap \uI_{\rm int}$ by $\uU_{\rm int}(d_+)=\uu_\ell$; 
\item $d_-\in(-\infty,0)\cap \uI_{\rm int}$ by $\uU_{\rm int}(d_-)=\uu_r$;
\end{enumerate}
and we set $D=\{d_-,d_+\}$ and, for $x\in\RR$, 
\[
\uU(x)=\begin{cases}
\uu_{+\infty} &\text{ if }  x>d_+\\
\uu_{-\infty} &\text{ if }  x<d_-\\
\uU_{\rm int}(x) &\text{ if }  d_-<x<d_+
\end{cases}\,.
\]
\end{Assumption}

\begin{Theorem}\label{T.mixed}
Let $(\uU,\sigma,D)$ be given by either Assumption~\ref{A.mixed-single}, Assumption~\ref{A.mixed-single-bis} or Assumption~\ref{A.mixed-double} and set
\[
\theta\eqdef \min\left(\left\{g'(\uu_\star),-g'(\uu_{+\infty}),-g'(\uu_{-\infty})\right\}\cup\left\{-\frac{[g(\uU)]_d}{[\uU]_d};\,d\in D\,\right\}\right)>0.
\]
There exist $\eps>0$ and $C>0$ such that for any $(\psi_0,v_0)\in BUC^1(\RR)\times BUC^1(\RR\setminus D)$ satisfying
\[
\Norm{\d_x\psi_0-1}_{L^{\infty}(\RR)}+\Norm{v_0}_{W^{1,\infty}(\RR\setminus D)}\leq \eps,
\]
the initial datum $u\id{t=0}=(\uU+v_0)\circ \psi_0^{-1}$ generates a unique global entropic solution to~\eqref{eq-u} and there exist $\psi\in BUC^2(\RR_+\times\RR)$ and $\psi_\infty\in\RR$ such that 
\[
\abs{\psi_\infty-\psi_0(0)}\leq 
C\,\left(\Norm{\d_x\psi_0-1}_{L^{\infty}(\RR)}+\Norm{v_0}_{L^{\infty}(\RR)}\right)
\]
and for any $t\geq 0$
\begin{align*}
\psi(t,0)&=\psi_\infty+\sigma\,t\,,
&u(t,\psi_\infty+\sigma t)&=\uu_\star\,,
\end{align*}
$u(t,\psi(t,\cdot))\in BUC^1(\RR\setminus D)$ and
\begin{align*}
\Norm{\d_x\psi(t,\cdot)-1}_{L^{\infty}(\RR)}
+\Norm{u(t,\psi(t,\cdot))-\uU}_{W^{1,\infty}(\RR\setminus D)}
&\leq C\,e^{-\theta t}
\,\left(\Norm{\d_x\psi_0-1}_{L^{\infty}(\RR)}+\Norm{v_0}_{W^{1,\infty}(\RR\setminus D)}\right)\,,\\
\Norm{\d_t\psi(t,\cdot)-\sigma}_{L^{\infty}(\RR)}+\Norm{\psi(t,\cdot)-(\cdot+\sigma t +\psi_\infty)}_{L^{\infty}(\RR)}
&\leq C\,e^{-\theta t}
\,\left(\Norm{\d_x\psi_0-1}_{L^{\infty}(\RR)}+\Norm{v_0}_{W^{1,\infty}(\RR\setminus D)}\right)\,.
\end{align*}
Moreover, increasing $C$ if necessary, one may enforce that, for any $t\geq0$, $\psi(t,\cdot)-(\cdot+\sigma t +\psi_\infty)$ is supported in any prescribed neighborhood of $0$.
\end{Theorem}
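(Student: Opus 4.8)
The plan is to adapt the strategy of~\cite{DR1}: decompose the profile $\uU$ into its maximal smooth pieces, realize each of them as a stable wave of a suitably modified equation, solve each modified Cauchy problem by invoking Theorem~\ref{T.stable-front} or Proposition~\ref{P.constant-classical}, and then reconstruct the solution $u$ by gluing these building blocks along discontinuity curves governed by the Rankine--Hugoniot condition. We treat the case of Assumption~\ref{A.mixed-double}; the cases of Assumptions~\ref{A.mixed-single} and~\ref{A.mixed-single-bis} follow by the same argument with a single discontinuity and a single constant end piece. By the translation invariance of~\eqref{eq-u}, up to incorporating a constant translation into $\psi_\infty$, we may assume $\psi_0(0)=0$, so that $\psi_0$ is close to $\Id_\RR$ uniformly on every compact set and the initial datum $(\uU+v_0)\circ\psi_0^{-1}$, restricted to each of the three intervals delimited by $\psi_0(d_-)$ and $\psi_0(d_+)$, is a small $W^{1,\infty}$-perturbation of, respectively, the constant $\uu_{-\infty}$, the restriction of $\uU_{\rm int}$, and the constant $\uu_{+\infty}$. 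For the middle piece we pick nonlinearities $(\widetilde f,\widetilde g)$ coinciding with $(f,g)$ on a neighborhood of $[\min(\{\uu_r,\uu_\ell\}),\max(\{\uu_r,\uu_\ell\})]$ but modified away from it so that $\uU_{\rm int}$ extends to a stable continuous front of $(\widetilde f,\widetilde g)$ in the sense of Assumption~\ref{A.front} --- possible since the defining sign and non-characteristicity conditions only bear on the values in that compact set. This modification is harmless: for $v_0$ small the corresponding solution stays in the region where $(\widetilde f,\widetilde g)=(f,g)$, the discontinuity curves built below never enter the modified range, and hence by Kru\v zkov uniqueness~\cite{Kruzhkov} the glued function is the genuine entropy solution of~\eqref{eq-u}, insensitive to the extension. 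The constant end pieces require no modification, Proposition~\ref{P.constant-classical} directly providing global solutions remaining near $\uu_{\pm\infty}$.

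With an extension lemma of the type of Lemma~\ref{L.extension-small-shock}, we extend the three restrictions of the perturbation into small perturbations defined on all of $\RR$. Applying Theorem~\ref{T.stable-front} to the middle (modified) piece yields a global classical solution $u_{\rm int}$, a pinned phase $\psi_\infty$ with $u_{\rm int}(t,\psi_\infty+\sigma t)=\uu_\star$ for every $t\geq0$, and the exponential decay toward the front profile; applying Proposition~\ref{P.constant-classical} to the end pieces yields global solutions $u_{-\infty}$, $u_{+\infty}$ decaying exponentially to $\uu_{\pm\infty}$. We then define the discontinuity curves $\phi_\pm\in\cC^2(\RR_+)$ as the maximal solutions of the Rankine--Hugoniot ODEs $\phi_-(0)=\psi_0(d_-)$, $\phi_-'(t)=s_f(u_{-\infty}(t,\phi_-(t)),u_{\rm int}(t,\phi_-(t)))$, and $\phi_+(0)=\psi_0(d_+)$, $\phi_+'(t)=s_f(u_{\rm int}(t,\phi_+(t)),u_{+\infty}(t,\phi_+(t)))$, with $s_f$ the slope function~\eqref{slope-function}; for $\eps$ small these are globally defined and preserve the ordering $\phi_-<\phi_+$. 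Patching $u_{-\infty}$, $u_{\rm int}$, $u_{+\infty}$ across $\phi_-$ and $\phi_+$ produces a weak solution $u$ of~\eqref{eq-u} with the prescribed initial datum, regular on $\Omega^{\phi_-}\cap\Omega^{\phi_+}$; its entropy-admissibility follows, exactly as at the end of the proof of Proposition~\ref{P.small-shock}, from a Cauchy--Lipschitz argument applied to the jump amplitudes along $\phi_\pm$, whose evolution ODEs have right-hand sides vanishing whenever the amplitude does.

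It remains to study the curves $\phi_\pm$ and to build the modulation. Setting $p_\pm(t)\eqdef\phi_\pm(t)-\sigma t-\psi_\infty$ and inserting the exponential convergence of $u_{\rm int}(t,\cdot+\sigma t+\psi_\infty)$ to the front profile (Theorem~\ref{T.stable-front} together with Lemma~\ref{L.front}) and of $u_{\pm\infty}$ to $\uu_{\pm\infty}$ (Proposition~\ref{P.constant-classical}), one finds that $p_\pm$ solves an ODE $p_\pm'=G_\pm(p_\pm)+r_\pm$ with $r_\pm$ exponentially decaying and with $G_\pm\in\cC^1$ satisfying $G_\pm(d_\pm)=0$ --- the Rankine--Hugoniot relation of the reference wave --- and $G_\pm'(d_\pm)=\tfrac{[\,g(\uU)\,]_{d_\pm}}{[\,\uU\,]_{d_\pm}}<0$, the latter being the attractivity computation underlying the definition of $\theta$. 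A Gr\"onwall and continuity argument then gives $\abs{p_\pm(t)-d_\pm}\leq C\,e^{-\theta t}\,\Norm{v_0}_{W^{1,\infty}(\RR\setminus D)}$, so the discontinuities are asymptotically pinned at $d_\pm+\sigma t+\psi_\infty$ with the sharp rate $\theta$. Finally we build $\psi(t,\cdot)$ from the translation $x\mapsto x+\sigma t+\psi_\infty$ by adding, through a fixed partition of unity, corrections --- supported in any prescribed neighborhood of $\{0\}\cup D$ --- that enforce $\psi(t,0)=\sigma t+\psi_\infty$ (whence $u(t,\psi(t,0))=u_{\rm int}(t,\sigma t+\psi_\infty)=\uu_\star$) and $\psi(t,d_\pm)=\phi_\pm(t)$ (so that $u(t,\psi(t,\cdot))$ is regular off $D$). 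This yields $\psi\in BUC^2(\RR_+\times\RR)$ with $\Norm{\d_x\psi-1}_{L^\infty(\RR)}$, $\Norm{\d_t\psi-\sigma}_{L^\infty(\RR)}$ and $\Norm{\psi(t,\cdot)-(\cdot+\sigma t+\psi_\infty)}_{L^\infty(\RR)}$ all bounded by $C\,e^{-\theta t}(\Norm{\d_x\psi_0-1}_{L^\infty(\RR)}+\Norm{v_0}_{W^{1,\infty}(\RR\setminus D)})$; the bound on $\Norm{u(t,\psi(t,\cdot))-\uU}_{W^{1,\infty}(\RR\setminus D)}$ then follows by composing the per-piece estimates of Theorem~\ref{T.stable-front} and Proposition~\ref{P.constant-classical}, and $\abs{\psi_\infty-\psi_0(0)}$ is controlled similarly after undoing the constant-shift reduction.

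The main obstacle, now that Theorem~\ref{T.stable-front} disposes of the characteristic point, is the shock-position analysis: unlike in~\cite{DR1}, the discontinuities here separate constant states from a genuinely non-constant, merely exponentially-known environment, namely the front solution $u_{\rm int}$, so establishing the sharp pinning $\abs{p_\pm(t)-d_\pm}\leq C\,e^{-\theta t}\,\Norm{v_0}_{W^{1,\infty}(\RR\setminus D)}$ requires exploiting simultaneously the decay rate of $u_{\rm int}$ toward the front and the attractivity of $d_\pm$ as an equilibrium of the limiting ODE (with rate exactly $-[\,g(\uU)\,]_{d_\pm}/[\,\uU\,]_{d_\pm}$, whence the contribution of the jump terms to $\theta$). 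A second, more bookkeeping-type difficulty is to set up the modified equations so that the artificial extensions stay invisible: one must check that, for $\eps$ small, the solution never leaves the agreement region and that the discontinuity curves never reach the modified range, after which Kru\v zkov uniqueness finishes the argument.
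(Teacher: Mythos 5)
Your proposal follows essentially the same route as the paper's proof: reduction to $\psi_0(0)=0$ by translation invariance, artificial extension of the nonlinearities (the paper's Lemma~\ref{L.g-extend}) so that the non-constant piece of $\uU$ becomes a genuine stable front of a modified equation whose solutions never see the modification, extension of the perturbation to the whole line, propagation of each piece by Theorem~\ref{T.stable-front} and Proposition~\ref{P.constant-classical}, gluing along Rankine--Hugoniot curves whose deviation from $d+\sigma t+\psi_\infty$ is shown to decay at rate $\theta$ via the linearization $[g(\uU)]_{d}/[\uU]_{d}$ of the slope function, and a cut-off construction of $\psi$. That you write out Assumption~\ref{A.mixed-double} while the paper writes out Assumption~\ref{A.mixed-single} is immaterial.

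One localized step needs repair: you justify entropy admissibility of the glued solution by the Cauchy--Lipschitz argument on the jump amplitude taken from the end of the proof of Proposition~\ref{P.small-shock}. That argument reduces Oleinik's condition to a sign condition on the jump, which is legitimate only when $f''$ keeps a fixed sign on the interval spanned by the jump; here the background discontinuities are macroscopic and no convexity of $f$ on $[\uu_{-\infty},\uu_r]$ or $[\uu_\ell,\uu_{+\infty}]$ is assumed (classes~\ref{class4} and~\ref{class5} only occur for non-convex fluxes). Showing that the amplitude never vanishes therefore does not yield Oleinik's condition. The correct --- and simpler --- argument, which is the one the paper uses, is that the reference jumps satisfy the \emph{strict} Oleinik and Lax inequalities, an open condition on the pair of one-sided traces, so admissibility persists as long as $(u_l(t),u_r(t))$ stays in a small neighborhood of the reference values; your estimates on $\phi_\pm$ and on the per-piece solutions already guarantee this.
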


Our strategy of proof is the same as in the proofs of Theorem~\ref{T.shock} in~\cite[Theorem~3.2]{DR1}, or of Proposition~\ref{P.small-shock} in the previous section. We first extend smooth parts of the initial datum to the whole line, apply Theorem~\ref{T.stable-front} and Proposition~\ref{P.constant-classical} to propagate in time these extended initial data, and glue the obtained functions along the shock location determined from the Rankine-Hugoniot condition. Yet here there is a priori no extension of $\uU\id{(d,+\infty)}$ into a stable front of~\eqref{eq-u}. Instead, we shall first perform an \emph{artificial} extension of the background profile itself, based on extensions of nonlinearities. We first state and prove the simple relevant lemma.

\begin{Lemma}\label{L.g-extend}
Let $a''<a'<a<b$, $\alpha<0$ and $h\,:\ [a'',b]\to \RR$ $\cC^3$ such that $h$ is negative on $[a',a]$.\\ 
There exists $\check{h}:\ [a'',b]\to \RR$ $\cC^3$ such that $\check{h}\id{[a',b]}=g\id{[a',b]}$, $\check{h}(a'')=0$, $\check{h}'(a'')=\alpha$ and $\check{h}$ is negative on $(a",a]$.
\end{Lemma}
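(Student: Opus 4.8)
The plan is to keep $h$ unchanged on a subinterval $[a_1,b]$, for a suitable $a_1\in(a'',a')$, and to replace it on $[a'',a_1]$ by an explicit product $\alpha(x-a'')\,\psi(x)$ with $\psi$ smooth and strictly positive; this makes the sign condition transparent, builds in the boundary data at $a''$ automatically, and leaves just enough room in $\psi$ to glue $\cC^3$ at $a_1$.

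Concretely, I would first use that $h$ is continuous with $h(a')<0$ (a consequence of the negativity of $h$ on $[a',a]$) to pick $a_1\in(a'',a')$ with $h<0$ on $[a_1,a]$. On a neighborhood of $a_1$ the map $x\mapsto h(x)/(\alpha(x-a''))$ is $\cC^3$ and strictly positive, since $\alpha<0$, $x-a''>0$ and $h<0$ there; hence its logarithm is $\cC^3$ near $a_1$, and I let $Q$ be its degree-$3$ Taylor polynomial at $a_1$. Choosing $\chi\in\cC^\infty([a'',a_1])$ with $\chi\equiv0$ near $a''$ and $\chi\equiv1$ near $a_1$, I set $\psi\eqdef\exp(\chi\,Q)$ on $[a'',a_1]$, so that $\psi$ is smooth, strictly positive, equal to $1$ near $a''$, and has the same $3$-jet at $a_1$ as $x\mapsto h(x)/(\alpha(x-a''))$. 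Finally I define $\check h(x)\eqdef\alpha(x-a'')\,\psi(x)$ for $x\in[a'',a_1]$ and $\check h\eqdef h$ on $[a_1,b]$.

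The remaining checks are routine. Since $[a',b]\subset[a_1,b]$, $\check h$ agrees with $h$ on $[a',b]$. Near $a''$ one has $\psi\equiv1$, so there $\check h(x)=\alpha(x-a'')$, giving $\check h(a'')=0$ and $\check h'(a'')=\alpha$. On $(a'',a_1]$, $\check h=\alpha(x-a'')\psi<0$ because $\alpha<0$, $x-a''>0$, $\psi>0$; on $[a_1,a]$, $\check h=h<0$; hence $\check h<0$ on $(a'',a]$. Finally $\check h$ is $\cC^3$ on $[a'',b]$: it is smooth off $a_1$, and at $a_1$ the left-hand $3$-jet of $\alpha(x-a'')\psi(x)$ equals that of $\alpha(x-a'')\cdot\frac{h(x)}{\alpha(x-a'')}=h(x)$, which is the right-hand $3$-jet. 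The one point deserving a moment's thought --- and the step I would single out as the main (minor) difficulty --- is the construction of a \emph{positive} $\cC^3$ function on $[a'',a_1]$ carrying a prescribed value at $a''$ and a prescribed $3$-jet at $a_1$ whose higher-order part is unconstrained; writing it as the exponential of a $\cC^3$ function removes the positivity constraint and reduces the task to a standard cutoff interpolation.
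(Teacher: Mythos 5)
Your construction is correct, but it is genuinely different from the paper's. The paper glues additively with a partition of unity: it picks a smooth $\chi$ equal to $0$ near $a''$ and to $1$ on $[a''+\tfrac23(a'-a''),+\infty)$ and sets $\check h=(1-\chi)\,\alpha\,(\cdot-a'')+\chi\,h$ (the printed $\alpha(x-a)$ is evidently a typo for $\alpha(x-a'')$, as otherwise $\check h(a'')\neq0$). There the $\cC^3$ matching is trivial, but the negativity on the transition zone rests on both summands being negative there, and since that zone sits inside $(a'',a')$ where the hypotheses do not give the sign of $h$, the paper is implicitly using that in its application $h$ is negative slightly to the left of $a'$ as well. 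Your multiplicative ansatz $\check h=\alpha(\cdot-a'')\,\psi$ with $\psi=\exp(\chi Q)>0$ makes the sign automatic on all of $(a'',a_1]$ and pushes the gluing point $a_1$ up against $a'$, where continuity alone gives $h<0$; the price is the slightly more delicate $\cC^3$ matching at $a_1$, which you handle correctly by taking $Q$ to be the degree-$3$ Taylor polynomial of $\log\bigl(h/(\alpha(\cdot-a''))\bigr)$ so that the $3$-jets of $\alpha(\cdot-a'')e^{Q}$ and $h$ agree there. In short, you trade an easy gluing plus a sign verification for a slightly heavier gluing plus an automatic sign; both yield the lemma, and your version requires only the stated hypothesis that $h<0$ on $[a',a]$.
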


\begin{proof}
Pick $\chi:\RR\to [0,1]$ smooth such that $\chi_{|\left[a''+\tfrac23(a'-a''),+\infty\right)}\equiv1$ and $\chi_{|\left(-\infty,a''+\tfrac13(a'-a'')\right]}\equiv0$. Then define $\check{h}$ through $\check{h}(x)=(1-\chi(x))\,\alpha\,(x-a)+\chi(x)\,h(x)$.
\end{proof}

Now we prove Theorem~\ref{T.mixed}.

\begin{proof}
For the sake of brevity we only treat the case arising from Assumption~\ref{A.mixed-single}. We stress that the changes needed to deal with other cases are purely notational. Moreover we point out that one may exchange Assumptions~\ref{A.mixed-single} and~\ref{A.mixed-single-bis} by switching $(x,f)$ to $(-x,-f)$. Besides, by using invariance by spatial translation as we have done in proofs of the later section to reduce to the case $\psi_0=0$, we may enforce without loss of generality that $\psi_0(0)=0$.

First, we choose a convex neighborhood of $[\min(\{\uu_r,\uu_{+\infty}\}),\max(\{\uu_r,\uu_{+\infty}\})]$ on which $f$ is $\cC^4$, $g$ is $\cC^3$, $f'-\sigma$ vanishes only at $\uu_\star$ and $g$ vanishes only at $\uu_\star$ and $\uu_{+\infty}$. Then we choose some $\check{\uu}_{-\infty}$ in the foregoing neighborhood so that we may apply Lemma~\ref{L.g-extend} with $\alpha=g'(\uu_{+\infty})$ (and either $(h,a'')=(g,\check{\uu}_{-\infty})$ or $(h,a'')=(-g(-\,\cdot\,),-\check{\uu}_{-\infty})$ depending on the relative positions of $\uu_r$ and $\uu_{+\infty}$) and obtain $\check{g}$ coinciding with $g$ on a convex neighborhood of $[\min(\{\uu_r,\uu_{+\infty}\}),\max(\{\uu_r,\uu_{+\infty}\})]$ and such that $(f,\check{g},\check{\uu}_{-\infty},\uu_\star,\uu_{+\infty})$ defines as in Assumption~\ref{A.front} a stable continuous front $(\check{\uU}_+,\sigma)$. In particular, for some positive $\eps_0$ and $\delta_0$, we have, for $x\in[d-\delta_0,+\infty)$, $\check{\uU}_+(x)=\uU_+(x)$, and for any $u$ such that $\abs{u-\check{\uU}_+(x)}\leq\eps_0$, $\check{g}(u)=g(u)$.

Now, we observe that if $\Norm{\d_x\psi_0-1}_{L^{\infty}(\RR)}\leq 1/2$, then for any $x\in\RR$, $|x|/2\leq \abs{\psi_0(x)}\leq 2\,|x|$. Combining this with the exponential localization of $\uU_+$, we deduce that for some $C'$ (not depending on $\psi_0$)
\begin{align*}
\Norm{\uU\circ\psi_0^{-1}-\uU_+}_{W^{1,\infty}((\psi_0(d),+\infty))}
&\leq C'\,\Norm{\d_x\psi_0-1}_{L^{\infty}(\RR)}
\end{align*}
provided that $\Norm{\d_x\psi_0-1}_{L^{\infty}(\RR)}$ is sufficiently small. Then, as in Lemma~\ref{L.extension-small-shock}, we may extend 
\[
\left(\uU\circ\psi_0^{-1}-\uU_++v_0\circ\psi_0^{-1}\right)\id{(\psi_0(d),+\infty)}
\]
into $\check{v}_{0,+}$ and $\left(v_0\circ\psi_0^{-1}\right)\id{(-\infty,\psi_0(d))}$ into $\check{v}_{0,-}$. Afterwards we apply Proposition~\ref{P.constant-classical} to $(\uu_{-\infty},\check{v}_{0,-})$ and Theorem~\ref{T.stable-front} to $(\check{\uU}_+,\check{v}_{0,+})$. In this way, provided that $\Norm{\d_x\psi_0-1}_{L^{\infty}(\RR)}+\Norm{v_0}_{W^{1,\infty}(\RR)}$ is sufficiently small, for some $C'$ (not depending on $(\psi_0,v_0)$), we receive $u_-$ solving $\eqref{eq-u}$ with initial datum $\uu_{-\infty}+\check{v}_{0,-}$ and $u_+$ solving $\eqref{eq-u}$ with $\check{g}$ instead of $g$ and initial datum $\check{\uU}_++\check{v}_{0,+}$ such that for any $t\geq0$
\begin{align*}
\Norm{u_-(t,\cdot)-\uu_{-\infty}}_{W^{1,\infty}(\RR)}
&\leq C'\,e^{g'(\uu_{-\infty})\,t}\,\Norm{v_0}_{W^{1,\infty}(\RR\setminus D)}\,,
\end{align*}
and
\begin{align*}
&\Norm{u_+(t,\cdot+\sigma\,t+\psi_\infty)-\check{\uU}_+}_{W^{1,\infty}(\RR)}\\
&\qquad\leq C'\,e^{-\min(\{g'(\uu_\star),-g'(\uu_{+\infty})\})\,t}\,
\left(\Norm{\d_x\psi_0-1}_{L^{\infty}(\RR)}+\Norm{v_0}_{W^{1,\infty}(\RR\setminus D)}\right)\,,
\end{align*}
for some $\psi_\infty$ such that
\[
\abs{\psi_\infty}
\leq C'\,\left(\Norm{\d_x\psi_0-1}_{L^{\infty}(\RR)}+\Norm{v_0}_{L^{\infty}(\RR)}\right)\,.
\]
Moreover in the foregoing, for any $t\geq0$, $u_+(t,\sigma t+\psi_\infty)=\uu_\star$.

As in the proof of Proposition~\ref{P.small-shock}, we consider the slope function $s_f$ associated with $f$ as in~\eqref{slope-function} and observe that, the map $(t,x)\mapsto s_f(u_-(t,x),u_+(t,x))$ belongs to $BUC^1(\RR_+\times\RR)$, hence there exists a unique $\phi\in\cC^2(\RR_+)$ satisfying $\phi(0)=\psi_0(d)$ and for any $t\geq 0$, 
\[\phi'(t)\,=\, s_f(u_-(t,\phi(t)),u_+(t,\phi(t)))\,.\]
We shall construct our solution, $u$, as in~\eqref{patch} through the formula
\[
u(t,x)=
\begin{cases} 
u_-(t,x) & \text{ if } x<\phi(t),\\
u_+(t,x) & \text{ if } x>\phi(t)\,.
\end{cases}
\]
Note that $\abs{\phi(0)-d}\leq |d|\,\Norm{\d_x\psi_0-1}_{L^{\infty}(\RR)}$ and if we prove that $\phi(t)$ remains sufficiently close to $\sigma\,t+\psi_\infty+d$ we deduce that $u$ is a weak solution to~\eqref{eq-u} (since the values of $u_+$ used in $u$ lie where $g$ and $\check{g}$ coincide). Likewise, the same condition yields that $(u_-(t,\phi(t)),u_+(t,\phi(t)))$ remains close to $(\uu_{-\infty},\uU_+(d))=(\uu_{-\infty},\uu_r)$, thus proving that $u$ satisfies Oleinik's condition is an entropy solution.

Therefore it only remains to study the asymptotic behavior of $\phi$ and to recast all the proved estimates so as to fit the claims in Theorem~\ref{T.mixed}. In order to study $\phi$, we introduce $\varphi:\RR_+\to\RR$, $t\mapsto \phi(t)-(d+\psi_\infty+\sigma\,t)$. Note that
\begin{align*}
s_f(\uu_{-\infty},\uu_r)&=\sigma\,,&
\d_x\left(s_f(\uu_{-\infty},\check{\uU}_+(\cdot))\right)(d)
&=\frac{[g(\uU)]_d}{[\uU]_d}\,.
\end{align*}
Combined with the asymptotic estimates on $u_-$ and $u_+$, this implies that for some $C''$, 
\[
\abs{\varphi(0)}\leq C'' \left(\Norm{\d_x\psi_0-1}_{L^{\infty}(\RR)}+\Norm{v_0}_{L^{\infty}(\RR)}\right)
\] 
and, for any $t\geq0$,
\begin{align*}
&\abs{\varphi(t)}
\leq \abs{\varphi(0)}\,e^{\frac{[g(\uU)]_d}{[\uU]_d}\,t}
+C''\,\int_0^t\,e^{\frac{[g(\uU)]_d}{[\uU]_d}\,(t-s)}\,(\varphi(s))^2\dd s\\
&\quad+C''\,
\left(\Norm{\d_x\psi_0-1}_{L^{\infty}(\RR)}+\Norm{v_0}_{W^{1,\infty}(\RR\setminus D)}\right)
\,\int_0^t\,e^{\frac{[g(\uU)]_d}{[\uU]_d}\,(t-s)}\,
e^{-s\,\min\left(\left\{g'(\uu_\star),-g'(\uu_{+\infty}),-g'(\uu_{-\infty})\right\}\right)}\dd s\,.
\end{align*}
Thus a continuity argument shows that, provided that $\Norm{\d_x\psi_0-1}_{L^{\infty}(\RR)}+\Norm{v_0}_{W^{1,\infty}(\RR\setminus D)}$ is sufficiently small, for some $C'''$ and any $t\geq0$,
\[
\abs{\varphi(t)}\leq 
C'''\,e^{-\theta\,t}\,
\left(\Norm{\d_x\psi_0-1}_{L^{\infty}(\RR)}+\Norm{v_0}_{W^{1,\infty}(\RR\setminus D)}\right)\,.
\]
This implies a similar bound on $\varphi'$.

At this stage we only need to introduce $\psi$ to fit the claims in Theorem~\ref{T.mixed}. We pick $\chi:\RR\to\RR$ smooth and compactly supported, constant equal to $1$ in a neighborhood of $d$ and constant equal to $0$ in a neighborhood of $0$. Then we set 
\[
\psi\,:\ \RR_+\times\RR\to\RR\,,\quad
(t,x)\mapsto x+\psi_\infty+\sigma\,t+\chi(x)\,\varphi(t)\,.
\] 
For some constant $C_0$ and any $t\geq0$, 
\begin{align*}
\Norm{u(t,\psi(t,\cdot))-\uU}_{W^{1,\infty}((-\infty,d))}
\leq C_0\Norm{u_-(t,\cdot)-\uu_{-\infty}}_{W^{1,\infty}(\RR)}
\end{align*}
and
\begin{align*}
&\Norm{u(t,\psi(t,\cdot))-\uU}_{W^{1,\infty}((d,+\infty))}\\
&\qquad\leq \Norm{u_+(t,\psi(t,\cdot))-\check{\uU}_+(\varphi(t)\,\chi(\cdot))}_{W^{1,\infty}((d,+\infty))}
+\Norm{\check{\uU}_+(\varphi(t)\,\chi(\cdot))-\check{\uU}_+}_{W^{1,\infty}((d,+\infty))}\\
&\qquad\leq C_0\Norm{u_+(t,\cdot+\sigma\,t+\psi_\infty)-\check{\uU}_+}_{W^{1,\infty}(\RR)}
+C_0\,\abs{\varphi(t)}\,.
\end{align*}
This achieves the proof.
\end{proof}

To conclude the proof of Theorem~\ref{T.classification}, it only remains to observe that spectral stability of stable constant states and of Riemann shocks as in Assumption~\ref{A.Riemann} is also proved in~\cite{DR1} and to show spectral stability of waves given by Assumptions~\ref{A.mixed-single},~\ref{A.mixed-single-bis} and~\ref{A.mixed-double}. The latter follows from the spectral stability of stable constant states and stable continuous fronts through the extension-patching argument as in the nonlinear stability proofs. The details are left to the reader and the reader is referred to Section~4 for some hints.

It is relatively straightforward to derive counterparts of Theorem~\ref{T.mixed} including perturbations with small shocks, higher-regularity descriptions or multidimensional perturbations supported away from characteristic points. The statement and proofs of those are left to the interested reader. Yet we give here a brief description of the dynamics of the small perturbing shocks. When perturbing a stable Riemann shock, the small shock merges in finite time with the background shock and thus somehow disappears. For waves as in Theorem~\ref{T.mixed}, small perturbing shocks either merge in finite time with a background discontinuity if there is any on the same side of the characteristic point of the reference wave or moves towards infinity as in Proposition~\ref{P.small-shock} if there is none.

\subsection{Stable waves with multiple characteristic points}\label{S.multiple-characteristics}

In the present subsection we prove that for non-degenerate piecewise regular traveling waves with a finite number of shocks the identification of instability mechanisms in Section~\ref{S.instabilities} is indeed comprehensive. Thus we relax Assumption~\ref{A.generic} and prove nonlinear stability for some waves possessing several characteristic points.

The main difference with analysis of the foregoing subsection is that such waves are not isolated, even if waves coinciding up to a spatial translation are identified. Indeed they come as elements of continuously parameterized families of waves. As a consequence each wave is not asymptotically stable by itself but these families are, in the sense that a solution arising from the perturbation of one such wave converges to a possibly different element of the same wave family.

The following assumption formalizes the class of waves we consider.

\begin{Assumption}\label{A.general-stable}
Consider a non-degenerate piecewise regular entropy-admissible traveling-wave solution to~\eqref{eq-u} defined by $(\uU,\sigma,D)$. Assume that
\begin{enumerate}
\item $D$ is finite and its limits, $\uu_{+\infty}=\lim_{+\infty}\uU$ and $\uu_{-\infty}=\lim_{-\infty}\uU$ satisfy
\begin{align*}
g'(\uu_{+\infty})&<0\,,&
g'(\uu_{-\infty})&<0\,;
\end{align*}
\item each characteristic value $\uu_\star$ taken by $\uU$ satisfies $g'(\uu_\star)>0$;
\item for any $d\in D$, either, near $d$, $\uU$ is constant on both sides or
\[\frac{[\,g(\uU)\,]_d}{[\,\uU\,]_d}\,<\,0\,.\]
\end{enumerate}
\end{Assumption}

A few remarks are in order. 
\begin{enumerate}
\item By Proposition~\ref{P.structure}, the second condition could alternatively be stated as: each connected component of $\RR\setminus D$ contains at most one characteristic point. 
\item Since $\uU$ is strictly monotonic on bounded connected components, in the third condition the first part of the alternative happens only for Riemann shocks. 
\item Proceeding as in the element of proof below Theorem~\ref{T.classification}, one infers that if $\uU$ has at least two characteristic points, then it passes through at least two different  characteristic values.
\item The cases when $\uU$ has less than two characteristic points are already covered by the results stated in the foregoing subsection and Section~\ref{S.continous_fronts}. 
\end{enumerate}

The following proposition proves that when there are at least two characteristic points there is nearby a family of similar waves.
\begin{Proposition}\label{P.family}
Let $(\uU,\sigma,D)$ define a wave satisfying Assumption~\ref{A.general-stable} with at least two characteristic points. Label the characteristic points as $\upsi_{1,\star}<\cdots <\upsi_{n,\star}$, with $n\in\NN$, $n\geq2$ and the elements of $D$ as $d_{1-\gamma_-}<\cdots<d_{n-1+\gamma_+}$ with $(\gamma_-,\gamma_+)\in\{0,1\}^2$, $\upsi_{1,\star}<d_1$, $\upsi_{n,\star}>d_{n-1}$.\\
There exist $\eps_0>0$ and $C>0$ such that for any $\Psi_\star=(\psi_{1,\star},\cdots,\psi_{n,\star})$ such that
\[
\Norm{(\upsi_{j,\star}-\upsi_{1,\star})_{2\leq j\leq n}
-(\psi_{j,\star}-\psi_{1,\star})_{2\leq j\leq n}}\leq \eps_0\,,
\]
there exist a unique $(\uU^{\Psi_\star},D^{\Psi_\star})$ such that 
\begin{enumerate}
\item $(\uU^{\Psi_\star},\sigma,D^{\Psi_\star})$ defines a wave satisfying Assumption~\ref{A.general-stable};
\item for any $1\leq j\leq n$, $\uU^{\Psi_\star}(\psi_{j,\star})=\uU(\upsi_{j,\star})$;
\item $D^{\Psi_\star}$ has the same cardinal as $D$ and, labeling its elements as $d^{\Psi_\star}_{1-\gamma_-}<\cdots<d^{\Psi_\star}_{n-1+\gamma_+}$, then $\psi_{1,\star}<d^{\Psi_\star}_1$, $\psi_{n,\star}>d^{\Psi_\star}_{n-1}$ and, for any $1-\gamma_-\leq k\leq n-1+\gamma_+$,
\[
\abs{d^{\Psi_\star}_k-(d_k+\psi_{1,\star}-\upsi_{1,\star})}\,\leq\,C\,
\Norm{(\upsi_{j,\star}-\upsi_{1,\star})_{2\leq j\leq n}
-(\psi_{j,\star}-\psi_{1,\star})_{2\leq j\leq n}}\,;
\]
\item there exists a $\cC^\infty$ maps $\psi^{\Psi_\star}:\RR\to\RR$ such that 
\begin{align*}
\Norm{\psi^{\Psi_\star}-(\,\cdot\,+\psi_{1,\star}-\upsi_{1,\star})}_{W^{1,\infty}(\RR)}
&+\Norm{\uU^{\Psi_\star}(\psi^{\Psi_\star}(\,\cdot\,))-\uU}_{W^{1,\infty}(\RR\setminus D)}\\
&\leq C\,\Norm{(\upsi_{j,\star}-\upsi_{1,\star})_{2\leq j\leq n}
-(\psi_{j,\star}-\psi_{1,\star})_{2\leq j\leq n}}\,.
\end{align*}
\end{enumerate}
Moreover, increasing $C$ if necessary, one may enforce that $\psi^{\Psi_\star}-(\cdot+\psi_{1,\star}-\upsi_{1,\star})$ is supported in any prescribed neighborhood of $D$.
\end{Proposition}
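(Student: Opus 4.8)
The plan is to construct $(\uU^{\Psi_\star},D^{\Psi_\star})$ piece by piece, gluing portions of trajectories of the profile vector field $F_\sigma$ at discontinuity points chosen to satisfy the Rankine--Hugoniot condition, exactly mimicking the structure of the reference wave $\uU$; the whole construction amounts to solving, for each unbounded end and each characteristic value, a small system of implicit equations whose invertibility is guaranteed by the non-degeneracy assumptions. I would first set up notation: each bounded connected component of $\RR\setminus D$ carries exactly one characteristic point $\upsi_{j,\star}$ by the second condition in Assumption~\ref{A.general-stable} together with Proposition~\ref{P.structure}, and on that component $\uU$ agrees with the maximal solution of $\uU_j' = F_\sigma(\uU_j)$ with $\uU_j(\upsi_{j,\star})=\uu_{j,\star}\eqdef\uU(\upsi_{j,\star})$, which by the phase-portrait argument in the proof of Lemma~\ref{L.front} is defined on a maximal open interval containing a neighborhood of the closure of that component. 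The data $\Psi_\star=(\psi_{1,\star},\dots,\psi_{n,\star})$ prescribes where each such characteristic point sits; once the $\psi_{j,\star}$ are fixed, the interior profile on the would-be $j$-th bounded cell is forced to be $x\mapsto \uU_j(x-(\psi_{j,\star}-\upsi_{j,\star}))$, since it must be a trajectory of $F_\sigma$ taking the value $\uu_{j,\star}$ at $\psi_{j,\star}$ and $F_\sigma$ is locally Lipschitz near its zeros (non-degeneracy). Thus the only genuine unknowns are the discontinuity locations $d^{\Psi_\star}_k$ and, for the two half-line cells, the constant endstates or the terminal characteristic trajectory, which are fixed by $\uu_{\pm\infty}$ anyway.

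Next I would solve the Rankine--Hugoniot conditions for the interior discontinuities. For $1\le k\le n-1$, the point $d^{\Psi_\star}_k$ separates the cell containing $\psi_{k,\star}$ from the cell containing $\psi_{k+1,\star}$, so it must solve
\[
s_f\big(\uU_k(d-a_k),\ \uU_{k+1}(d-a_{k+1})\big)=\sigma,
\qquad a_j\eqdef \psi_{j,\star}-\upsi_{j,\star},
\]
with $s_f$ the slope function of~\eqref{slope-function}. At $\Psi_\star=(\upsi_{1,\star},\dots,\upsi_{n,\star})$ (i.e. $a_j\equiv 0$) this is solved by $d=d_k$, and the derivative of the left-hand side in $d$ there equals, after a short computation using $\uU_k'=F_\sigma(\uU_k)$ and the Rankine--Hugoniot relation, precisely $\tfrac{[\,g(\uU)\,]_{d_k}}{[\,\uU\,]_{d_k}}\neq 0$ by the third condition in Assumption~\ref{A.general-stable}; this is the same identity as $s_0'(d_0)=\tfrac{[g(\uU)]_{d_0}}{[\uU]_{d_0}}$ used in the proof of Proposition~\ref{P.nl-instability-shock}. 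The implicit function theorem then produces $d^{\Psi_\star}_k$ depending $\cC^1$ on $\Psi_\star$, with the Lipschitz bound in item (iii) coming from differentiating and using that moving all of $(a_1,\dots,a_n)$ by a common constant $c$ simply translates the whole picture (so the actual dependence is only on the differences $a_j-a_1$, up to the overall shift $\psi_{1,\star}-\upsi_{1,\star}=a_1$ which contributes the $+\psi_{1,\star}-\upsi_{1,\star}$ term). For the end discontinuities present when $\gamma_\pm=1$ (a Riemann-type jump to a constant endstate), the same argument applies with $\uU_{k+1}$ replaced by the constant $\uu_{\pm\infty}$. The strict Oleinik/Lax inequalities in Definition~\ref{A.generic-traveling-wave} are strict at $\uU$ and depend continuously on the configuration, hence persist for $\eps_0$ small, so the constructed $(\uU^{\Psi_\star},\sigma,D^{\Psi_\star})$ is again a non-degenerate piecewise regular entropy-admissible traveling wave; one checks items (i) and (ii) are then immediate by construction, and the endstate and jump sign conditions of Assumption~\ref{A.general-stable} are inherited by continuity. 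Uniqueness follows because any wave with the same speed, the same endstates, the prescribed values $\uU^{\Psi_\star}(\psi_{j,\star})=\uu_{j,\star}$, and satisfying Assumption~\ref{A.general-stable} is forced cell by cell: each bounded cell must be the $F_\sigma$-trajectory through its (unique) characteristic point, and each discontinuity must solve the above scalar Rankine--Hugoniot equation, whose solution near $d_k$ is unique by the non-vanishing derivative.

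Finally, for item (iv) I would build $\psi^{\Psi_\star}$ by interpolation, exactly as $\psi$ is built in the proof of Theorem~\ref{T.mixed}: choose a fixed finite partition of unity subordinate to small neighborhoods of the points of $D$, and set $\psi^{\Psi_\star}(x)=x+a_1+\sum_k \chi_k(x)\,\big(d^{\Psi_\star}_k-(d_k+a_1)\big)$ with each $\chi_k$ smooth, equal to $1$ near $d_k$ and $0$ near the other $d_j$'s and outside a prescribed neighborhood of $D$; this makes $\psi^{\Psi_\star}$ a $\cC^\infty$ near-identity diffeomorphism sending the bounded cells of $\uU$ to those of $\uU^{\Psi_\star}$ and each $\upsi_{j,\star}+a_1$ near $\psi_{j,\star}$, and the $W^{1,\infty}$ bound follows from the bounds on $d^{\Psi_\star}_k-(d_k+a_1)$ together with the exponential localization of $\uU-\uu_{\pm\infty}$ at infinity (Lemma~\ref{L.front}) needed to control $\uU^{\Psi_\star}(\psi^{\Psi_\star}(\cdot))-\uU$ on the two half-lines, where $\psi^{\Psi_\star}$ is the pure translation by $a_1$. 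The support statement is built into the choice of the $\chi_k$. I expect the main obstacle to be purely bookkeeping: carefully tracking the combinatorics of which cells are bounded versus half-lines (the $\gamma_\pm$), keeping the implicit-function estimates uniform as one varies $\Psi_\star$, and verifying that the strict entropy inequalities and the sign conditions of Assumption~\ref{A.general-stable} are stable under the perturbation — there is no deep analytic difficulty beyond what is already in Proposition~\ref{P.structure} and the proof of Theorem~\ref{T.mixed}, but the notation is heavy.
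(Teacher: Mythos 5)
Your proposal follows essentially the same route as the paper's proof: translate each smooth piece of $\uU$ so that its characteristic point lands at the prescribed $\psi_{j,\star}$, determine the interior jump locations by applying the implicit function theorem to the Rankine--Hugoniot relation (whose $d$-derivative at the reference configuration is $[g(\uU)]_{d_k}$, equivalently your $[g(\uU)]_{d_k}/[\uU]_{d_k}\neq 0$), check that the strict sign and entropy conditions persist by continuity, and build $\psi^{\Psi_\star}$ by cutoff interpolation supported near $D$. The only differences are cosmetic (the paper assigns the end jumps $d_0^{\Psi_\star}$, $d_n^{\Psi_\star}$ directly as rigid translates rather than re-solving Rankine--Hugoniot, and uses a single cutoff with a piecewise formula instead of a sum of cutoffs), so the proposal is correct.
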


\begin{proof}
To begin with, let $(\uU_0,\cdots,\uU_n)$ denote extensions respectively of $\uU_{|(-\infty,d_1)}$ to a neighborhood of $(-\infty,d_1]$, of $\uU_{|(d_j,d_{j+1})}$ to a neighborhood of $[d_j,d_{j+1}]$ for $1\leq j\leq n-1$, and of $\uU_{|(d_n,+\infty)}$ to a neighborhood of $[d_n,+\infty)$,  obtained by solving the ODE associated with the profile equations. Note that $\uU_0$ (resp. $\uU_n$) contains a discontinuity if $\gamma_-=1$ (resp. $\gamma_+=1$).

For any $\Psi_\star$ satisfying the smallness condition of the proposition, we shall define $\uU^{\Psi_\star}$ as 
\[
\uU^{\Psi_\star}(x)=
\begin{cases} 
\uU_0(x+\upsi_{1,\star}-\psi_{1,\star}) & \text{ if } x<d^{\Psi_\star}_1\,,\\
\uU_k(x+\upsi_{k,\star}-\psi_{k,\star}) & \text{ if } d^{\Psi_\star}_k<x<d^{\Psi_\star}_{k+1}\,,\ 1\leq k\leq n-1\,,\\
\uU_n(x+\upsi_{n,\star}-\psi_{n,\star}) & \text{ if } x>d^{\Psi_\star}_{n-1}\,,\\
\end{cases}
\]
with $(d^{\Psi_\star}_k)_{1\leq k\leq n-1}$ determined by the Rankine-Hugoniot conditions
\[
[f(\uU^{\Psi_\star})-\sigma\,\uU^{\Psi_\star}]_{d^{\Psi_\star}_k}\,=\,0\,,\qquad 1\leq k\leq n-1\,,
\]
and, if $\gamma_-=1$ (resp. $\gamma_+=1$), $d^{\Psi_\star}_0=d_0+\psi_{1,\star}-\upsi_{1,\star}$ (resp. $d^{\Psi_\star}_n=d_n+\psi_{n,\star}-\upsi_{n,\star}$). The existence of $(d^{\Psi_\star}_k)_{1\leq k\leq n-1}$ follows from the Implicit Function Theorem applied for $1\leq k\leq n-1$, to
\[
RH_k\,:\quad(\delta,\eta,\eta')\mapsto 
f(\uU_{k+1}(d_k+\delta+\eta'))-\sigma\,\uU_{k+1}(d_k+\delta+\eta')
-\left(f(\uU_k(d_k+\delta+\eta))-\sigma\,\uU_k(d_k+\delta+\eta)\right)
\]
so as to determine $\delta_k=d^{\Psi_\star}_k-(d_k+\psi_{1,\star}-\upsi_{1,\star})$ as a function of 
\[
(\eta_k,\eta'_k)=((\upsi_{k,\star}-\upsi_{1,\star})
-(\psi_{k,\star}-\psi_{1,\star}),(\upsi_{k+1,\star}-\upsi_{1,\star})
-(\psi_{k+1,\star}-\psi_{1,\star}))\,,
\] 
since 
\begin{align*}
RH_k(0,0,0)&=
[f(\uU)-\sigma\,\uU]_{d_k}\,=\,0\,,\\
\d_\delta(RH_k)(0,0,0)&=
[(f(\uU)-\sigma\,\uU)']_{d_k}\,=\,
[g(\uU)]_{d_k}\,\neq\,0\,.
\end{align*}
The smallness condition implies that $(\uU^{\Psi_\star},\sigma,D^{\Psi_\star})$ does define a wave of~\eqref{eq-u} satisfying Assumption~\ref{A.general-stable}.

We conclude essentially as in the proof of Theorem~\ref{T.mixed}. Independently of $\Psi_\star$, we pick $\chi:\RR\to\RR$ smooth and compactly supported, constant equal to $1$ in a neighborhood of $D$, and constant equal to $0$ in a neighborhood of $\{\,\upsi_{k,\star}\,;\,1\leq k\leq n\,\}$. Then, for any $\Psi_\star$, we define $\psi^{\Psi_\star}\,:\ \RR\to\RR$ by
\begin{align*}
\psi^{\Psi_\star}(x)\,=\,
x&+\psi_{1,\star}-\upsi_{1,\star}\\
&+
\begin{cases} 
0& \text{ if } x\leq\psi_{1,\star}\,,\\
\left(d^{\Psi_\star}_k-(d_k+\psi_{1,\star}-\upsi_{1,\star})\right)\,\chi(x)& \text{ if } \psi_{k,\star}<x\leq\psi_{k+1,\star}\,,\ 1\leq k\leq n-1\,,\\
\left(d^{\Psi_\star}_n-(d_n+\psi_{1,\star}-\upsi_{1,\star})\right)\,\chi(x)& \text{ if } x>\psi_{n,\star}\text{ and }\gamma_+=1\,,\\
0& \text{ if } x>\psi_{n,\star}\text{ and }\gamma_+=0\,.
\end{cases}
\end{align*} 
It is then straightforward to check the claimed estimates as in the proof of Theorem~\ref{T.mixed}.
\end{proof}

\begin{Theorem}\label{T.general-stable}
Let $(\uU,\sigma,D)$ define a wave satisfying Assumption~\ref{A.general-stable} with at least two characteristic points and use notation from Proposition~\ref{P.family}. Let $\theta>0$ such that
\begin{align*}
\theta&\geq\min\left(\left\{-g'(\uu_{+\infty}),-g'(\uu_{-\infty})\right\}\cup\left\{g'(\uU(\upsi_{k,\star})),;\,1\leq k\leq n-1\,\right\}\right)\,,\\
\theta&>\min\left(\left\{-\frac{[g(\uU)]_d}{[\uU]_d};\,d\in D\,\right\}\right)\,.
\end{align*}
There exist $\eps>0$ and $C>0$ such that for any $(\psi_0,v_0)\in BUC^1(\RR)\times BUC^1(\RR\setminus D)$ satisfying
\[
\Norm{\d_x\psi_0-1}_{L^{\infty}(\RR)}+\Norm{v_0}_{W^{1,\infty}(\RR\setminus D)}\leq \eps,
\]
the initial datum $u\id{t=0}=(\uU+v_0)\circ \psi_0^{-1}$ generates a unique global entropic solution to~\eqref{eq-u} and there exist $\psi\in BUC^2(\RR_+\times\RR)$ and $\Psi_\infty=(\psi_{k,\star})_{1\leq k\leq n}\in\RR^n$ such that 
\[
\abs{\Psi_\infty-(\psi_0(\upsi_{k,\star}))_{1\leq k\leq n}}\leq 
C\,\left(\Norm{\d_x\psi_0-1}_{L^{\infty}(\RR)}+\Norm{v_0}_{L^{\infty}(\RR)}\right)
\]
and for any $t\geq 0$
\begin{align*}
\psi(t,\psi_{k,\star})&=\psi_{k,\star}+\sigma t\,,\ 1\leq k\leq n\,,
&u(t,\psi_{k,\star}+\sigma t)&=\uU(\upsi_{k,\star})\,,\ 1\leq k\leq n\,,
\end{align*}
$u(t,\psi(t,\cdot))\in BUC^1(\RR\setminus D^{\Psi_\infty})$ and
\begin{align*}
\Norm{\d_x\psi(t,\cdot)-1}_{L^{\infty}(\RR)}
+\Norm{u(t,\psi(t,\cdot))-\uU^{\Psi_\infty}}_{W^{1,\infty}(\RR\setminus D^{\Psi_\infty})}
&\leq C\,e^{-\theta t}
\,\left(\Norm{\d_x\psi_0-1}_{L^{\infty}(\RR)}+\Norm{v_0}_{W^{1,\infty}(\RR\setminus D)}\right)\,,\\
\Norm{\d_t\psi(t,\cdot)-\sigma}_{L^{\infty}(\RR)}+\Norm{\psi(t,\cdot)-(\cdot+\sigma t)}_{L^{\infty}(\RR)}
&\leq C\,e^{-\theta t}
\,\left(\Norm{\d_x\psi_0-1}_{L^{\infty}(\RR)}+\Norm{v_0}_{W^{1,\infty}(\RR\setminus D)}\right)\,.
\end{align*}
Moreover, increasing $C$ if necessary, one may enforce that, for any $t\geq0$, $\psi(t,\cdot)-(\cdot+\sigma t)$ is supported in any prescribed neighborhood of $(\psi_0(\upsi_{1,\star})-\upsi_{1,\star})+D$.
\end{Theorem}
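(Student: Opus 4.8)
The strategy is to combine the near-identity parametrization of the wave family provided by Proposition~\ref{P.family} with the extension/patching scheme already used in the proof of Theorem~\ref{T.mixed}, applied piece by piece to the $n+1$ smooth chunks $\uU_0,\dots,\uU_n$ of the profile (where $\uU_0$ and $\uU_n$ may contain a Riemann-type discontinuity when $\gamma_-=1$ or $\gamma_+=1$). First I would use invariance by spatial translation to reduce to $\psi_0(\upsi_{1,\star})=\upsi_{1,\star}$, equivalently arrange that the first characteristic location is not moved. Then, for the final asymptotic parameter $\Psi_\infty$ to be determined later by a fixed-point/continuity argument, I would perform on each smooth block an \emph{artificial} extension of the nonlinearities à la Lemma~\ref{L.g-extend}, so that each middle block $\uU_k$ ($1\le k\le n-1$) becomes a piece of a stable continuous front in the sense of Assumption~\ref{A.front} for a modified pair $(f,\check g_k)$, and so that each of the two outer blocks becomes either a stable continuous front (if it contains a characteristic point) or is handled by Proposition~\ref{P.constant-classical} (constant endstate) and, where $\gamma_\pm=1$, by the small-shock analysis of Proposition~\ref{P.small-shock}. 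The key point, exactly as below Theorem~\ref{T.mixed}, is that these extensions are irrelevant: the artificial parts are never unravelled because the $n-1$ interior shock locations, determined by the Rankine–Hugoniot ODE $\phi_k'(t)=s_f(u_k(t,\phi_k(t)),u_{k+1}(t,\phi_k(t)))$ with the slope function~\eqref{slope-function}, stay exponentially close to $\sigma t+d_k^{\Psi_\infty}$.

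Concretely, the order of steps is: (i) extend $(v_0\circ\psi_0^{-1})$ restricted to each block $(\psi_0(d_{k-1}),\psi_0(d_k))$ to a global $\check v_{0,k}\in BUC^1$ with $\|\check v_{0,k}\|_{W^{1,\infty}}\lesssim \|\d_x\psi_0-1\|_{L^\infty}+\|v_0\|_{W^{1,\infty}(\RR\setminus D)}$, as in Lemma~\ref{L.extension-small-shock} (using the exponential localization of each $\uU_k$ from Lemma~\ref{L.front} to absorb the error $\uU\circ\psi_0^{-1}-\uU_k$); (ii) run Theorem~\ref{T.stable-front} on each block containing a characteristic point and Proposition~\ref{P.constant-classical} on constant endstates, obtaining solutions $u_0,\dots,u_n$, each decaying at rate $\theta$, and, for the block carrying the $k$-th characteristic point, an asymptotic shift $\psi_{k,\star}$ with $|\psi_{k,\star}-\psi_0(\upsi_{k,\star})|\lesssim \|\d_x\psi_0-1\|_{L^\infty}+\|v_0\|_{L^\infty}$; these define $\Psi_\infty=(\psi_{k,\star})_{1\le k\le n}$; (iii) solve the $n-1$ scalar ODEs for the shock curves $\phi_k$ with $\phi_k(0)=\psi_0(d_k)$, glue $u:=u_k$ on $\phi_{k-1}(t)<x<\phi_k(t)$; (iv) study $\varphi_k(t):=\phi_k(t)-(d_k^{\Psi_\infty}+\sigma t)$ via a Grönwall estimate identical in form to the one for $\varphi$ in the proof of Theorem~\ref{T.mixed} — using $\d_x(s_f(\uU_k(\cdot-\cdot),\uU_{k+1}(\cdot-\cdot)))(d_k^{\Psi_\infty})=[g(\uU)]_{d_k}/[\uU]_{d_k}<0$ — to conclude $|\varphi_k(t)|+|\varphi_k'(t)|\lesssim e^{-\theta t}(\|\d_x\psi_0-1\|_{L^\infty}+\|v_0\|_{W^{1,\infty}(\RR\setminus D)})$; (v) check that the smallness of $\varphi_k$ keeps the traces $(u_k(t,\phi_k(t)),u_{k+1}(t,\phi_k(t)))$ in the region where $\check g_k$ and $g$ agree and where the Oleinik condition holds strictly, so $u$ is a genuine entropy solution (uniqueness then by Kružkov); (vi) build $\psi$ as in Theorem~\ref{T.mixed} from the $\varphi_k$'s and the cutoff $\chi$ of Proposition~\ref{P.family}, and translate all the estimates into the normalization $\psi(t,\psi_{k,\star})=\psi_{k,\star}+\sigma t$, $u(t,\psi_{k,\star}+\sigma t)=\uU(\upsi_{k,\star})$, recognizing $\uU^{\Psi_\infty}$ and $D^{\Psi_\infty}$ from Proposition~\ref{P.family} as the limiting profile.

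The main obstacle is step~(ii)–(iv) coupling: the shifts $\psi_{k,\star}$ coming out of Theorem~\ref{T.stable-front} on the different blocks are a priori \emph{independent}, whereas the glued solution must be compatible with a single element $\uU^{\Psi_\infty}$ of the family, i.e. the $n-1$ shock curves must all converge and the interlacing $\psi_{1,\star}<d_1^{\Psi_\infty}<\psi_{2,\star}<\cdots$ must be preserved. One must therefore verify that the vector $\Psi_\infty$ produced by the block solutions satisfies the smallness hypothesis of Proposition~\ref{P.family} — which follows from the $L^\infty$ bounds on each $\psi_{k,\star}-\psi_0(\upsi_{k,\star})$ together with $\|\d_x\psi_0-1\|_{L^\infty}$ small — and then that the Grönwall argument of step~(iv) closes simultaneously for all $k$; since the $\phi_k$ only interact through the already-controlled block solutions $u_k$, this decouples and a standard continuity/bootstrap argument (as in the proof of Theorem~\ref{T.stable-front}) closes it. A secondary technical point, also routine given Lemma~\ref{L.g-extend} and Proposition~\ref{P.small-shock}, is the treatment of the outer blocks when $\gamma_\pm=1$: there a background discontinuity is already present, so one extends as in Proposition~\ref{P.small-shock} rather than Theorem~\ref{T.stable-front}, and the perturbing shock either merges in finite time with an adjacent background shock or drifts to infinity — in either case the contribution to the error is $O(e^{-\theta t})$ and does not affect the statement. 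The remaining verifications (propagation of $BUC^1$ regularity, composition estimates for $u(t,\psi(t,\cdot))-\uU^{\Psi_\infty}$, support localization of $\psi(t,\cdot)-(\cdot+\sigma t)$) are identical to those in the proof of Theorem~\ref{T.mixed} and are left to the reader.
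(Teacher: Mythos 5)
Your proposal matches the paper's intended argument: the paper itself only remarks that the proof is ``nearly identical to the one of Theorem~\ref{T.mixed}'' and leaves it to the reader, and your block-by-block extension/patching scheme --- artificial extension of the nonlinearities via Lemma~\ref{L.g-extend}, Theorem~\ref{T.stable-front} and Proposition~\ref{P.constant-classical} on each smooth chunk, Rankine--Hugoniot ODEs and Gr\"onwall estimates for the shock drifts, and Proposition~\ref{P.family} to assemble the asymptotic profile $\uU^{\Psi_\infty}$ from the a priori independent shifts $\psi_{k,\star}$ --- is exactly that adaptation. You also correctly isolate the one genuinely new point relative to Theorem~\ref{T.mixed}, namely that the limiting object is a different member of the wave family rather than a translate of $\uU$, and resolve it the way the paper intends.
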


Since the proof of Theorem~\ref{T.general-stable} is nearly identical to the one of Theorem~\ref{T.mixed}, we leave it to the reader. Again, we point out that Theorem~\ref{T.general-stable} possesses counterparts including perturbations with small shocks, higher-regularity descriptions or multidimensional perturbations supported away from characteristic points. 

\newcommand{\etalchar}[1]{$^{#1}$}
\newcommand{\SortNoop}[1]{}

\end{document}